\newtheorem{theorem}{Theorem}[section]
\newtheorem{lemma}[theorem]{Lemma}
\theoremstyle{definition}
\newtheorem{definition}[theorem]{Definition}
\newtheorem{prop}[theorem]{Proposition}
\newtheorem{cor}[theorem]{Corollary}
\newtheorem{assumption}[theorem]{Assumption}
\theoremstyle{remark}
\newtheorem{remark}[theorem]{Remark}
\numberwithin{equation}{section}
\newcommand{\R}{\mathbb{R}}
\newcommand{\C}{\mathbb{C}}
\newcommand{\N}{\mathcal{N}}
\newcommand{\NN}{\mathbb{N}}
\newcommand{\E}{\mathbb{E}}
\newcommand{\Id}{\operatorname{Id}}
\newcommand{\train}{\textnormal{train}}
\newcommand{\Tr}{\operatorname{Tr}}
\newcommand{\tr}{\operatorname{tr}}
\newcommand{\x}{\mathbf{x}}
\newcommand{\y}{\mathbf{y}}
\newcommand{\z}{\mathbf{z}}
\newcommand{\vmu}{\mbi{\mu}}
\newcommand{\vnu}{\mbi{\nu}}
\newcommand{\lameff}{\lambda_{\textnormal{eff}}}
\newcommand{\w}{\mbi{w}}
\renewcommand{\a}{\alpha}
\renewcommand{\aa}{\mbi{a}}
\renewcommand{\b}{\beta}
\newcommand{\eps}{\varepsilon}
\newcommand{\veps}{\boldsymbol \epsilon}
\renewcommand{\P}{\mathbb{P}}
\newcommand{\Var}{\operatorname{Var}}
\newcommand{\diag}{\operatorname{diag}}
\renewcommand{\vec}{\operatorname{vec}}
\newcommand{\limspec}{\operatorname{lim\,spec}}
\newcommand{\bbeta}{\pmb{\beta}}
\renewcommand{\Im}{\operatorname{Im}}
\renewcommand{\Re}{\operatorname{Re}}
\newcommand{\cE}{\mathcal{E}}
\def\mbi#1{\boldsymbol{#1}}
\def\v#1{\mbi{#1}} 
\begin{document}

\title{Deformed semicircle law and concentration of nonlinear random matrices for ultra-wide neural networks}

\author{Zhichao Wang}
\address{Department of Mathematics, University of California, San Diego, La Jolla, CA 92093}
\email{zhw036@ucsd.edu}


\author{Yizhe Zhu}
\address{Department of Mathematics, University of California, Irvine, Irvine, CA 92697}
\email{yizhe.zhu@uci.edu}

%
%

\begin{abstract}
In this paper, we investigate a  two-layer fully connected neural network of the form $
f(X)=\frac{1}{\sqrt{d_1}}\aa^\top \sigma\left(WX\right)$, where $X\in\R^{d_0\times n}$ is a deterministic data matrix, $W\in\R^{d_1\times d_0}$ and $\aa\in\R^{d_1}$ are random Gaussian weights, and $\sigma$ is a nonlinear activation function. We study the limiting spectral distributions of two empirical kernel matrices associated with $f(X)$: the empirical conjugate kernel (CK) and neural tangent kernel (NTK), beyond the linear-width regime ($d_1\asymp n$). We focus on the \textit{ultra-wide regime}, where the width $d_1$ of the first layer is much larger than the sample size $n$. Under appropriate assumptions on $X$ and $\sigma$, a deformed semicircle law emerges as $d_1/n\to\infty$ and $n\to\infty$. We first prove this limiting law for generalized sample covariance matrices with some dependency. To specify it for our neural network model, we provide a nonlinear Hanson-Wright inequality that is suitable for neural networks with random weights and Lipschitz activation functions. We also demonstrate non-asymptotic concentrations of the empirical CK and NTK around their limiting kernels in the spectral norm, along with lower bounds on their smallest eigenvalues. As an application, we show that random feature regression induced by the empirical kernel achieves the same asymptotic performance as its limiting kernel regression under the ultra-wide regime. This allows us to calculate the asymptotic training and test errors for random feature regression using the corresponding kernel regression. 
\end{abstract}
\date{\today}
\maketitle


\section{Introduction}\label{sec:intro}
Nowadays, deep neural networks have become one of the leading models in machine learning, and many theoretical results have been established to understand the training and generalization of neural networks. Among them, two kernel matrices are prominent in deep learning theory: \textit{Conjugate Kernel} (CK) \cite{neal1995bayesian,williams1997computing,rahimi2007random,cho2009kernel,daniely2016toward,poole2016exponential,schoenholz2017deep,lee2018deep,matthews2018gaussian} and \textit{Neural Tangent Kernel} (NTK) \cite{jacot2018neural,du2019gradienta,allen2019convergence}. The CK matrix defined in \eqref{eq:Y}, which has been exploited to study the generalization of random feature regression, is the Gram matrix of the output of the last hidden layer on the training dataset. While the NTK matrix,  defined in \eqref{eq:Hmatrixform},  is the Gram matrix of the Jacobian of the neural network with respect to training parameters, characterizing the performance of a wide neural network through gradient flows. Both are related to the kernel machine and help us explore the generalization and training process of the neural network. 

We are interested in the behaviors of CK and NTK matrices at random initialization. A recent line of work has proved that these two random kernel matrices will converge to their expectations when the width of the network becomes infinitely wide \cite{jacot2018neural,arora2019exact}. Although CK and NTK are usually referred to as these expected kernels in literature, we will always call CK and NTK the empirical kernel matrices in this paper, with a slight abuse of terminology. 

In this paper, we study the random CK and NTK matrices of a two-layer fully connected neural network with input data $X\in\R^{d_0\times n}$, given by $f: \mathbb R^{d_0\times n} \to \mathbb R^{n}$ such that
\begin{equation}\label{eq:1hlNN}
    f(X):=\frac{1}{\sqrt{d_1}}\aa^\top \sigma\left(WX\right),
\end{equation}where $W\in\R^{d_1\times d_0}$ is the weight matrix for the first layer, $\aa\in\R^{d_1}$ are the second layer weights, and $\sigma$ is a nonlinear activation function applied to the matrix $WX$ element-wisely. We assume that all entries of $\aa$ and $W$ are independently identically distributed by the standard Gaussian $\N(0,1)$. We will always view the  input data $X$ as a deterministic matrix (independent of the random weights in $\aa$ and $W$)  with certain assumptions. 

In terms of random matrix theory, we  study the difference between these two kernel matrices (CK and NTK) and their expectations with respect to random weights, showing both asymptotic and non-asymptotic behaviors of these differences as the width of the first hidden layer $d_1$ is growing faster than the number of samples $n$. As an extension of~\cite{fan2020spectra}, we prove that when $n/d_1\to 0$, the centered CK and NTK with appropriate normalization have the limiting eigenvalue distribution given by a deformed semicircle law, determined by the training data spectrum and the nonlinear activation function. To prove this global law, we further set up a limiting law theorem for centered sample covariance matrices with dependent structures and a nonlinear version of the Hanson-Wright inequality. These two results are very general, which makes them potentially applicable to different scenarios beyond our neural network model. For the non-asymptotic analysis, we establish concentration inequalities between the random kernel matrices and their expectations. As a byproduct, we provide lower bounds of the smallest eigenvalues of CK and NTK, which are essential for the global convergence of gradient-based optimization methods when training a wide neural network \cite{oymak2020toward,nguyen2020global,nguyen2021proof}. Because of the non-asymptotic results for kernel matrices, we can also describe how close the performances of the random feature regression and the limiting kernel regression are with a general dataset, which allows us to compute the limiting training error and generalization error for the random feature regression via its corresponding kernel regression in the ultra-wide regime.

\subsection{Nonlinear random matrix theory in neural networks}
Recently, the limiting spectra of CK and NTK at random initialization have received increasing attention from a random matrix theory perspective. Most of the papers focus on the \textit{linear-width regime} $d_1\propto n$, using both the moment method and Stieltjes transforms. Based on moment methods, \cite{pennington2017nonlinear} first computed the limiting law of the CK for two-layer neural networks with centered nonlinear activation functions, which is further described as a deformed Marchenko–Pastur law in \cite{peche2019note}. This result has been extended to sub-Gaussian weights and input data with real analytic activation functions by \cite{benigni2019eigenvalue}, even for multiple layers with some special activation functions.  Later, \cite{adlam2019random} generalized their results by adding a random bias vector in pre-activation and a more general input data matrix. Similar results for the two-layer model with a random bias vector and random input data were analyzed in \cite{piccolo2021analysis} by cumulant expansion. In parallel, by Stieltjes transform, \cite{louart2018random} investigated the CK of a one-hidden-layer network with general deterministic input data and Lipschitz activation functions via some deterministic equivalent. \cite{Liao2020ARM} further developed a deterministic equivalent for the Fourier feature map. With the help of the Gaussian equivalent technique and operator-valued free probability theory, the limit spectrum of NTK with one-hidden layer has been analyzed in \cite{adlam2020neural}. Then the limit spectra of CK and NTK of a multi-layer neural network with general deterministic input data have been fully characterized in \cite{fan2020spectra}, where the limiting spectrum of CK is given by the propagation of the Marchenko–Pastur map through the network, while the NTK is approximated by the linear combination of CK's of each hidden layer. \cite{fan2020spectra} illustrated that the \textit{pairwise approximate orthogonality} assumption on the input data is preserved in all hidden layers. Such a property is useful to approximate the expected CK and NTK. We refer to \cite{ge2021large} as a summary of the recent development in  nonlinear random matrix theory.

Most of the results in nonlinear random matrix theory focus on the case when $d_1$ is proportional to $n$ as $n\to\infty$. We build a random matrix result for both CK and NTK under the \textit{ultra-wide regime}, where $d_1/n\to \infty$ and $n\to\infty$. As an intrinsic interest of this regime, this exhibits the connection between wide (or overparameterized) neural networks and kernel learning induced by limiting kernels of CK and NTK. In this article, we will follow general assumptions on the input data and activation function in \cite{fan2020spectra} and study the limiting spectra of the centered and normalized CK matrix
\begin{equation}\label{eq:center_CK}
    \frac{1}{\sqrt{nd_1}}\left(Y^\top Y-\E[Y^\top Y]\right),
\end{equation}
where $Y:=\sigma(WX)$. Similar results for the NTK can be obtained as well. To complete the proofs, we  establish a nonlinear version of Hanson-Wright inequality, which has previously appeared in \cite{louart2018random,Liao2020ARM}. This nonlinear version is a generalization of the original Hanson-Wright inequality \cite{hanson1971bound,rudelson2013hanson,adamczak2015note}, and may have various applications in statistics, machine learning, and other areas. In addition, we also derive a deformed semicircle law for normalized sample covariance matrices without independence in columns. This result is of independent interest in random matrix theory as well.  

\subsection{General sample covariance matrices}
We observe that the  random matrix $Y\in\R^{d_1\times n}$ defined above has independent and identically-distributed rows. Hence, $Y^\top Y$ is a generalized sample covariance matrix. We first inspect a more general sample covariance matrix $Y$ whose rows are independent copies of some random vector $\y\in\R^n$. Assuming $n$ and $d_1$ both go to infinity but $n/d_1\to 0$, we aim to study the limiting empirical eigenvalue distribution of centered Wishart matrices in the form of \eqref{eq:center_CK} with certain conditions on $\y$. This regime is also related to the ultra-high dimensional setting in statistics \cite{qiu2021asymptotic}.

This regime has been studied for decades starting in \cite{bai1988convergence}, where $Y$ has i.i.d. entries and $\E[Y^\top Y]=d_1\Id$. In this setting, by the moment method, one can obtain the semicircle law. This normalized model also arises in quantum theory with respect to random induced states (see \cite{aubrun2012partial,aubrun2017alice,collins2018ppt}). The largest eigenvalue of such a normalized sample covariance matrix has been considered in \cite{chen2012convergence}. Subsequently, \cite{chen2015clt,li2016testing,yu2021central,qiu2021asymptotic} analyzed the fluctuations for the linear spectral statistics of this model and applied this result to hypothesis testing for the covariance matrix. A spiked model for sample covariance matrices in this regime was recently studied in \cite{feldman2021spiked}. This kind of semicircle law also appears in many other random matrix models. For instance, \cite{jiang2004limiting} showed this  limiting law for normalized sample correlation matrices. Also, the semicircle  law for centered sample covariance matrices has already been applied in machine learning: \cite{gamarnik2019stationary} controlled the generalization error of shallow neural networks with quadratic activation functions by the moments of this limiting semicircle law; \cite{granziol2020learning} derived a semicircle law of the fluctuation matrix between stochastic batch Hessian and the deterministic empirical Hessian of deep neural networks.

For general sample covariance, \cite{wang2014limiting} considered the form $Y=B X A^{1/2}$ with deterministic $A$ and $B$, where $X$ consists of i.i.d. entries with mean zero and variance one. The same result has been proved in \cite{bao2012strong} by generalized Stein's method. Unlike previous results, \cite{Xie2013LimitingSD} tackled the general case, only assuming $Y$ has independent rows with some deterministic covariance $\Phi_n$. Though this is similar to our model in Section \ref{sec:general_sample_covariance}, we will consider more general assumptions on each row of $Y$, which can be directly verified in our neural network models. 

\subsection{Infinite-width kernels and the smallest eigenvalues of empirical kernels} 
Besides the above asymptotic spectral fluctuation of \eqref{eq:center_CK}, we provide non-asymptotic concentrations of \eqref{eq:center_CK} in spectral norm and a corresponding result for the NTK. In the infinite-width networks, where $d_1\to\infty$ and $n$ are fixed, both CK and NTK will converge to their expected kernels. This has been investigated in \cite{daniely2016toward,schoenholz2017deep,lee2018deep,matthews2018gaussian} for the CK and \cite{jacot2018neural,du2019gradienta,allen2019convergence,arora2019exact,liang2020multiple} for the NTK. Such kernels are also called infinite-width kernels in literature. In this current work, we present the precise probability bounds for concentrations of CK and NTK around their infinite-width kernels, where the difference is of order $\sqrt{n/d_1}$. Our results permit more general activation functions and input data $X$ only with pairwise approximate orthogonality, albeit similar concentrations have been applied in \cite{avron2017random,song2019quadratic,adlam2020neural,montanari2020interpolation,hu2020surprising}.

A corollary of our concentration is the explicit lower bounds of the smallest eigenvalues of the CK and the NTK. Such extreme eigenvalues of the NTK have been utilized to prove the  global convergence of gradient descent algorithms of wide neural networks since the NTK governs the gradient flow in the training process, see, e.g., \cite{chizat2018lazy,du2019gradienta,arora2019fine,song2019quadratic,wu2019global,oymak2020toward,nguyen2020global,nguyen2021proof}. The smallest eigenvalue of NTK is also crucial for proving generalization bounds and memorization capacity in \cite{arora2019fine,montanari2020interpolation}. Analogous to Theorem 3.1 in \cite{montanari2020interpolation}, our lower bounds are given by the Hermite coefficients of the activation function $\sigma$. Besides, the lower bound of NTK for multi-layer ReLU networks is analyzed in \cite{nguyen2020tight}.

\subsection{Random feature regression and limiting kernel regression}
Another byproduct of our concentration results is to measure the difference of performance between random feature regression with respect to $\frac{1}{\sqrt{d_1}}Y$ and corresponding kernel regression when $d_1/n\to\infty$. Random feature regression can be viewed as the linear regression of the last hidden layer, and its performance has been studied in, for instance,  \cite{pennington2017nonlinear,louart2018random,mei2019generalization,Liao2020ARM,gerace2020generalisation,hu2020universality,lin2021causes,mei2021generalization,loureiro2021capturing} under the linear-width regime\footnote{This linear-width regime is also known as the high-dimensional regime, while our ultra-wide regime is also called a highly overparameterized regime in literature, see \cite{mei2019generalization}.}. In this regime, the CK matrix $\frac{1}{d_1}Y^\top Y$ is not concentrated around its expectation 
\begin{equation}\label{def:phi}
    \Phi:=\E_{\w}[\sigma(\w^\top X)^\top \sigma(\w^\top X)]
\end{equation}
under the spectral norm,
where $\w$ is the standard normal random vector in $\R^{d_0}$. But the limiting spectrum of CK is exploited to characterize the asymptotic performance and double descent phenomenon of random feature regression when $n,d_0,d_1\to\infty$ proportionally. Several works have also utilized this regime to depict the performance of the ultra-wide random network by letting $d_1/n\to\psi\in(0,\infty)$  first, getting the asymptotic performance and then taking $\psi\to\infty$ (see \cite{mei2019generalization,yang2021exact}). However, there is still a difference between this sequential limit and the ultra-wide regime. Before these results, random feature regression has already attracted significant attention in that it is a random approximation of the RKHS defined by population kernel function $K: \R^{d_0} \times \R^{d_0} \to \R$ such that
\begin{equation}\label{eq:def_K}
    K(\x,\z):=\mathbb E_{\w} [\sigma (\langle \w,\x \rangle ) \sigma (\langle \w,\z \rangle )],
\end{equation}
when width $d_1$ is sufficiently large \cite{rahimi2007random,bach2013sharp,rudi2016generalization,bach2017equivalence}. We point out that Theorem 9 of \cite{avron2017random} has the same order $\sqrt{n/d_1}$ of the approximation as ours, despite only for random Fourier features. 

In our work, the concentration between empirical kernel induced by $\frac{1}{d_1}Y^\top Y$ and the population kernel  matrix $K$ defined in \eqref{eq:def_K} for $X$ leads to the control of the differences of training/test errors between random feature regression and kernel regression, which were previously concerned by \cite{avron2017random,jacot2020implicit,montanari2020interpolation,mei2021generalization} in different cases. Specifically, \cite{jacot2020implicit} obtained the same kind of estimation but considered random features sampled from Gaussian Processes. Our results explicitly show how large width $d_1$ should be so that the random feature regression gets the same asymptotic performance as kernel regression \cite{mei2021generalization}. With these estimations, we can take the limiting test error of the kernel regression to predict the limiting test error of random feature regression as $n/d_1\to 0$ and $d_0,n\to \infty$. We refer \cite{liang2020just,liang2020multiple,liu2021kernel,mei2021generalization}, \cite[Section 4.3]{bartlett2021deep} and references therein for more details in high-dimensional kernel ridge/ridgeless regressions. We emphasize that the optimal prediction error of random feature regression in linear-width regime is actually achieved in the ultra-wide regime, which boils down to the limiting kernel regression, see \cite{mei2019generalization,mei2021generalization,yang2021exact,loureiro2021capturing}. This is one of the motivations for studying the ultra-wide regime and the limiting kernel regression.

In the end, we would like to mention the idea of spectral-norm approximation for the expected kernel $\Phi$, which helps us describe the asymptotic behavior of limiting kernel regression. For specific activation $\sigma$, kernel $\Phi$ has an explicit formula, see \cite{louart2018random,liao2018spectrum,Liao2020ARM}, whereas generally, it can be expanded in terms of the Hermite expansion of $\sigma$ \cite{pennington2017nonlinear,mei2019generalization,fan2020spectra}. Thanks to pairwise approximate orthogonality introduced in \cite[Definition 3.1]{fan2020spectra}, we can approximate $\Phi$ in the spectral norm for general deterministic data $X$. This pairwise approximate orthogonality defines how orthogonal is within different input vectors of $X$. With certain i.i.d. assumption on $X$, \cite{liang2020multiple} and \cite[Section 4.3]{bartlett2021deep}, where the scaling $d_0\propto n^\alpha$, for $\alpha\in(0,1]$, determined which degree of the polynomial kernel is sufficient to approximate $\Phi$. Instead, our theory leverages the approximate orthogonality among general datasets $X$ to obtain a similar approximation. Our analysis presumably indicates that the weaker orthogonality $X$ has, the higher degree of the polynomial kernel we need to approximate the kernel $\Phi$. 

\subsection{Preliminaries}

\subsubsection*{Notations}
We use $\tr(A)=\frac{1}{n}\sum_{i} A_{ii}$ as the normalized trace of a matrix $A\in\R^{n\times n}$ and $\Tr(A)=\sum_{i} A_{ii}$. Denote vectors by lowercase boldface. $\| A \|$ is the spectral norm for matrix $A$,  $\|A\|_F$ denotes the Frobenius norm, and $\|\x\|$ is the $\ell_2$-norm of any vector $\x$.  $A\odot B$ is the Hadamard product of two matrices, i.e., $(A\odot B)_{ij}=A_{ij}B_{ij}$. Let $\E_{\w}[\cdot]$ and $\Var_{\w}[\cdot]$ be the expectation and variance only with respect to random vector $\w$. Given any vector $\v v$, $\diag(\v v)$ is a diagonal matrix where the  main diagonal elements are given by $\v v$. $\lambda_{\min}(A)$ is the smallest eigenvalue of any Hermitian matrix $A$.

\bigskip 
Before stating our main results, we  describe our model with assumptions.
We first consider the output of the first hidden layer and empirical \textit{Conjugate Kernel} (CK):
\begin{equation}\label{eq:Y}
    Y:=\sigma( WX)\quad \text{and}\quad \frac{1}{d_1}Y^\top Y.
\end{equation}
Observe that the rows of matrix $Y$ are independent and identically distributed since only $W$ is random and $X$ is deterministic. Let the $i$-th row of $Y$ be $\y_i^\top$, for $1\le i\le d_1$. Then, we obtain a sample covariance matrix,
\begin{equation}\label{eq:sumr1}
    Y^\top Y=\sum_{i=1}^{d_1}\y_i\y_i^\top,
\end{equation}
which is the sum of $d_1$ independent rank-one random matrices in $\R^{n\times n}$. Let the second moment of any row $\y_i$ be \eqref{def:phi}. Later on, we will approximate $\Phi$ based on  the assumptions of input data $X$. 

Next, we define the empirical \textit{Neural Tangent Kernel} (NTK) for \eqref{eq:1hlNN}, denoted by $H\in\R^{n\times n}$. From Section~3.3 in \cite{fan2020spectra}, the $(i,j)$-th entry of $H$ can be explicitly written as 
\begin{align}\label{eq:Hentrywise}
 H_{ij}:=\frac{1}{d_1}\sum_{r=1}^{d_1}\left(\sigma(\w_r^\top\x_i)\sigma(\w_r^\top\x_j)+a_r^2\sigma'(\w_r^\top\x_i)\sigma'(\w_r^\top\x_j)\x_i^\top\x_j\right),\quad 1\le i,j\le n,   
\end{align}
where $\w_r$ is the $r$-th row of weight matrix $W$, $\x_i$ is the $i$-th column of matrix $X$, and $a_r$ is $r$-th entry of the output layer $\aa$. In the matrix form, $H$ can be written by 
\begin{align}\label{eq:Hmatrixform}
  H:=\frac{1}{d_1}\left(Y^\top Y+ (S^\top S)\odot(X^\top X)\right),  
\end{align}
where the $\alpha$-th column of $S$ is given by 
\begin{align}\label{eq:defcolumnS}
    \diag (\sigma'(W\x_\alpha))\aa, \quad \forall 1\leq \a\leq n.
\end{align}   

We introduce the following assumptions for the random weights, nonlinear activation function $\sigma$, and input data. These assumptions are basically carried on from \cite{fan2020spectra}.
\begin{assumption}\label{assump:W}
The entries of $W$ and $\aa$ are i.i.d.\ and distributed by $\N(0,1)$.
\end{assumption}

\begin{assumption}\label{assump:sigma}
Activation function $\sigma(x)$ is a Lipschitz function with the Lipschitz constant $\lambda_\sigma\in (0,\infty)$.  Assume that $\sigma$ is centered and normalized with respect to $\xi \sim \N(0,1)$ such that
\begin{align}
    \E[\sigma(\xi)]=0, \qquad &\E[\sigma^2(\xi)]=1.\label{eq:conditionsigma}
\end{align}
Define constants $a_\sigma$ and $b_\sigma \in \R$ by 
\begin{align}
    b_\sigma:=\E[\sigma'(\xi)], \qquad & a_\sigma:=\E[\sigma'(\xi)^2].\label{eq:conditionsigma1}
\end{align}
Furthermore, $\sigma$ satisfies \emph{either} of the following:
\begin{enumerate}
    \item $\sigma(x)$ is twice differentiable with $\sup_{x \in \R}
|\sigma''(x)| \leq \lambda_\sigma$, or
\item \label{assump:sigma_relu} $\sigma(x)$ is a piece-wise linear function defined by
\[\sigma(x)=
\begin{cases}
ax+b, &x>0,\\
cx+b, &x\le 0,
\end{cases}\]
for some constants $a,b,c\in\R$ such that \eqref{eq:conditionsigma} holds. 
\end{enumerate}
\end{assumption}

Analogously to \cite{hu2020surprising}, our Assumption \ref{assump:sigma} permits $\sigma$ to be the commonly used activation functions, including ReLU, Sigmoid, and Tanh, although we have to center and normalize the activation functions to guarantee \eqref{eq:conditionsigma}. Such normalized activation functions exclude some trivial spike in the limit spectra of CK and NTK  \cite{benigni2019eigenvalue,fan2020spectra}. The foregoing assumptions ensure our nonlinear Hanson-Wright inequality in the proof. As a future direction, going beyond Gaussian weights and Lipschitz activation functions may involve different types of concentration inequalities.  

Next, we present the conditions of the deterministic input data $X$ and the asymptotic regime for our main results.
Define the following $(\varepsilon, B)$-orthonormal property for our data matrix $X$. 
\begin{definition}
For given any $\eps, B>0$, matrix $X$ is {\bf $(\eps,B)$-orthonormal} if for any distinct columns $\x_{\alpha}, \x_{\beta}$ in $X$, we have
\[\big|\|\x_{\alpha}\|_2-1\big| \leq \eps,
\qquad \big|\|\x_{\beta}\|_2-1\big| \leq \eps, \qquad
\big|\x_{\alpha}^\top \x_{\beta}\big| \leq \eps,\]
	and also
	\[\sum_{\a=1}^n (\|\x_\a\|_2-1)^2 \leq B^2, \qquad \|X\| \leq B.\]
 
\end{definition}

\begin{assumption}\label{assump:asymptotics}
Let $n,d_0,d_1\to \infty$ such that
\begin{enumerate}[(a)]
\item $\gamma:=n/d_1 \to 0$;
\item $X$ is $\left(\varepsilon_n,B\right)$-orthonormal such that $n\varepsilon_n^4\to\ 0$ as $n\to \infty$; 
 
\item The empirical spectral distribution \ $\hat{\mu}_0$ of $X^\top X$
converges weakly to a fixed and non-degenerate probability distribution $\mu_0\not=\delta_0$ on $[0,\infty)$.
\end{enumerate}
\end{assumption}

In above (b), the $(\varepsilon_n,B)$-orthonormal property with $n\varepsilon^4_n=o(1)$ is a quantitative version of \textit{pairwise approximate orthogonality} for the column vectors of the data matrix $X\in \R^{d_0\times n}$. When $d_0\asymp n$, it holds, with high probability, for many random $X$ with independent columns $\x_{\alpha}$, including the anisotropic Gaussian vectors $\x_{\alpha}\sim \N(0,\Sigma)$ with $\tr(\Sigma)=1$ and $\|\Sigma\|\lesssim 1/n$,  vectors generated by Gaussian mixture models, and vectors satisfying the log-Sobolev inequality or convex Lipschitz concentration property. See \cite[Section 3.1]{fan2020spectra} for more details. Specifically, when $\x_{\alpha}$'s are independently sampled from the unit sphere $\mathbb{S}^{d_0-1}$,  $X$ is $\left(\varepsilon_n,B\right)$-orthonormal with high probability where $\varepsilon_n=O\Big(\sqrt{\frac{\log(n)}{n}}\Big)$ and $B=O(1)$ as $n\asymp d_0$. In this case, for any $\ell>2$, we have $n\varepsilon_n^\ell \to 0$. In our theory, we always treat $X$ as a deterministic matrix. However, our results also work for random input $X$ independent of weights $W$ and $\aa$ by conditioning on the high probability event that $X$ satisfies $(\varepsilon_n,B)$-orthonormal property. Unlike data vectors with independent entries, our assumption is promising to analyze real-world datasets \cite{loureiro2021capturing} and establish some $n$-dependent deterministic equivalents like \cite{Liao2020ARM}.

The following Hermite polynomials are crucial to the approximation of $\Phi$ in our analysis.
\begin{definition}[Normalized Hermite polynomials]\label{eq:hermitepolynomial}
The $r$-th normalized Hermite polynomial is given by 
\[ h_r(x)=\frac{1}{\sqrt {r!}} (-1)^r e^{x^2/2} \frac{d^r}{dx^r} e^{-x^2/2}.
\]
Here $\{h_r\}_{r=0}^{\infty}$ form an orthonormal basis of $L^2(\mathbb R, \Gamma)$, where $\Gamma$ denotes the standard Gaussian distribution. For $\sigma_1,\sigma_2\in L^2(\mathbb R, \Gamma)$,  the inner product is defined by 
\begin{align*}
    \langle \sigma_1,\sigma_2\rangle =\int_{-\infty}^{\infty} \sigma_1(x)\sigma_2(x) \frac{e^{-x^2/2}}{\sqrt{2\pi}}dx.
\end{align*}
Every function $\sigma \in L^2(\mathbb R, \Gamma)$ can be expanded as a Hermite polynomial expansion
\begin{align*}
    \sigma(x)=\sum_{r=0}^{\infty}\zeta_r(\sigma)h_r(x),
\end{align*}
where $\zeta_r(\sigma)$ is the $r$-th Hermite coefficient defined by 
\begin{align*}
    \zeta_r(\sigma):=\int_{-\infty}^{\infty} \sigma(x)h_r(x)\frac{e^{-x^2/2}}{\sqrt{2\pi}}dx.
\end{align*}
\end{definition}In the following statements and proofs,  we denote $\xi\sim \N(0,1)$. Then for any $k\in\NN$, we have
\begin{align}\label{eq:deflambdak}
    \zeta_k(\sigma)=\E[\sigma(\xi)h_k(\xi)].
\end{align}  
Specifically, $b_\sigma=\E[\sigma'(\xi)]=\E[\xi\cdot\sigma(\xi)]=\zeta_1(\sigma)$.  Let $f_k(x)=x^k$. We define the inner-product kernel random matrix $f_k(X^\top X)\in\R^{n\times n}$ by applying  $f_k$ entrywise to  $X^\top X$. Define a deterministic matrix 
 \begin{equation}\label{def:Phi0}
     \Phi_0:= \vmu\vmu^\top+\sum_{k=1}^3 \zeta_k(\sigma)^2f_k(X^\top X)+(1-\zeta_1(\sigma)^2-\zeta_2(\sigma)^2-\zeta_3(\sigma)^2)\Id,
 \end{equation}
 where the $\a$-th entry of $\vmu\in\R^n$ is $\sqrt{2}\zeta_2(\sigma)\cdot(\|\x_\a\|_2-1)$ for $1\le\a\le n$. We will employ $\Phi_0$ as an approximation of the population covariance $\Phi$ in \eqref{def:phi} in the spectral norm when $n\eps_n^4\to 0$.
 
 For any $n\times n$ Hermitian matrix $A_n$ with eigenvalues $\lambda_1,\dots, \lambda_n$, the empirical spectral distribution of $A$ is defined by 
 \begin{align*}
     \mu_{A_n}(x)=\frac{1}{n}\sum_{i=1}^n \delta_{\lambda_i}(x).
 \end{align*}
 We write $\limspec(A_n)=\mu$ if $\mu_{A_n}\to\mu$ weakly as $n\to\infty$.
 The main tool we use to study the limiting spectral distribution of a matrix sequence is  the Stieltjes transform defined as follows.
 \begin{definition}[Stieltjes transform]
 Let $\mu$ be a probability measure on $\R$. The Stieltjes transform of $\mu$ is a function $s(z)$ defined on the upper half plane $\mathbb C^+$ by
 \begin{align*}
     s(z)=\int_{\R} \frac{1}{x-z} d\mu(x).
 \end{align*}
 \end{definition}
 For any $n\times n$ Hermitian matrix $A_n$, the Stieltjes transform of the empirical spectral distribution of $A_n$ can be written as $\tr (A_n-z\Id)^{-1}$. We call $(A_n-z \Id)^{-1}$ the resolvent of $A_n$.
 
 \section{Main results}\label{sec:main}

\subsection{Spectra of the centered CK and NTK}
Our first result is a deformed semicircle law for the CK matrix. Denote by $\tilde{\mu}_0=(1-b_{\sigma})^2+b_{\sigma}^2 \mu_0$ the distribution of $(1-b_{\sigma}^2)+b_{\sigma}^2\lambda$ with $\lambda$ sampled from the distribution $\mu_0$. The limiting law of our centered and normalized CK matrix is depicted by $\mu_s\boxtimes \tilde{\mu}_0$, where $\mu_s$ is the standard semicircle law and the notation $\boxtimes$ is the \emph{free multiplicative convolution} in free harmonic analysis. For full descriptions of free independence and free multiplicative convolution, see \cite[Lecture 18]{nica2006lectures} and \cite[Section 5.3.3]{anderson2010introduction}. The free multiplicative convolution $\boxtimes$ was first introduced in \cite{voiculescu1987multiplication}, which later has many applications for products of asymptotic free random matrices. The main tool for computing free multiplicative convolution is the $S$-transform, invented by \cite{voiculescu1987multiplication}. $S$-transform was recently utilized to study the dynamical isometry of deep neural networks  \cite{pennington2017resurrecting,pennington2018emergence,xiao2018dynamical,hayase2021spectrum,collins2021asymptotic}. Some basic properties and intriguing examples for free multiplicative convolution with $\mu_s$ can also be found in \cite[Theorems 1.2, 1.3]{bai2010limiting}.

\begin{theorem}[Limiting spectral distribution for the conjugate kernel]\label{thm:law_DNN}
Suppose Assumptions \ref{assump:W}, \ref{assump:sigma} and \ref{assump:asymptotics} of the input matrix $X$ hold,  the empirical eigenvalue distribution of
\begin{equation}\label{eq:center_RF}
    \frac{1}{\sqrt{d_1n}}\left(Y^\top Y-\E[Y^\top Y]\right)
\end{equation}
 converges weakly to 
 \begin{align}\label{eq:expression_mu}
 \mu:=\mu_{s} \boxtimes \Big((1-b_\sigma^2)+b_\sigma^2 \cdot \mu_{0}\Big)=\mu_s\boxtimes \tilde{\mu}_0\end{align} almost surely as $n,d_0,d_1\to \infty$.
Furthermore, if $d_1\varepsilon_n^4\rightarrow 0$ as $n,d_0,d_1\to \infty$, then the empirical eigenvalue distribution of 
\begin{equation}\label{eq:center_RF2}
    \sqrt{\frac{d_1}{n}}\left(\frac{1}{d_1}Y^\top Y-\Phi_0\right)
\end{equation}
also  converges weakly to the probability measure $\mu $ almost surely, whose Stieltjes transform $m(z)$ is defined by 
\begin{equation}\label{eq:fixed_point1_CK}
    m(z)+\int_{\R} \frac{d \tilde \mu_0(x)}{z+\beta(z) x}=0
\end{equation}
 for each $z\in\C^+$, where  $\beta(z)\in\C^+$ is the unique solution to  
\begin{equation}\label{eq:fixed_point2_CK}
    \beta(z)+\int_{\R} \frac{xd \tilde \mu_0(x)}{z+\beta(z) x}=0. 
\end{equation}
\end{theorem}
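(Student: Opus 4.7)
The strategy is to reduce Theorem~\ref{thm:law_DNN} to the deformed semicircle law for generalized sample covariance matrices developed in Section~\ref{sec:general_sample_covariance}, applied to the random vectors $\y_i = \sigma(X^\top \w_i) \in \R^n$ forming the rows of $Y$. I would first rewrite $Y^\top Y = \sum_{i=1}^{d_1} \y_i\y_i^\top$ as a sum of i.i.d.\ rank-one matrices with $\E[\y_i\y_i^\top] = \Phi$, and then verify the two ingredients the general theorem demands in the scaling $\gamma = n/d_1 \to 0$: (i) the empirical spectral distribution of the deterministic ``population covariance'' $\Phi$ converges to $\tilde{\mu}_0$ with $\|\Phi\|$ uniformly bounded, and (ii) a uniform quadratic-form concentration $\y_i^\top A \y_i - \Tr(A\Phi)$ with the correct separation between $\|A\|$ and $\|A\|_F$. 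Ingredient (ii) is precisely the nonlinear Hanson--Wright inequality advertised in the introduction: since $\y_i$ is a $\lambda_\sigma$-Lipschitz image of the standard Gaussian vector $\w_i$, that inequality plays the role of the standard Hanson--Wright bound used to derive Wishart-type limits in the sub-Gaussian setting.

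For ingredient (i), I would expand $\sigma = \sum_{k\geq 1} \zeta_k(\sigma) h_k$ in normalized Hermite polynomials (the $k=0$ term vanishes because $\E[\sigma(\xi)]=0$). Using the identity $\E[h_j(u) h_k(v)] = \delta_{jk}\rho^k$ for jointly standard Gaussians $(u,v)$ with correlation $\rho$, a unit-norm reduction gives $\Phi_{\alpha\beta} = \sum_{k\geq 1} \zeta_k(\sigma)^2 (\x_\alpha^\top \x_\beta)^k$ up to corrections from $\|\x_\alpha\|, \|\x_\beta\| \neq 1$; Taylor-expanding in those norm deviations and collecting the leading $\zeta_2(\sigma)$-piece yields the rank-one shift $\vmu\vmu^\top$ inside $\Phi_0$. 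Under the $(\varepsilon_n,B)$-orthonormality, the entrywise powers $(X^\top X)^{\odot k}$ for $k\geq 2$ are $\Id$ plus a spectrally small off-diagonal part, and the Hermite tail $\sum_{k\geq 4} \zeta_k(\sigma)^2 = O(1)$ summability plus $|(\x_\alpha^\top \x_\beta)|^k \leq \varepsilon_n^k$ controls the higher terms. Together these estimates give $\|\Phi - \Phi_0\| \to 0$ at a rate that is a power of $\varepsilon_n$, and $\|\Phi_0 - (b_\sigma^2 X^\top X + (1-b_\sigma^2)\Id)\| \to 0$, so Assumption~\ref{assump:asymptotics}(c) delivers $\limspec(\Phi) = \tilde{\mu}_0$.

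Combining (i) and (ii) with the general sample-covariance result of Section~\ref{sec:general_sample_covariance} yields weak almost-sure convergence of \eqref{eq:center_RF} to $\mu_s \boxtimes \tilde{\mu}_0$, together with the subordination fixed-point system \eqref{eq:fixed_point1_CK}--\eqref{eq:fixed_point2_CK}; the latter is the standard Stieltjes characterization of free multiplicative convolution with the semicircle, which one can cross-check via $S_{\mu_s \boxtimes \tilde{\mu}_0} = S_{\mu_s}\, S_{\tilde{\mu}_0}$. To pass from \eqref{eq:center_RF} to \eqref{eq:center_RF2}, I would use the identity $\tfrac{1}{\sqrt{nd_1}}\E[Y^\top Y] - \sqrt{d_1/n}\,\Phi_0 = \sqrt{d_1/n}\,(\Phi - \Phi_0)$, observe that the spectral-norm bound $\|\Phi - \Phi_0\| = O(\varepsilon_n^2)$ from step (i) combined with the hypothesis $d_1\varepsilon_n^4 \to 0$ makes this deterministic bias vanish, and invoke a standard perturbation argument for empirical spectral distributions in bounded-Lipschitz metric.

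The principal obstacle is step (ii). Gaussian Lipschitz concentration alone delivers only sub-Gaussian tails for $\|\y_i\|$, not the sharp bilinear splitting into $\|A\|$ and $\|A\|_F$ on the right scales that the sample-covariance machinery requires; one must combine Gaussian concentration with a decoupling step and a covering over test directions, and moreover handle the piecewise-linear case~\ref{assump:sigma_relu} of Assumption~\ref{assump:sigma} where the second derivative of $\sigma$ is only a distribution. Once the nonlinear Hanson--Wright inequality is established at the appropriate quantitative level, the rest of the argument is essentially bookkeeping driven by the general sample-covariance theorem and the Hermite-expansion control of $\Phi$.
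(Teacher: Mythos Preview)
Your plan is the same as the paper's: feed the rows $\y_i=\sigma(\w_i^\top X)^\top$ into the general centered-sample-covariance theorem of Section~\ref{sec:general_sample_covariance}, identify $\limspec\Phi$ via Hermite expansion, and use nonlinear Hanson--Wright for the quadratic-form input. Two points need correction, though, and one ingredient is missing.

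First, your claim $\|\Phi_0-(b_\sigma^2 X^\top X+(1-b_\sigma^2)\Id)\|\to 0$ in \emph{spectral} norm is false in general under Assumption~\ref{assump:asymptotics}. The rank-one piece $\vmu\vmu^\top$ has $\|\vmu\|^2=2\zeta_2(\sigma)^2\sum_\alpha(\|\x_\alpha\|-1)^2\le 2\zeta_2(\sigma)^2B^2$, which need not vanish; likewise $\|f_2(X^\top X)-\Id\|$ is only bounded, not $o(1)$, since $n\varepsilon_n^2$ can diverge when $n\varepsilon_n^4\to 0$. What does hold is $\tfrac{1}{n}\|\Phi_0-(b_\sigma^2 X^\top X+(1-b_\sigma^2)\Id)\|_F^2\to 0$, and the paper's Lemma~\ref{lem:ESDforbenius} (normalized Frobenius perturbation of ESDs) is what carries $\limspec\Phi_0$ to $\tilde\mu_0$. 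Similarly your rate $\|\Phi-\Phi_0\|=O(\varepsilon_n^2)$ should be $O(\sqrt{n}\,\varepsilon_n^2)$: the term $T_0-\vmu\vmu^\top$ contributes $\|\vmu-\vnu\|\lesssim\sqrt{n}\,\varepsilon_n^2$ because each of the $n$ coordinates is $O(\varepsilon_n^2)$. This still suffices for the second part since $\sqrt{d_1/n}\cdot\sqrt{n}\,\varepsilon_n^2=\sqrt{d_1\varepsilon_n^4}\to 0$.

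Second, Theorem~\ref{thm:iff_conditions} has \emph{two} hypotheses to verify, and you only address~\eqref{condition_2}. The other, condition~\eqref{condition_1}, asks for almost-sure concentration of $\tr R(z)D_n$ around its mean for arbitrary bounded deterministic $D_n$. The paper does \emph{not} derive this from Hanson--Wright; it shows directly (Proposition~\ref{prop:trRD}) that $W\mapsto\tr R(z)D$ is $C/\sqrt{n}$-Lipschitz in Frobenius norm by differentiating the resolvent and bounding $\tfrac{1}{\sqrt{d_1n}}\|R(z)\sigma(WX)^\top\|$, then applies Gaussian concentration. This is a short but separate calculation you should flag. Finally, the nonlinear Hanson--Wright itself is not obtained in the paper by ``decoupling plus covering'' but by observing that $\y=\sigma(\w^\top X)^\top$ inherits the convex-concentration property from the Gaussian $\w$ (Lipschitz pushforward) and then invoking Adamczak's Hanson--Wright for convex-concentrated vectors; the $\varepsilon$-net is used only later for the spectral-norm bound in Theorem~\ref{thm:spectralnorm}, which is not needed for Theorem~\ref{thm:law_DNN}.
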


Suppose that we additionally have $b_{\sigma}=0$, i.e. $\E[\sigma'(\xi)]=0$. In this case, our Theorem \ref{thm:law_DNN} shows that the limiting spectral distribution of \eqref{eq:center_CK} is the semicircle law, and from \eqref{eq:expression_mu}, the deterministic data matrix $X$ does not have an effect on the limiting spectrum. See Figure \ref{fig:law_semi} for a cosine-type $\sigma$ with $b_{\sigma}=0$. The only effect of the nonlinearity in $\mu$ is the coefficient $b_\sigma$ in the deformation $\tilde{\mu}_0$.
\begin{figure}[!ht]
\includegraphics[width=0.32\textwidth]{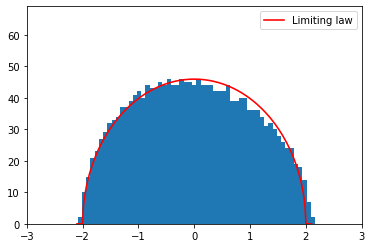}
\includegraphics[width=0.32\textwidth]{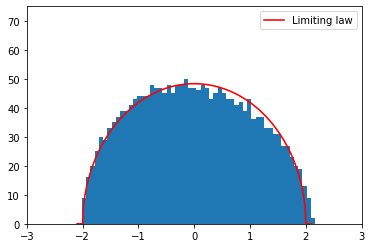}
\includegraphics[width=0.32\textwidth]{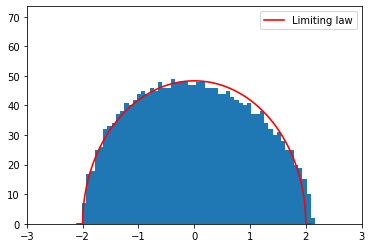}
\caption{\small Simulations for empirical eigenvalue distributions of \eqref{eq:center_RF2} and theoretical predication (red curves) of the limiting law $\mu$ where activation function $\sigma(x)\propto\cos (x)$ satisfies Assumption \ref{assump:sigma} with $b_{\sigma}=0$, and $X$ is a standard Gaussian random matrix. Dimension parameters are given by $n=1.9\times 10^3$, $d_0=2\times 10^3$ and $d_1=2\times 10^5$ (left); $n=2\times 10^3$, $d_0=1.9\times 10^3$ and $d_1=2\times 10^5$ (middle); $n=2\times 10^3$, $d_0=2\times 10^3$ and $d_1=2\times 10^5$ (right).}
\label{fig:law_semi}
\end{figure}

 Figure \ref{fig:law} is  a simulation of the limiting spectral distribution of CK with activation function $\sigma(x)$ given by $\arctan(x)$ after proper shifting and scaling. More simulations are provided in Appendix \ref{appendix:simulation} with different activation functions. The red curves are implemented by the self-consistent equations \eqref{eq:fixed_point1_CK} and \eqref{eq:fixed_point2_CK} in Theorem \ref{thm:law_DNN}. In Section~\ref{sec:general_sample_covariance}, we present general random matrix models with similar limiting eigenvalue distribution as $\mu$ whose  Stieltjes transform is also determined by \eqref{eq:fixed_point1_CK} and \eqref{eq:fixed_point2_CK}.
\begin{figure}[!ht]
\includegraphics[width=0.32\textwidth]{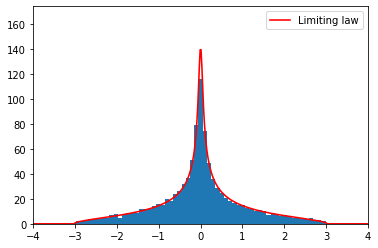}
\includegraphics[width=0.32\textwidth]{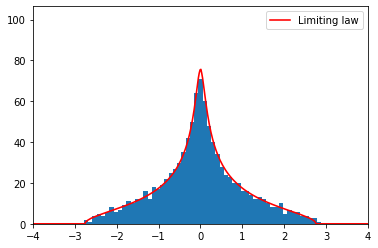}
\includegraphics[width=0.32\textwidth]{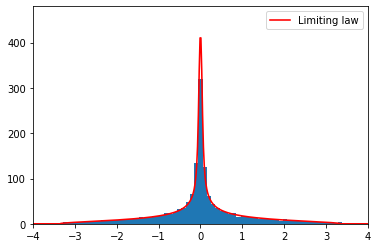}
\caption{\small Simulations for empirical eigenvalue distributions of \eqref{eq:center_RF2} and theoretical predication (red curves) of the limiting law $\mu$ where activation function $\sigma(x)\propto\arctan(x)$ satisfies Assumption \ref{assump:sigma} and $X$ is a standard Gaussian random matrix: $n=10^3$, $d_0=10^3$ and $d_1=10^5$ (left); $n=10^3$, $d_0=1.5\times 10^3$ and $d_1=10^5$ (middle); $n=1.5\times 10^3$, $d_0=10^3$ and $d_1=10^5$ (right).}
\label{fig:law}
\end{figure}

\bigskip 
Theorem \ref{thm:law_DNN} can be extended to the NTK model as well. Denote by
\begin{align}\label{eq:defPsi}
    \Psi:&=\frac{1}{d_1}\E[S^\top S]\odot(X^\top X)\in\R^{n\times n}.
\end{align}
As an approximation of $\Psi$ in the spectral norm, we define
\begin{align}
\Psi_0:&=\left(a_\sigma-\sum_{k=0}^2\eta_{k}^2(\sigma)\right)\Id+\sum_{k=0}^2\eta_{k}^2(\sigma) f_{k+1}( X^\top X),\label{eq:defPsi_0}
\end{align}
where $f_k$'s are defined in \eqref{def:Phi0},  $a_{\sigma}$ is defined in \eqref{eq:conditionsigma1}, and the $k$-th Hermite coefficient of $\sigma'$ is 
\begin{align}\label{eq:def_eta_k}
    \eta_k(\sigma):=\E[\sigma'(\xi)h_k(\xi)].
\end{align} 
Then, similar deformed semicircle law can be obtained for the empirical NTK matrix $H$. 
\begin{theorem}[Limiting spectral distribution of the NTK]\label{thm:law_NTK}
Under Assumptions \ref{assump:W}, \ref{assump:sigma} and \ref{assump:asymptotics} of the input matrix $X$, the empirical eigenvalue distribution of 
\begin{equation}\label{eq:NTK_1}
    \sqrt{\frac{d_1}{n}}\left(H-\E[H]\right)
\end{equation}
weakly converges to $\mu=\mu_{s} \boxtimes \Big((1-b_\sigma^2)+b_\sigma^2 \cdot \mu_{0}\Big)$ almost surely as $n,d_0,d_1\to \infty$ and $n/d_1\to 0$. 
Furthermore, suppose that $\varepsilon_n^4d_1\to 0$, then the empirical eigenvalue distribution of
\begin{equation}\label{eq:NTK_2}
    \sqrt{\frac{d_1}{n}}\left(H-\Phi_0-\Psi_0\right)
\end{equation}
weakly converges to $\mu$ almost surely, where $\Phi_0$ and $\Psi_0$ are defined in \eqref{def:Phi0} and \eqref{eq:defPsi_0}, respectively.
\end{theorem}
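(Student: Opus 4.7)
\textbf{Proof proposal for Theorem~\ref{thm:law_NTK}.} The plan reduces the NTK statement to Theorem~\ref{thm:law_DNN} by splitting the NTK into its CK component and the extra Hadamard term, then showing the latter is spectrally negligible. Write
\[
H - \E[H] \;=\; \tfrac{1}{d_1}\bigl(Y^\top Y - \E[Y^\top Y]\bigr) + \tfrac{1}{d_1}\bigl(S^\top S - \E[S^\top S]\bigr)\odot(X^\top X),
\]
scale by $\sqrt{d_1/n}$, and denote the two summands by $A$ and $B$. Theorem~\ref{thm:law_DNN} gives that the empirical spectral distribution of $A$ converges almost surely to $\mu$, so by the Bai-Silverstein bound $L^3(\mu_{A+B},\mu_A)\le \tfrac{1}{n}\|B\|_F^2$ on the L\'evy distance, the first statement~\eqref{eq:NTK_1} will follow once we show $\tfrac{1}{n}\|B\|_F^2 \to 0$ almost surely.

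The Frobenius bound is supplied by a variance estimate on $(S^\top S)_{\alpha\beta}$. Since $|\sigma'|\le\lambda_\sigma$ (Assumption~\ref{assump:sigma}) and $a_r\sim\N(0,1)$ has all Gaussian moments, each of the $d_1$ independent summands $a_r^2\sigma'(\w_r^\top\x_\alpha)\sigma'(\w_r^\top\x_\beta)$ composing $(S^\top S)_{\alpha\beta}$ has $O(1)$ variance, whence $\mathrm{Var}[(S^\top S)_{\alpha\beta}]\lesssim d_1$. Thus
\[
\E\!\left[\tfrac{1}{n}\|B\|_F^2\right] = \tfrac{1}{n^2 d_1}\sum_{\alpha,\beta}\mathrm{Var}[(S^\top S)_{\alpha\beta}](X^\top X)_{\alpha\beta}^2 \;\lesssim\; \tfrac{1}{n^2}\bigl(n + n^2\varepsilon_n^2\bigr) \;=\; \tfrac{1}{n}+\varepsilon_n^2 \;\to\;0,
\]
splitting the diagonal ($(X^\top X)_{\alpha\alpha}^2 = O(1)$) from the off-diagonal ($(X^\top X)_{\alpha\beta}^2 \le \varepsilon_n^2$) via $(\varepsilon_n,B)$-orthonormality. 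Upgrading to almost-sure convergence is standard via a fourth-moment bound and Borel-Cantelli, or directly through the paper's nonlinear Hanson-Wright concentration, which gives sub-exponential tails for $(S^\top S)_{\alpha\beta} - \E[(S^\top S)_{\alpha\beta}]$.

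For the second statement~\eqref{eq:NTK_2}, combine the above with an approximation $\sqrt{d_1/n}\bigl\|(\Phi-\Phi_0)+(\Psi-\Psi_0)\bigr\|_{\text{op}} \to 0$, so that Weyl's inequality preserves the limit. The bound on $\|\Phi-\Phi_0\|_{\text{op}}$ is exactly the step used to pass from~\eqref{eq:center_RF} to~\eqref{eq:center_RF2} in Theorem~\ref{thm:law_DNN}. For $\Psi-\Psi_0$, apply the same strategy with $\sigma'$ in place of $\sigma$: using the Gaussian identity $\E_\w[h_k(\w^\top\x_\alpha/\|\x_\alpha\|)h_l(\w^\top\x_\beta/\|\x_\beta\|)]=\delta_{kl}\rho_{\alpha\beta}^k$ with $\rho_{\alpha\beta}=\x_\alpha^\top\x_\beta/(\|\x_\alpha\|\|\x_\beta\|)$, expand $\E_\w[\sigma'(\w^\top\x_\alpha)\sigma'(\w^\top\x_\beta)]$ in normalized Hermite polynomials. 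Multiplying by the Hadamard factor $(\x_\alpha^\top\x_\beta)$ produces the series $\sum_k\eta_k(\sigma)^2(\x_\alpha^\top\x_\beta)^{k+1}$ plus perturbations from $\|\x_\alpha\|\ne 1$; truncating at $k\le 2$ yields the three polynomial kernel terms of $\Psi_0$ up to a remainder of order $\varepsilon_n^4$ per off-diagonal entry, while the identity piece $(a_\sigma-\sum_{k=0}^2\eta_k^2)\Id$ cancels the truncated diagonal at unit norms. A Frobenius-norm accounting, combined with the global control $\sum_\alpha(\|\x_\alpha\|-1)^2\le B^2$ from the $(\varepsilon_n,B)$-orthonormality to handle the diagonal perturbation, yields $\tfrac{d_1}{n^2}\|\Psi-\Psi_0\|_F^2\to 0$ in the ultra-wide scaling under $n\varepsilon_n^4\to 0$ and $d_1\varepsilon_n^4\to 0$, delivering the claim via Hoffman-Wielandt.

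The main technical obstacle is controlling $\Psi-\Psi_0$ on the diagonal: unlike the CK case, where the rank-one correction $\vmu\vmu^\top$ in $\Phi_0$ is introduced precisely to absorb the first-order expansion in $\|\x_\alpha\|-1$, there is no analogous matching term in $\Psi_0$, so one must carefully exploit the \emph{global} summability $\sum_\alpha(\|\x_\alpha\|-1)^2\le B^2$ rather than the pointwise bound $|\|\x_\alpha\|-1|\le\varepsilon_n$. This is the step that parallels the delicate $\Phi$-approximation of~\cite{fan2020spectra}, now adapted to $\sigma'$ and to the extra Hadamard factor $(X^\top X)$.
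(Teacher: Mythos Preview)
Your proposal is correct and follows the same two-step strategy as the paper: decompose $H-\E[H]$ into the CK piece plus the Hadamard term, show the latter is spectrally negligible, then approximate $\E[H]=\Phi+\Psi$ by $\Phi_0+\Psi_0$ via Hermite expansion. The routes diverge only in the concentration tool. For the Hadamard perturbation $B$, you bound $\tfrac{1}{n}\|B\|_F^2$ entrywise and invoke the L\'evy/Hoffman--Wielandt inequality; the paper instead bounds $\|B\|$ directly in operator norm via a sub-exponential matrix Bernstein inequality (its Theorem~\ref{thm:NTK_concentration}, giving Lemma~\ref{lem:NTKexpectation}), which yields $\|B\|\lesssim\sqrt{\log n/n}$ almost surely and then applies Lemma~\ref{lem:BS10A45}. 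The paper's route is sharper and reuses the same operator-norm bound for the smallest-eigenvalue results; your Frobenius route is more elementary and perfectly adequate for the limiting law, though the almost-sure upgrade you sketch does need some care since the summands $a_r^2\sigma'(\cdot)\sigma'(\cdot)$ are sub-exponential rather than bounded.

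One correction on your ``main technical obstacle'': the absence of a rank-one term in $\Psi_0$ is a simplification, not a difficulty. In the CK case, $\vmu\vmu^\top$ appears precisely because $\zeta_0(\sigma)=\E[\sigma(\xi)]=0$ by centering, so the zeroth Hermite coefficient of $\sigma(\|\x_\alpha\|\,\cdot)$ is a first-order perturbation $\sim(\|\x_\alpha\|-1)$, and the global bound $\sum_\alpha(\|\x_\alpha\|-1)^2\le B^2$ is what keeps $\|\vmu\|$ bounded. For $\Psi$ the zeroth coefficient $\eta_0(\sigma)=b_\sigma$ is generically nonzero, so the $k=0$ term is already absorbed by $\eta_0^2 f_1(X^\top X)$ in $\Psi_0$, with only an $O(\varepsilon_n)$ correction from $\|\x_\alpha\|\ne 1$. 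The paper's Lemma~\ref{lemma:Psi_0} accordingly obtains $\|\Psi-\Psi_0\|\le C_B\,n\varepsilon_n^4$ by a spectral-norm argument strictly parallel to (and slightly simpler than) Lemma~\ref{lemma:operator_Phi}, without invoking the summed bound $\sum_\alpha(\|\x_\alpha\|-1)^2\le B^2$ at that step.
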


\subsection{Non-asymptotic estimations}\label{sec:non-asymp}

With  our nonlinear Hanson-Wright inequality (Corollary \ref{thm:quadratic_concentration}), we attain the following concentration bound on the CK matrix in the spectral norm.
\begin{theorem}\label{thm:spectralnorm} 
With Assumption \ref{assump:W}, assume $X$ satisfies
$ \sum_{i=1}^n (\|\x_i\|^2-1)^2\leq B^2$ for a constant $B\geq 0$,
 and $\sigma$ is $\lambda_{\sigma}$-Lipschitz with  $\E [\sigma(\xi)]=0$. Then with probability at least $1-4e^{-2n}$,
 \begin{align}\label{eq:d_1Y}
     \left\| \frac{1}{d_1}Y^{\top}Y-\Phi \right\|\leq C\left(\sqrt{\frac{n}{d_1}}+ \frac{n}{d_1}\right)\lambda_{\sigma}^2 \|X\|^2+32 B\lambda_{\sigma}^2 \|X\|\sqrt{\frac{n}{d_1}},
 \end{align}
 where $C>0$ is a universal constant.
\end{theorem}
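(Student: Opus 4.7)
The plan is to reduce the spectral-norm bound to a pointwise tail estimate via an $\eps$-net argument. Since $\frac{1}{d_1}Y^\top Y - \Phi$ is symmetric, for a standard $\tfrac14$-net $\mathcal N\subset S^{n-1}$ of cardinality $|\mathcal N|\le 12^n$ one has
\begin{equation*}
\Big\|\tfrac{1}{d_1}Y^\top Y - \Phi\Big\|\;\le\;2\max_{\uu\in\mathcal N}\Big|\uu^\top\Big(\tfrac{1}{d_1}Y^\top Y - \Phi\Big)\uu\Big|,
\end{equation*}
so it suffices to obtain a tail bound on the scalar quadratic form with probability $1-e^{-cn}$ strong enough to survive the union bound over $\mathcal N$.

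For a fixed $\uu\in S^{n-1}$, let $Z_i:=\uu^\top\sigma(X^\top\w_i)$ for $i=1,\dots,d_1$, where $\w_1,\dots,\w_{d_1}$ are the i.i.d.\ $\N(0,I_{d_0})$ rows of $W$, so that $\uu^\top(\tfrac{1}{d_1}Y^\top Y)\uu = \tfrac{1}{d_1}\sum_i Z_i^2$. Since $\w\mapsto\uu^\top\sigma(X^\top\w)$ is $\lambda_\sigma\|X\uu\|$-Lipschitz with $\|X\uu\|\le\|X\|$, the centered variable $\tilde Z_i:=Z_i - \E[Z_i]$ is sub-Gaussian with proxy $\lambda_\sigma\|X\|$. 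I split
\begin{equation*}
Z_i^2-\E[Z_i^2]\;=\;2\,\E[Z_i]\,\tilde Z_i \;+\; \big(\tilde Z_i^2-\E[\tilde Z_i^2]\big).
\end{equation*}
The quadratic piece is controlled by the nonlinear Hanson--Wright inequality (Corollary~\ref{thm:quadratic_concentration})---or equivalently by Bernstein applied to the i.i.d.\ sub-exponential variables $\tilde Z_i^2$---yielding $|\tfrac{1}{d_1}\sum_i(\tilde Z_i^2-\E[\tilde Z_i^2])|\lesssim\lambda_\sigma^2\|X\|^2(\sqrt{t/d_1}+t/d_1)$ with probability $1-2e^{-t}$. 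The linear piece is a centered sub-Gaussian average with proxy $\lambda_\sigma\|X\|/\sqrt{d_1}$ scaled by $2|\E[Z_i]|$, so on the same event it is bounded by $2|\E[Z_i]|\,\lambda_\sigma\|X\|\sqrt{t/d_1}$.

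The missing ingredient is control of the bias $\E[Z_i]$ under the weaker hypothesis $\sum_\alpha(\|\x_\alpha\|^2-1)^2\le B^2$ rather than $\|\x_\alpha\|=1$. Using $\E[\sigma(\xi)]=0$ together with Lipschitzness, $|\E[\sigma(s\xi)]|\le\lambda_\sigma|s-1|\,\E|\xi|\le\lambda_\sigma|s-1|$ for $\xi\sim\N(0,1)$; applying this with $s=\|\x_\alpha\|$ and then Cauchy--Schwarz gives
\begin{equation*}
|\E[Z_i]|\;\le\;\lambda_\sigma\sum_\alpha|u_\alpha|\,\big|\,\|\x_\alpha\|-1\big|\;\le\;\lambda_\sigma\sqrt{\textstyle\sum_\alpha(\|\x_\alpha\|-1)^2}\;\le\;\lambda_\sigma B,
\end{equation*}
where the last step uses $(\|\x_\alpha\|-1)^2\le(\|\x_\alpha\|^2-1)^2$ since $\|\x_\alpha\|+1\ge 1$. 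Substituting this produces the cross-term contribution $\lesssim B\lambda_\sigma^2\|X\|\sqrt{t/d_1}$. Setting $t=cn$ with $c$ large enough that $12^n e^{-t}\le 2e^{-2n}$, then union-bounding over $\mathcal N$ and applying the factor-of-two from the net inequality, assembles the stated bound on an event of probability at least $1-4e^{-2n}$. The main obstacle, and what forces the second summand $32B\lambda_\sigma^2\|X\|\sqrt{n/d_1}$ in the statement, is precisely this non-vanishing bias: in the strictly isotropic case $B=0$ only the first term survives, but the present hypothesis only allows one to absorb the bias into an explicit sub-Gaussian contribution scaled by $B$. The absolute constants are pinned down by tracking the sub-Gaussian and sub-exponential norms through the estimates.
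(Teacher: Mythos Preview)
Your proof is correct and follows the same overall strategy as the paper: an $\eps$-net reduction, Gaussian Lipschitz concentration for the quadratic form, and the bias estimate $|\E[\sigma(\|\x_\alpha\|\xi)]|\le\lambda_\sigma|\,\|\x_\alpha\|-1|$ combined with Cauchy--Schwarz to produce the $B\lambda_\sigma^2\|X\|\sqrt{n/d_1}$ term. The packaging differs slightly: the paper concatenates the rows into a single vector $(\y_1,\dots,\y_{d_1})\in\R^{nd_1}$ and applies the nonlinear Hanson--Wright inequality (Corollary~\ref{thm:quadratic_concentration}, the nonzero-mean case) once to the block-diagonal quadratic form $A_{\z}$, which internally performs exactly your linear/quadratic split; you instead exploit the i.i.d.\ scalar structure of the $Z_i$ directly and invoke Bernstein for the sub-exponential part and a sub-Gaussian bound for the linear part. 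Both routes yield the same rates.

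One small imprecision: the map $\w\mapsto\uu^\top\sigma(X^\top\w)$ is $\lambda_\sigma\|X\|$-Lipschitz, not $\lambda_\sigma\|X\uu\|$-Lipschitz in general (the gradient is $XD_\w\uu$ with $D_\w=\diag(\sigma'(X^\top\w))$, and $D_\w$ does not commute with $X$). Since you immediately pass to $\|X\uu\|\le\|X\|$ this does not affect the argument.
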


\begin{remark}
Theorem \ref{thm:spectralnorm} ensures that the empirical spectral measure $\mu_n$ of the centered random matrix $\sqrt{\frac{d_1}{n}}\left(\frac{1}{d_1}Y^\top Y-\Phi\right)$ has a bounded support for all sufficiently large $n$. Together with the global law in Theorem~\ref{thm:law_DNN}, our concentration inequality \eqref{eq:d_1Y} is \emph{tight} up to a constant factor. Additionally, by the weak convergence of $\mu_n$ to $\mu$ proved in Theorem \ref{thm:law_DNN}, we can take a test function $x^2$ to obtain that
\[\int_{\R} x^2d\mu_n(x)\to \int_{\R} x^2d\mu(x),\quad  \text{i.e.,}\quad  \frac{\sqrt{d_1}}{n}\left\|\frac{1}{d_1}Y^\top Y-\Phi\right\|_F\rightarrow \left(\int_\R x^2d\mu(x)\right)^{\frac{1}{2}}\] 
almost surely, as $n,d_1\to \infty$ and $d_1/n\to\infty$.  Therefore, the fluctuation of $\frac{1}{d_1} Y^\top Y$ around $\Phi$ under the Frobenius norm is exactly of order  $n/\sqrt{d_1}$. 
\end{remark}

Based on the foregoing estimation, we have the following lower bound on  the smallest eigenvalue of the empirical conjugate kernel, denoted by $\lambda_{\min}\left(\frac{1}{d_1}Y^\top Y\right)$.
\begin{theorem}\label{thm:lowerKn}
Suppose Assumptions \ref{assump:W} and \ref{assump:sigma} hold and $\sigma$ is not a linear function, $X$ is $(\varepsilon_n, B)$-orthonormal. Then with  probability at least $1-4e^{-2n}$,
\begin{align*}
    &\lambda_{\min}\left(\frac{1}{d_1}Y^\top Y\right)
    \geq  1-\sum_{i=1}^3\zeta_i(\sigma)^2-C_B\eps_n^2\sqrt{n}-C\left(\sqrt{\frac{n}{d_1}}+\frac{n}{d_1}\right)\lambda_{\sigma}^2B^2,
\end{align*}
where $C_B$ is a constant depending on $B$.
In particular, if $\varepsilon_n^4 n=o(1), B=O(1), d_1=\omega(n)$, then with high probability,
\[\lambda_{\min}\left(\frac{1}{d_1}Y^\top Y\right)\geq 1-\sum_{i=1}^3\zeta_i(\sigma)^2-o(1).
\]
\end{theorem}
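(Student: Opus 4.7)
The plan is to combine three ingredients via Weyl's inequality: the concentration bound from Theorem~\ref{thm:spectralnorm}, a spectral-norm approximation $\Phi\approx \Phi_0$, and a direct lower bound on $\lambda_{\min}(\Phi_0)$ via positive semidefiniteness. Specifically, I would first write
\[
 \lambda_{\min}\!\left(\tfrac{1}{d_1}Y^\top Y\right)\ \geq\ \lambda_{\min}(\Phi_0)\;-\;\|\Phi-\Phi_0\|\;-\;\left\|\tfrac{1}{d_1}Y^\top Y-\Phi\right\|,
\]
and then bound each of the three terms on the right.

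For the first term, I recall that
\[
 \Phi_0=\vmu\vmu^\top+\sum_{k=1}^3\zeta_k(\sigma)^2 f_k(X^\top X)+\Bigl(1-\sum_{k=1}^3\zeta_k(\sigma)^2\Bigr)\Id,
\]
where $f_k(X^\top X)=(X^\top X)^{\odot k}$ is the $k$-fold Hadamard power of $X^\top X$. Since $X^\top X\succeq 0$, the Schur product theorem gives $f_k(X^\top X)\succeq 0$ for every $k\ge 1$, and clearly $\vmu\vmu^\top\succeq 0$. Hence
\[
 \Phi_0\ \succeq\ \Bigl(1-\sum_{k=1}^3\zeta_k(\sigma)^2\Bigr)\Id,\qquad \lambda_{\min}(\Phi_0)\ \geq\ 1-\sum_{k=1}^3\zeta_k(\sigma)^2.
\]

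For the third term, I would apply Theorem~\ref{thm:spectralnorm}: the $(\varepsilon_n,B)$-orthonormal assumption gives $\|X\|\leq B$ and, since $\|\x_\alpha\|\leq 1+\varepsilon_n$, also $\sum_\alpha(\|\x_\alpha\|^2-1)^2\leq (2+\varepsilon_n)^2 B^2\leq 9B^2$. Plugging into \eqref{eq:d_1Y} yields, with probability at least $1-4e^{-2n}$,
\[
 \left\|\tfrac{1}{d_1}Y^\top Y-\Phi\right\|\ \leq\ C\Bigl(\sqrt{\tfrac{n}{d_1}}+\tfrac{n}{d_1}\Bigr)\lambda_\sigma^2 B^2
\]
after absorbing the $32B\lambda_\sigma^2\|X\|\sqrt{n/d_1}$ term into the constant.

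The main obstacle is the middle term $\|\Phi-\Phi_0\|$. This is the spectral-norm approximation discussed informally in the introduction: one expands each entry $\Phi_{\alpha\beta}=\E_{\w}[\sigma(\w^\top\x_\alpha)\sigma(\w^\top\x_\beta)]$ in the Hermite basis, keeping the contributions of $\zeta_0,\zeta_1,\zeta_2,\zeta_3$ (which produce the identity, the linear, the Hadamard-square, and the Hadamard-cube of $X^\top X$, plus the rank-one $\vmu\vmu^\top$ correction for $\|\x_\alpha\|\neq 1$) and showing that the remainder is controlled by the $(\varepsilon_n,B)$-orthonormality. Concretely, the $k\ge 4$ Hermite contributions produce matrices whose off-diagonal entries are $O(\varepsilon_n^k)$ and whose diagonal is $O(1)$; combined with a deviation $O(\varepsilon_n^2)$ from the $\|\x_\alpha\|=1$ case at each entry, one obtains
\[
 \|\Phi-\Phi_0\|\ \leq\ C_B\,\varepsilon_n^2\sqrt n
\]
by a straightforward (but careful) entrywise expansion as in \cite{fan2020spectra}. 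I expect this step to be the technically delicate one, but it is largely a matter of bookkeeping and does not require new ideas beyond those already present in \cite{fan2020spectra}.

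Finally, combining the three displays gives the claimed inequality. For the ``in particular'' statement, under $n\varepsilon_n^4=o(1)$ we have $\varepsilon_n^2\sqrt n=o(1)$, and under $d_1=\omega(n)$ with $B=O(1)$, the concentration error is $o(1)$, so with high probability $\lambda_{\min}(\tfrac{1}{d_1}Y^\top Y)\geq 1-\sum_{k=1}^3\zeta_k(\sigma)^2-o(1)$, as required.
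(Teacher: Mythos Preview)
Your proposal is correct and follows essentially the same route as the paper. The paper packages the argument into two lemmas: one applies Weyl's inequality together with Theorem~\ref{thm:spectralnorm} to get $\lambda_{\min}(\tfrac{1}{d_1}Y^\top Y)\ge \lambda_{\min}(\Phi)-\text{(concentration error)}$, and the other bounds $\lambda_{\min}(\Phi)$ via $\|\Phi-\Phi_0\|\le C_B\varepsilon_n^2\sqrt{n}$ (this is exactly Lemma~\ref{lemma:operator_Phi}, so the ``technically delicate'' step you flag is already proved in full there) together with $\lambda_{\min}(\Phi_0)\ge 1-\sum_{k=1}^3\zeta_k(\sigma)^2$; the only cosmetic difference is that the paper certifies $f_k(X^\top X)\succeq 0$ by writing it as $K_k^\top K_k$ with columns $\x_i^{\otimes k}$, whereas you invoke the Schur product theorem---both are valid.
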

\begin{remark} A related result in
  \cite[Theorem 5]{oymak2020toward} assumed $\|\x_j\|=1$ for all $j\in [n]$, $\lambda_{\sigma}\leq B, |\sigma(0)|\leq B$, $d_1\geq C\log^2(n) \frac{n}{\lambda_{\min}(\Phi)}$ and obtained $
    \frac{1}{d_1} \lambda_{\min}(Y^\top Y)\geq  \lambda_{\min}(\Phi)-o(1). 
 $ We relax the assumption on the column vectors of $X$, and extend the range of $d_1$ down to $d_1=\Omega(n)$,   to guarantee that $ \frac{1}{d_1} \lambda_{\min}(Y^\top Y)$ is lower bounded by an absolute constant,  with an extra assumption that $\E[\sigma(\xi)]=0$. This assumption can always be satisfied by shifting the activation function with a proper constant. Our bound for $\lambda_{\min}(\Phi)$ is derived via Hermite  polynomial expansion, similar to \cite[Lemma 15]{oymak2020toward}. However, we apply an $\varepsilon$-net argument for concentration bound for $\frac{1}{d_1} Y^\top Y$ around $\Phi$, while a matrix Chernoff concentration bound with truncation was used in \cite[Theorem 5]{oymak2020toward}.
\end{remark}

\bigskip
Additionally, the concentration for the NTK matrix $H$ can be obtained in the next theorem. Recall that $H$ is defined by \eqref{eq:Hmatrixform} and the columns of $S$ are defined by \eqref{eq:defcolumnS} with Assumption~\ref{assump:W}.

\begin{theorem}\label{thm:NTK_concentration}
Suppose $d_1\geq \log n$, and $\sigma$ is $\lambda_{\sigma}$-Lipschitz. Then with  probability at least $1-n^{-7/3}$, 
\begin{align}\label{eq:Lupperboundd}
  \left  \|\frac{1}{d_1} (S^\top S-\E[S^\top S])\odot (X^\top X) \right\|\leq 10\lambda_{\sigma}^4 \|X\|^4\sqrt{\frac{\log n}{d_1}}.
\end{align}
Moreover, if the assumptions in Theorem \ref{thm:spectralnorm} hold, then with probability at least $1-n^{-7/3}-4e^{-2n}$,
\begin{align}\label{eq:YEY2}
    \| H-\E H\|\leq C\left(\sqrt{\frac{n}{d_1}}+ \frac{n}{d_1}\right)\lambda_{\sigma}^2 \|X\|^2+32 B\lambda_{\sigma}^2 \|X\|\sqrt{\frac{n}{d_1}}+10\lambda_{\sigma}^4 \|X\|^4\sqrt{\frac{\log n}{d_1}}.
\end{align}
\end{theorem}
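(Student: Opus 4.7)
The plan is to reduce the second bound to the first by a triangle inequality and do the real work inside (\ref{eq:Lupperboundd}). Because $X^\top X$ is deterministic, $\E[(S^\top S)\odot(X^\top X)]=\E[S^\top S]\odot(X^\top X)$, so
\[H-\E H \;=\; \tfrac{1}{d_1}(Y^\top Y-\E Y^\top Y) \;+\; \tfrac{1}{d_1}(S^\top S-\E S^\top S)\odot(X^\top X).\]
Theorem~\ref{thm:spectralnorm} (applied with $\Phi=\frac{1}{d_1}\E Y^\top Y$) controls the first summand with probability $1-4e^{-2n}$, and (\ref{eq:Lupperboundd}) controls the second with probability $1-n^{-7/3}$; the union bound then yields (\ref{eq:YEY2}).

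For (\ref{eq:Lupperboundd}) itself, I would first rewrite the Hadamard product as a sum of independent positive semidefinite matrices. Setting $\s_r := (\sigma'(\w_r^\top \x_\alpha))_{\alpha=1}^n$, the elementary identity $(\s\s^\top)\odot(X^\top X)=\diag(\s)\,X^\top X\,\diag(\s)$ applied to each row together with (\ref{eq:defcolumnS}) gives
\[\tfrac{1}{d_1}(S^\top S)\odot(X^\top X) \;=\; \tfrac{1}{d_1}\sum_{r=1}^{d_1} a_r^2 M_r, \qquad M_r := \diag(\s_r)\,X^\top X\,\diag(\s_r),\]
with the almost sure bound $\|M_r\|\leq \lambda_\sigma^2\|X\|^2$ (since $|\sigma'|\leq\lambda_\sigma$), and with all $M_r$ i.i.d.\ of common mean $\bar M$ satisfying $\|\bar M\|\leq\lambda_\sigma^2\|X\|^2$ as well. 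Because $a_r\perp\w_r$ and $\E a_r^2=1$, I split each summand as
\[a_r^2 M_r - \bar M \;=\; a_r^2(M_r-\bar M)+(a_r^2-1)\bar M,\]
which cleanly separates the randomness in $W$ from the randomness in $\aa$. The scalar piece $\bigl(\tfrac{1}{d_1}\sum_r(a_r^2-1)\bigr)\bar M$ is handled by Bernstein's inequality for centered $\chi_1^2$ variables, which gives $|\tfrac{1}{d_1}\sum_r(a_r^2-1)|\lesssim\sqrt{\log n/d_1}$ with probability at least $1-n^{-3}$ and so contributes $\lesssim\lambda_\sigma^2\|X\|^2\sqrt{\log n/d_1}$. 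The matrix piece $\tfrac{1}{d_1}\sum_r a_r^2(M_r-\bar M)$ is handled by conditioning on $\{a_r\}$ and applying matrix Bernstein: on the event $\mathcal E:=\{\max_r a_r^2\leq C\log n\}\cap\{\sum_r a_r^4\leq 4d_1\}$, the summands have uniform spectral-norm bound $\lesssim\lambda_\sigma^2\|X\|^2\log n$ and total conditional variance $\lesssim d_1\lambda_\sigma^4\|X\|^4$ (using $\E(M_r-\bar M)^2\preceq \E M_r^2\preceq \lambda_\sigma^2\|X\|^2\bar M$), yielding a bound of order $\lambda_\sigma^2\|X\|^2\sqrt{d_1\log n}$ on the unnormalized sum, i.e.\ $\lesssim\lambda_\sigma^2\|X\|^2\sqrt{\log n/d_1}$ after dividing by $d_1$. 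Combining the two contributions and loosening constants produces the stated $10\lambda_\sigma^4\|X\|^4\sqrt{\log n/d_1}$.

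The main obstacle is the unboundedness of the chi-squared weights $a_r^2$, which rules out a direct invocation of matrix Bernstein for bounded random matrices; the truncation event $\mathcal E$ resolves this. A Gaussian tail plus union bound gives $P(\max_r a_r^2>C\log n)\leq 2 d_1 e^{-C\log n/2}$, while a one-sided $\chi^2$ Bernstein bound gives $P(\sum_r a_r^4>4d_1)\leq 2 e^{-cd_1}$, so $P(\mathcal E^c)\leq n^{-3}$ for $C$ sufficiently large whenever $d_1$ is at most polynomial in $n$; for larger $d_1$ the same conclusion follows from a matrix Bernstein inequality for sub-exponential entries (e.g.\ the polynomial-moment form in Tropp's monograph) applied directly to the independent sub-exponential matrices $V_r-\E V_r$ with $V_r=a_r^2 M_r$. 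A final union bound over all failure events delivers the probability $1-n^{-7/3}$ required in (\ref{eq:Lupperboundd}).
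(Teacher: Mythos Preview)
Your reduction of (\ref{eq:YEY2}) to (\ref{eq:Lupperboundd}) via Theorem~\ref{thm:spectralnorm} and a union bound matches the paper exactly. For (\ref{eq:Lupperboundd}), however, the paper takes the route you mention only as a fallback at the very end: it applies the polynomial-moment (sub-exponential) matrix Bernstein inequality of \cite{tropp2012user} \emph{directly} to the centered summands $Z_r=a_r^2 M_r-\bar M$, using the almost-sure bound $\|Z_r\|\leq \lambda_\sigma^2\|X\|^2(a_r^2+1)$ to verify the moment hypothesis $\|\E Z_r^p\|\leq \tfrac12 p!\,R^{p-2}a^2$ with $R=2\lambda_\sigma^4\|X\|^4$. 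No split, no truncation event $\mathcal E$, no conditioning on $\aa$; one inequality gives the bound uniformly for all $d_1\geq\log n$.

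Your primary route (split off the scalar $\chi^2$ fluctuation, truncate the weights, then apply bounded matrix Bernstein conditionally on $\aa$) is a reasonable alternative and would even give a sharper prefactor $\lambda_\sigma^2\|X\|^2$ in place of $\lambda_\sigma^4\|X\|^4$. There is one wrinkle you glossed over, though: on $\mathcal E$ the per-summand bound is $R\asymp\lambda_\sigma^2\|X\|^2\log n$, so bounded matrix Bernstein produces a deviation of order $\sqrt{\sigma^2\log n}+R\log n$, and the $R\log n$ term contributes $(\log n)^2/d_1$ after normalization. This dominates the target $\sqrt{\log n/d_1}$ in the range $\log n\leq d_1\lesssim(\log n)^3$, so your truncation argument as written does not quite cover the full hypothesis $d_1\geq\log n$. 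Since the sub-exponential Bernstein you already cite handles that range (and every other range) in one stroke, nothing is actually lost---but it is then simpler, and it is what the paper does, to invoke that inequality from the start and skip the detour.
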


\begin{remark}
  Compared to Proposition D.3 in \cite{hu2020surprising}, we assume $\aa$ is a Gaussian vector instead of a Rademacher random vector and attain a better bound. If $a_i\in \{+1,-1\}$, then one can apply matrix Bernstein inequality for the sum of bounded random matrices. In our case, the boundedness condition is not satisfied. Section S1.1 in \cite{adlam2020neural} applied matrix Bernstein inequality for the sum of bounded random matrices when $\aa$ is a Gaussian vector, but the boundedness condition does not hold in Equation (S7) of \cite{adlam2020neural}.
\end{remark}

Based on Theorem \ref{thm:NTK_concentration}, we get a lower bound for the smallest eigenvalue of the NTK.
\begin{theorem}\label{thm:NTK_Hlowerbound}
Under Assumptions~\ref{assump:W} and \ref{assump:sigma}, suppose that $X$ is $(\varepsilon_n, B)$-orthonormal, $\sigma$ is not a linear function, and $d_1\geq \log n$. Then with probability at least $1-n^{-7/3}$,
\begin{align*}
    \lambda_{\min}(H)\geq a_\sigma-\sum_{k=0}^2\eta_{k}^2(\sigma)-C_B\eps_n^4 n- 10\lambda_{\sigma}^4 B^4\sqrt{\frac{\log n}{d_1}},
\end{align*}
where $C_B$ is a constant depending only on $B$, and $\eta_k(\sigma)$ is defined in \eqref{eq:def_eta_k}.
In particular, if $\varepsilon_n^4n =o(1)$, $B=O(1)$, and $d_1=\omega(\log n)$, then with high probability,
\begin{align*}
    \lambda_{\min}(H)\geq \left( a_\sigma-\sum_{k=0}^2\eta_{k}^2(\sigma)\right)(1-o(1)).
\end{align*}
\end{theorem}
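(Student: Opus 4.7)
The plan is to reduce $\lambda_{\min}(H)$ to the smallest eigenvalue of the ``derivative part'' of the NTK, namely $\frac{1}{d_1}(S^\top S)\odot(X^\top X)$, and then to approximate that object by the deterministic matrix $\Psi_0$ via a triangle inequality that splits the error into (i) the concentration $\|\tfrac{1}{d_1}(S^\top S)\odot(X^\top X)-\Psi\|$ already supplied by Theorem~\ref{thm:NTK_concentration}, and (ii) the deterministic spectral-norm approximation $\|\Psi-\Psi_0\|$. Since the conjugate-kernel piece $\frac{1}{d_1}Y^\top Y$ is positive semidefinite, Weyl's inequality gives
\begin{align*}
\lambda_{\min}(H)\;\geq\;\lambda_{\min}\!\left(\tfrac{1}{d_1}(S^\top S)\odot(X^\top X)\right)\;\geq\;\lambda_{\min}(\Psi_0)-\|\Psi-\Psi_0\|-\Big\|\tfrac{1}{d_1}(S^\top S)\odot(X^\top X)-\Psi\Big\|.
\end{align*}
The last term is bounded with probability at least $1-n^{-7/3}$ by \eqref{eq:Lupperboundd} combined with $\|X\|\leq B$, yielding $10\lambda_\sigma^4 B^4\sqrt{\log n/d_1}$.

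The clean step is the lower bound $\lambda_{\min}(\Psi_0)\geq a_\sigma-\sum_{k=0}^{2}\eta_k^2(\sigma)$. From \eqref{eq:defPsi_0}, $\Psi_0$ is the sum of the scalar multiple of the identity $\bigl(a_\sigma-\sum_{k=0}^{2}\eta_k^2(\sigma)\bigr)\Id$ and the matrix $\sum_{k=0}^{2}\eta_k^2(\sigma)\,f_{k+1}(X^\top X)$. Each Hadamard power $f_{k+1}(X^\top X)=(X^\top X)^{\odot(k+1)}$ is positive semidefinite by iterated application of the Schur product theorem to the PSD matrix $X^\top X$, so the second summand is PSD and drops out of the eigenvalue lower bound. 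By Parseval, $a_\sigma-\sum_{k=0}^{2}\eta_k^2(\sigma)=\sum_{k\geq 3}\eta_k^2(\sigma)\geq 0$, which is strictly positive whenever $\sigma'$ is not a polynomial of degree at most two.

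The main obstacle is the deterministic approximation $\|\Psi-\Psi_0\|\leq C_B\varepsilon_n^4 n$. For unit-norm $\x_i,\x_j$, the classical Hermite expansion for correlated Gaussians gives
\begin{align*}
\E_{\w}[\sigma'(\w^\top\x_i)\sigma'(\w^\top\x_j)]=\sum_{k\geq 0}\eta_k^2(\sigma)\,(\x_i^\top\x_j)^{k},
\end{align*}
so that $\Psi_{ij}=\sum_{k\geq 0}\eta_k^2(\sigma)(\x_i^\top\x_j)^{k+1}$, and subtracting the degree-$\leq 3$ Hadamard powers that make up $\Psi_0$ leaves only the Hermite tail $\sum_{k\geq 3}\eta_k^2(\sigma)(\x_i^\top\x_j)^{k+1}$, which is $O(\varepsilon_n^4)$ off the diagonal. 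When $\|\x_\alpha\|\neq 1$, additional corrections come from writing $\sigma'(\|\x_i\|u_i)$ with $u_i$ a unit-variance Gaussian and expanding to first order in $\|\x_i\|-1$; the remainders are absorbed by the Lipschitz bound on $\sigma'$ and by the $\ell^2$-control $\sum_\alpha(\|\x_\alpha\|-1)^2\leq B^2$. A Frobenius-norm estimate over the at most $n^2$ off-diagonal entries then produces a spectral-norm bound of order $n\varepsilon_n^4$, while the diagonal discrepancies are absorbed into the constant $C_B$ using the same $\ell^2$-bound. The argument parallels the spectral-norm approximation $\Phi\approx\Phi_0$ carried out for the conjugate kernel in \cite{fan2020spectra}; the extra Hadamard factor of $(X^\top X)$ simply shifts the Hermite index by one without changing the orders of magnitude.

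Assembling the three bounds in the initial display yields the claimed estimate. The ``in particular'' clause follows immediately, because under $\varepsilon_n^4n=o(1)$, $B=O(1)$, $d_1=\omega(\log n)$ the two error terms $C_B\varepsilon_n^4 n$ and $10\lambda_\sigma^4 B^4\sqrt{\log n/d_1}$ are $o(1)$, so they can be factored as $(1-o(1))$ multiplied by the positive constant $a_\sigma-\sum_{k=0}^{2}\eta_k^2(\sigma)$.
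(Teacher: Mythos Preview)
Your proposal is correct and follows essentially the same approach as the paper: drop the PSD conjugate-kernel block, apply Weyl's inequality together with the concentration bound \eqref{eq:Lupperboundd}, pass from $\Psi$ to $\Psi_0$ via a deterministic spectral-norm approximation, and lower-bound $\lambda_{\min}(\Psi_0)$ by the identity coefficient using PSD-ness of the Hadamard powers. The only minor difference is that the paper packages the bound $\|\Psi-\Psi_0\|\leq C_B\varepsilon_n^4 n$ as a separate lemma (Lemma~\ref{lemma:Psi_0}), and it explicitly argues that $a_\sigma-\sum_{k=0}^{2}\eta_k^2(\sigma)>0$ under the stated hypotheses by observing that a Lipschitz non-linear $\sigma$ forces $\sigma'$ to have infinitely many nonzero Hermite coefficients (a bounded polynomial on $\R$ is constant), whereas you only note it is positive ``whenever $\sigma'$ is not a polynomial of degree at most two'' without tying this back to the hypotheses.
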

\begin{remark}
We relax the assumption  in \cite{nguyen2020tight} to $d_1=\omega(\log n)$ for the 2-layer case and our result is applicable beyond the ReLU activation function and to more general assumptions on $X$.  Our proof strategy is different from \cite{nguyen2020tight}. In \cite{nguyen2020tight}, the authors used the inequality $\lambda_{\min}((S^\top S)\odot(X^\top X))\geq \min_{i} \|S_i\|_2^2\cdot\lambda_{\min} (X^\top X)$ where $S_i$ is the $i$-th column of $S$. Then getting the  lower bound is reduced to show the concentration of the $2$-norm of the column vectors of $S$. Here we apply a matrix concentration inequality to $(S^\top S)\odot(X^\top X)$ and gain a  weaker assumption on $d_1$ to ensure the lower bound on $\lambda_{\min}(H)$.
\end{remark}

\begin{remark}\label{remark:linear}
    In Theorems \ref{thm:lowerKn} and \ref{thm:NTK_Hlowerbound}, we exclude the linear activation function. When $\sigma(x)=x$, it is easy to check both $\frac{1}{d_1}\lambda_{\min}(Y^\top Y)$ and $\lambda_{\min}(H)$ will trivially determined by $\lambda_{\min}(X^\top X)$, which can be vanishing. In this case, the lower bounds of the smallest eigenvalues of CK and NTK rely on the assumption of $\mu_0$ or the distribution of $X$. For instance, when the entries of $X$ are i.i.d. Gaussian random variables, $\lambda_{\min}(X^\top X)$ has been analyzed in \cite{silverstein1985smallest}.
\end{remark}

\subsection{Training and test errors for random feature regression}
We apply the results of the preceding sections to a two-layer neural network at random initialization defined in \eqref{eq:1hlNN}, to estimate the training errors and test errors with mean-square losses for random feature regression under the ultra-wide regime where $d_1/n\to\infty$ and $n\to\infty$. 
In this model, we take the random feature $\frac{1}{\sqrt{d_1}}\sigma(WX)$ and consider the regression with respect to $\v \theta\in\R^{d_1}$ based on
\[f_{\v \theta}(X):=\frac{1}{\sqrt{d_1}}\v \theta^\top \sigma\left(WX\right),\]
with training data $X\in\R^{d_0\times n}$ and training labels $\v y\in\R^{ n}$. Considering the ridge regression with ridge parameter $\lambda\ge 0$ and squared loss defined by
\begin{align}\label{def:lossfunction}
    L(\v \theta):= \|f_{\v \theta}(X)^\top-\v y\|^2+ \lambda\|\v \theta\|^2,
\end{align}
we can conclude that the minimization $ \hat{\v\theta}:=\arg\min_{\v\theta} L(\v\theta)$ has an explicit solution
\begin{equation}\label{eq:theta_hat}
    \hat{\v\theta}= \frac{1}{\sqrt{d_1}}Y\left(\frac{1}{d_1}Y^\top Y+\lambda\Id\right)^{-1}\v y,
\end{equation}where $Y=\sigma(WX)$ is defined in \eqref{eq:Y}. When $\sigma$ is nonlinear, by Theorem~\ref{thm:lowerKn}, it is feasible to take inverse in \eqref{eq:theta_hat} for any $\lambda\ge 0$. Hence, in the following results, we will focus on \emph{nonlinear} activation functions\footnote{As Remark~\ref{remark:linear} stated, when $\sigma(x)=x$, $\lambda_{\min}$ of CK may be possibly vanishing. To include the linear activation function, we can alternatively assume that the ridge parameter $\lambda$ is \emph{strictly} positive and focus on random feature \emph{ridge} regressions.}.
In general, the optimal predictor for this random feature with respect to \eqref{def:lossfunction} is
\begin{align}\label{eq:RFridge}
\hat{f}_{\lambda}^{(RF)}(\x):=\frac{1}{\sqrt{d_1}}\hat{\v\theta}^\top \sigma\left(W\x\right)=K_n(\x,X)(K_n(X,X)+\lambda\Id)^{-1}\v y,
\end{align}
where we define an empirical kernel $K_n(\cdot,\cdot):\R^{d_0} \times \R^{d_0} \to \R$ as
\begin{equation}\label{eq:K_n_def}
    K_n(\x,\z):=\frac{1}{d_1}\sigma(W\x)^\top\sigma(W\z)=\frac{1}{d_1}\sum_{i=1}^{d_1}\sigma (\langle \w_i,\x \rangle ) \sigma (\langle \w_i,\z\rangle ).
\end{equation}
The $n$-dimension row vector is given by 
\begin{equation}\label{eq:K_n_x_def}
K_n(\x,X)=[K_n(\x,\x_1),\ldots,K_n(\x,\x_n)],
\end{equation}
and the $(i,j)$ entry of $K_n(X,X)$ is defined by $K_n(\x_i,\x_j)$, for $1\le i,j\le n$.

Analogously, consider any kernel function $K(\cdot,\cdot): \R^{d_0} \times \R^{d_0} \to \R$. The optimal kernel predictor with a ridge parameter $\lambda\ge 0$ for the kernel ridge regression is given by (see \cite{rahimi2007random,avron2017random,liang2020just,jacot2020implicit,liu2021kernel,bartlett2021deep} for more details)
\begin{align}\label{eq:Kridge}
\hat{f}_{\lambda}^{(K)}(\x):=K(\x,X) (K(X,X)+\lambda\Id)^{-1} \v y,
\end{align}
where $K(X,X)$ is an $n\times n$ matrix such that its $(i,j)$ entry is  $K(\x_i,\x_j)$, and $K(\x,X)$ is a row vector in $\R^n$ similarly with \eqref{eq:K_n_x_def}.  We compare the characteristics of the two different predictors  $\hat{f}_{\lambda}^{(RF)}(\x)$ and $\hat{f}_{\lambda}^{(K)}(\x)$ when the kernel function $K$ is defined in \eqref{eq:def_K}. Denote the optimal predictors for random features and kernel $K$ on training data $X$ by
\begin{align*}
   \hat f_{\lambda}^{(RF)}(X)&=\left(\hat f_{\lambda}^{(RF)}(\x_1),\dots,\hat f_{\lambda}^{(RF)}(\x_n)\right)^\top,\\ 
    \hat f_{\lambda}^{(K)}(X)&=\left(\hat f_{\lambda}^{(K)}(\x_1),\dots, \hat f_{\lambda}^{(K)}(\x_n)\right)^\top,
\end{align*}
respectively. Notice that, in this case, $K(X,X)\equiv\Phi$ defined in \eqref{def:phi} and $K_n(X,X)$ is the random empirical CK matrix $\frac{1}{d_1}Y^\top Y$ defined in \eqref{eq:Y}.

We aim to compare the training and test errors for these two predictors  in ultra-wide random neural networks, respectively. Let \textit{training errors} of these two predictors be 
\begin{align}
 E_{\train}^{(K,\lambda)}:&=\frac{1}{n}\|\hat{f}_{\lambda}^{(K)}(X)-\v y\|_2^2=\frac{\lambda^2}{n}\|  (K(X,X)+\lambda\Id)^{-1}\v y\|^2, \label{eq:Etrain_Kn}\\
  E_{\train}^{(RF,\lambda)}:&=\frac{1}{n}\|\hat{f}_{\lambda}^{(RF)}(X)-\v y\|_2^2= \frac{\lambda^2}{n}\|  (K_n(X,X)+\lambda\Id)^{-1}\v y\|^2.\label{eq:Etrain_K}
\end{align}
In the following theorem, we show that, with high probability, the training error of the random feature regression model can be approximated by the corresponding kernel regression model with the same ridge parameter $\lambda\ge 0$ for ultra-wide neural networks.

\begin{theorem}[Training error approximation]\label{thm:train_diff}
Suppose  Assumptions \ref{assump:W}, \ref{assump:sigma} and \ref{assump:asymptotics} hold, and $\sigma$ is not a linear function. Then, for all large $n$, with  probability at least $1-4 e^{-2n}$,  
\begin{align}\label{eq:ETrain}
    \left| E_{\train}^{(RF,\lambda)}-E_{\train}^{(K,\lambda)}\right|\leq  \frac{C_1}{\sqrt{nd_1}}\left(\sqrt{\frac{n}{d_1}}+C_2\right) \|\v y\|^2,
\end{align}
where constants $C_1$ and $C_2$ only depend on $\lambda$, $B$ and $\sigma$.
\end{theorem}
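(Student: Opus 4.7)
The plan is to reduce the training-error gap to the spectral-norm discrepancy between the empirical random-feature kernel matrix $K_n(X,X)=\frac{1}{d_1}Y^\top Y$ and its expectation $\Phi=K(X,X)$, and then to invoke the non-asymptotic concentration bound already established in Theorem~\ref{thm:spectralnorm}. Setting
\begin{align*}
A:=\bigl(K_n(X,X)+\lambda\Id\bigr)^{-1}, \qquad B:=\bigl(\Phi+\lambda\Id\bigr)^{-1},
\end{align*}
the formulas \eqref{eq:Etrain_Kn} and \eqref{eq:Etrain_K} yield the clean expression
\begin{align*}
E_{\train}^{(RF,\lambda)}-E_{\train}^{(K,\lambda)} = \frac{\lambda^2}{n}\,\v y^{\top}\bigl(A^2-B^2\bigr)\v y.
\end{align*}
I would treat $\lambda>0$; the boundary case $\lambda=0$ is trivial because Theorem~\ref{thm:lowerKn} together with the Hermite lower bound on $\lambda_{\min}(\Phi)$ (same as in Theorem~\ref{thm:lowerKn}) guarantees invertibility and forces both training errors to vanish identically.

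The two key tools are the operator splitting $A^2-B^2=A(A-B)+(A-B)B$ and the resolvent identity
\begin{align*}
A-B = A\bigl(\Phi-K_n(X,X)\bigr)B.
\end{align*}
Chaining them produces
\begin{align*}
\bigl|E_{\train}^{(RF,\lambda)}-E_{\train}^{(K,\lambda)}\bigr|\leq \frac{\lambda^2}{n}\bigl(\|A\|+\|B\|\bigr)\|A\|\,\|B\|\,\|\Phi-K_n(X,X)\|\,\|\v y\|^2.
\end{align*}
Since $K_n(X,X)$ and $\Phi$ are positive semidefinite, $\|A\|,\|B\|\leq 1/\lambda$, so the prefactor collapses to $\frac{2}{n\lambda}\,\|\Phi-K_n(X,X)\|\,\|\v y\|^2$. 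Applying Theorem~\ref{thm:spectralnorm} with $\|X\|\leq B$ (from the $(\varepsilon_n,B)$-orthonormality of $X$) bounds $\|\Phi-K_n(X,X)\|$, on an event of probability at least $1-4e^{-2n}$, by a constant multiple of $\lambda_{\sigma}^{2}B^{2}\bigl(\sqrt{n/d_1}+n/d_1\bigr)+B^{2}\lambda_{\sigma}^{2}\sqrt{n/d_1}$. Dividing by $n$ and using the elementary identity $n^{-1}\sqrt{n/d_1}=(nd_1)^{-1/2}$ and $n^{-1}(n/d_1)=(nd_1)^{-1/2}\sqrt{n/d_1}$ rearranges the two terms into $\frac{1}{\sqrt{nd_1}}\bigl(C_2+\sqrt{n/d_1}\bigr)$ up to the overall constant, and absorbing $\lambda,B,\lambda_{\sigma}$ into $C_1,C_2$ gives the claimed bound.

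The main technical weight has already been carried by the nonlinear Hanson--Wright inequality underlying Theorem~\ref{thm:spectralnorm}, so what remains here is routine resolvent calculus and the bookkeeping step that converts $n^{-1}\bigl(\sqrt{n/d_1}+n/d_1\bigr)$ into $(nd_1)^{-1/2}\bigl(1+\sqrt{n/d_1}\bigr)$. Thus the central obstacle is conceptual rather than technical: recognizing that the sole stochastic ingredient of the difference $E_{\train}^{(RF,\lambda)}-E_{\train}^{(K,\lambda)}$ is the operator-norm deviation $\|K_n(X,X)-\Phi\|$, with everything else being a deterministic resolvent perturbation controlled uniformly by $\lambda$.
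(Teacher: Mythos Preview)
Your proposal is correct and follows essentially the same strategy as the paper: reduce the training-error gap to a resolvent perturbation and then invoke Theorem~\ref{thm:spectralnorm} to control $\|K_n(X,X)-\Phi\|$. The algebraic details differ slightly. The paper applies the identity $A^{-1}-B^{-1}=B^{-1}(B-A)A^{-1}$ directly to the \emph{squared} resolvents $A=(K+\lambda\Id)^{2}$, $B=(K_n+\lambda\Id)^{2}$, and then expands $\|(K+\lambda\Id)^{2}-(K_n+\lambda\Id)^{2}\|$; you instead split $A^{2}-B^{2}=A(A-B)+(A-B)B$ at the level of the first-power resolvents before applying the resolvent identity, which is a little cleaner. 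More substantively, the paper bounds $\|(K+\lambda\Id)^{-1}\|$ and $\|(K_n+\lambda\Id)^{-1}\|$ via the smallest-eigenvalue estimates of Lemma~\ref{thm:minlowerbound} and Theorem~\ref{thm:lowerKn} (this is where the nonlinearity of $\sigma$ enters for $\lambda>0$), yielding constants that stay bounded as $\lambda\to 0$; you use the trivial bound $1/\lambda$, which is simpler but produces $C_1\propto 1/\lambda$. Since the theorem allows $C_1,C_2$ to depend on $\lambda$, both arguments deliver the stated conclusion.
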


Next, to investigate the test errors (or generalization errors), we introduce further assumptions on the data and the target function that we want to learn from training data. Denote the true regression function by $f^*: \mathbb R^{d_0}\to \R$. Then, the training labels are defined by 
\begin{equation}\label{def:vy}
    \v y=f^*(X)+\boldsymbol \epsilon\quad\text{and}\quad f^*(X)=(f^*(\x_1),\ldots,f^*(\x_n))^\top,
\end{equation}
where $\veps\in\R^n$ is the training label noise. For simplicity, we further impose the following assumptions, analogously to \cite{lin2021causes}.

\begin{assumption}\label{assump:target}
Assume that the target function is a linear function $f^*(\x)=\langle\bbeta^*,\x\rangle$, where random vector satisfies $\bbeta^*\sim\N(0,\sigma^2_{\bbeta}\Id)$.
Then, in this case, the  training label vector is given by $\v y=X^\top\bbeta^*+\veps$ where $\boldsymbol \epsilon\sim \N (\mathbf{0},\sigma_{\veps}^2\Id)$ independent with $\bbeta^*\in\R^{d_0}$.
\end{assumption}
\begin{assumption}\label{assump:testdata}
Suppose that training dataset $X=[\x_1,\ldots,\x_n]\in\R^{d_0\times n}$ satisfies $(\eps_n,B)$-orthonormal condition with $n\eps_n^4=o(1)$, and a test data $\x\in\R^{d_0}$ is independent with $X$ and $\v y$ such that $\tilde X:=[\x_1,\ldots,\x_n,\x]\in\R^{d_0\times (n+1)} $ is also $(\eps_n,B)$-orthonormal. For convenience, we further assume the population covariance of the test data is $\E_\x[\x\x^\top ]=\frac{1}{d_0}\Id$.
\end{assumption}
\begin{remark}
Our Assumption \ref{assump:testdata} of  the test data $\x$ ensures the same statistical behavior as training data in $X$, but we do not have any explicit assumption of the distribution of $\x$. It is promising to adopt such assumptions to handle statistical models with real-world data \cite{liao2018spectrum,Liao2020ARM}. Besides, it is possible to extend our analysis to general population covariance for $\E_\x[\x\x^\top ]$.
\end{remark}

For any predictor $\hat f$, define the \textit{test error} (generalization error) by 
\begin{align}\label{eq:def_testerror}
    \mathcal L(\hat f):=\E_{\x}[|\hat f(\x)-f^*(\x)|^2].
\end{align} 
We first present the following approximation of the test error of a random feature predictor via its corresponding kernel predictor.
\begin{theorem}[Test error approximation]\label{thm:test_diff}
Suppose that Assumptions \ref{assump:W}, \ref{assump:sigma}, \ref{assump:target} and \ref{assump:testdata} hold, and $\sigma$ is not a linear function. 
Then, for any $\varepsilon\in(0,1/2)$, the difference of test errors satisfies
\begin{equation}\label{eq:test_diff}
     \left| \mathcal L(\hat{f}_{\lambda}^{(RF)}(\x))-\mathcal L (\hat{f}_{\lambda}^{(K)}(\x))\right| =o\left(\left(n/d_1\right)^{\frac{1}{2}-\varepsilon}\right),
\end{equation}
with probability $1-o(1)$, when $n/d_1\to 0 $ and $n\to \infty$. 
\end{theorem}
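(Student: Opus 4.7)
The plan is to reduce the test-error difference to an $L^2(\x)$-bound on the difference of predictors and then control the latter using Theorem~\ref{thm:spectralnorm} applied to the augmented matrix $\tilde X=[X,\x]$. First, using $a^2-b^2=(a-b)(a+b)$ inside $\E_\x$ and Cauchy--Schwarz,
\begin{align*}
\left|\mathcal L(\hat f_\lambda^{(RF)})-\mathcal L(\hat f_\lambda^{(K)})\right| \le \sqrt{\E_\x[(\hat f_\lambda^{(RF)}(\x)-\hat f_\lambda^{(K)}(\x))^2]}\cdot\sqrt{\E_\x[(\hat f_\lambda^{(RF)}(\x)+\hat f_\lambda^{(K)}(\x)-2f^*(\x))^2]}.
\end{align*}
The second factor is at most $2\sqrt{\mathcal L(\hat f_\lambda^{(RF)})}+2\sqrt{\mathcal L(\hat f_\lambda^{(K)})}=O(1)$ with high probability under Assumption~\ref{assump:target}, so it suffices to show that the first factor is $o((n/d_1)^{1/2-\eps})$ with probability $1-o(1)$.

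For this, I would use the resolvent identity to write
\begin{align*}
\hat f_\lambda^{(RF)}(\x)-\hat f_\lambda^{(K)}(\x) = \Delta_1(\x)^\top(K_n+\lambda\Id)^{-1}\v y + K(\x,X)(K+\lambda\Id)^{-1}\Delta_2(K_n+\lambda\Id)^{-1}\v y,
\end{align*}
with $\Delta_1(\x):=K_n(X,\x)-K(X,\x)\in\R^n$ and $\Delta_2:=K(X,X)-K_n(X,X)\in\R^{n\times n}$, and then bound each piece on the high-probability event where Theorems~\ref{thm:spectralnorm} and~\ref{thm:lowerKn} are in force. Since $\tilde X$ is $(\eps_n,B)$-orthonormal by Assumption~\ref{assump:testdata}, Theorem~\ref{thm:spectralnorm} applied to $\tilde X$ yields $\|K_n(\tilde X,\tilde X)-K(\tilde X,\tilde X)\|=O(\sqrt{n/d_1})$; extracting a row and the principal $n\times n$ block gives $\|\Delta_1(\x)\|_2=O(\sqrt{n/d_1})$ and $\|\Delta_2\|=O(\sqrt{n/d_1})$ simultaneously. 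Theorem~\ref{thm:lowerKn} and its analogue for $\Phi$ (via the $\Phi_0$ approximation) give $\|(K_n+\lambda\Id)^{-1}\|,\|(K+\lambda\Id)^{-1}\|=O(1)$. With $\E[\v y\v y^\top]=\sigma_{\bbeta}^2X^\top X+\sigma_\veps^2\Id$ of $O(1)$ operator norm, a direct quadratic-form estimate in $\v y$ then yields
\begin{align*}
\E_{\v y,\x}[(\hat f_\lambda^{(RF)}(\x)-\hat f_\lambda^{(K)}(\x))^2]=O(n/d_1)
\end{align*}
on the good event.

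Markov's inequality applied to the nonnegative random variable $\E_\x[(\hat f_\lambda^{(RF)}-\hat f_\lambda^{(K)})^2]$ upgrades this expectation bound to $\sqrt{\E_\x[(\hat f_\lambda^{(RF)}-\hat f_\lambda^{(K)})^2]}=o((n/d_1)^{1/2-\eps})$ with probability $1-o(1)$ for any fixed $\eps\in(0,1/2)$ (choosing a slightly smaller exponent $\eps'<\eps$ in Markov so that $O((n/d_1)^{1/2-\eps'})=o((n/d_1)^{1/2-\eps})$), which is precisely the source of the $\eps$ slack in the exponent. The main obstacle will be the second piece $K(\x,X)(K+\lambda\Id)^{-1}\Delta_2(K_n+\lambda\Id)^{-1}\v y$, since the row $K(\x,X)$ has $\ell_2$-norm only of order $\sqrt n\,\eps_n$ rather than uniformly $O(1)$: via the Hermite expansion of $K$ and $|\x^\top\x_i|\le\eps_n$, each entry $K(\x,\x_i)$ is $O(\eps_n)$, so one must exploit the hypothesis $n\eps_n^4=o(1)$ from Assumption~\ref{assump:testdata} to absorb this excess factor into the $O(n/d_1)$ bound in expectation. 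A secondary technical subtlety is that Theorems~\ref{thm:spectralnorm} and~\ref{thm:lowerKn} must be invoked on the augmented matrix $\tilde X$ rather than on $X$ alone, for which Assumption~\ref{assump:testdata} is exactly tailored.
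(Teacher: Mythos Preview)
Your route is genuinely different from the paper's. The paper never passes through the Cauchy--Schwarz factorization; instead it expands $\mathcal L(\hat f_\lambda^{(RF)})-\mathcal L(\hat f_\lambda^{(K)})$ directly as $(E_1-\bar E_1)-2(E_2-\bar E_2)$, where each $E_i,\bar E_i$ is a quadratic or bilinear form in $(\bbeta^*,\veps)$ with matrix built from $K,K_n,\E_\x[K(\x,X)^\top K(\x,X)]$ and $\E_\x[\x K(\x,X)]$. Each difference is split telescopically into four pieces $J_i=\v y^\top A_i\v y$ (resp.\ two pieces $I_i$), every matrix $A_i$ is shown to have Frobenius norm $O(\sqrt{n/d_1})$ on the good $W$-event, and then the Gaussian quadratic-form variance bound (Lemma~\ref{lemma:quadratic_ybeta}) plus Chebyshev gives the $(n/d_1)^{1/2-\eps}$ rate. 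This avoids entirely the need to know that $\mathcal L(\hat f_\lambda^{(K)})$ or $\mathcal L(\hat f_\lambda^{(RF)})$ is $O(1)$.

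Your approach can be made to work, but two points deserve correction. First, the ``main obstacle'' you flag is a non-issue: the $\ell_2$-norm of the row $K(\x,X)$ is bounded by the operator norm of the full matrix $K(\tilde X,\tilde X)$, which is $O(1)$ by Lemma~\ref{lem:Kupperbound}; you do not need the entrywise bound $|K(\x,\x_i)|=O(\eps_n)$. With $\|K(\x,X)\|\le C$ in hand, the nuclear-norm computation $\Tr\big(\E_\x[K(\x,X)^\top K(\x,X)]\big)=\E_\x\|K(\x,X)\|^2\le C$ combined with $\|\Delta_2\|=O(\sqrt{n/d_1})$ does give $\E_{\v y,\x}[(\hat f_\lambda^{(RF)}-\hat f_\lambda^{(K)})^2]=O(n/d_1)$ as you claim. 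Second, the assertion that the other Cauchy--Schwarz factor is $O(1)$ is the real gap in your sketch: the naive bound on $\v y^\top K_\lambda^{-1}\E_\x[K(\x,X)^\top K(\x,X)]K_\lambda^{-1}\v y$ gives only $O(\|\v y\|^2)\sim O(n)$. You need the same nuclear-norm observation to get $\E_{\v y}[\,\cdot\,]=O(1)$, plus a variance bound (again Lemma~\ref{lemma:quadratic_ybeta}) to conclude $\mathcal L(\hat f_\lambda^{(K)})=O_p(1)$, and then a bootstrap to $\mathcal L(\hat f_\lambda^{(RF)})=O_p(1)$. The paper's direct decomposition sidesteps this extra step; your route is conceptually cleaner but purchases that cleanliness with an additional lemma.
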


Theorems \ref{thm:train_diff} and \ref{thm:test_diff} verify that the random feature regression achieves
the same asymptotic errors as the kernel regression, as long as $n/d_1\to\infty$. This is closely related to \cite[Theorem 1]{mei2021generalization} with different settings. Based on that, we can compute the asymptotic training and test errors for the random feature model by calculating the corresponding quantities for the kernel regression in the ultra-wide regime where $n/d_1\to 0$. 
\begin{theorem}[Asymptotic training and test errors]\label{thm:limit_error} 
Suppose  Assumptions \ref{assump:W} and \ref{assump:sigma} hold,  and $\sigma$ is not a linear function. 
Suppose the target function $f^*$, training data $X$ and test data $\x\in\R^{d_0}$ satisfy Assumptions \ref{assump:target} and \ref{assump:testdata}. For any $\lambda\ge 0$, let the effective ridge parameter be
\begin{equation}\label{eq:lameff}
    \lameff(\lambda,\sigma):=\frac{1+\lambda-b_\sigma^2}{b_\sigma^2}.
\end{equation}
If the training data has some limiting eigenvalue distribution $\mu_0=\limspec X^\top X$ as $n\to \infty$ and $n/d_0\to \gamma\in(0,\infty)$, then when $n/d_1\to 0$ and $n\to\infty$, the training error satisfies
\begin{equation}\label{eq:train_limit}
    E_{\train}^{(RF,\lambda)}\xrightarrow[]{\P }\frac{\sigma^2_{\bbeta}\lambda^2}{\gamma b_\sigma^4}\mathcal{V}_K(\lameff(\lambda,\sigma))+\frac{\sigma^2_{\veps}\lambda^2}{\gamma (1+\lambda-b_\sigma^2)^2}\left(\mathcal{B}_K(\lameff(\lambda,\sigma))-1+\gamma\right),
\end{equation}
and the test error satisfies
\begin{equation}\label{eq:test_limit}
     \mathcal L(\hat{f}_{\lambda}^{(RF)}(\x)) \xrightarrow[]{\P } \sigma^2_{\bbeta}\,\mathcal{B}_{K}(\lameff(\lambda,\sigma))+\sigma_{\veps}^2\,\mathcal{V}_K(\lameff(\lambda,\sigma)),
\end{equation}
where the bias and variance functions are defined by
\begin{align}
    \mathcal{B}_{K}(\nu):=~&(1-\gamma)+\gamma\nu^2 \int_{\R} \frac{1}{(x+\nu)^2}d\mu_0(x),\\
    \mathcal{V}_{K}(\nu):=~&\gamma \int_{\R} \frac{x}{(x+\nu)^2}d\mu_0(x).
\end{align}
\end{theorem}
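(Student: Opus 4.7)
The plan is to reduce the problem in three stages: (i) replace the random feature regression by its population kernel regression using the concentration results of Section~\ref{sec:non-asymp}; (ii) linearize the kernel $K$ on approximately orthonormal data to obtain an effective linear ridge regression with ridge parameter $\lameff$; (iii) evaluate the resulting bias-variance decomposition using $\mu_0=\limspec X^\top X$. For step (i), Theorem~\ref{thm:train_diff} gives $|E_{\train}^{(RF,\lambda)}-E_{\train}^{(K,\lambda)}|\to 0$ in probability, since $\|\v y\|^2=\|X^\top\bbeta^*+\veps\|^2=O(n)$ with probability tending to one by Gaussian concentration and $\|X\|\le B$; Theorem~\ref{thm:test_diff} similarly gives $|\mathcal L(\hat f_\lambda^{(RF)})-\mathcal L(\hat f_\lambda^{(K)})|\to 0$ in probability as $n/d_1\to 0$. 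It therefore suffices to evaluate $E_{\train}^{(K,\lambda)}$ and $\mathcal L(\hat f_\lambda^{(K)}(\x))$ in the limit.

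\textbf{Linearization to ridge regression.}
Under $(\eps_n,B)$-orthonormality with $n\eps_n^4\to 0$, the spectral-norm approximation $\|\Phi-\Phi_0\|=o(1)$ from \cite{fan2020spectra} applies, with $\Phi_0$ as in \eqref{def:Phi0}. Because $\|\x_\alpha\|=1+O(\eps_n)$ and $|\langle \x_i,\x_j\rangle|\le\eps_n$ for $i\ne j$, the rank-one correction $\vmu\vmu^\top$ and the off-diagonal parts of the Hadamard powers $f_k(X^\top X)$ for $k\in\{2,3\}$ contribute negligibly to the resolvent traces and bilinear forms below, while their diagonal contributions merge with the $(1-\sum_{k\le 3}\zeta_k(\sigma)^2)\Id$ term to yield an effective shift $(1-b_\sigma^2)\Id$. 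This produces the linearization
\[K(X,X)\;\sim\; b_\sigma^2 X^\top X+(1-b_\sigma^2)\Id\]
at the level of the relevant spectral functionals. Assumption~\ref{assump:testdata} makes $\tilde X=[X,\x]$ also $(\eps_n,B)$-orthonormal, so the same argument gives $\|K(\x,X)-b_\sigma^2\x^\top X\|_2=o(1)$ and $K(\x,\x)=1+o(1)$. Combined with a resolvent identity,
\[(K(X,X)+\lambda\Id)^{-1}\;\sim\;\frac{1}{b_\sigma^2}(X^\top X+\lameff\Id)^{-1},\qquad \lameff=\frac{1+\lambda-b_\sigma^2}{b_\sigma^2},\]
so $\hat f_\lambda^{(K)}(\x)\approx \x^\top X(X^\top X+\lameff\Id)^{-1}\v y$: the kernel predictor reduces to the linear ridge predictor on $X$ with ridge parameter $\lameff$.

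\textbf{Bias-variance computation.}
Plugging $\v y=X^\top\bbeta^*+\veps$ into the linearized predictor and using $X(X^\top X+\lameff\Id)^{-1}X^\top=\Id-\lameff(XX^\top+\lameff\Id)^{-1}$,
\[\hat f_\lambda^{(K)}(\x)-f^*(\x)\;\approx\; \x^\top\bigl[-\lameff(XX^\top+\lameff\Id)^{-1}\bbeta^*+X(X^\top X+\lameff\Id)^{-1}\veps\bigr].\]
Taking $\E_\x$ with $\E_\x[\x\x^\top]=\Id/d_0$ (Assumption~\ref{assump:testdata}) and then averaging over the independent $\bbeta^*\sim\N(0,\sigma_\bbeta^2\Id)$ and $\veps\sim\N(0,\sigma_\veps^2\Id)$ yields
\[\frac{\sigma_\bbeta^2\lameff^2}{d_0}\Tr(XX^\top+\lameff\Id)^{-2}+\frac{\sigma_\veps^2}{d_0}\Tr\!\bigl[XX^\top(XX^\top+\lameff\Id)^{-2}\bigr],\]
which, as $n/d_0\to\gamma$ and $\mu_{X^\top X}\to\mu_0$ and after accounting for the $|d_0-n|$ zero eigenvalues of $XX^\top$, converges to $\sigma_\bbeta^2\mathcal B_K(\lameff)+\sigma_\veps^2\mathcal V_K(\lameff)$, giving \eqref{eq:test_limit}. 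Gaussian Hanson-Wright applied to the quadratic forms in $\bbeta^*$ and $\veps$ upgrades convergence of expectations to convergence in probability. A parallel trace computation for $E_{\train}^{(K,\lambda)}=\tfrac{\lambda^2}{n}\|(K(X,X)+\lambda\Id)^{-1}\v y\|^2\approx\tfrac{\lambda^2}{b_\sigma^4 n}\|(X^\top X+\lameff\Id)^{-1}(X^\top\bbeta^*+\veps)\|^2$, together with the identity $b_\sigma^4\lameff^2=(1+\lambda-b_\sigma^2)^2$, produces \eqref{eq:train_limit}.

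\textbf{Main obstacle.}
The delicate step is making precise in what sense the linearization $K(X,X)\sim b_\sigma^2 X^\top X+(1-b_\sigma^2)\Id$ holds: the Hadamard powers $f_k(X^\top X)$ for $k\ge 2$ need not be close to $\Id$ in spectral norm under only $(\eps_n,B)$-orthonormality with $n\eps_n^4\to 0$, so the approximation must be handled at the level of the specific resolvent traces $\tr(\Phi+\lambda\Id)^{-p}$ and bilinear forms $\x^\top(\Phi+\lambda\Id)^{-1}X^\top\v u$ appearing in the decomposition above. I expect to control this by splitting $\Phi$ along its Hermite expansion, bounding each non-linear component in Hilbert-Schmidt or trace norm, and exploiting the extra cancellations that arise when such corrections are contracted against the independent Gaussian vectors $\bbeta^*$, $\veps$, and $\x$. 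Stability of $\mu_{X^\top X}\to\mu_0$ under the bounded perturbation by $\lameff$ then closes the argument.
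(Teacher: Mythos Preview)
Your proposal is correct and follows essentially the same route as the paper: reduce RF to kernel regression via Theorems~\ref{thm:train_diff} and~\ref{thm:test_diff}, linearize $\Phi$ to $b_\sigma^2 X^\top X+(1-b_\sigma^2)\Id$ at the level of the relevant quadratic forms and normalized traces (not in spectral norm), concentrate in $\bbeta^*,\veps$ via Gaussian quadratic-form bounds, and pass to $\mu_0$. You have also correctly identified the main obstacle and its resolution: the paper carries this out exactly through the Frobenius-norm estimate $\|K_\lambda^{-1}-\bar K_\lambda^{-1}\|_F\le C\sqrt{n^2\eps_n^4+n\eps_n^2}$ (from the proof of Lemma~\ref{lemma:spectrum_Phi}) and the Frobenius-norm bounds $\|\E_\x[K(\x,X)^\top K(\x,X)]-\tfrac{b_\sigma^4}{d_0}X^\top X\|_F,\|\E_\x[\x K(\x,X)]-\tfrac{b_\sigma^2}{d_0}X\|_F\le C\sqrt n\,\eps_n^2$, which suffice to control the specific traces and bilinear forms you wrote down. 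The only cosmetic difference is that the paper keeps all computations in $n$-dimensional traces of $\bar K_\lambda^{-1}$, whereas you pass to the $d_0$-dimensional resolvent $(XX^\top+\lameff\Id)^{-1}$ via $X(X^\top X+\lameff\Id)^{-1}X^\top=\Id-\lameff(XX^\top+\lameff\Id)^{-1}$; the two are equivalent.
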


We emphasize that in the proof of Theorem \ref{thm:limit_error}, we also get $n$-dependent deterministic equivalents for training/test errors of the kernel regression to approximate the performance of random feature regression. This is akin to \cite[Theorem 3]{Liao2020ARM} and \cite[Theorem 4.13]{bartlett2021deep}, but in different regimes. In the following Figure \ref{fig:krr}, we present implementations of test errors for random feature regressions on standard Gaussian random data and their limits \eqref{eq:test_limit}. For simplicity, we fix $n,d_0$, only let $d_1\to\infty$, and use empirical spectral distribution of $X^\top X$ to approximate $\mu_0$ in $\mathcal{B}_{K}(\lameff(\lambda,\sigma))$ and $\mathcal{V}_{K}(\lameff(\lambda,\sigma))$, which is actually the $n$-dependent deterministic equivalent. However, for Gaussian random matrix $X$, $\mu_0$ is actually a Marchenko-Pastur law with ratio $\gamma$, so  $\mathcal{B}_{K}(\lameff(\lambda,\sigma))$ and $\mathcal{V}_{K}(\lameff(\lambda,\sigma))$ can be computed explicitly according to \cite[Definition 1]{lin2021causes}.
\begin{figure}[!ht]  
\centering
\begin{minipage}[t]{0.45\linewidth}
\centering
{\includegraphics[width=0.99\textwidth]{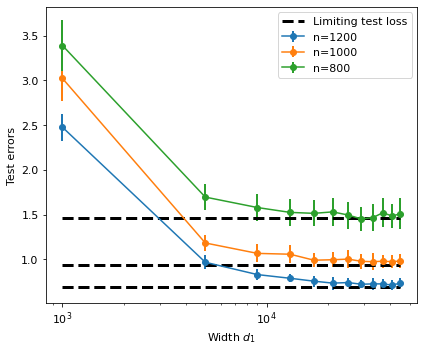}}  
\end{minipage}
\begin{minipage}[t]{0.45\linewidth}
\centering 
{\includegraphics[width=0.99\textwidth]{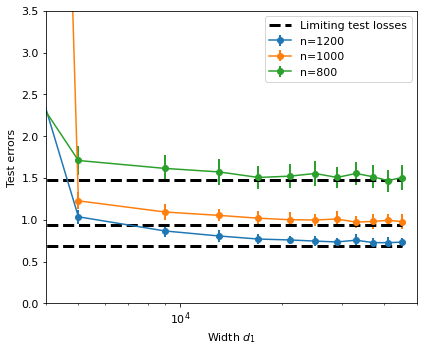}}
\end{minipage}  
 \caption{\small Simulations for the test errors of random feature regressions with centered Gaussian random matrix as input $X$ and regularization parameter $\lambda=10^{-3}$ (left) and $\lambda=10^{-6}$ (right). Here, the activation function $\sigma$ is a re-scaled Sigmoid function, $\sigma_{\veps}=1$ and $\sigma_{\bbeta}=2$. We fix $d_0=500$, varying values of sample sizes $n$ and widths $d_1$. Test errors in solid lines with error bars are computed using an independent test set of size 5000. We average our results over 50 repetitions. Limiting test errors in black dash lines are  computed by \eqref{eq:test_limit}, and we take $\mu_0$ to be the corresponding Marchenko–Pastur distributions.}   
\label{fig:krr}  
\end{figure}

\begin{remark}[Implicit regularization]
 For nonlinear $\sigma$, the effective ridge parameter \eqref{eq:lameff} can be viewed as an inflated ridge parameter since $b_\sigma^2\in [0,1)$ and $\lameff>\lambda\ge 0$. This $\lameff$ leads to \emph{implicit regularization} for our random feature and kernel ridge regressions  even for the ridgeless regression with $\lambda=0$ \cite{liang2020just,montanari2020interpolation,jacot2020implicit,bartlett2021deep}. This effective ridge parameter $\lameff$ also shows the effect of the nonlinearity in the random feature and kernel regressions induced by ultra-wide neural networks.
\end{remark}

\begin{remark}
For convenience, we only consider the linear target function $f^*$, but in general, the above theorems can also be obtained for nonlinear target functions, for instance, $f^*$ is a nonlinear single-index model. Under $(\eps_n,B)$-orthonormal assumption with $n\eps_n^4\to 0$, our expected  kernel $K(X,X)\equiv\Phi$ is approximated in terms of
\begin{equation}\label{eq:K_equi}
    \limspec K(X,X)=\limspec \left(b_\sigma^2 X^\top X+(1-b^2_\sigma)\Id\right),
\end{equation}
whence, this  kernel regression can only learn linear functions. So if $f^*$ is nonlinear, the limiting test error should be decomposed into the linear part as \eqref{eq:test_limit} and the nonlinear component as a noise \cite[Theorem 4.13]{bartlett2021deep}. For more conclusions of this kernel machine, we refer to \cite{liang2020just,liang2020multiple,liu2021kernel,mei2021generalization}.
\end{remark}
\begin{remark}[Neural tangent regression]
In parallel to the above results, we can obtain a similar analysis of the limiting training and test errors for random feature regression  in \eqref{eq:RFridge} with empirical NTK given by either $K_n(X, X)=\frac{1}{d_1}(S^\top S)\odot(X^\top X)$ or $K_n(X, X)=H$. This random feature regression also refers to \emph{neural tangent regression} \cite{montanari2020interpolation}. With the help of our concentration results in Theorem \ref{thm:NTK_concentration} and the lower bound of the smallest eigenvalues in Theorem \ref{thm:NTK_Hlowerbound}, we can directly extend the above Theorems~\ref{thm:train_diff}, \ref{thm:test_diff} and \ref{thm:limit_error} to this neural tangent regression. We omit the proofs in these cases and only state the results as follows.

If $K_n(X, X)=\frac{1}{d_1}(S^\top S)\odot(X^\top X)$ with expected kernel $K(X, X)=\Psi$ defined by \eqref{eq:defPsi}, the limiting training and test errors of this neural tangent regression can be approximated by the kernel regression with respect to $\Psi$, as long as $d_1=\omega(\log n)$. Analogously to \eqref{eq:K_equi}, we have an additional approximation
\begin{equation}\label{eq:K_equi_psi}
    \limspec \Psi=\limspec \left(b_\sigma^2 X^\top X+(a_\sigma-b^2_\sigma)\Id\right).
\end{equation}
Under the same assumptions of Theorem~\ref{thm:limit_error} and replacing $n/d_1\to 0$ with  $d_1=\omega(\log n)$, we can conclude that the test error of this neural tangent regression has the same limit as \eqref{eq:test_limit} but changing the effective ridge parameter \eqref{eq:lameff} into $\lameff(\lambda,\sigma)=\frac{a_\sigma+\lambda-b_\sigma^2}{b_\sigma^2}$. This result is akin to \cite[Corollary 3.2]{montanari2020interpolation} but permits more general assumptions on $X$. The limiting training error of this neural tangent regression can be obtained by slightly modifying the coefficient in \eqref{eq:train_limit}.

Similarly, if $K_n(X, X)=H$ defined by \eqref{eq:Hmatrixform} possesses an expected kernel $K(X,X)=\Phi+\Psi$, this neural tangent regression in \eqref{eq:RFridge} is close to kernel regression \eqref{eq:Kridge} with kernel
\[K(\x,\z)=\mathbb E_{\w} [\sigma (\w^\top\x) \sigma (\w^\top\x)]+\mathbb E_{\w} [\sigma' (\w^\top\x ) \sigma'(\w^\top\x )]\x^\top\z,\]
under the ultra-wide regime, $n/d_1 \to 0$. Combining \eqref{eq:K_equi} and \eqref{eq:K_equi_psi}, Theorem~\ref{thm:limit_error} can directly be extended to this neural tangent regression but replacing \eqref{eq:lameff} with $\lameff(\lambda,\sigma)=\frac{a_\sigma+1+\lambda-2b_\sigma^2}{2b_\sigma^2}$.
Section 6.1 of \cite{adlam2020neural} also calculated this limiting test error when data $X$ is isotropic Gaussian.
\end{remark} 

\subsubsection*{Organization of the paper} The remaining parts of the paper are structured as follows.
In Section \ref{sec:nonlinearHW}, we first provide a nonlinear Hanson-Wright inequality as a concentration tool for our spectral analysis. Section \ref{sec:general_sample_covariance} gives a general theorem for the limiting spectral distributions of generalized centered sample covariance matrices. We prove the limiting spectral distributions  for the empirical CK and NTK matrices (Theorem \ref{thm:law_DNN} and Theorem \ref{thm:law_NTK}) in Section \ref{sec:1hlNN}. Non-asymptotic estimates  in subsection~\ref{sec:non-asymp} are proved in Section \ref{sec:nonasymptotic}. In Section \ref{sec:generalizationerror}, we justify the asymptotic results of the training and test errors for the random feature model (Theorem \ref{thm:train_diff} and Theorem \ref{thm:test_diff}). 
Auxiliary lemmas and additional simulations are included in Appendices.

\section{A non-linear Hanson-Wright inequality}\label{sec:nonlinearHW}
We give an improved version of Lemma 1 in \cite{louart2018random} with a simple proof based on a Hanson-Wright inequality for random vectors with dependence \cite{adamczak2015note}. This serves as the concentration tool for us to prove the deformed semicircle law in Section~\ref{sec:1hlNN} and provide bounds on extreme eigenvalues in Section~\ref{sec:nonasymptotic}. We first define some concentration properties for random vectors.

\begin{definition}[Concentration property]
Let $X$ be a random vector in $\mathbb R^n$. We say $X$ has the \textit{$K$-concentration property} with constant $K$ if for any $1$-Lipschitz function $f: \mathbb R^n \to \R$, we have $\mathbb E|f(X)|<\infty$ and for any $t>0$,
\begin{align}\label{eq:Kconvex}
    \P (|f(X)-\E f(X)|\geq t)\leq 2\exp(-t^2/K^2).
\end{align}
\end{definition}
There are many distributions of random vectors satisfying $K$-concentration property, including uniform random vectors on the sphere, unit ball, hamming  or continuous cube, uniform random permutation, etc. See \cite[Chapter 5]{vershynin2018high} for more details.

\begin{definition}[Convex concentration property]
Let $X$ be a random vector in $\mathbb R^n$. We say $X$ has the \textit{$K$-convex concentration property} with the constant $K$ if for any $1$-Lipschitz convex function $f: \mathbb R^n \to \R$, we have $\mathbb E|f(X)|<\infty$ and for any $t>0$,
\begin{align*}
    \P (|f(X)-\E f(X)|\geq t)\leq 2\exp(-t^2/K^2).
\end{align*}
\end{definition}

We will apply the following result from \cite{adamczak2015note}  to the nonlinear setting.

\begin{lemma}[Theorem 2.5 in \cite{adamczak2015note}]\label{eq:HWinequality}
Let $X$ be a mean zero random vector in $\R^n$. If $X$ has the $K$-convex concentration property, then for any $n\times n$ matrix $A$ and any $t>0$,
\begin{align*}
    \P ( |X^\top AX -\E (X^\top AX)| \geq t) \leq 2\exp \left(-\frac{1}{C} \min \left\{ \frac{t^2}{2K^4\|A\|_F^2}, \frac{t}{K^2\|A\|}\right\}\right) 
\end{align*}
for some universal constant $C>1$.
\end{lemma}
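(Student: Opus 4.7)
The plan is to reduce to a positive semi-definite (PSD) quadratic form, apply the $K$-convex concentration property to the norm of a suitable linear image of $X$, and then transfer the resulting subgaussian tail on that norm to the mixed subgaussian/subexponential tail on its square. First, since $X^\top A X = X^\top \tfrac{1}{2}(A+A^\top) X$ we may assume $A$ is symmetric, and decomposing $A = A_+ - A_-$ into PSD parts with $\|A_\pm\|\le\|A\|$ and $\|A_+\|_F^2+\|A_-\|_F^2=\|A\|_F^2$ reduces the problem to PSD $A$ via a union bound that is absorbed into the constant $C$.

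For PSD $A$, the function $f(x) := \|A^{1/2}x\|_2$ is convex (composition of the Euclidean norm with a linear map) and Lipschitz with constant $\|A^{1/2}\|_{\mathrm{op}} = \|A\|^{1/2}$. Applying the $K$-convex concentration property to $f/\|A\|^{1/2}$ gives
\begin{equation*}
    \P\bigl(|f(X) - \E f(X)| \ge s\bigr) \le 2\exp\bigl(-s^2/(K^2\|A\|)\bigr), \qquad s>0,
\end{equation*}
so $Z := f(X)$ is $K\|A\|^{1/2}$-subgaussian around $\mu := \E Z$. To pass to $Z^2 = X^\top A X$, decompose
\begin{equation*}
    Z^2 - \E Z^2 \;=\; 2\mu(Z-\mu) + \bigl[(Z-\mu)^2 - \Var(Z)\bigr].
\end{equation*}
The first summand is linear in a subgaussian variable and yields $\P(|2\mu(Z-\mu)|\ge t/2) \le 2\exp(-ct^2/(K^2\|A\|\mu^2))$; the second is the centered square of a subgaussian, hence sub-exponential, and Bernstein's inequality gives $2\exp(-c\min(t^2/(K^4\|A\|^2), t/(K^2\|A\|)))$. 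Combining these via a union bound and using the key moment estimate $\mu^2 \lesssim K^2 \|A\|_F^2/\|A\|$ produces the advertised mixed tail $\exp(-c\min(t^2/(K^4\|A\|_F^2), t/(K^2\|A\|)))$, noting that $\|A\|\le\|A\|_F$ already handles the subgaussian regime of the second summand.

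The main obstacle is precisely the moment bound $\mu^2 \lesssim K^2\|A\|_F^2/\|A\|$. The naive estimate $\mu^2 \le \Tr(A\,\Cov(X)) \le \|\Cov(X)\|\cdot\Tr(A) \lesssim K^2\Tr(A)$ is off by the factor $\Tr(A)\|A\|/\|A\|_F^2$, which can be as large as the ambient dimension. Sharpening this requires a more delicate argument---either Cauchy--Schwarz in the trace inner product together with a Hilbert--Schmidt bound on $\Cov(X)$ derived from convex concentration along rank-one test directions, or working directly at the level of the moment generating function of $X^\top A X$ by exploiting convex concentration along random linear functionals, as in \cite{adamczak2015note}. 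Verifying that the two tails combine cleanly into the stated minimum is then the remaining bookkeeping.
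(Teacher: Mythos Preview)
The paper does not prove this lemma; it is quoted from \cite{adamczak2015note} and used as a black box. I therefore evaluate your sketch on its own.

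Your reductions to symmetric and then PSD $A$, and the observation that $f(x)=\|A^{1/2}x\|$ is convex and $\|A\|^{1/2}$-Lipschitz, are correct. The genuine gap is in the squaring step. You correctly isolate the needed moment bound $\mu^2\lesssim K^2\|A\|_F^2/\|A\|$, but this inequality is \emph{false}, not merely hard to prove. Take $X\sim\N(0,I_n)$, so that $K$ is an absolute constant, and $A=\diag(1,n^{-1/2},\dots,n^{-1/2})$. Then $\|A\|=1$ and $\|A\|_F^2\approx 2$, while $\mu^2=(\E Z)^2=\E[Z^2]-\Var(Z)=\Tr A-O(n^{-1/2})\approx\sqrt{n}$. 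Neither of your proposed fixes can rescue this: convex concentration along linear functionals yields only $\|\Cov(X)\|\lesssim K^2$, hence $\mu^2\le\Tr(A\,\Cov(X))\lesssim K^2\Tr A$; and the Cauchy--Schwarz route $\Tr(A\,\Cov(X))\le\|A\|_F\|\Cov(X)\|_F$ fails because $\|\Cov(X)\|_F$ is not controlled by $K^2$ (it equals $\sqrt{n}$ in the Gaussian case).

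The structural reason the decomposition $Z^2-\E Z^2=2\mu(Z-\mu)+[(Z-\mu)^2-\Var Z]$ cannot work is that the only tail information convex concentration gives about $Z-\mu$ is the $K\|A\|^{1/2}$-subgaussian bound, which depends on $A$ solely through $\|A\|$; the finer spectral information carried by $\|A\|_F$ is lost the moment you collapse $X$ to the scalar $Z=\|A^{1/2}X\|$. Your route therefore yields at best the weaker inequality with $K^4\Tr(A)\|A\|$ replacing $K^4\|A\|_F^2$ in the subgaussian regime, and $\Tr(A)\|A\|\ge\|A\|_F^2$ always (with equality only when $A$ is a multiple of a projection). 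The proof in \cite{adamczak2015note} does not pass through $\|A^{1/2}X\|$: it controls the $p$-th moments of the centered quadratic form directly, separating diagonal and off-diagonal parts and applying the convex concentration property to a family of auxiliary convex Lipschitz functions, which is precisely what retains the $\|A\|_F$ dependence.
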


\begin{theorem}
Let $\w\in \mathbb R^{d_0}$ be a  random vector with $K$-concentration property, $X=(\x_1,\dots, \x_n)\in \R^{d_0\times n}$ be a deterministic matrix. Define  $\y=\sigma(\w^{\top} X)^{\top}$, where $\sigma$ is $\lambda_{\sigma}$-Lipschitz, and $\Phi=\mathbb E \y \y ^\top$.
Let $A$ be an $n\times n$ deterministic matrix. 
\begin{enumerate}
\item  If $\mathbb E[\y]=0$, for any $t>0$,
\begin{align}\label{eq:zeromeaninequality1}
   \P \left(|\y^{\top} A \y-\Tr A\Phi|\geq t\right)
   \leq  2\exp \left(-\frac{1}{C} \min \left\{ \frac{t^2}{2K^4\lambda_{\sigma}^4 \|X\|^4\|A\|_F^2}, \frac{t}{K^2\lambda_{\sigma}^2 \|X\|^2\|A\|}\right\}\right),
\end{align}
where $C>0$ is an absolute constant.
    \item 
If $\mathbb E[\y]\not=0$, for any $t>0$, 
\begin{align*} 
    \mathbb P \left(|\y^{\top} A \y-\Tr A\Phi| >t \right)
    \leq ~&2\exp \left(-\frac{1}{C} \min \left\{ \frac{t^2}{4K^4 \lambda_{\sigma}^4 \|X\|^4\|A\|_F^2}, \frac{t}{K^2\lambda_{\sigma}^2 \|X\|^2\|A\|}\right\}\right) \\
    &+2\exp \left( -\frac{t^2}{16K^2\lambda_{\sigma}^2\|X\|^2\|A\|^2 \|\E \y\|^2  }\right). \notag
\end{align*}
for some constant $C>0$.
\end{enumerate}
\end{theorem}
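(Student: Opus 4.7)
The plan is to reduce the problem to the Hanson–Wright inequality of Adamczak (the stated Lemma) by transferring the concentration property from $\w$ to the nonlinear vector $\y=\sigma(\w^{\top}X)^{\top}$. The key observation is that for any $1$-Lipschitz $f:\R^n\to\R$, the composition $\w\mapsto f(\sigma(\w^{\top}X)^{\top})$ is Lipschitz with constant $\lambda_\sigma\|X\|$, because $\w\mapsto\sigma(\w^{\top}X)^{\top}$ is $\lambda_\sigma\|X\|$-Lipschitz from $\R^{d_0}$ to $\R^n$. Hence the $K$-concentration property of $\w$ implies that $\y$ satisfies the $(K\lambda_\sigma\|X\|)$-concentration property, and in particular the $(K\lambda_\sigma\|X\|)$-convex concentration property needed to invoke the lemma.

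For part (1), since $\E[\y]=0$, we have $\E[\y^{\top}A\y]=\Tr(A\,\E[\y\y^{\top}])=\Tr(A\Phi)$, and we simply apply the Adamczak Hanson–Wright inequality to $\y$ with the parameter $K$ replaced by $K\lambda_\sigma\|X\|$. This yields \eqref{eq:zeromeaninequality1} directly.

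For part (2), let $\vnu:=\E\y$ and $\tilde\y:=\y-\vnu$. Expanding gives
\begin{equation*}
\y^{\top}A\y-\Tr(A\Phi)=\bigl(\tilde\y^{\top}A\tilde\y-\E[\tilde\y^{\top}A\tilde\y]\bigr)+\vnu^{\top}(A+A^{\top})\tilde\y,
\end{equation*}
since $\E[\y^{\top}A\y]=\E[\tilde\y^{\top}A\tilde\y]+\vnu^{\top}A\vnu$ and $\Tr(A\Phi)=\E[\y^{\top}A\y]$. The first term is centered and quadratic in $\tilde\y$; because $\tilde\y$ inherits the $(K\lambda_\sigma\|X\|)$-convex concentration property (the Lipschitz constant of a function is unchanged by an additive translation), part~(1) applied to $\tilde\y$ yields the first exponential term with the factor $2K^4$ replaced by $4K^4$ after absorbing the factor $2$ into the threshold $t/2$. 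The second term $\vnu^{\top}(A+A^{\top})\tilde\y$ is a linear, hence Lipschitz, function of $\tilde\y$ with Lipschitz constant at most $2\|A\|\,\|\vnu\|$; the concentration property gives a sub-Gaussian tail with parameter proportional to $K\lambda_\sigma\|X\|\cdot\|A\|\cdot\|\vnu\|$, producing the second exponential term. A union bound over the two contributions, each bounded by $t/2$, completes the estimate.

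The main technical point, and the only place where care is needed, is the transfer of the concentration property from $\w$ to $\y$ (and then to $\tilde\y$) with the correct Lipschitz scaling $\lambda_\sigma\|X\|$; once this is done, both parts follow by direct application of Adamczak's lemma combined with a standard decomposition of the quadratic form into centered quadratic and linear pieces. Tracking the absolute constants and absorbing the factor $2$ from the union bound into the exponents explains the appearance of $4K^4$ in part (2) instead of the $2K^4$ in part (1).
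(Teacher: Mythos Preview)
Your proposal is correct and follows essentially the same approach as the paper's proof: transfer the $K$-concentration property to $\y$ (and $\tilde\y$) via the $\lambda_\sigma\|X\|$-Lipschitz map, apply Adamczak's Hanson--Wright inequality to the centered vector, and for part~(2) use the same decomposition into a centered quadratic piece plus the linear term $\vnu^\top(A+A^\top)\tilde\y$, handling the latter by Lipschitz concentration and combining via a union bound at level $t/2$.
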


\begin{proof}
Let $f$ be any $1$-Lipschitz convex function.
Since $\y=\sigma (\w^\top X)^\top $, $f(\y)=f(\sigma(\w^\top X)^\top) $ is a $\lambda_{\sigma} \|X\|$-Lipschitz function of $\w$. Then by the Lipschitz concentration property of $\w$ in \eqref{eq:Kconvex}, we obtain
\begin{align*}
    \P (|f(\y) -\E f(\y)|\geq t)\leq 2\exp\left(-\frac{t^2}{K^2\lambda_{\sigma}^2 \|X\|^2} \right).
\end{align*}
Therefore, $\y$ satisfies the $K\lambda_{\sigma}\|X\|$-convex concentration property. Define $\tilde{f}(\x)=f(\x-\mathbb E\y)$, then $\tilde{f}$ is also a convex $1$-Lipschitz function and $\tilde{f}(\y)=f(\y-\E\y)$. Hence $\tilde{\y}:=\y-\E \y$ also satisfies the $K\lambda_{\sigma}\|X\|$-convex concentration property. Applying Theorem \ref{eq:HWinequality} to $\tilde{\y}$, we have for any $t>0$,
\begin{align}\label{eq:bound1}
    \P ( | {\tilde{\y}}^\top A \tilde\y -\E (\tilde{\y}^\top A\tilde{\y})| \geq t) \leq 2\exp \left(-\frac{1}{C} \min \left\{ \frac{t^2}{ 2K^4\lambda_{\sigma}^4 \|X\|^4\|A\|_F^2}, \frac{t}{K^2\lambda_{\sigma}^2  \|X\|^2\|A\|}\right\}\right).
\end{align}
Since $\E\tilde {\y}=0$, the inequality above implies \eqref{eq:zeromeaninequality1}.   Note that 
\begin{align*}
    {\tilde{\y}}^\top A \tilde\y -\E (\tilde{\y}^\top A\tilde{\y})=(\y^\top A \y -\Tr A\Phi)- \tilde{\y}^{\top}A\E \y -\E \y ^{\top} A\tilde{\y},
\end{align*}
Hence,
\begin{align}\label{eq:sum2}
  \y^\top A \y -\Tr A\Phi&= ({\tilde{\y}}^\top A \tilde\y -\E (\tilde{\y}^\top A\tilde{\y})) + (\y-\E  \y)^{\top}(A+A^{\top})\E \y \notag\\
  &=({\tilde{\y}}^\top A \tilde\y -\E (\tilde{\y}^\top A\tilde{\y}))+ (\y^{\top}(A+A^{\top})\E \y-\E\y^{\top}(A+A^{\top})\E \y).
\end{align}
Since $\y^{\top}(A+A^{\top})\E \y$ is a  $(2\|A\| \|\E \y\| \|X\| \lambda_{\sigma})$-Lipschitz function of $\w$, by the Lipschitz concentration property of $\w$, we have 
\begin{align}\label{eq:bound2}
    \P ( |(\y-\E  \y)^{\top}(A+A^{\top})\E \y|\geq t  )\leq  2\exp \left( -\frac{t^2}{4K^2(\|A\| \|\E \y\| \|X\| \lambda_{\sigma})^2}\right).
\end{align}
Then combining \eqref{eq:bound1}, \eqref{eq:sum2}, and \eqref{eq:bound2}, we have 
\begin{align}
    \P (|\y^\top A \y -\Tr A\Phi|\geq t)
    &\leq  \P ( | {\tilde{\y}}^\top A \tilde\y -\E (\tilde{\y}^\top A\tilde{\y})| \geq t/2)+  \P ( |(\y-\E  \y)^{\top}(A+A^{\top})\E \y|\geq t/2) \notag \\
    &\leq 2\exp \left(-\frac{1}{2C} \min \left\{ \frac{t^2}{4K^4 \lambda_{\sigma}^4 \|X\|^4\|A\|_F^2}, \frac{t}{K^2\lambda_{\sigma}^2 \|X\|^2\|A\|}\right\}\right) \notag\\
    &\quad + 2\exp \left( -\frac{t^2}{16K^2\lambda_{\sigma}^2\|X\|^2\|A\|^2 \|\E \y\|^2  }\right). \notag
\end{align}
This finishes the proof.
\end{proof}

Since the Gaussian random vector $\w\sim \N(0,I_{d_0})$ satisfies the $K$-concentration inequality with $K=\sqrt{2}$ (see for example \cite{boucheron2013concentration}), we have the following corollary.

\begin{cor}\label{thm:quadratic_concentration}
Let $\w\sim \N(0, I_{d_0})$, $X=(\x_1,\dots, \x_n)\in \R^{d_0\times n}$ be a deterministic matrix. Define  $\y=\sigma(\w^{\top} X)^{\top}$, where $\sigma$ is $\lambda_{\sigma}$-Lipschitz, and $\Phi=\mathbb E \y \y ^\top$.
Let $A$ be an $n\times n$ deterministic matrix. 
\begin{enumerate}
\item  If $\mathbb E[\y]=0$, for any $t>0$,
\begin{align} \label{eq:zero-meanbound}
   ~~~\P \left(|\y^{\top} A \y-\Tr A\Phi|\geq t\right)\leq  2\exp \left(-\frac{1}{C} \min \left\{ \frac{t^2}{4\lambda_{\sigma}^4 \|X\|^4\|A\|_F^2}, \frac{t}{\lambda_{\sigma}^2 \|X\|^2\|A\|}\right\}\right)
\end{align}
for some absolute constant $C>0$.
    \item 
If $\mathbb E[\y]\not=0$, for any $t>0$, 
\begin{align}
    &\mathbb P \left(|\y^{\top} A \y-\Tr A\Phi| >t \right) \\
    \leq ~&2\exp \left(-\frac{1}{C} \min \left\{ \frac{t^2}{8 \lambda_{\sigma}^4 \|X\|^4\|A\|_F^2}, \frac{t}{\lambda_{\sigma}^2 \|X\|^2\|A\|}\right\}\right) +2\exp \left( -\frac{t^2}{32\lambda_{\sigma}^2\|X\|^2\|A\|^2 \|\E \y\|^2  }\right) \notag\\
\leq  ~&2\exp \left(-\frac{1}{C} \min \left\{ \frac{t^2}{8 \lambda_{\sigma}^4 \|X\|^4\|A\|_F^2}, \frac{t}{\lambda_{\sigma}^2 \|X\|^2\|A\|}\right\}\right) + 2\exp \left( -\frac{t^2}{32\lambda_{\sigma}^2\|X\|^2\|A\|^2 t_0} \right), \label{eq:inparticular}
\end{align}
where \begin{align}\label{eq:def_t_0}
    t_0:=2\lambda_{\sigma}^2 \sum_{i=1}^n (\|\x_i\|-1)^2+ 2n (\mathbb E \sigma(\xi))^2 , \quad \xi \sim \N(0,1).
\end{align}
\end{enumerate}
\end{cor}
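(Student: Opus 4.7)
The bulk of the corollary is an immediate specialization of the preceding theorem: since the standard Gaussian vector $\w\sim\N(0,I_{d_0})$ satisfies the $K$-concentration property with $K=\sqrt{2}$ (a standard consequence of Gaussian isoperimetry, cf.\ Boucheron--Lugosi--Massart), one plugs $K^2=2$ and $K^4=4$ into the two bounds of the preceding theorem. This yields the zero-mean inequality \eqref{eq:zero-meanbound} and the first line of the non-centered inequality verbatim, only with constants doubled. So the only genuinely new content in the corollary is the estimate $\|\E\y\|^2\leq t_0$ that converts the $\|\E\y\|^2$ factor in the Gaussian tail term into the explicit quantity $t_0$ defined in \eqref{eq:def_t_0}, giving the bound \eqref{eq:inparticular}.

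To prove $\|\E\y\|^2\leq t_0$, I would work coordinate by coordinate. Since $\y_i=\sigma(\w^\top\x_i)$ and $\w^\top\x_i\overset{d}{=}\|\x_i\|\,\xi$ for $\xi\sim\N(0,1)$, one has $\E[\y_i]=\E[\sigma(\|\x_i\|\,\xi)]$. Splitting this via the triangle inequality against the ``reference'' value $\E[\sigma(\xi)]$,
\begin{equation}
\bigl|\E[\y_i]\bigr|\leq \bigl|\E[\sigma(\|\x_i\|\xi)-\sigma(\xi)]\bigr|+\bigl|\E[\sigma(\xi)]\bigr|,
\end{equation}
and applying the $\lambda_\sigma$-Lipschitz property inside the first expectation gives
\begin{equation}
\bigl|\E[\sigma(\|\x_i\|\xi)-\sigma(\xi)]\bigr|\leq \lambda_\sigma\bigl|\|\x_i\|-1\bigr|\,\E|\xi|\leq \lambda_\sigma\bigl|\|\x_i\|-1\bigr|.
\end{equation}
Using $(a+b)^2\leq 2a^2+2b^2$ and summing over $i=1,\dots,n$ then yields
\begin{equation}
\|\E\y\|^2=\sum_{i=1}^n (\E\y_i)^2\leq 2\lambda_\sigma^2\sum_{i=1}^n(\|\x_i\|-1)^2+2n(\E\sigma(\xi))^2=t_0,
\end{equation}
which, substituted into the Gaussian term $\exp(-t^2/(32\lambda_\sigma^2\|X\|^2\|A\|^2\|\E\y\|^2))$, produces \eqref{eq:inparticular}.

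There is essentially no obstacle here: the corollary is a deterministic rewriting plus a one-line Lipschitz estimate. The only point requiring mild care is the comparison with the reference $\E[\sigma(\xi)]$ rather than $0$, which is what makes the first term involve $(\|\x_i\|-1)^2$ instead of $\|\x_i\|^2$; this is important for later applications where the $\x_i$ are approximately unit vectors so that $\sum_i(\|\x_i\|-1)^2$ is small and the whole bound behaves well.
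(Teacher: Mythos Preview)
Your proof is correct and follows essentially the same route as the paper: specialize the preceding theorem with $K=\sqrt{2}$ for the Gaussian, then bound $\|\E\y\|^2$ coordinate-wise via the Lipschitz estimate $|\E\sigma(\|\x_i\|\xi)-\E\sigma(\xi)|\leq\lambda_\sigma|\|\x_i\|-1|\,\E|\xi|\leq\lambda_\sigma|\|\x_i\|-1|$ and the inequality $(a+b)^2\leq 2a^2+2b^2$. The argument and even the constants match the paper's proof.
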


\begin{remark}
Compared to \cite[Lemma 1]{louart2018random}, we identify the dependence on $\|A\|_F$ and $\E \y$ in the probability estimate. By using the inequality $\|A\|_F\leq \sqrt {n} \|A\|$, we obtain a similar inequality to the one in \cite{louart2018random} with a better dependence on $n$. Moreover, our bound in $t_0$  is independent of $d_0$, while the corresponding term $t_0$ in \cite[Lemma 1]{louart2018random} depends on $\|X\|$ and $d_0$.
In particular, when $\E\sigma(\xi)=0$ and $X$ is $(\varepsilon_n, B)$-orthonormal, $t_0$ is of order 1.  Hence, \eqref{eq:inparticular} with the special choice of $t_0$ is the key ingredient in the proof of Theorem \ref{thm:spectralnorm} to get a concentration of the spectral norm for CK. 
\end{remark}

\begin{proof}[Proof of Corollary \ref{thm:quadratic_concentration}]  We only need to prove \eqref{eq:inparticular}, since other statements follow immediately by taking $K=\sqrt{2}$.
Let $\x_i$ be the $i$-th column of $X$. Then
\begin{align*} 
      \|\E \y\|^2&= \| \E \sigma(\w^\top X)\|^2=\sum_{i=1}^n [\E \sigma(\w^{\top}\x_i)]^2. 
\end{align*}
Let $\xi\sim \N(0,1)$. We have
\begin{align}
    |\E \sigma(\w^{\top}\x_i)|&=|\E \sigma(\xi \|\x_i\|)|\leq \E| (\sigma(\xi \|\x_i\|)-\sigma(\xi))| +|\E \sigma(\xi)| \notag\\
   & \leq \lambda_{\sigma}\E|\xi (\|\x_i\|-1)| + |\E \sigma(\xi)|
   \leq \lambda_{\sigma} |\|\x_i\|-1|+|\E \sigma(\xi)|.\label{eq:Esigma}
\end{align}
Therefore
\begin{align}\label{eq:Eybound}
   \|\E \y\|^2&\leq \sum_{i=1}^n (\lambda_{\sigma} (\|\x_i\|-1)+|\E \sigma(\xi)|)^2\leq \sum_{i=1}^n 2\lambda_{\sigma}^2(\|\x_i\|-1 )^2+2(\E\sigma(\xi))^2  \\
   &=2\lambda_{\sigma}^2\sum_{i=1}^n (\|\x_i\|-1 )^2+2n(\E\sigma(\xi))^2=t_0,  \notag
\end{align}
and \eqref{eq:inparticular} holds.
 
\end{proof}

We include the following corollary about the variance of $\y^\top A\y$, which will be used in Section \ref{sec:1hlNN} to study the spectrum of the CK and NTK.

\begin{cor}\label{cor:L_2_converge}
Under the same assumptions of Corollary \ref{thm:quadratic_concentration}, we further assume that $t_0\leq C_1 n$,  and $\|A\|,\|X\|\leq C_2$. Then  as $n\to\infty$,
\[\frac{1}{n^2}\mathbb E\left[
\left|\y^{\top} A \y-\Tr A\Phi\right|^2\right]\to  0.\]
\end{cor}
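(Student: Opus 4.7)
The plan is to bound $\mathbb{E}[Z^2]$ with $Z := \y^\top A \y - \Tr A \Phi$ directly via the tail estimate in Corollary \ref{thm:quadratic_concentration}, using the layer-cake identity
\[\mathbb{E}[Z^2] = \int_0^\infty 2t\, \P(|Z|>t)\, dt.\]
First, I would simplify the tail bound \eqref{eq:inparticular} under the extra hypotheses. Since $A$ is $n\times n$ with $\|A\|\le C_2$, we have $\|A\|_F^2 \le n\|A\|^2 \le n C_2^2$; together with $\|X\|\le C_2$ and $t_0\le C_1 n$, the bound \eqref{eq:inparticular} becomes, for some constants $c_1,c_2>0$ depending only on $C_1,C_2,\lambda_\sigma$,
\[\P(|Z|>t) \le 2\exp\!\left(-c_1 \min\!\left\{\tfrac{t^2}{n},\, t\right\}\right) + 2\exp\!\left(-c_2 \tfrac{t^2}{n}\right).\]

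Next, I would evaluate the resulting three integrals. The Gaussian-type piece from the second summand contributes
\[\int_0^\infty 2t\cdot 2\exp(-c_2 t^2/n)\, dt = \frac{2n}{c_2},\]
i.e.\ order $n$. For the first summand I split at $t=n$: on $[0,n]$ the minimum equals $t^2/n$, giving an integral at most $2n/c_1$; on $[n,\infty)$ the minimum equals $t$, giving $\int_n^\infty 4t e^{-c_1 t}\, dt = O(n e^{-c_1 n})$, which is negligible. Combining these pieces yields $\mathbb{E}[Z^2] \le C n$ for some constant $C$ depending only on $C_1,C_2,\lambda_\sigma$, and therefore
\[\frac{1}{n^2}\mathbb{E}\!\left[|\y^\top A\y - \Tr A\Phi|^2\right] \le \frac{C}{n} \longrightarrow 0.\]

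There is no serious obstacle here; the only thing to be careful about is making sure the bound used (whether \eqref{eq:zero-meanbound} in the centered case or \eqref{eq:inparticular} in the general case) is treated uniformly, which is handled by \eqref{eq:inparticular} since it dominates \eqref{eq:zero-meanbound} up to absolute constants. The key structural observation that makes the proof go through is simply $\|A\|_F^2 \le n\|A\|^2$, which keeps the second-moment bound linear in $n$ and lets the $1/n^2$ normalization close out the argument.
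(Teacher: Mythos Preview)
Your proposal is correct and follows essentially the same approach as the paper: both use the tail bound from Corollary~\ref{thm:quadratic_concentration} together with $\|A\|_F^2 \le n\|A\|^2$, and then integrate the tail to control the second moment. The only cosmetic differences are that the paper first rescales to work with $\tfrac{1}{n}|Z|$ and merges the two exponential terms into a single $4\exp(-Cn\min\{t^2,t\})$ bound before integrating $\int_0^\infty \P((Z/n)^2>s)\,ds$, whereas you integrate $\int_0^\infty 2t\,\P(|Z|>t)\,dt$ directly and keep the two pieces separate; your version has the minor advantage of yielding an explicit rate $O(1/n)$ rather than invoking dominated convergence for the $o(1)$ conclusion.
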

\begin{proof}
Notice that $\|A\|_F\le \sqrt{n}\|A\|$. Thanks to Theorem \ref{thm:quadratic_concentration} (2), we have that for any $t>0$,
 \begin{equation}
    \P\left(\frac{1}{n}\left|\y^{\top} A \y-\Tr A\Phi\right|>t\right)\le 4\exp\left(-Cn\min\{t^2,t\}\right),
 \end{equation}where constant $C>0$ only relies  on  $C_1, C_2$, $\lambda_\sigma$, and $K$. Therefore, we can compute the variance in the following way:
 \begin{align*}
     \E\left[\frac{1}{n^2}\left|\y^{\top} A \y-\Tr A\Phi\right|^2\right]=~&\int_{0}^\infty \P\left(\frac{1}{n^2}\left|\y^{\top} A \y-\Tr A\Phi\right|^2>s\right)ds\\
     \le~ & 4\int_{0}^\infty \exp\left(-Cn\min\{s,\sqrt{s}\}\right)ds\\
     =~ & 4\int_{0}^1 \exp\left(-Cn\sqrt{s}\right)ds+4\int_{1}^{+\infty} \exp\left(-Cns\right)ds\rightarrow 0,
 \end{align*}as $n\rightarrow \infty$. Here, we use the dominant convergence theorem for the first integral in the last line.
\end{proof}

\section{Limiting law for general centered sample covariance matrices}\label{sec:general_sample_covariance}
Independent of the subsequent sections, this section focuses on the generalized sample covariance matrix where the dimension of the feature is much smaller than the sample size. We will later interpret such sample covariance matrix specifically for our neural network applications. Under certain weak assumptions, we prove the limiting eigenvalue distribution of the normalized sample covariance matrix satisfies two self-consistent equations, which are subsumed into a deformed semicircle law. Our findings in this section demonstrate some degree of universality, indicating that they hold across various random matrix models and may have implications for other related fields.

\begin{theorem}\label{thm:iff_conditions}
Suppose $\y_1,\ldots,\y_d\in\R^n$ are independent random vectors with the same distribution of a random vector  $\y\in\R^n$. Assume that $\E[\y]=\textbf{0}$, $\E[\y\y^\top]=\Phi_n\in\R^{n\times n}$, where $\Phi_n$ is a  deterministic matrix whose limiting eigenvalue distribution is $\mu_{\Phi}\not=\delta_0$. Assume $\|\Phi_n\|\le C$ for some constant $C$. Define $A_n:=\sqrt{\frac{d}{n}}\left(\frac{1}{d}\sum_{i=1}^d \y_i\y_i^\top-\Phi_n\right)$ and $R(z):=(A_n-z\Id)^{-1}$. For any $z\in\C^+$ and any deterministic matrices $D_n$ with $\|D_n\|\leq C$, suppose that as $n,d\rightarrow\infty$ and $n/d\rightarrow 0$,
\begin{equation}\label{condition_1}
    \tr R(z)D_n-\E \left[ \tr R(z)D_n\right]\overset{\textnormal{a.s.}}{\longrightarrow}0,
\end{equation}
and 
\begin{equation}\label{condition_2}
    \frac{1}{n^2} \E\left[\left|\y^\top D_n\y-\Tr D_n \Phi_n\right|^2\right]\to 0.
\end{equation}
 Then the empirical eigenvalue  distribution of matrix $A_n$ weakly converges to $\mu$ almost surely, whose Stieltjes transform $m(z)$ is defined by 
\begin{equation}\label{eq:fixed_point1}
    m(z)+\int \frac{d \mu_\Phi(x)}{z+\beta(z) x}=0
\end{equation}
 for each $z\in\C^+$, where $\beta(z)\in\C^+$ is the unique solution to  
\begin{equation}\label{eq:fixed_point2}
    \beta(z)+\int \frac{xd \mu_\Phi(x)}{z+\beta(z) x}=0. 
\end{equation}
In particular, $\mu=\mu_s\boxtimes \mu_\Phi$.
\end{theorem}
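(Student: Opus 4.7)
The plan is to prove pointwise almost-sure convergence of the Stieltjes transform $m_n(z) := \tr R(z)$ to the limit $m(z)$ characterized by \eqref{eq:fixed_point1}--\eqref{eq:fixed_point2} for each $z \in \C^+$; weak almost-sure convergence $\mu_{A_n} \to \mu$ then follows from the Stieltjes continuity theorem applied on a countable dense subset of $\C^+$. Hypothesis \eqref{condition_1} with $D_n = \Id$ yields the self-averaging $m_n(z) - \E m_n(z) \to 0$ a.s., and with $D_n = \Phi_n$ the same for the auxiliary quantity $\phi_n(z) := \tr \Phi_n R(z)$, so the task reduces to identifying the deterministic limits of $\E m_n(z)$ and $\E \phi_n(z)$.

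The central step is a deterministic-equivalent argument: introduce $T_n(z) := (-z\Id - \phi_n(z)\Phi_n)^{-1}$ and show that $\tr(R(z) - T_n(z))D \to 0$ for every deterministic $D$ with $\|D\| \le C$. Starting from
\[
R - T_n \;=\; R(T_n^{-1} - R^{-1}) T_n \;=\; R\Phi_n T_n\bigl(\sqrt{d/n} - \phi_n\bigr) - \sqrt{d/n}\,R B_n T_n,
\]
and expanding the rank-$d$ piece via the Sherman--Morrison identity
\[
R(z)\y_k \;=\; \frac{R^{(k)}(z)\y_k}{1 + (nd)^{-1/2}\y_k^\top R^{(k)}(z)\y_k},\qquad R^{(k)}(z) := \bigl(A_n - (nd)^{-1/2}\y_k\y_k^\top - z\Id\bigr)^{-1},
\]
one writes $\tr R B_n T_n D = (nd)^{-1}\sum_k (1 + (nd)^{-1/2}\y_k^\top R^{(k)}\y_k)^{-1}\,\y_k^\top T_n D R^{(k)}\y_k$. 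Applying \eqref{condition_2} conditionally on $\{\y_i\}_{i\ne k}$ (under which $R^{(k)}$ is deterministic with $\|R^{(k)}\|\le 1/\Im z$) replaces each quadratic form $\y_k^\top M\y_k$ by $\Tr M\Phi_n$, while a rank-one interlacing bound gives $|\tr(R - R^{(k)})M| = O((nd)^{-1/2})$. After the algebra, the leading $\sqrt{d/n}$-terms cancel precisely and
\[
\tr(R(z) - T_n(z))D \;=\; \frac{-\sqrt{n/d}\,\phi_n(z)^2}{1 + \sqrt{n/d}\,\phi_n(z)}\,\tr(R\Phi_n T_n D) + o(1) \;=\; O\bigl(\sqrt{n/d}\bigr),
\]
which vanishes since $n/d \to 0$ and both $|\phi_n|$ and $|\tr R\Phi_n T_n D|$ are bounded by $(\Im z)^{-2}\|\Phi_n\|\|D\|$.

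Specializing with $D = \Id$ and $D = \Phi_n$ and using the spectral identities $\tr T_n(z) = -\int (z+\phi_n(z) x)^{-1}d\hat\mu_{\Phi_n}$ and $\tr T_n(z)\Phi_n = -\int x(z+\phi_n(z) x)^{-1}d\hat\mu_{\Phi_n}$ yields
\[
m_n(z) \;=\; -\int \frac{d\hat\mu_{\Phi_n}(x)}{z + \phi_n(z)x} + o(1), \qquad \phi_n(z) \;=\; -\int \frac{x\,d\hat\mu_{\Phi_n}(x)}{z + \phi_n(z)x} + o(1).
\]
Since $\hat\mu_{\Phi_n} \to \mu_\Phi$ weakly with uniformly bounded support and $(m_n,\phi_n)$ is locally uniformly bounded on $\C^+$, every subsequential limit $(m(z),\beta(z))$ of $(m_n,\phi_n)$ satisfies the system \eqref{eq:fixed_point1}--\eqref{eq:fixed_point2}. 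Uniqueness of $\beta(z) \in \C^+$ solving \eqref{eq:fixed_point2}---by the standard Nevanlinna/contraction argument for self-maps of $\C^+$ induced by positive measures on $[0,\infty)$---promotes subsequential to full convergence, and the identification $\mu = \mu_s \boxtimes \mu_\Phi$ follows from the $S$-transform characterization of the free multiplicative convolution with the semicircle.

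The main obstacle is the conditional application of \eqref{condition_2}, which is stated only for deterministic $D_n$ but must be applied with $D_n = R^{(k)}(z)$ and $D_n = T_n D R^{(k)}(z)$, both random through the other samples $\y_i$ with $i \ne k$. I would handle this by conditioning on $\sigma(\{\y_i\}_{i\ne k})$: under this conditioning $R^{(k)}$ is deterministic with the uniform bound $\|R^{(k)}\|\le 1/\Im z$ a.s., so \eqref{condition_2} applies pointwise and the outer expectation preserves the $o(n^2)$ variance estimate. A second subtlety is the delicate cancellation in the $\sqrt{d/n}$-algebra of the second paragraph: both $\phi_n\Phi_n$ and $\sqrt{d/n}(B_n-\Phi_n)$ are of order $\sqrt{d/n}$ individually, and only their combination inside the matrix Dyson-type equation $R^{-1} \approx -z\Id - \phi_n\Phi_n$ produces the $O(\sqrt{n/d})$ residual responsible for the ultra-wide limit, so the expansion must be bookkept carefully through the second-order term.
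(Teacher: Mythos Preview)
Your proposal is correct and uses the same core ingredients as the paper—leave-one-out via Sherman--Morrison, the quadratic-form concentration \eqref{condition_2} applied conditionally, the self-averaging \eqref{condition_1}, subsequential compactness plus uniqueness of the fixed point, and the free-probability identification of $\mu$—but it is organized differently. You introduce the explicit deterministic equivalent $T_n(z)=(-z\Id-\phi_n(z)\Phi_n)^{-1}$ and prove $\tr(R-T_n)D\to 0$, then read off the two self-consistent equations by taking $D=\Id$ and $D=\Phi_n$. The paper instead works with an augmented resolvent $\widehat R$ built from $d+1$ samples (so that each leave-one-out has exactly the law of $R$), derives the master identity $\tr D+z\E[\tr RD]+\E[\tr DR\Phi_n]\E[\tr R\Phi_n]\to 0$ for arbitrary bounded $D$ (Lemmas~\ref{lemma:fixed_point_error}--\ref{lemma:fixed_point_eq}), and then extracts the two equations by the nontrivial choice $D=(\bar\beta_n\Phi_n+z\Id)^{-1}\Phi_n$ in addition to $D=\Id$. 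Your route is arguably more direct and avoids this clever test-matrix choice; the paper's route avoids the extra bookkeeping of tracking the $\y_k$-dependence of $T_n$ through $\phi_n$ (which you must handle by the rank-one bound $|\phi_n-\phi_n^{(k)}|=O((nd)^{-1/2})$). One small point: your $B_n$ is undefined—presumably $B_n=\tfrac{1}{d}\sum_k\y_k\y_k^\top$—and the cancellation you state in the second paragraph indeed goes through exactly as written once this is supplied.
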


\begin{remark}
In \cite{Xie2013LimitingSD},  it was assumed that
$\frac{d}{n^3}\mathbb E\left|\y^\top D_n\y-\Tr D_n \Phi_n\right|^2\to 0,$
where $n^3/d\rightarrow \infty$ and $n/d\to 0$ as $n\to \infty$. By martingale difference, this condition implies \eqref{condition_1}. However, we are not able to verify a certain step in the proof of \cite{Xie2013LimitingSD}. Hence, we will not directly adopt the result of \cite{Xie2013LimitingSD} but consider a more general situation without assuming $n^3/d\rightarrow \infty$. The weakest conditions we found are conditions \eqref{condition_1} and \eqref{condition_2}, which can be verified in our nonlinear random model.  
\end{remark}

The self-consistent equations we derived are consistent with the results in \cite{bao2012strong,Xie2013LimitingSD}, where they studied the empirical spectral distribution of separable sample covariance matrices in the regime $n/d\to 0$ under different assumptions. When $n\rightarrow\infty$ and $n/d\rightarrow 0$, our goal is to prove that the Stieltjes transform $m_n(z)$ of the empirical eigenvalue distribution of $A_n$ and $\beta_n(z):=\tr[R(z)\Phi_n]$ point-wisely converges to $m(z)$ and $\beta(z)$, respectively.

For the rest of this section, we first prove a series of lemmas to get $n$-dependent deterministic equivalents related to \eqref{eq:fixed_point1} and \eqref{eq:fixed_point2} and then deduce the proof of Theorem \ref{thm:iff_conditions} at the end of this section. Recall $A_n=\sqrt{\frac{d}{n}}\left(\frac{1}{d}\sum_{i=1}^d \y_i\y_i^\top-\Phi_n\right)$, $R(z)=(A_n-z\Id)^{-1}$, and $\y$ is a random vector independent of $A_n$ with the same distribution of $\y_i$.
\begin{lemma}\label{lemma:fixed_point_error}
Under the assumptions of Theorem \ref{thm:iff_conditions}, for any $z\in\C^+$, as $d,n\rightarrow\infty$,
\begin{equation}\label{eq:fixed_point_error}
    \tr D+z\E[\tr R(z)D]+\E\left[\frac{\frac{1}{n}\y^\top DR(z)\y\cdot\frac{1}{n}\y^\top R(z)\y}{1+\sqrt{\frac{n}{d}}\frac{1}{n}\y^\top R(z)\y}\right]=o(1),
\end{equation} where $D\in\R^{n\times n}$ is any deterministic matrix such that $\|D\|\le C$, for some constant $C$.
\end{lemma}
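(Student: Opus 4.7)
The plan is to apply Sherman-Morrison to a single leave-one-out $\y_1$, exploit exchangeability, and then perform two separate reductions to pass from $R^{(1)}$ to $R$ and from $\y_1$ to the independent copy $\y$. Starting from $R(A_n - z\Id) = \Id$ and taking the normalized trace against $D$ and then expectation, one gets $\tr D + z\E[\tr RD] = \E[\tr RA_n D]$. Expanding $A_n = \tfrac{1}{\sqrt{nd}}\sum_{i=1}^d(\y_i\y_i^\top - \Phi_n)$ and using the exchangeability of the $\y_i$'s, this reduces to $\sqrt{d/n}\bigl(\E[\tfrac{1}{n}\y_1^\top DR\y_1] - \E[\tr R\Phi_n D]\bigr)$. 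Both terms diverge individually as $d/n \to \infty$; the whole job is to extract a near-cancellation.

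Let $A_n^{(1)} := A_n - \tfrac{1}{\sqrt{nd}}\y_1\y_1^\top$ and $R^{(1)} := (A_n^{(1)} - z\Id)^{-1}$, which is independent of $\y_1$. Sherman-Morrison gives $\tfrac{1}{n}\y_1^\top DR\y_1 = v_1/(1 + \sqrt{n/d}\,u_1)$ with $u_1 := \tfrac{1}{n}\y_1^\top R^{(1)}\y_1$, $v_1 := \tfrac{1}{n}\y_1^\top DR^{(1)}\y_1$. Combined with the algebraic identity $v_1/(1+au_1) = v_1 - au_1v_1/(1+au_1)$ with $a = \sqrt{n/d}$ and the fact that $\E[v_1] = \E[\tr DR^{(1)}\Phi_n]$ (by $\y_1 \perp R^{(1)}$ and $\E[\y_1\y_1^\top] = \Phi_n$), one arrives at the master identity
\[
  \tr D + z\E[\tr RD] + \E\Bigl[\frac{u_1 v_1}{1 + \sqrt{n/d}\,u_1}\Bigr] \;=\; \sqrt{d/n}\,\E\bigl[\tr(R^{(1)} - R)D\Phi_n\bigr].
\]
A second Sherman-Morrison pass rewrites the right-hand side as $\tfrac{1}{n^2}\E\bigl[\y_1^\top R^{(1)}D\Phi_n R^{(1)}\y_1/(1+\sqrt{n/d}u_1)\bigr]$ (the factor of $\sqrt{d/n}\cdot (n\sqrt{nd})^{-1} = n^{-2}$ is where the cancellation happens). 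Since $\|R^{(1)}D\Phi_n R^{(1)}\|$ is bounded and $\E\|\y_1\|^2 = \Tr\Phi_n = O(n)$, modulo the denominator this term is $O(1/n) = o(1)$.

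It then remains to replace $(u_1, v_1, R^{(1)})$ inside the expectation by the analogous quantities built from an independent $\y$ and the full resolvent $R$. Swapping $\y_1$ for $\y$ is free because $\y_1 \perp R^{(1)}$ and $\y_1 \overset{d}{=} \y$; swapping $R^{(1)}$ for $R$ costs only a rank-one contribution to the quadratic forms, which is controlled by condition \eqref{condition_2} applied conditionally on the random $R^{(1)}$, together with \eqref{condition_1} for the trace term. The principal obstacle throughout is uniform control of the denominator $|1 + \sqrt{n/d}\,u_1|$, since $\Re u_1$ is not sign-constrained. I plan to handle this by truncation: because $|u_1| \le \|\y_1\|^2/(n|\Im z|)$ and $\E\|\y_1\|^2 = O(n)$, Markov gives $|u_1| \le M$ with probability $1 - O(1/M)$, and on that event the denominator is $\ge 1/2$ once $d$ is large enough; on the complementary rare event, the lower bound $|1 + \sqrt{n/d}\,u_1| \ge \sqrt{n/d}\,\Im u_1$ combined with $\Im u_1 \ge 0$ gives a contribution vanishing as $M \to \infty$, closing all the $o(1)$ estimates.
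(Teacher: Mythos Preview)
Your master identity is correct, and the leave-one-out structure mirrors the paper's proof. The paper, however, uses an \emph{add-one} rather than leave-one-out: it introduces an extra independent copy $\y_{d+1}$ and works with the $(d{+}1)$-term resolvent $\hat R$, so that $\y=\y_{d+1}$ is automatically independent of $R=R^{(d+1)}$. This eliminates your final swap $R^{(1)}\to R$ entirely. Also, for the right-hand side $\sqrt{d/n}\,\E[\tr(R^{(1)}-R)\Phi_n D]$ the paper uses the plain resolvent identity $R^{(1)}-R=\frac{1}{\sqrt{nd}}R^{(1)}\y_1\y_1^\top R$ (no denominator), giving $|{\rm RHS}|\le \frac{1}{n^2}\E|\y_1^\top R\Phi_n DR^{(1)}\y_1|\le \frac{C}{v^2 n}$ immediately; your ``second Sherman--Morrison pass'' introduces an unnecessary denominator.

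That denominator is where your argument actually breaks. The truncation you propose does not close: on the event $\{|u_1|>M\}$, the lower bound $|1+\sqrt{n/d}\,u_1|\ge \sqrt{n/d}\,\Im u_1$ together with $|\y_1^\top R^{(1)}\Phi_n DR^{(1)}\y_1|\le C\|R^{(1)}\y_1\|^2=\tfrac{Cn}{v}\Im u_1$ gives a quotient bounded only by $Cn\sqrt{d/n}/v$, so after the $1/n^2$ factor and the $O(1/M)$ probability you get $C\sqrt{d/n}/(vnM)$, which diverges whenever $d/n^3\to\infty$. The lemma places no restriction on $d/n^3$, so this is a genuine gap. For the RHS the fix is the resolvent identity above; for the remaining denominator occurrences (your swap step, and the analogue of the paper's Condition 3) you need the sharper case-split the paper uses: write $\tilde\y_1=\y_1/(nd)^{1/4}$, use $\|R^{(1)}\tilde\y_1\|^2=\frac{1}{v}\Im(\tilde\y_1^\top R^{(1)}\tilde\y_1)$ and the Stieltjes bound $|\Re s|\le v^{-1/2}\sqrt{\Im s}$ to obtain the uniform almost-sure estimate $\bigl|\frac{\tilde\y_1^\top DR^{(1)}\tilde\y_1}{1+\tilde\y_1^\top R^{(1)}\tilde\y_1}\bigr|\le C\|\y_1\|^2/\sqrt{nd}$, which translates to $|v_1/(1+\sqrt{n/d}\,u_1)|\le C\|\y_1\|^2/n$ with no stray $\sqrt{d/n}$ factor. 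Markov truncation on $|u_1|$ alone cannot recover this.
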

\begin{proof}
Let $z=u+iv\in\C^+$ where $u\in\R$ and $v>0$. Let
\[\Hat{R}:=\left(\frac{1}{\sqrt{dn}}\sum_{j=1}^{d+1}\y_j\y_j^\top-\sqrt{\frac{d}{n}}\Phi_n -z\Id\right)^{-1},\] where $\y_j$'s are independent copies of $\y$ defined in Theorem \ref{thm:iff_conditions}. Notice that, for any deterministic matrix $D\in \R^{n\times n}$,
\begin{align}D&=\Hat{R}\left(\frac{1}{\sqrt{dn}}\sum_{j=1}^{d+1}\y_j\y_j^\top-\sqrt{\frac{d}{n}}\Phi_n -z\Id\right)D\\
&= \frac{1}{\sqrt{dn}}\Hat{R}\left(\sum_{i=1}^{d+1}\y_i\y_i^\top\right)D-\sqrt{\frac{d}{n}}\Hat{R}\Phi_n D-z\Hat{R}D.
\end{align}

Without loss of generality, we assume $\|D\|\le 1$. Taking normalized trace, we have
\begin{equation}\label{eq:tr_A}
     \tr D+z\tr [\Hat{R}D]= \frac{1}{\sqrt{dn}}\frac{1}{n}\sum_{i=1}^{d+1}\y_i^\top D\Hat{R}\y_i-\sqrt{\frac{d}{n}}\tr [\Hat{R}\Phi_n D].
\end{equation} 
For each $1\le i\le d+1$, Sherman–Morrison formula (Lemma \ref{lem:SMformula}) implies
\begin{equation}\label{eq:S-M formula}
    \Hat{R} = R^{(i)}-\frac{R^{(i)}\y_i\y_i^\top R^{(i)}}{\sqrt{dn}+\y_i^\top R^{(i)}\y_i},
\end{equation}
where the leave-one-out resolvent $R^{(i)}$ is defined as  \[R^{(i)}:=\left(\frac{1}{\sqrt{dn}}\sum_{ 1\leq j\leq d+1, j\ne i}\y_j\y_j^\top-\sqrt{\frac{d}{n}}\Phi_n -z\Id\right)^{-1}.\] 
Hence, by \eqref{eq:S-M formula}, we obtain
\begin{equation}\label{eq:quadratic_form}
    \frac{1}{\sqrt{dn}}\frac{1}{n}\sum_{i=1}^{d+1}\y_i^\top D\Hat{R}\y_i = \frac{1}{n}\sum_{i=1}^{d+1} \frac{\y_i^\top DR^{(i)}\y_i}{\sqrt{dn}+\y_i^\top R^{(i)}\y_i}.
\end{equation}
Combining equations \eqref{eq:tr_A} and \eqref{eq:quadratic_form}, and applying expectation at both sides implies
\begin{align}
\tr D+z\E[\tr\Hat{R}D]=~&\frac{1}{n}\sum_{i=1}^{d+1}\E\left[\frac{\y_i^\top DR^{(i)}\y_i}{\sqrt{dn}+\y_i^\top R^{(i)}\y_i}\right]-\sqrt{\frac{d}{n}}\E\tr \Hat{R}\Phi_n D \notag\\
=~&\frac{d+1}{n}\E\left[\frac{\y^\top DR(z)\y}{\sqrt{dn}+\y^\top R(z)\y}\right]-\sqrt{\frac{d}{n}}\E\tr \Hat{R}\Phi_n D,\label{eq:trE}
\end{align}
because of the assumption that all $\y_i$'s have the same distribution as vector $\y$ for all $i\in [d+1]$.  With \eqref{eq:trE}, to prove \eqref{eq:fixed_point_error}, we will first show that when $n,d\rightarrow\infty$,
\begin{align}
     \sqrt{\frac{d}{n}}\left(\E[\tr\Hat{R}\Phi_n D]-\E[\tr R(z)\Phi_n D]\right)=o(1), \label{eq:Condition1}\\
    \E[\tr\Hat{R}D]-\E[\tr R(z)D]=o(1),\label{eq:Condition2}\\
     \frac{1}{n}\E\left[\frac{\y^\top DR(z)\y}{\sqrt{dn}+\y^\top R(z)\y}\right]=o(1).\label{eq:Condition3}
\end{align}
 Recall that 
\[\Hat{R}-R(z)=\frac{1}{\sqrt{dn}}R(z)\left(\y_{d+1}\y_{d+1}^\top\right)\Hat{R},\]
and spectral norms $\|\Hat{R}\|,\|R(z)\|\le 1/v$ due to Proposition C.2 in \cite{fan2020spectra}. Notice that $\|\Phi_n\|\leq C$. Hence, we can deduce that
\begin{align*}
\sqrt{\frac{d}{n}}\left|\E[\tr\Hat{R}\Phi_n D]-\E[\tr R(z)\Phi_n D]\right|\le ~& \frac{1}{n}\E[|\tr  R(z)\y_{d+1}\y_{d+1}^\top\Hat{R}\Phi_n D|]\\
\le~ & \frac{1}{n^2}\E[\|\Hat{R}\Phi_nDR(z)\|\cdot \|\y_{d+1} \|^2]\\
= ~& \frac{C}{v^2n^2}\E[\Tr \y_{d+1}\y_{d+1}^\top ]=\frac{C\Tr\Phi_n}{v^2n^2}\le \frac{C^2}{v^2n}\rightarrow 0,
\end{align*}as $n\rightarrow\infty.$ The same argument can be applied to the error of $\E[\tr\Hat{R}D]-\E[\tr R(z)D]$. Therefore \eqref{eq:Condition1} and \eqref{eq:Condition2} hold.
For \eqref{eq:Condition3}, we denote $\tilde{\y}:=\y/(nd)^{1/4}$ and observe that
\begin{equation}\label{eq:second_term}
    \frac{1}{n}\E\left[\frac{\y^\top DR(z)\y}{\sqrt{dn}+\y^\top R(z)\y}\right]= \frac{1}{n}\E\left[\frac{\tilde{\y}^\top DR(z)\tilde{\y}}{1+\tilde{\y}^\top R(z)\tilde{\y}}\right].
\end{equation}
Let $R(z)=\sum_{i=1}^n \frac{1}{\lambda_i-z} \mathbf u_i \mathbf u_i^\top$ be the eigen-decomposition of $R(z)$. Then
\begin{align}
  \tilde{\y}^\top R(z)\tilde{\y}/\|\tilde{\y}\|^2 =\sum_{i=1}^n \frac{1}{\lambda_i-z} \frac{(\langle \mathbf u_i, \tilde{\y}\rangle)^2 }{\|\tilde {\y}\|^2} :=\int \frac{1}{x-z} d\mu_{\tilde \y}
\end{align}
is the Stieltjes transform of a discrete measure $\mu_{\tilde \y}=\sum_{i=1}^n \frac{(\langle \mathbf u_i, \tilde{\y}\rangle)^2 }{\|\tilde {\y}\|^2} \delta_{\lambda_i} $. Then, we can control the real part of $\tilde{\y}^\top R(z)\tilde{\y}$ by Lemma \ref{lem:BS10B11}:
\begin{equation}\label{eq:Real_part}
    \left|\Re (\tilde{\y}^\top R(z)\tilde{\y})\right|\le v^{-1/2}\|\tilde{\y}\|\left(\Im (\tilde{\y}^\top R(z)\tilde{\y})\right)^{1/2}.
\end{equation}
We now separately consider two cases in the following: 
\begin{itemize}
\item  If the right-hand side of the above inequality \eqref{eq:Real_part} is at most $1/2$, then 
\[\left|1+ \tilde{\y}^\top R(z)\tilde{\y}\right|\ge \left|1+ \Re (\tilde{\y}^\top R(z)\tilde{\y})\right|\ge \frac{1}{2},\]
which results in 
\begin{equation}\label{eq:first_case}
    \left|\frac{\tilde{\y}^\top DR(z)\tilde{\y}}{1+\tilde{\y}^\top R(z)\tilde{\y}}\right|\le \frac{C}{\sqrt{dn}}\|\y\|^2.
\end{equation}
\item When $v^{-1/2}\|\tilde{\y}\|\left(\Im (\tilde{\y}^\top R(z)\tilde{\y})\right)^{1/2}>1/2$, we know that
\begin{align}
    \left|\frac{\tilde{\y}^\top DR(z)\tilde{\y}}{1+\tilde{\y}^\top R(z)\tilde{\y}}\right|\le~&  \frac{\|\tilde{\y}^\top D\|\|R(z)\tilde{\y}\|}{|\Im (1+\tilde{\y}^\top R(z)\tilde{\y})|}
    =  \frac{\|\tilde{\y}^\top D\|\|R(z)\tilde{\y}\|}{\tilde{\y}^\top \Im (R(z))\tilde{\y}}\nonumber\\
    \le ~&  \frac{\|\tilde{\y}^\top D\|}{\left(v\tilde{\y}^\top \Im (R(z))\tilde{\y}\right)^{1/2}}\le \frac{2\|\tilde{\y}^\top D\|\|\tilde{\y}\|}{v}\le \frac{C\|\y\|^2}{v\sqrt{nd}},\label{eq:second_case}
\end{align}where we exploit the fact that (see also Equation (A.1.11) in \cite{bai2010spectral})
\[\|R(z)\tilde{\y}\|=(\tilde{\y}^\top R(\Bar{z})R(z)\tilde{\y})^{1/2}=\left(\frac{1}{v}\tilde{\y}^\top \Im (R(z))\tilde{\y}\right)^{1/2}.\]
\end{itemize}
 
Finally, combining \eqref{eq:first_case} and \eqref{eq:second_case} in the above two cases, we can conclude the asymptotic result \eqref{eq:Condition3} because $\E\|\y\|^2=\Tr \Phi_n\le Cn$ in terms of the assumptions of Theorem~\ref{thm:iff_conditions}.

Then with \eqref{eq:Condition1}, \eqref{eq:Condition2}, and \eqref{eq:Condition3}, we get 
\begin{equation}\label{eq:fixed_point_error_1}
     \tr D+z\E[\tr R(z)D]=\E\left[\frac{\sqrt{\frac{d}{n}}\frac{1}{n}\y^\top DR(z)\y}{1+\frac{1}{\sqrt{dn}}\y^\top R(z)\y}-\sqrt{\frac{d}{n}}\tr R(z)\Phi_n D\right]+o(1),
\end{equation}
as $n\rightarrow\infty$. We utilize the notion $\E_\y$ to clarify the expectation only with respect to random vector $\y$, conditioning on other independent random variables. So the conditional expectation is $\E_{\y}\left[\frac{1}{n}\y^\top DR(z)\y\right] = \tr D R(z)\Phi_n$ and
\[\E\left[\frac{1}{n}\y^\top DR(z)\y\right]=\E\left[\E_\y\left[\frac{1}{n}\y^\top DR(z)\y\right]\right]=\E[\tr R(z)\Phi_nD].\] Therefore, based on \eqref{eq:fixed_point_error_1}, the conclusion \eqref{eq:fixed_point_error} holds.
\end{proof}

In the next lemma, we apply the quadratic concentration condition \eqref{condition_2} to simplify \eqref{eq:fixed_point_error}. 
\begin{lemma}\label{lemma:replacing}
 Under the assumptions of Theorem \ref{thm:iff_conditions}, condition \eqref{condition_2} of Theorem \ref{thm:iff_conditions} implies that
\begin{equation}\label{eq:condition_for_fraction}
    \E\left[\frac{\frac{1}{n}\y^\top DR(z)\y\cdot\frac{1}{n}\y^\top R(z)\y}{1+\sqrt{\frac{n}{d}}\frac{1}{n}\y^\top R(z)\y}\right]=\E\left[\frac{\tr DR(z)\Phi_n\tr R(z)\Phi_n}{1+\sqrt{\frac{n}{d}}\tr R(z)\Phi_n}\right]+o(1),
\end{equation}for each $z\in\C^+$ and any deterministic matrix $D$ with $\|D\|\leq C$.
\end{lemma}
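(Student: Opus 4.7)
The plan is to replace the quadratic forms $\tfrac{1}{n}\y^\top D R(z)\y$ and $\tfrac{1}{n}\y^\top R(z)\y$ by their deterministic proxies $\tr(DR(z)\Phi_n)$ and $\tr(R(z)\Phi_n)$ using the quadratic concentration \eqref{condition_2}, while carefully handling the ratio whose denominator can be small. Write $z=u+iv$ with $v>0$, $c:=\sqrt{n/d}$, and set $\alpha:=\tfrac{1}{n}\y^\top DR(z)\y$, $\bar\alpha:=\tr(DR(z)\Phi_n)$, $\beta:=\tfrac{1}{n}\y^\top R(z)\y$, $\bar\beta:=\tr(R(z)\Phi_n)$. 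Since $\|R(z)\|\le 1/v$ and $\|\Phi_n\|\le C$, we have $|\bar\alpha|,|\bar\beta|=O(1/v)$ and $c|\bar\beta|\to 0$ deterministically, so $1+c\bar\beta \to 1$ uniformly and the RHS of \eqref{eq:condition_for_fraction} equals $\E[\bar\alpha\bar\beta]+o(1)$. It therefore suffices to show $\E[\alpha\beta/(1+c\beta)]=\E[\bar\alpha\bar\beta]+o(1)$.

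The first ingredient is an $L^2$ estimate: conditioning on $R(z)$ (independent of the fresh copy $\y$) and applying \eqref{condition_2} to the real and imaginary parts of $DR(z)$ and $R(z)$ (all bounded by $C/v$ in operator norm) yields $\|\alpha-\bar\alpha\|_2=o(1)$ and $\|\beta-\bar\beta\|_2=o(1)$, hence $\|\alpha\|_2,\|\beta\|_2=O(1)$. Taking $D_n=\Id$ in \eqref{condition_2} also gives the fourth moment bound $\E\|\y\|^4=(\tr\Phi_n)^2+o(n^2)=O(n^2)$. Using the identity
\[
\Delta:=\frac{\alpha\beta}{1+c\beta}-\frac{\bar\alpha\bar\beta}{1+c\bar\beta}=\frac{(\alpha\beta-\bar\alpha\bar\beta)+c\beta\bar\beta(\alpha-\bar\alpha)}{(1+c\beta)(1+c\bar\beta)},
\]
split $\E|\Delta|$ according to the good event $\mathcal{E}:=\{c|\beta-\bar\beta|\le 1/4\}$; by Markov, $\P(\mathcal{E}^c)\le 16 c^2\|\beta-\bar\beta\|_2^2=o(n/d)=o(1)$. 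On $\mathcal{E}$ one has $|1+c\beta|\ge |1+c\bar\beta|-c|\beta-\bar\beta|\ge 1/4$ for large $n$, so the denominator of $\Delta$ is bounded below by $1/8$, and three applications of Cauchy--Schwarz with the $L^2$ estimates give $\E[\mathbf{1}_{\mathcal{E}}|\Delta|]\lesssim \|\alpha-\bar\alpha\|_2\|\beta\|_2+\|\bar\alpha\|_\infty\|\beta-\bar\beta\|_2+c\|\bar\beta\|_\infty\|\beta\|_2\|\alpha-\bar\alpha\|_2=o(1)$.

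The main obstacle is the complementary event $\mathcal{E}^c$, on which $|1+c\beta|$ may be arbitrarily small. The remedy is to use $\Im R(z)=vR(z)R(\bar z)\succeq 0$, which forces $\Im\beta=(v/n)\|R(z)\y\|^2\ge 0$, hence
\[
|1+c\beta|\ge c\,\Im\beta=\frac{cv\|R(z)\y\|^2}{n}.
\]
Combining this with the structural bounds $|\alpha|,|\beta|\le C\|\y\|\|R(z)\y\|/n$ produces a cancellation of the factor $\|R(z)\y\|^2$:
\[
\left|\frac{\alpha\beta}{1+c\beta}\right|\le\frac{C\|\y\|^2}{ncv}=\frac{C\sqrt{d}\,\|\y\|^2}{v n^{3/2}}.
\]
Cauchy--Schwarz with $\sqrt{\P(\mathcal{E}^c)}=o(\sqrt{n/d})$ and $\sqrt{\E\|\y\|^4}=O(n)$ then gives $\E[\mathbf{1}_{\mathcal{E}^c}|\alpha\beta/(1+c\beta)|]\le (C\sqrt d /(vn^{3/2}))\cdot o(n\sqrt{n/d})=o(1)$. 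The RHS contribution on $\mathcal{E}^c$ is harmless, since $|\bar\alpha\bar\beta/(1+c\bar\beta)|=O(1/v^2)$ deterministically, so its expectation there is $O(\P(\mathcal{E}^c))=o(1)$. Combining both regions yields $\E[\Delta]=o(1)$ and proves \eqref{eq:condition_for_fraction}.
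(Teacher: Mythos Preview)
Your proof is correct, but it takes a different route from the paper's. The paper rewrites $\delta_n:=\frac{Q_1Q_2}{1+cQ_2}-\frac{\bar Q_1\bar Q_2}{1+c\bar Q_2}$ (with $Q_i=\alpha,\beta$, $\bar Q_i=\bar\alpha,\bar\beta$, $c=\sqrt{n/d}$) using the algebraic identity
\[
\delta_n=\frac{1}{c}(Q_1-\bar Q_1)+\frac{1}{c}\cdot\frac{\bar Q_1-Q_1}{1+c\bar Q_2}+\frac{Q_1(\bar Q_2-Q_2)}{(1+c\bar Q_2)(1+cQ_2)}.
\]
The first two terms have zero expectation after conditioning on $R(z)$ (since $\E_\y Q_1=\bar Q_1$), so only the last term $\Delta_n$ survives. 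The paper then invokes the \emph{uniform pointwise} bound $\bigl|\tfrac{Q_1}{1+cQ_2}\bigr|\le C\|\y\|^2/n$, already established in the two-case analysis \eqref{eq:first_case}--\eqref{eq:second_case} of Lemma~\ref{lemma:fixed_point_error}, to get $|\Delta_n|\le C(\|\y\|^2/n)\,|\bar Q_2-Q_2|$, and concludes by Cauchy--Schwarz with \eqref{condition_2}.

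Your approach instead uses a straight numerator identity and a good/bad event split on $\{c|\beta-\bar\beta|\le 1/4\}$. This costs an extra case analysis in expectation and a fourth-moment bound on $\|\y\|$, but it is entirely self-contained: you rederive the $\Im\beta=\tfrac{v}{n}\|R(z)\y\|^2$ cancellation on the bad event rather than citing the earlier lemma. The paper's trick of engineering two mean-zero terms is shorter and recycles prior work; your argument is more elementary and does not depend on the specific two-case dichotomy used earlier.
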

\begin{proof}
Let us denote
 \[\delta_n:=\frac{\frac{1}{n}\y^\top DR(z)\y\cdot\frac{1}{n}\y^\top R(z)\y}{1+\sqrt{\frac{n}{d}}\frac{1}{n}\y^\top R(z)\y}-\frac{\tr DR(z)\Phi_n\tr R(z)\Phi_n}{1+\sqrt{\frac{n}{d}}\tr R(z)\Phi_n},\]
 \[Q_1:=\frac{1}{n}\y^\top DR(z)\y, \quad  Q_2:=\frac{1}{n}\y^\top R(z)\y,\]
 $\Bar{Q}_1:=\E_{\y}[Q_1]=\tr  DR(z)\Phi_n$, and $\Bar{Q}_2:=\E_{\y}[Q_1]=\tr  R(z)\Phi_n$. In other words, $\delta_n$ can be expressed by
 \begin{align*}
     \delta_n =~ & \frac{Q_1Q_2}{1+\sqrt{\frac{n}{d}}Q_2}-\frac{\Bar{Q}_1\Bar{Q}_2}{1+\sqrt{\frac{n}{d}}\Bar{Q}_2}\\
     =~&\frac{Q_1\left(Q_2+\sqrt{\frac{d}{n}}\right)}{1+\sqrt{\frac{n}{d}}Q_2}-\frac{\sqrt{\frac{d}{n}}Q_1}{1+\sqrt{\frac{n}{d}}Q_2}-\frac{\Bar{Q}_1\left(\Bar{Q}_2+\sqrt{\frac{d}{n}}\right)}{1+\sqrt{\frac{n}{d}}\Bar{Q}_2}+\frac{\sqrt{\frac{d}{n}}\Bar{Q}_1}{1+\sqrt{\frac{n}{d}}\Bar{Q}_2}\\
     =~&  \sqrt{\frac{d}{n}}(Q_1-\Bar{Q}_1)+\frac{\sqrt{\frac{d}{n}}(\Bar{Q}_1-Q_1)}{1+\sqrt{\frac{n}{d}}\Bar{Q}_2}+\frac{\sqrt{\frac{n}{d}}Q_1\sqrt{\frac{d}{n}}(\Bar{Q}_2-Q_2)}{\left(1+\sqrt{\frac{n}{d}}\Bar{Q}_2\right)\left(1+\sqrt{\frac{n}{d}}Q_2\right)}.
 \end{align*} 
 Observe that $\E[\Bar{Q}_i]=\E[Q_i]$ for $i=1,2$. Thus, $\delta_n$ has the same expectation as the last term
 \[\Delta_n:=\frac{Q_1(\Bar{Q}_2-Q_2)}{\left(1+\sqrt{\frac{n}{d}}\Bar{Q}_2\right)\left(1+\sqrt{\frac{n}{d}}Q_2\right)},\]
 since we can first take the expectation for $\y$ conditioning on the resolvent $R(z)$ and then take the expectation for $R(z)$. Besides, notice that $|\Bar{Q}_1|,|\Bar{Q}_2|\le \frac{C}{v}$ uniformly. Hence, $\sqrt{\frac{n}{d}}\Bar{Q}_2$ converges to zero uniformly and there exists some constant $C>0$ such that 
 \begin{equation}\label{eq:bound_fraction1}
     \left|\frac{1}{1+\sqrt{\frac{n}{d}}\Bar{Q}_2}\right|\le C,
 \end{equation}for all large $d$ and $n$. In addition, observe that 
 \[\frac{\sqrt{\frac{n}{d}}Q_1}{1+\sqrt{\frac{n}{d}}Q_2}=\frac{\tilde{\y}^\top DR(z)\tilde{\y}}{1+\tilde{\y}^\top R(z)\tilde{\y}},\]
where $\tilde{\y}$ is defined in the proof of Lemma \ref{lemma:fixed_point_error}. In terms of \eqref{eq:first_case} and \eqref{eq:second_case}, we verify that
 \begin{equation}\label{eq:bound_fraction2}
     \left|\frac{Q_1}{1+\sqrt{\frac{n}{d}}Q_2}\right|\le \frac{C\|\y\|^2}{n},
 \end{equation}where $C>0$ is some constant depending on $v$. Next, recall that condition \eqref{condition_2} exposes that 
 \begin{equation}\label{eq:bound_fraction3}
     \E (Q_2-\Bar{Q}_2)^2\to 0\quad\text{and}\quad \E(\|\y\|^2/n-\tr\Phi_n)^2\to 0
 \end{equation}
 as $n\rightarrow\infty$. The first convergence is derived by viewing $D_n=R(z)$ and taking expectation conditional on $R(z)$. To sum up, we can bound $|\Delta_n|$ based on \eqref{eq:bound_fraction1} and \eqref{eq:bound_fraction2} in the subsequent way:
 \begin{align*}
     |\Delta_n|\le~ & \frac{C\|\y\|^2}{n}|\Bar{Q}_2-Q_2|
     \le   C\left|\|\y\|^2/n-\tr\Phi_n\right|\cdot|\Bar{Q}_2-Q_2|+C\left|\tr\Phi_n\right|\cdot|\Bar{Q}_2-Q_2|.
 \end{align*} 
 Here, $|\tr\Phi_n|\le \|\Phi_n\|$ and $\|\Phi_n\|$ is uniformly bounded by some constant. Then, by Hölder's inequality, \eqref{eq:bound_fraction3} implies that $\E[|\Delta_n|]\rightarrow 0$, as $n$ approaching to infinity. This concludes $\E[\delta_n]=\E[\Delta_n]$ converges to zero.
 
\end{proof}

\begin{lemma}\label{lemma:fixed_point_eq}
Under assumptions of Theorem \ref{thm:iff_conditions}, we can conclude that
 \[ \lim_{n,d\rightarrow\infty}\left(\tr D+z\E[\tr R(z)D]+\E\left[ \tr DR(z)\Phi_n\right]\E\left[\tr R(z)\Phi_n \right]\right)=0\]
holds for each $z\in\C^+$ and deterministic matrix $D$ with uniformly bounded spectral norm.
\end{lemma}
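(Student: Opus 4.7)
The plan is to chain together Lemma~\ref{lemma:fixed_point_error} and Lemma~\ref{lemma:replacing}, and then exploit the concentration hypothesis \eqref{condition_1} to decorrelate a product of two traces into a product of expectations.

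First I would combine Lemmas~\ref{lemma:fixed_point_error} and~\ref{lemma:replacing} directly to obtain
\begin{equation}
\tr D + z\,\E[\tr R(z)D] + \E\left[\frac{\tr DR(z)\Phi_n\cdot\tr R(z)\Phi_n}{1+\sqrt{n/d}\,\tr R(z)\Phi_n}\right] = o(1).
\end{equation}
Next, since $\|\Phi_n\|\leq C$ and $\|R(z)\|\leq 1/v$ with $v=\Im z$, the deterministic bounds $|\tr R(z)\Phi_n|\leq C/v$ and $|\tr DR(z)\Phi_n|\leq C\|D\|/v$ hold, so $\sqrt{n/d}\,\tr R(z)\Phi_n\to 0$ uniformly. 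Expanding the denominator as $1/(1+\sqrt{n/d}\,\tr R(z)\Phi_n)=1+O(\sqrt{n/d})$ and using the uniform boundedness of the numerator, the denominator can be replaced by $1$ at the cost of an $o(1)$ error, reducing the problem to showing
\begin{equation}\label{eq:plan-decouple}
\E\bigl[\tr DR(z)\Phi_n\cdot\tr R(z)\Phi_n\bigr] = \E[\tr DR(z)\Phi_n]\cdot\E[\tr R(z)\Phi_n] + o(1).
\end{equation}

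To establish \eqref{eq:plan-decouple} I would invoke the concentration hypothesis \eqref{condition_1} twice: once with $D_n=\Phi_n$, giving the almost-sure convergence $\tr R(z)\Phi_n-\E[\tr R(z)\Phi_n]\to 0$; and once with $D_n=\Phi_n D$, which has uniformly bounded spectral norm and satisfies $\tr R(z)\Phi_n D=\tr DR(z)\Phi_n$ by cyclicity of the normalized trace, giving $\tr DR(z)\Phi_n-\E[\tr DR(z)\Phi_n]\to 0$ almost surely. Since each recentred quantity is uniformly bounded by a deterministic constant depending only on $v$, $\|D\|$, and $C$, the dominated convergence theorem upgrades these to $L^2$ convergence. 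A single application of Cauchy--Schwarz then forces the covariance
\begin{equation}
\E\bigl[(\tr DR(z)\Phi_n-\E\tr DR(z)\Phi_n)(\tr R(z)\Phi_n-\E\tr R(z)\Phi_n)\bigr]\to 0,
\end{equation}
which, upon expanding, is exactly \eqref{eq:plan-decouple}. Substituting back completes the argument.

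The main obstacle I anticipate is precisely this decorrelation step: condition \eqref{condition_1} is stated as almost-sure convergence, whereas the target identity concerns a limit of expectations of a product, so the $v>0$ bound on the resolvent and the spectral bound on $\Phi_n$ must be used in tandem to pass from a.s.\ concentration to $L^2$ concentration and then to the vanishing of the covariance. Once this is in place, the remaining steps are mechanical combinations of the two preceding lemmas with elementary resolvent estimates.
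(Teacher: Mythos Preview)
Your proposal is correct and follows essentially the same route as the paper: combine Lemmas~\ref{lemma:fixed_point_error} and~\ref{lemma:replacing}, drop the denominator using the uniform bound $|\tr R(z)\Phi_n|\le C/v$ and $\sqrt{n/d}\to 0$, and then decorrelate via condition~\eqref{condition_1} plus dominated convergence. The only minor difference is in the decorrelation step: the paper invokes \eqref{condition_1} just once (with $D_n=\Phi_n$), multiplies the a.s.-vanishing factor $\tr R(z)\Phi_n-\E[\tr R(z)\Phi_n]$ by the uniformly bounded $\tr DR(z)\Phi_n$, and applies dominated convergence directly to the product, whereas you invoke \eqref{condition_1} twice, upgrade both factors to $L^2$, and close with Cauchy--Schwarz. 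Both arguments are valid; the paper's is slightly shorter since only one factor needs to concentrate.
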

\begin{proof}
Based on Lemma \ref{lemma:fixed_point_error} and Lemma \ref{lemma:replacing}, \eqref{eq:condition_for_fraction} and \eqref{eq:fixed_point_error} yield
\[ \tr D+z\E[\tr R(z)D]+\E\left[\frac{\tr DR(z)\Phi_n\tr R(z)\Phi_n}{1+\sqrt{\frac{n}{d}}\tr R(z)\Phi_n}\right]=o(1).\]
As $|\tr R(z)D|$ and $| \tr R(z)D\Phi_n|$ are bounded by some constants uniformly and almost surely, for sufficiently large $d$ and $n$, $|\sqrt{\frac{n}{d}}\tr R(z)\Phi_n|<1/2$ and 
\begin{align*}
    &\left|\E\left[\frac{\tr DR(z)\Phi_n\tr R(z)\Phi_n}{1+\sqrt{\frac{n}{d}}\tr R(z)\Phi_n}\right]-\E\left[ \tr DR(z)\Phi_n\tr R(z)\Phi_n \right]\right|\\
    \le ~&\E\left[|\tr R(z)D|\cdot |\tr R(z)D\Phi_n|\cdot\left|\frac{\sqrt{\frac{n}{d}}\tr R(z)\Phi_n}{1+\sqrt{\frac{n}{d}}\tr R(z)\Phi_n}\right|\right]\le 2C\sqrt{\frac{n}{d}}\rightarrow 0,
\end{align*}as $n/d\rightarrow 0$. Hence, 
\begin{equation}\label{eq:crossing_term}
    \tr D+z\E[\tr R(z)D]+\E\left[ \tr DR(z)\Phi_n \tr R(z)\Phi_n \right]=o(1).
\end{equation}
Considering $D_n=\Phi_n$ in \eqref{condition_1}, we can get almost sure convergence for $\tr DR(z)\Phi_n\cdot (\tr R(z)\Phi_n -\E\left[\tr R(z)\Phi_n \right])$ to zero. Thus by dominated convergence theorem, $$\lim_{n\rightarrow\infty}\E\left[\tr DR(z)\Phi_n\cdot (\tr R(z)\Phi_n -\E\left[\tr R(z)\Phi_n \right])\right]\rightarrow 0.$$
So we can replace the third term at the right-hand side of \eqref{eq:crossing_term} with $$\E\left[ \tr DR(z)\Phi_n\right]\E\left[\tr R(z)\Phi_n \right]$$ to obtain the conclusion.
\end{proof}

\begin{proof}[Proof of Theorem \ref{thm:iff_conditions}]
Fix any $z\in \C^+$. Denote the  Stieltjes transform of empirical spectrum of $A_n$ and its expectation by $m_n(z):=\tr R(z)$ and $\Bar{m}_n(z):=\E[m_n(z)]$ respectively. Let $\beta_n(z):=\tr R(z)\Phi_n$ and $\Bar{\beta}_n(z):=\E[\beta_n(z)]$. Notice that $m_n(z), \Bar{m}_n(z), \beta_n$ and $\Bar{\beta}_n(z)$ are all in $\C^+$ and uniformly and almost surely bounded by some constant. By choosing $D=\Id$ in Lemma \ref{lemma:fixed_point_eq}, we conclude 
 \begin{equation}\label{eq:fixed_point_appro_10}
     \lim_{n,\,d\rightarrow\infty}\left(1+z\bar{m}_n(z)+\Bar{\beta}_n(z)^2\right)=0.
 \end{equation}
 
 Likewise, in Lemma \ref{lemma:fixed_point_eq}, we consider $D=\left(\Bar{\beta}_n(z)\Phi_n+z\Id\right)^{-1}\Phi_n$. Let $$U=\left(\Bar{\beta}_n(z)\Phi_n+z\Id\right)^{-1}.$$ Because $\|\Phi_n\|$ is uniformly bounded, $\|D\|\le C\|U\|$. In terms of Lemma \ref{lem:propC2}, we only need to provide a lower bound for the imaginary part of $U$. Observe that $\Im U=\Im\Bar{\beta}_n(z)\Phi_n+v\Id	\succeq v\Id $ since $\lambda_{\min}(\Phi_n)\ge 0$ and $\Im\Bar{\beta}_n(z)>0$. Thus, $\|D\|\le Cv^{-1}$ for all $n$. Meanwhile, we have the equation $\Bar{\beta}_n(z)\Phi_n D=\Phi_n-zD$ and hence, 
 \[\Bar{\beta}_n(z)\E[\tr R(z)\Phi_n D]=\E[\tr R(z)\Phi_n D]\E[\tr R(z)\Phi_n]=\Bar{\beta}_n(z)-z\E[\tr R(z)D].\]
So applying Lemma \ref{lemma:fixed_point_eq} again, we have another limiting equation $\tr D+\Bar{\beta}_n(z)\rightarrow 0$. In other words,
  \begin{equation}\label{eq:fixed_point_appro_20}
     \lim_{n,\,d\rightarrow\infty}\left(\tr\left(\Bar{\beta}_n(z)\Phi_n+z\Id\right)^{-1}\Phi_n+\bar{\beta}_n(z)\right)=0.
 \end{equation}Thanks to the identity
 \begin{align*}
     \bar{\beta}_n(z)\tr\left(\Bar{\beta}_n(z)\Phi_n+z\Id\right)^{-1}\Phi_n-1 =-z\tr\left(\Bar{\beta}_n(z)\Phi_n+z\Id\right)^{-1},
 \end{align*}
 we can modify \eqref{eq:fixed_point_appro_10} and \eqref{eq:fixed_point_appro_20} to get 
  \begin{equation}\label{eq:fixed_point_appro_1}
     \lim_{n,\,d\rightarrow\infty}\left(\bar{m}_n(z)+\tr\left(\Bar{\beta}_n(z)\Phi_n+z\Id\right)^{-1}\right)=0.
 \end{equation}

 Since $\Bar{\beta}_n(z)$ and $\Bar{m}_n(z)$ are uniformly bounded, for any subsequence in $n$, there is a further convergent sub-subsequence. We denote the limit of such sub-subsequence by $\beta(z)$ and $m(z)\in\C^+$ respectively. Hence, by \eqref{eq:fixed_point_appro_20} and \eqref{eq:fixed_point_appro_1}, one can conclude 
 \[ \lim_{n,\,d\rightarrow\infty}\left(\beta(z)+\tr\left(\beta(z)\Phi_n+z\Id\right)^{-1}\Phi_n\right)=0.\]
 Because of the convergence of the empirical eigenvalue distribution of $\Phi_n$, we obtain the fixed point equation \eqref{eq:fixed_point2} for $\beta(z)$. Analogously, we can also obtain \eqref{eq:fixed_point1} for $m(z)$ and $\beta(z)$. The existence and the uniqueness of the solutions to \eqref{eq:fixed_point1} and \eqref{eq:fixed_point2} are proved in \cite[Theorem 2.1]{bai2010limiting} and \cite[Section 3.4]{wang2014limiting}, which implies the  convergence of $\Bar{m}_n(z)$ and $\Bar{\beta}_n(z)$ to $m(z)$ and $\beta(z)$  governed by the self-consistent equations \eqref{eq:fixed_point1} and \eqref{eq:fixed_point2} as $n\rightarrow\infty$,  respectively. 
 
 Then, by virtue of condition \eqref{condition_1} in Theorem \ref{thm:iff_conditions}, we know $m_n(z)-\Bar{m}_n(z)\overset{\text{a.s.}}{\longrightarrow }0 $ and $\beta_n(z)-\Bar{\beta}_n(z)\overset{\text{a.s.}}{\longrightarrow }0 $. Therefore, the empirical  Stieltjes  transform $m_n(z)$ converges to $m(z)$ almost surely for each $z\in\C^+$. 
Recall that the Stieltjes transform of $\mu$ is $m(z)$. By the standard Stieltjes continuity theorem (see for example, \cite[Theorem B.9]{bai2010spectral}), this finally concludes the weak convergence of empirical eigenvalue distribution of $A_n$ to $\mu$.
 
 \bigskip
 
 Now we show $\mu=\mu_s\boxtimes \mu_\Phi$.
 The fixed point equations \eqref{eq:fixed_point1} and \eqref{eq:fixed_point2} induce
\begin{equation}\label{eq:fixed_point22}
    \beta^2(z)+1+zm(z)=0,
\end{equation}
since $\beta(z)\in \C^+$ for any $z\in\C^+$. Together with \eqref{eq:fixed_point1}, we attain the same self-consistent equations for the convergence of the empirical spectral distribution of the Wigner-type matrix studied in \cite[Theorem 1.1]{bai2010limiting}. 

Define $W_n$, the $n$-by-$n$ Wigner matrix, as a Hermitian matrix with independent entries \[\{W_n[i,j]: \E[W_n[i,j]]=0,\text{ }\E[W_n[i,j]^2]=1,\text{ } 1\le i\le j\le n\}.\]  The Wigner-type matrix studied in \cite[Definition 1.2]{bai2010limiting} is indeed $\frac{1}{\sqrt{n}}\Phi_n^{1/2}W_n\Phi_n^{1/2}$. Hence, such Wigner-type matrix $\frac{1}{\sqrt{n}}\Phi_n^{1/2}W_n\Phi_n^{1/2}$ has the same limiting spectral distribution as $A_n$ defined in Theorem \ref{thm:iff_conditions}. Both limits are determined by self-consistent equations \eqref{eq:fixed_point1} and \eqref{eq:fixed_point22}.

On the other hand, based on \cite[Theorem 5.4.5]{anderson2010introduction}, $\frac{1}{\sqrt{n}}W_n$ and $\Phi_n$ are almost surely asymptotically free, i.e. the empirical distribution of $\{\frac{1}{\sqrt{n}}W_n,\Phi_n\}$ converges almost surely to the law of $\{\bf s,d\}$, where $\textbf{s}$ and $\textbf{d}$ are two free non-commutative random variables ($\textbf{s}$ is a semicircle element and $\textbf{d}$ has the law $\mu_\Phi$). Thus, the limiting spectral distribution $\mu$ of $\frac{1}{\sqrt{n}}\Phi_n^{1/2}W_n\Phi_n^{1/2}$ is  the free multiplicative convolution between $\mu_s$ and $\mu_{\Phi}$. This implies  $\mu=\mu_s\boxtimes \mu_\Phi$ in our setting.
\end{proof}

\section{Proof of Theorem \ref{thm:law_DNN} and Theorem \ref{thm:law_NTK}}\label{sec:1hlNN}

To prove Theorem \ref{thm:law_DNN}, we first establish the following proposition to analyze the difference between Stieltjes transform of \eqref{eq:center_RF} and its expectation. This will assist us to verify condition \eqref{condition_1} in Theorem \ref{thm:iff_conditions}. The proof is based on  \cite[Lemma E.6]{fan2020spectra}.
\begin{prop}\label{prop:trRD}
Let $D\in\R^{n\times n}$ be any deterministic symmetric matrix with a uniformly bounded spectral norm. Following the notions in Theorem \ref{thm:law_DNN}, assume $\|X\|\le C$ for some constant $C$ and Assumption \ref{assump:sigma} holds. Let $R(z)$ be the resolvent 
\[\left(\frac{1}{\sqrt{d_1n}}\left(Y^\top Y-\E[Y^\top Y]\right)-z\Id\right)^{-1},\]
for any fixed $z\in\C^+$. Then, there exist some constants $s,n_0>0$ such that for all $n>n_0$ and any $t>0$,
\[\P\left(\left|\tr R(z)D-\E[\tr R(z) D]\right|>t\right)\le 2e^{-cnt^2}.\]
\end{prop}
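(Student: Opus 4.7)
The plan is to exhibit $f(W) := \tr R(z) D$ as approximately Lipschitz in the Gaussian matrix $W$, and then apply Gaussian Lipschitz concentration after a truncation that controls $\|Y\|_{\mathrm{op}}$. The key computation is a gradient bound whose size is proportional to $\|Y\|_{\mathrm{op}}$, combined with the high-probability operator-norm control already available from Theorem~\ref{thm:spectralnorm}.

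First I would compute $\nabla_W f$. Using the resolvent identity $\partial_{W_{k\ell}} R = -R(\partial_{W_{k\ell}} A_n) R$ and the chain rule through $Y = \sigma(WX)$, one finds
\begin{equation*}
\frac{\partial f}{\partial W_{k\ell}} = -\frac{1}{n\sqrt{d_1 n}}\bigl(\y_k^\top RDR\, u_{k,\ell} + u_{k,\ell}^\top RDR\, \y_k\bigr),
\end{equation*}
where $u_{k,\ell} := \mathrm{diag}(\sigma'(\w_k^\top X))\, X_{\ell,:}^\top \in \R^n$ and $\y_k$ is the $k$-th row of $Y$. Summing first over $\ell$ (using $\sum_\ell \|u_{k,\ell}\|^2 \le \lambda_\sigma^2 \sum_j \|\x_j\|^2 \le \lambda_\sigma^2 n\|X\|^2$) and then over $k$ (using $\sum_k \|\y_k\|^2 = \|Y\|_F^2 \le n\|Y\|_{\mathrm{op}}^2$), I obtain
\begin{equation*}
\|\nabla_W f\|_F^2 \;\le\; \frac{4\lambda_\sigma^2 \|X\|^2 \|D\|^2}{d_1\, n\,(\Im z)^4}\,\|Y\|_{\mathrm{op}}^2.
\end{equation*}

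Second, by Theorem~\ref{thm:spectralnorm} applied to $Y = \sigma(WX)$, there is an event $\mathcal{B}$ of probability at least $1-4e^{-2n}$ on which $\|Y^\top Y\| \le Cd_1$, so $\|Y\|_{\mathrm{op}} \le C\sqrt{d_1}$. On $\mathcal{B}$ the Frobenius Lipschitz bound reduces to $\|\nabla_W f\|_F \le L$ with $L := C_z\lambda_\sigma \|X\| \|D\|/\sqrt{n}$, i.e.\ of order $1/\sqrt{n}$. Since $|f(W)| \le \|D\|/\Im z$ deterministically, I extend $f|_{\mathcal{B}}$ to a globally $L$-Lipschitz function $\tilde f$ via Kirszbraun's theorem and clamp it to $[-\|D\|/\Im z,\,\|D\|/\Im z]$; this clamp only decreases the Lipschitz constant.

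Third, because $W$ has i.i.d.\ $\N(0,1)$ entries, Gaussian Lipschitz concentration applied to $\tilde f$ yields
\begin{equation*}
\P(|\tilde f(W) - \E \tilde f(W)| > t/2) \;\le\; 2\exp(-ct^2/L^2) \;\le\; 2\exp(-c' n t^2).
\end{equation*}
Finally, $|\E f - \E \tilde f| \le (2\|D\|/\Im z)\,\P(\mathcal{B}^c) \le 8e^{-2n}\|D\|/\Im z$, which is smaller than $t/2$ for $n \ge n_0(t)$, and $\{\tilde f \neq f\} \subset \mathcal{B}^c$ has probability at most $4e^{-2n}$. Combining the three error terms and absorbing the $e^{-2n}$ contributions into $2e^{-cnt^2}$ for $n$ large completes the argument.

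The main obstacle is the absence of a global Lipschitz constant for $f$: its gradient scales with $\|Y\|_{\mathrm{op}}$, which can be arbitrarily large for atypical $W$. The truncation/extension must simultaneously preserve boundedness, preserve the Lipschitz constant $O(1/\sqrt{n})$, and only perturb the expectation by $o(t)$. A cleaner alternative that avoids Kirszbraun's extension is to decompose $\tr R(z)D - \E\tr R(z)D = \sum_{i=1}^{d_1} M_i$ as a row-wise martingale, bound each $|M_i|$ via the Sherman--Morrison rank-two perturbation estimate
\begin{equation*}
|\tr R(z) D - \tr R^{(i)}(z) D| \;\le\; \frac{2\|D\|}{n(\Im z)^2\sqrt{d_1 n}}\bigl(\|\y_i\|^2 + \|\y_i'\|^2\bigr),
\end{equation*}
and control $\|\y_i\|^2 \le Cn$ uniformly in $i$ by the nonlinear Hanson--Wright inequality (Corollary~\ref{thm:quadratic_concentration} with $A = \Id$) before applying a truncated Azuma--Hoeffding bound.
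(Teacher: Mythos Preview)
Your overall strategy is sound, but it is substantially more complicated than the paper's, and the truncation step has a genuine gap. The paper obtains a \emph{global} Lipschitz bound of order $n^{-1/2}$ for $F(W)=\tr R(z)D$, with no truncation at all. The key identity you miss is the ``self-bounding'' resolvent trick: writing
\[
\frac{1}{\sqrt{d_1n}}Y^\top Y \;=\; R(z)^{-1}+z\Id+\sqrt{\tfrac{d_1}{n}}\,\Phi,
\]
one gets
\[
\Bigl(\tfrac{1}{\sqrt{d_1n}}\|R(z)Y^\top\|\Bigr)^2
=\tfrac{1}{\sqrt{d_1n}}\Bigl\|R(z)\,\tfrac{1}{\sqrt{d_1n}}Y^\top Y\,R(z)^*\Bigr\|
\le \tfrac{1}{\sqrt{d_1n}}\Bigl(\|R\|+\|R\|^2\bigl(\sqrt{\tfrac{d_1}{n}}\|\Phi\|+|z|\bigr)\Bigr)\le \frac{C}{n},
\]
which is a \emph{deterministic} bound, independent of $\|Y\|_{\mathrm{op}}$. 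Combined with $\|\sigma'(WX)\odot(\Delta X)\|_F\le \lambda_\sigma\|X\|\,\|\Delta\|_F$, this yields $\|\nabla_W F\|_F\le C/\sqrt{n}$ for every $W$, and Gaussian Lipschitz concentration applies directly.

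The gap in your argument is in the Kirszbraun step. You show $\|\nabla_W f\|_F\le L$ on the event $\mathcal B=\{\|Y\|_{\mathrm{op}}\le C\sqrt{d_1}\}$, but a gradient bound on a set only implies $L$-Lipschitzness on that set when the set is convex (so that the mean-value inequality applies along the connecting segment). The set $\mathcal B=\{W:\|\sigma(WX)\|\le C\sqrt{d_1}\}$ is not known to be convex for nonlinear $\sigma$, so you cannot conclude that $f|_{\mathcal B}$ is $L$-Lipschitz, and Kirszbraun does not apply. One can repair this with a smooth cutoff or by redefining $\mathcal B$ via a convex proxy, but once you see the resolvent identity above, the whole truncation mechanism becomes unnecessary. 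Your martingale alternative is closer in spirit to a workable proof, but it is still more laborious than the two-line global Lipschitz computation.
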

\begin{proof}
 Define function $F:\R^{d_1\times d_0}\rightarrow \R$ by $F(W):=\tr R(z)D$. Fix any $W,\Delta\in \R^{d_1 \times d_0}$ where $\|\Delta\|_F=1$, and let $W_t=W+t \Delta$. We want to verify $F(W)$ is a Lipschitz function in $W$ with respect to the Frobenius norm. First, recall \[R(z)^{-1}=\frac{1}{\sqrt{d_1n}}\sigma(WX)^\top\sigma(WX)-\sqrt{\frac{d_1}{n}}\Phi-z\Id,\] where the last two terms are deterministic with respect to $W$. Hence,
\begin{align*}
\vec(\Delta)^\top (\nabla F(W))&=\frac{d}{dt}\Big|_{t=0} F(W_t)\\
&=- \tr R(z)\left(\frac{d}{dt}\Big|_{t=0}R(z)^{-1}\right)R(z)D\\
&=-\frac{1}{\sqrt{d_1n}}\tr R(z)\left(\frac{d}{dt}\Big|_{t=0}\sigma(W_tX)^\top\sigma(W_tX)
\right)R(z)D\\
&=-\frac{2}{\sqrt{d_1n}}\tr R(z)\left(\sigma(WX)^\top\cdot \frac{d}{dt}\Big|_{t=0}\sigma(W_tX)
\right)R(z)D\\
&=-\frac{2}{\sqrt{d_1n}}\tr R(z)\left(\sigma(WX)^\top\cdot \left(\sigma'(WX) \odot
(\Delta X)\right)
\right)R(z)D,
\end{align*}
where $\odot$ is the Hadamard product, and $\sigma'$ is applied entrywise. Here we utilize the formula \[\partial R(z)=-R(z)(\partial (R(z)^{-1}))R(z)\] and $R(z)=R(z)^\top$. Lemma~\ref{lem:propC2} in Appendix~\ref{appendix:lemmas} implies that $\|R(z)\|\leq \frac{1}{|\Im z|}.$ Therefore, based on the assumption of $D$, we have
\[\Big|\vec(\Delta)^\top (\nabla F(W))\Big|
\leq \frac{C}{\sqrt{d_1n}}  \|R(z)\sigma(WX)^\top\| \cdot
\|\sigma'(WX) \odot (\Delta X)\|,\] for some constant $C>0$.
For the first term in the product on the right-hand side,
\begin{align*}
&\left(\frac{1}{\sqrt{d_1n}}\|R(z)\sigma(WX)^\top\|\right)^2\\
=&\frac{1}{\sqrt{d_1n}}\left\|R(z)\left(\frac{1}{\sqrt{d_1n}} \sigma(WX)^\top \sigma(WX)\right) R(z)^*\right\|\\
\leq & \frac{1}{\sqrt{d_1n}}\left(\|R(z)R(z)^{-1}R(z)^*\|
+\left\|R(z)\left(\sqrt{\frac{d_1}{n}}\Phi+z\Id\right)R(z)^*\right\|\right)\\
 \leq &\frac{1}{\sqrt{d_1n}}\left(\|R(z)\|+\|R(z)\|^2\left(\sqrt{\frac{d_1}{n}}\|\Phi\|+|z|\right)\right)\leq \frac{C}{n}.
\end{align*}
For the second term,
\[\|\sigma'(WX) \odot (\Delta X)\| \leq \|\sigma'(WX) \odot (\Delta
X)\|_F \leq \lambda_\sigma \|\Delta X\|_F \leq \lambda_\sigma \|\Delta\|_F \cdot
\|X\| \leq C.\]
Thus, $|\vec(\Delta)^\top (\nabla F(W))| \leq C/\sqrt{n}$. This holds for
every $\Delta$ such that $\|\Delta\|_F=1$, so
$F(W)$ is $C/\sqrt{n}$-Lipschitz in $W$ with respect to the Frobenius norm.
Then the result follows from the Gaussian concentration inequality for Lipschitz functions.
\end{proof}

Next, we investigate the approximation of $\Phi=\E_{\w}[\sigma(\w^\top X)^\top\sigma(\w^\top X)]$ via the Hermite polynomials $\{h_k\}_{k\ge 0}$.  The orthogonality of Hermite polynomials allows us to write $\Phi$ as a series of kernel matrices. Then we only need to estimate each kernel matrix in this series. The proof is directly based on \cite[Lemma 2]{ghorbani2019limitations}. The only difference is that we consider the deterministic input data $X$ with the $(\varepsilon_n, B)$-orthonormal property, while in Lemma 2 of \cite{ghorbani2019limitations}, the matrix $X$ is formed by independent Gaussian vectors.  

\begin{lemma}\label{lemma:operator_Phi}
Recall the definition of $\Phi_0$ in \eqref{def:Phi0}. If  $X$ is $(\varepsilon_n,B)$-orthonormal and Assumption \ref{assump:sigma} holds, then we have the spectral norm bound
 \[\|\Phi-\Phi_0\|\le C_B\varepsilon_n^2\sqrt{n},\] 
 where $C_B$ is a constant depending on $B$. Suppose that $\varepsilon_n^2\sqrt{n}\to0$ as $n\to\infty$, then  $\|\Phi\|\leq C$ uniformly for some constant $C$ independent of $n$. 
\end{lemma}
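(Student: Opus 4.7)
The plan is to compare $\Phi$ and $\Phi_0$ entrywise via the Hermite expansion of $\sigma$, then regroup the resulting pieces into structured matrices (rank-one, Hadamard products, and a small-entry tail) whose operator norms can be controlled separately. Writing $s_\alpha := \|\x_\alpha\|$, $t_\alpha := s_\alpha - 1$, and $g_\alpha(\xi):=\sigma(s_\alpha \xi)$, the Mehler identity for bivariate Gaussians with correlation $\rho_{\alpha\beta} := \x_\alpha^\top\x_\beta/(s_\alpha s_\beta)$ yields, for $\alpha\ne\beta$,
\[
\Phi_{\alpha\beta} \;=\; \sum_{k\ge 0} \zeta_k(g_\alpha)\zeta_k(g_\beta)\,\rho_{\alpha\beta}^k, \qquad \zeta_k(g_\alpha):=\E\bigl[\sigma(s_\alpha\xi) h_k(\xi)\bigr],
\]
with $\Phi_{\alpha\alpha}=\sum_k\zeta_k(g_\alpha)^2$. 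The scaling identity $h_k(s\xi)=\sqrt{k!}\sum_{\ell\ge 0}\frac{(s^2-1)^\ell s^{k-2\ell}}{2^\ell\ell!\sqrt{(k-2\ell)!}}\,h_{k-2\ell}(\xi)$, derived from the Hermite generating function, together with the Lipschitz assumption on $\sigma$, gives $\zeta_0(g_\alpha)=\sqrt{2}\,\zeta_2(\sigma)\,t_\alpha+O(t_\alpha^2)$---precisely matching $(\vmu)_\alpha$---and $\zeta_k(g_\alpha)=\zeta_k(\sigma)+O(|t_\alpha|)$ for $k\ge 1$.

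Next I would decompose $\Phi-\Phi_0$ into three structured pieces. The rank-one piece $E^{(0)}:=\boldsymbol{\zeta}\boldsymbol{\zeta}^\top-\vmu\vmu^\top$ with $\boldsymbol{\zeta}_\alpha:=\zeta_0(g_\alpha)$ satisfies $\|E^{(0)}\|\le\|\boldsymbol{\zeta}-\vmu\|\bigl(\|\boldsymbol{\zeta}\|+\|\vmu\|\bigr)$, where $\|\boldsymbol{\zeta}-\vmu\|^2\le C\sum_\alpha t_\alpha^4\le C\varepsilon_n^2 B^2$ (each coordinate is $O(t_\alpha^2)$ and $\sum_\alpha t_\alpha^2\le B^2$) and $\|\vmu\|^2\le 2\zeta_2(\sigma)^2 B^2$. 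For each $k\in\{1,2,3\}$, the Hadamard-structured piece $E^{(k)}$ is estimated by replacing $\zeta_k(g_\alpha)$ with $\zeta_k(\sigma)$ and absorbing $s_\alpha^k s_\beta^k$ into $\rho_{\alpha\beta}^k$ to recover $(\x_\alpha^\top\x_\beta)^k$; Schur's product inequality $\|A\odot B\|\le\max_\alpha A_{\alpha\alpha}\cdot\|B\|$ for $A\succeq 0$, combined with $\|f_k(X^\top X)\|\le\max_\alpha\|\x_\alpha\|^{2(k-1)}\|X\|^2\le C_B$, controls each such residual. Finally, the tail $E^{(\ge 4)}:=\sum_{k\ge 4}\zeta_k(g_\alpha)\zeta_k(g_\beta)\rho_{\alpha\beta}^k$ has off-diagonal entries bounded in absolute value by $C\varepsilon_n^4$, since $|\rho_{\alpha\beta}|\le 2\varepsilon_n$ for distinct $\alpha,\beta$ yields $|\rho_{\alpha\beta}|^k\le C\varepsilon_n^4$ for $k\ge 4$, and Cauchy--Schwarz together with $\|g_\alpha\|_{L^2}\le C$ controls the sum $\sum_k|\zeta_k(g_\alpha)\zeta_k(g_\beta)|$; thus $\|E^{(\ge 4)}\|_F\le Cn\varepsilon_n^4$, with diagonal contributions absorbed into the $(1-\sum_{k=1}^3\zeta_k(\sigma)^2)\Id$ term.

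Combining these estimates gives $\|\Phi-\Phi_0\|\le C_B\varepsilon_n^2\sqrt n$ in the relevant regime $\varepsilon_n^2\sqrt n\to 0$ (in which $n\varepsilon_n^4=(\sqrt n\varepsilon_n^2)^2\le\sqrt n\varepsilon_n^2$). The uniform estimate $\|\Phi\|\le C$ then follows from $\|\Phi_0\|\le\|\vmu\|^2+\sum_{k=1}^3\zeta_k(\sigma)^2\|f_k(X^\top X)\|+1\le C_B$ (via the same Schur estimate applied iteratively to each Hadamard power $f_k(X^\top X)=(X^\top X)\odot f_{k-1}(X^\top X)$) together with $\|\Phi-\Phi_0\|=o(1)$.

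The principal technical difficulty is the precise bookkeeping required across Hermite modes: one must exhibit an exact first-order-in-$t_\alpha$ cancellation between $\boldsymbol{\zeta}\boldsymbol{\zeta}^\top$ and $\vmu\vmu^\top$ (achieved by the $h_k$-scaling identity applied to $k=0$), and show that the remaining cross-mode errors organize into rank-one and Hadamard structures rather than contributing $\Theta(\varepsilon_n)$ to the operator norm diagonally. Using the square-summability $\sum_\alpha t_\alpha^2\le B^2$ in place of the pointwise bound $|t_\alpha|\le\varepsilon_n$ is what upgrades a coarse $O(\varepsilon_n)$ estimate to the stated rate $\varepsilon_n^2\sqrt n$ in the regime of interest.
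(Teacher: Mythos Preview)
Your outline is essentially the paper's proof: expand $\Phi$ via the Mehler/Hermite identity, then separately bound the rank-one piece $T_0-\vmu\vmu^\top$, the low-degree pieces $T_k-\zeta_k(\sigma)^2 f_k(X^\top X)$ for $k\in\{1,2,3\}$ via diagonal conjugation and the Schur product inequality, the off-diagonal tail $\sum_{k\ge 4}$ by the Frobenius bound $\lesssim n\varepsilon_n^4$, and the diagonal tail against the $(1-\sum_{k\le 3}\zeta_k(\sigma)^2)\Id$ term. One caveat on your closing paragraph: the intermediate estimates (yours and the paper's) actually deliver $O(\varepsilon_n+\sqrt n\,\varepsilon_n^2+n\varepsilon_n^4)$, and collapsing this to $C_B\varepsilon_n^2\sqrt n$ tacitly uses $\sqrt n\,\varepsilon_n\gtrsim 1$; the $O(\varepsilon_n)$ contributions from $k=1,2,3$ come from the pointwise bound $\|D_k-\zeta_k(\sigma)\Id\|\le C\varepsilon_n$ and are not improved by the square-summability $\sum_\alpha t_\alpha^2\le B^2$, so your remark that this upgrades the rate to $\varepsilon_n^2\sqrt n$ is not quite right.
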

 \begin{proof}
By Assumption \ref{assump:sigma}, we know that
\[ \xi_0(\sigma)=0, \quad \sum_{k=1}^{\infty}\zeta_k^2(\sigma)=\E[\sigma(\xi)^2]=1.\]
For any fixed $t$, $\sigma(tx)\in  L^2(\R,\Gamma)$. This is because $\sigma(x)\in L^2(\R,\Gamma)$ is a Lipschitz function and by triangle inequality $ |\sigma(tx)-\sigma(x)|\leq \lambda_{\sigma} |tx-x|$, we have, for $\xi\sim\N(0,1)$,
\begin{align}
    \E (\sigma(t\xi)^2) &\leq \E (|\sigma(\xi)|+\lambda_{\sigma}|t\xi-\xi|)^2<\infty.
\end{align}

 For $1\le\alpha\le n$, let $\sigma_\a(x):=\sigma(\|\x_\a\|x)$ and the Hermite expansion of $\sigma_a$ can be written as
 \[\sigma_\a(x)=\sum_{k=0}^{\infty} \zeta_k(\sigma_\a) h_k(x),\]
 where the coefficient $\zeta_k(\sigma_\a)=\E[\sigma_\a(\xi)h_k(\xi)]$. Let unit vectors be $\mathbf u_\a=\x_\a/\|\x_\a\|$, for $1\le \a\le n$. So for $1\le\a,\b\le n$, the $(\a,\b)$ entry of $\Phi$ is  
 \[\Phi_{\a\b}=\E[\sigma(\w^\top \x_{\alpha})\sigma(\w^\top \x_{\beta})]=\E[\sigma_\a(\xi_\a)\sigma_\b(\xi_\b)],\]
 where $(\xi_\a,\xi_\b)=(\w^\top \mathbf u_\a,\w^\top  \mathbf u_\b)$ is a  Gaussian random vector with mean zero and covariance 
 \begin{equation}\label{eq:covariance}
     \begin{pmatrix}
 1 & \mathbf u_\a^\top \mathbf u_\b\\
 \mathbf u_\a^\top \mathbf u_\b & 1
 \end{pmatrix}.
 \end{equation}
 By the orthogonality of Hermite polynomials with respect to $\Gamma$ and  Lemma \ref{lem:NM20D2}, we can obtain \[\E[h_j(\xi_\a)h_k(\xi_\b)]=\E [h_j(\w^\top \mathbf u_{\a})h_k(\w^\top \mathbf u_{\beta})]=\delta_{j,k}(\mathbf u_\a^\top \mathbf u_\b)^k,\]
 which leads to
 \begin{equation}\label{eq:Phiab}
     \Phi_{\a\b}= \sum_{k=0}^{\infty} \zeta_k(\sigma_\a)\zeta_k(\sigma_\b) (\mathbf u_\a^\top \mathbf u_\b)^k.
 \end{equation} 
 
For any $k\in\NN$, let $T_k$ be an $n$-by-$n$ matrix with $(\a,\b)$-th entry
 \begin{align}\label{eq:Talphabeta}
  (T_k)_{\a\b}:= \zeta_k(\sigma_\a)\zeta_k(\sigma_\b) (\mathbf u_\a^\top \mathbf u_\b)^k.   
 \end{align}
 Specifically, for any $k\in\NN$, we have \[T_k=D_k f_k(X^\top X)D_k,\] where $D_k$ is the diagonal matrix $\diag(\zeta_k(\sigma_\a)/\|\x_\a\|^k)_{\alpha\in [n]}$. 
 
At first, we consider twice differentiable $\sigma$ in Assumption \ref{assump:sigma}. Similar with \cite[Equation (26)]{ghorbani2019limitations}, for any $\varepsilon>0$ and $|t-1|\le \varepsilon$, we take the Taylor approximation of $\sigma(tx)$ at point $x$, then there exists $\eta$ between $tx$ and $x$ such that 
\[\sigma(tx)-\sigma(x)=\sigma'(x)x(t-1)+\frac{1}{2}\sigma''(\eta)x^2(t-1)^2.\]
Replacing $x$ by $\xi$ and taking expectation, since $\sigma''$ is uniformly bounded, we can get 
 \begin{equation}\label{eq:zeta_0_appr_0}
     \left|\E\left[\sigma(t\xi)-\sigma(\xi)\right]-\E[\sigma'(\xi)\xi](t-1)\right|\le C|t-1|^2\le C\varepsilon_n^2,
 \end{equation} 
 For $k\ge 1$, the Lipschitz condition for $\sigma$ yields
\begin{align}\label{eq:zeta_k_appr}
    |\zeta_k(\sigma_\a)-\zeta_k(\sigma) |\le C\left|\|\x_\a\|-1\right|\cdot \E[|\xi|\cdot|h_k(\xi)|]\le C\varepsilon_n,
\end{align}where constant $C$ does not depend on $k$. As for piece-wise linear $\sigma$, it is not hard to see
\begin{equation}\label{eq:zeta_0_appr_00}
     \E\left[\sigma(t\xi)-\sigma(\xi)\right]=\E[\sigma'(\xi)\xi](t-1).
\end{equation}

Now, we begin to approximate $T_k$ separately based on \eqref{eq:zeta_0_appr_0}, \eqref{eq:zeta_k_appr} and \eqref{eq:zeta_0_appr_00}. Denote $\diag(A)$ the diagonal submatrix of a matrix $A$.
 
\textbf{(1) Approximation for $\sum_{k\ge 4}(T_k-\diag(T_k))$}. At first, we estimate the $L^2$ norm with respect to $\Gamma$ of the function $\sigma_\a$. Recall that
$  \|\sigma_{\alpha} \|_{L^2}=\mathbb E [\sigma_{\alpha}(\xi)^2]^{1/2}.$
Because $\|\sigma\|_{L^2}=1$ and $\sigma$ is a Lipschitz function, we have  
\begin{align}
    \sup_{1\le \a\le n}\|\sigma-\sigma_\a\|_{L^2}=\E[(\sigma(\xi)-\sigma_\a(\xi))^2]^{1/2}\le~& C|\|\x_\a\|-1|,\label{eq:L_2bound}\\
    \sup_{1\le \a\le n}\|\sigma_\a\|_{L^2}\le~& 1+C\varepsilon_n.\label{eq:L_2bound_1}
\end{align}
Hence, $\|\sigma_\a\|_{L^2}$ is uniformly bounded with some constant for all large $n$. Next, we estimate the off-diagonal entries of $T_k$ when $k\ge 4$. From \eqref{eq:Talphabeta}, we obtain that
\begin{align}
    \left\|\sum_{k\ge 4}(T_k-\diag(T_k))\right\|\le ~& \left\|\sum_{k\ge 4}(T_k-\diag(T_k))\right\|_F\le\sum_{k\ge 4} \left\|T_k-\diag(T_k)\right\|_F\nonumber\\
    \le ~& \sum_{k\ge 4} \left(\sup_{\a\neq\b}|\mathbf u_\a^\top \mathbf u_\b|^{k}\right)\left[\sum_{\a,\b=1}^n\zeta_k(\sigma_\a)^2\zeta_k(\sigma_\b)^2 \right]^{\frac{1}{2}}\nonumber\\
    \le ~& \left(\sup_{\a\neq\b}|\mathbf u_\a^\top \mathbf u_\b|^{4}\right) \sum_{\a=1}^n\sum_{k=0}^\infty\zeta_k(\sigma_\a)^2\nonumber\\
    \le ~& n\cdot \left(\sup_{\a\neq\b}\frac{|\x_\a^\top \x_\b|^{4}}{\|\x_\a\|^4\|\x_\b\|^4}\right)\sup_{1\le \a\le n}\|\sigma_\a\|_{L^2}^2\le Cn\cdot\varepsilon_n^4, \label{eq:off_diagonal_bounds}
\end{align}when $n$ is sufficiently large.

\textbf{(2) Approximation for $T_0$}.  Recall $\E[\sigma(\xi)]=0$ and by Gaussian integration by part,
\[\E[\sigma'(\xi)\xi]=\E[\xi \int_{0}^{\xi} \sigma'(x)x dx]=\E [\xi^2\sigma(\xi)]-\E[\xi \int_{0}^{\xi} \sigma(x)dx]=\E [\xi^2\sigma(\xi)]-\E[\sigma(\xi)].\]
Then, we have
\begin{align*}
    \E[\sigma'(\xi)\xi]=\E[(\xi^2-1)\sigma(\xi)]=\E[\sqrt{2}h_2(\xi)\sigma(\xi)]=\sqrt{2}\zeta_2(\sigma). 
\end{align*}
If $\sigma$ is twice differentiable, then $\E[\sigma''(\xi)]=\sqrt{2}\zeta_2(\sigma) $ as well.

Thus,  taking $t=\|\x_{\alpha}\|$ in \eqref{eq:zeta_0_appr_0} and \eqref{eq:zeta_0_appr_00} implies that for any $1\le\a\le n,$
 \begin{equation}\label{eq:zeta_0_appr}
     \left|\zeta_0(\sigma_\a)-\sqrt{2}\zeta_2(\sigma)(\|\x_\a\|-1)\right|\le C\varepsilon_n^2.
 \end{equation} 
 Define $\vnu^\top:=(\zeta_0(\sigma_1),\ldots,\zeta_0(\sigma_n))$, then $T_0=\vnu\vnu^\top$. Recall the definition of $\vmu$ in \eqref{def:Phi0}. Then, \eqref{eq:zeta_0_appr} ensures that
 \[\|\vmu-\vnu\|\le C\sqrt{n}\varepsilon_n^2.\]
 Applying the $(\varepsilon_n,B)$-orthonormal property of $\x_\a$ yields
\begin{equation}\label{eq:bound_mu}
    \|\vmu\|^2=2\zeta_2(\sigma)^2\sum_{\a=1}^n (\|\x_\a\|-1)^2\le 2\zeta_2(\sigma)^2\sum_{\a=1}^n (\|\x_\a\|^2-1)^2\le 2B^2\zeta_2(\sigma)^2.
\end{equation}
Hence the difference between $T_0$ and $\vmu\vmu^\top$ is controlled by
\begin{equation}\label{eq:T0}
    \|T_0-\vmu\vmu^\top \|\le \|\vmu-\vnu\|\left(2\|\vmu\|+\|\vnu-\vmu\|\right)\le C\sqrt{n}\varepsilon_n^2.
\end{equation}

\textbf{(3) Approximation for $T_k$ for $k=1,2,3$}. For $0\le k\le 3$, Assumption~\ref{assump:asymptotics} and \eqref{eq:zeta_k_appr} show that
\begin{align}
    \left|\zeta_k(\sigma_\a)/\|\x_\a\|^k-\zeta_k(\sigma)\right|\le~ & \frac{1}{\|\x_\a\|^k}\left[\left|\zeta_k(\sigma_\a)-\zeta_k(\sigma)\right|+|\zeta_k(\sigma)|\cdot|\|\x_\a\|^k-1|\right]\nonumber\\
    \le~ & \frac{C\varepsilon_n+C_1|\|\x_\a\|-1|}{(1-\varepsilon_n)^k}\le C_2\varepsilon_n,\label{eq:zeta_k_appro}
\end{align} 
when $n$ is sufficiently large.  Notice that $T_k=D_kf_k(X^\top X)D_k$, where $D_k$ is the diagonal matrix. Hence, by \eqref{eq:zeta_k_appro}, \[\|D_k-\zeta_k(\sigma)\Id\|\le C_2\varepsilon_n.\] 
And for $k=1,2,3,$ by the triangle inequality,
\begin{align*}
    &\|T_k-\zeta_k(\sigma)^2f_k(X^\top X)\|=\|D_kf_k(X^\top X)D_k-\zeta_k(\sigma)^2f_k(X^\top X) \|\\
    \le~ & \|D_k-\zeta_k(\sigma)\Id\|\cdot \|f_k(X^\top X)\|(|\zeta_k(\sigma)|+\|D_k-\zeta_k(\sigma)\Id\|)
    \le  C\varepsilon_n\|f_k(X^\top X)\|.
\end{align*}  When $k=1$, $f_1(X^\top X)=X^\top X$ and $\|X^\top X\|\le \|X\|^2\le B^2$. When $k=2$, \[f_2(X^\top X)=(X^\top X)\odot (X^\top X).\]
From Lemma \ref{lem:Hadamardinequality} in Appendix~\ref{appendix:lemmas}, we have that
\begin{equation}\label{eq:bound_norm_f_k}
    \|f_2(X^\top X)\|\le \max_{1\le \a,\b\le n}|\x_\a^\top\x_{\b}|\cdot\|X\|^2\le B^2(1+\varepsilon_n).
\end{equation}
So the left-hand side of \eqref{eq:bound_norm_f_k} is bounded. Analogously, we can verify $\|f_3(X^\top X)\|$ is also bounded. Therefore, we have
\begin{equation}\label{eq:Tk}
    \|T_k-\zeta_k(\sigma)^2f_k(X^\top X)\|\le C\varepsilon_n,
\end{equation}for some constant $C$ and $k=1,2,3$ when $n$ is sufficiently large.

\textbf{(4) Approximation for $\sum_{k\ge 4 }\diag (T_k)$}.
Since $\mathbf u_\a^\top \mathbf u_\a=1$, we know
\[\sum_{k\ge 4 }\diag (T_k)=\diag \left(\sum_{k\ge 4}\zeta_k(\sigma_\a)^2\right)_{\a\in [n]}=\diag \left(\|\sigma_\a\|_{L^2}^2-\sum_{k=0}^4\zeta_k(\sigma_\a)^2\right)_{\a \in [n]}.\]
First, by \eqref{eq:L_2bound} and \eqref{eq:L_2bound_1}, we can claim that
\begin{align*}
    |\|\sigma_\a\|_{L^2}^2-1|= |\|\sigma_\a\|_{L^2}^2-\|\sigma\|_{L^2}^2|\le C\|\sigma_\a-\sigma\|_{L^2}\le C\varepsilon_n.
\end{align*}
Second, in terms of \eqref{eq:zeta_k_appro}, we obtain
\[|\zeta_k(\sigma_\a)^2-\zeta_k(\sigma)^2|\le C|\zeta_k(\sigma_\a)-\zeta_k(\sigma)|\le C\varepsilon_n,\] for $k=1,2$ and $3$. Combining these together, we conclude that
\begin{align}
    &\left\|\sum_{k\ge 4 }\diag (T_k)-(1-\zeta_1(\sigma)^2-\zeta_2(\sigma)^2-\zeta_3(\sigma)^2)\Id\right\|\nonumber\\
    \le \quad &\max_{1\le \a\le n}\left|(\|\sigma_\a\|_{L^2}^2-1)-\sum_{k=0}^4(\zeta_k(\sigma_\a)^2-\zeta_k(\sigma)^2)\right|\le C\varepsilon_n.\label{eq:diag_control}
\end{align}
Recall 
\[ \Phi_0= \vmu\vmu^\top+\sum_{k=1}^3 \zeta_k(\sigma)^2f_k(X^\top X)+(1-\zeta_1(\sigma)^2-\zeta_2(\sigma)^2-\zeta_3(\sigma)^2)\Id.\]
In terms of approximations \eqref{eq:off_diagonal_bounds}, \eqref{eq:T0}, \eqref{eq:Tk} and \eqref{eq:diag_control}, we can finally manifest
\begin{align}\label{eq:differencePhi}
    \|\Phi-\Phi_0\|\le C\left(\varepsilon_n+\sqrt{n}\varepsilon_n^2+n\varepsilon_n^4\right)\le C \sqrt{n}\varepsilon_n^2,
\end{align} for some constant $C>0$ as $\sqrt{n}\varepsilon_n^2\to 0$.  The spectral norm bound of $\Phi$ is directly deduced by the spectral norm bound of $\Phi_0$ based on \eqref{eq:bound_mu} and \eqref{eq:bound_norm_f_k}, together with \eqref{eq:differencePhi}.
 \end{proof}

 \begin{remark}[Optimality of $\eps_n$]\label{remark:optimal}
  For general deterministic data $X$, our pairwise orthogonality assumption with rate $n\eps_n^4=o(1)$ is optimal for the approximation of $\Phi$ by $\Phi_0$ in the spectral norm.  If we relax the decay rate of $\eps_n$ in Assumption~\ref{assump:asymptotics}, the above approximation may require including terms of higher-degree $f_k(X^\top X)$ for $k\ge4$ in $\Phi_0$, which will lead to the invalidation of some of our following results and simplifications. Subsequent to the initial completion of our paper, this  weaker regime has  been considered in our follow-up work \cite{wang2022overparameterized}. 
 \end{remark}

Next, we continue to provide an additional estimate for $\Phi$, but in the Frobenius norm to further simplify the limiting spectral distribution of $\Phi$.
 \begin{lemma}\label{lemma:spectrum_Phi}
 If Assumptions \ref{assump:sigma} and \ref{assump:asymptotics} hold, then $\Phi$ has the same limiting spectrum as $b_\sigma^2 X^\top X+(1-b_\sigma^2)\Id$ when $n\to \infty$, i.e.
 \[\limspec \Phi=\limspec\left(b_\sigma^2 X^\top X+(1-b_\sigma^2)\Id\right)=b_\sigma^2 \mu_0+(1-b_\sigma^2).\]
 \end{lemma}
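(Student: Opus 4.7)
My plan is to deduce the conclusion directly from Lemma~\ref{lemma:operator_Phi} together with the Hoffman--Wielandt inequality, reducing the problem to controlling a normalized Frobenius norm.

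First, since $\zeta_1(\sigma)=b_\sigma$, rewrite
\[
\Phi_0 - \bigl(b_\sigma^2 X^\top X + (1-b_\sigma^2)\Id\bigr)
= \vmu\vmu^\top + \zeta_2(\sigma)^2\bigl(f_2(X^\top X) - \Id\bigr) + \zeta_3(\sigma)^2\bigl(f_3(X^\top X) - \Id\bigr).
\]
By Lemma~\ref{lemma:operator_Phi}, $\|\Phi-\Phi_0\| \le C_B \varepsilon_n^2 \sqrt n$, hence $\|\Phi-\Phi_0\|_F \le \sqrt n \|\Phi-\Phi_0\| \le C_B \varepsilon_n^2 n$, so the normalized Frobenius distance $\|\Phi-\Phi_0\|_F/\sqrt n \le C_B \varepsilon_n^2 \sqrt n \to 0$ under Assumption~\ref{assump:asymptotics}(b). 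It therefore suffices to show that $\Phi_0 - (b_\sigma^2 X^\top X + (1-b_\sigma^2)\Id)$ has normalized Frobenius norm $o(1)$.

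Second, I would estimate the three remaining pieces in Frobenius norm. For the rank-one term, \eqref{eq:bound_mu} gives $\|\vmu\vmu^\top\|_F = \|\vmu\|^2 \le 2B^2\zeta_2(\sigma)^2$, so $\|\vmu\vmu^\top\|_F/\sqrt n \to 0$ trivially. For $k\in\{2,3\}$, the $(i,j)$-entry of $f_k(X^\top X)-\Id$ is $\|\x_i\|^{2k}-1$ on the diagonal and $(\x_i^\top \x_j)^k$ off-diagonal. Using $|\|\x_i\|-1|\le \varepsilon_n$ and $\sum_i(\|\x_i\|-1)^2\le B^2$ from $(\varepsilon_n,B)$-orthonormality, one gets $\sum_i(\|\x_i\|^{2k}-1)^2 \le C_{k,B}$. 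Using $|\x_i^\top\x_j|\le \varepsilon_n$ and $\|X^\top X\|_F^2 \le n\|X\|^4 \le nB^4$, one gets
\[
\sum_{i\ne j}(\x_i^\top\x_j)^{2k} \le \varepsilon_n^{2k-2}\sum_{i\ne j}(\x_i^\top\x_j)^2 \le C_{k,B}\, n\varepsilon_n^{2k-2}.
\]
Hence $\|f_k(X^\top X)-\Id\|_F^2/n \le C_{k,B}(1/n + \varepsilon_n^{2k-2}) = o(1)$ for $k=2,3$.

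Third, combining these estimates with the triangle inequality yields
\[
\frac{1}{\sqrt n}\,\bigl\|\Phi - (b_\sigma^2 X^\top X + (1-b_\sigma^2)\Id)\bigr\|_F \longrightarrow 0.
\]
By the Hoffman--Wielandt inequality, the $L^2$-Wasserstein distance between the empirical spectral distributions of $\Phi$ and $b_\sigma^2 X^\top X + (1-b_\sigma^2)\Id$ is bounded by this quantity. Since Assumption~\ref{assump:asymptotics}(c) says $\mu_{X^\top X}\to\mu_0$ weakly, the limiting spectrum of $b_\sigma^2 X^\top X + (1-b_\sigma^2)\Id$ is the pushforward $b_\sigma^2\mu_0+(1-b_\sigma^2)=\tilde\mu_0$, and the same limit holds for $\Phi$. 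No step is genuinely subtle here; the only mild care is in bookkeeping the Frobenius bounds for $f_2$ and $f_3$, which I expect to be the main (but routine) computational burden.
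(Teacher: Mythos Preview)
Your proposal is correct and follows essentially the same route as the paper: decompose $\Phi_0-(b_\sigma^2X^\top X+(1-b_\sigma^2)\Id)$ into $\vmu\vmu^\top$ plus the $f_2,f_3$ corrections, show each piece has vanishing normalized Frobenius norm, and conclude via a Frobenius-to-spectrum argument. The only cosmetic differences are that you invoke Hoffman--Wielandt explicitly where the paper cites Lemma~\ref{lem:ESDforbenius} (which encodes the same fact), and your off-diagonal bound $\sum_{i\ne j}(\x_i^\top\x_j)^{2k}\le \varepsilon_n^{2k-2}\|X^\top X\|_F^2$ is marginally sharper than the paper's direct $n(n-1)\varepsilon_n^{2k}$, though both suffice under Assumption~\ref{assump:asymptotics}(b).
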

 \begin{proof}
 By the definition of $b_\sigma$, we know that $b_\sigma=\zeta_1(\sigma)$. As a direct deduction of Lemma \ref{lemma:operator_Phi}, the limiting spectrum of $\Phi$ is identical to the limiting spectrum of $\Phi_0$. To prove this lemma, it suffices to check the Frobenius norm of the difference between $\Phi_0$ and $\zeta_1(\sigma)^2 X^\top X+(1-\zeta_1(\sigma)^2)\Id$. Notice that
 \begin{align*}
     &\Phi_0-\zeta_1(\sigma)^2 X^\top X-(1-\zeta_1(\sigma)^2)\Id\\
     =~&\vmu\vmu^\top +\zeta_2(\sigma)^2f_2(X^\top X)+\zeta_3(\sigma)^2f_3(X^\top X)-(\zeta_2(\sigma)^2+\zeta_3(\sigma)^2)\Id.
 \end{align*}
 By the definition of vector $\vmu$ and the assumption of $X$, we have
 \begin{equation}
     \|\vmu\vmu^\top\|_F=\|\vmu\|^2= 2\zeta_2^2(\sigma)\sum_{\a=1}^n(\|\x_\a\|-1)^2\le 2\zeta_2^2(\sigma) B^2.
 \end{equation}
 For $k=2,3$, the Frobenius norm can be controlled by
 \begin{align*}
     \|f_k(X^\top X)-\Id\|_F^2=~&\sum_{\a,\b=1}^n\left((\x_\a^\top\x_\b)^k-\delta_{\a\b}\right)^2\\
     \le~& n(n-1)\varepsilon_n^{2k}+\sum_{\a=1}^n (\|\x_\a\|^{2k}-1)^2\le n^2\varepsilon_n^{2k}+C n\varepsilon_n^2.
 \end{align*}
 Hence, as $n\to \infty$, we have
 \[\frac{1}{n}\|\vmu\vmu^\top\|_F^2,\quad\frac{1}{n}\|f_k(X^\top X)-\Id\|_F^2\to 0,~\text{ for } ~k=2,3,\]
as $n\varepsilon_n^4\to 0$. Then we conclude that
 \begin{equation*}
     \frac{1}{n}\|\Phi_0-\zeta_1(\sigma)^2 X^\top X-(1-\zeta_1(\sigma)^2)\Id\|_F^2\le C(n\varepsilon_n^4+\varepsilon_n^2)\to 0.
 \end{equation*}
Hence, $\limspec \Phi$ is the same as $\limspec (\zeta_1(\sigma)^2 X^\top X+(1-\zeta_1(\sigma)^2)\Id)$ when $n\to \infty,$ due to Lemma \ref{lem:ESDforbenius} in Appendix~\ref{appendix:lemmas}. 
 \end{proof} 
 
Moreover, the proof of Lemma~\ref{lemma:spectrum_Phi} can be modified to prove \eqref{eq:K_equi_psi}, so we omit its proof. Now, based on Corollary \ref{cor:L_2_converge}, Proposition \ref{prop:trRD}, Lemma \ref{lemma:operator_Phi}, and Lemma \ref{lemma:spectrum_Phi}, applying Theorem \ref{thm:iff_conditions} for general sample covariance matrices, we can finish the proof of Theorem \ref{thm:law_DNN}.
 \begin{proof}[Proof of Theorem \ref{thm:law_DNN}]
Based on Corollary \ref{cor:L_2_converge} and Proposition \ref{prop:trRD}, we can verify the conditions \eqref{condition_1} and \eqref{condition_2} in Theorem \ref{thm:iff_conditions}. By Lemma \ref{lemma:operator_Phi} and Lemma \ref{lemma:spectrum_Phi}, we know that the limiting eigenvalue distributions of $\Phi$ and $(1-b_\sigma^2)\Id+b_\sigma^2X^\top X$ are identical and $\|\Phi\|$ is uniformly bounded. So the limiting eigenvalue distribution of $\Phi$ denoted by $\mu_\Phi$ is just $(1-b_\sigma^2)+b_\sigma^2\mu_0$. Hence, the first conclusion of Theorem \ref{thm:law_DNN} follows from Theorem \ref{thm:iff_conditions}. 
 
 For the second part of this theorem, we consider the difference
\begin{align*}
     &\frac{1}{n}\left\|\frac{1}{\sqrt{d_1n}}\left(Y^\top Y-\E[Y^\top Y]\right)-\frac{1}{\sqrt{d_1n}}\left(Y^\top Y-d_1\Phi_0\right)\right\|^2_F\\
     \le~ & \frac{d_1}{n^2}\|\Phi-\Phi_0\|_F^2\le \frac{d_1}{n}\|\Phi-\Phi_0\|^2\le d_1\varepsilon_n^4\to 0,
\end{align*}where we employ Lemma \ref{lemma:operator_Phi} and the assumption $d_1\varepsilon_n^4=o(1)$. Thus, because of Lemma \ref{lem:ESDforbenius}, $\frac{1}{\sqrt{d_1n}}\left(Y^\top Y-d_1\Phi_0\right)$ has the same limiting eigenvalue distribution as \eqref{eq:center_RF}, $\mu_s\boxtimes ((1-b_\sigma^2)+b_\sigma^2\mu_0)$. This finishes the proof of Theorem \ref{thm:law_DNN}.
\end{proof}

 \bigskip
Next, we move to study the empirical NTK and its corresponding limiting eigenvalue distribution. Similarly, we first verify that such NTK concentrates around its expectation and then simplify this expectation by some deterministic matrix only depending on the input data matrix $X$ and nonlinear activation $\sigma$.  The following lemma  can be obtained from \eqref{eq:Lupperboundd} in Theorem \ref{thm:NTK_concentration}.
 \begin{lemma}\label{lem:NTKexpectation}
Suppose that Assumption \ref{assump:W} holds, $\sup_{x\in\R}|\sigma'(x)|\le \lambda_\sigma$ and $\|X\|\le B$. Then if $d_1=\omega(\log n)$, we have
 \begin{align}\label{eq:NTK_first_concentration_inequality}
     \frac{1}{d_1}\left\|(S^\top S)\odot(X^\top X)-\E[(S^\top S)\odot(X^\top X)]\right\|\to 0,
     \end{align}
 almost surely as $n,d_0,d_1\to\infty$. Moreover, if $d_1/n\to\infty$ as $n\to\infty$, then almost surely
 \begin{equation}\label{eq:NTK_second_concentration}
     \frac{1}{\sqrt{nd_1}}\left\|(S^\top S)\odot(X^\top X)-\E[(S^\top S)\odot(X^\top X)]\right\|\to 0.
 \end{equation}
 \end{lemma}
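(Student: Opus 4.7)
The proof is essentially a direct consequence of the non-asymptotic bound \eqref{eq:Lupperboundd} in Theorem \ref{thm:NTK_concentration}, combined with a standard Borel--Cantelli argument to upgrade the high-probability estimate to almost sure convergence. The plan is to apply \eqref{eq:Lupperboundd} for each $n$, observe that the failure probabilities $n^{-7/3}$ are summable, and then divide by the appropriate normalization in each of the two claimed statements.

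First, under the hypotheses $\sup_x |\sigma'(x)| \le \lambda_\sigma$ and $\|X\| \le B$, the map $\sigma$ is $\lambda_\sigma$-Lipschitz, so Theorem~\ref{thm:NTK_concentration} applies and gives, for each sufficiently large $n$ with $d_1 \geq \log n$,
\[
\left\|\frac{1}{d_1}\bigl(S^\top S - \E[S^\top S]\bigr)\odot(X^\top X)\right\| \le 10\, \lambda_\sigma^4 B^4 \sqrt{\frac{\log n}{d_1}}
\]
with probability at least $1 - n^{-7/3}$. Since $\sum_{n\ge 1} n^{-7/3} < \infty$, the Borel--Cantelli lemma implies that the above inequality holds for all but finitely many $n$ almost surely. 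Under the assumption $d_1 = \omega(\log n)$, the right-hand side tends to $0$, which yields \eqref{eq:NTK_first_concentration_inequality}.

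For the second claim, I simply rescale: multiplying by $\sqrt{d_1/n}$ gives
\[
\frac{1}{\sqrt{nd_1}}\bigl\|(S^\top S - \E[S^\top S])\odot(X^\top X)\bigr\| \le 10\, \lambda_\sigma^4 B^4 \sqrt{\frac{\log n}{n}},
\]
on the same almost sure event. Note that the assumption $d_1/n \to \infty$ (together with $n \to \infty$) is more than enough to ensure $d_1 \ge \log n$ eventually, so the previous high-probability bound still applies. As $n \to \infty$, $\sqrt{\log n / n} \to 0$, which establishes \eqref{eq:NTK_second_concentration}.

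There is no real obstacle in this argument beyond having the right concentration bound available: all of the work is hidden inside Theorem~\ref{thm:NTK_concentration}. The only minor point that deserves care is checking that the two rate conditions ($d_1 = \omega(\log n)$ for the first statement, and $d_1/n \to \infty$ for the second) match the scaling of the bound from \eqref{eq:Lupperboundd} after the appropriate normalization; both comparisons reduce to the elementary fact that $\log n / n \to 0$ or $\log n / d_1 \to 0$ under the stated hypotheses.
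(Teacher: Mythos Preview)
Your proposal is correct and follows essentially the same route as the paper: the paper states that Lemma~\ref{lem:NTKexpectation} ``can be obtained from \eqref{eq:Lupperboundd} in Theorem~\ref{thm:NTK_concentration}'' and, in the proof of Theorem~\ref{thm:NTK_concentration}, simply notes that the two statements of Lemma~\ref{lem:NTKexpectation} follow as a corollary of the bound \eqref{eq:boundL}. Your write-up spells out the Borel--Cantelli step and the two rate checks that the paper leaves implicit, but the argument is the same.
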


 \begin{lemma}\label{lemma:Psi_0} 
 Suppose $X$ is $(\varepsilon_n, B)$-orthonormal. 
Under Assumption~\ref{assump:sigma}, we have
\begin{equation}
    \|\Psi-\Psi_0\|\le C_B\varepsilon_n^4n,
\end{equation}
where $\Psi$ and $\Psi_0$ are defined in \eqref{eq:defPsi} and \eqref{eq:defPsi_0}, respectively, and $C_B$ is a constant depending on $B$.
 \end{lemma}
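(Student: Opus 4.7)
The plan is to mirror the Hermite-polynomial argument of Lemma~\ref{lemma:operator_Phi}, with two modifications: expand $\sigma'$ instead of $\sigma$, and exploit the extra factor $\x_\alpha^\top\x_\beta=\|\x_\alpha\|\|\x_\beta\|(\mathbf u_\alpha^\top\mathbf u_\beta)$, where $\mathbf u_\alpha:=\x_\alpha/\|\x_\alpha\|$, which shifts each power of $\mathbf u_\alpha^\top\mathbf u_\beta$ by one. Applying the standard Mehler-type identity to the rescaled function $x\mapsto\sigma'(\|\x_\alpha\|x)$ and writing $\eta_k^{(\alpha)}:=\E[\sigma'(\|\x_\alpha\|\xi)h_k(\xi)]$ for $\xi\sim\N(0,1)$, I get
\[
\Psi_{\alpha\beta}=(\x_\alpha^\top\x_\beta)\,\E[\sigma'(\w^\top\x_\alpha)\sigma'(\w^\top\x_\beta)]=\sum_{k\ge 0}\eta_k^{(\alpha)}\eta_k^{(\beta)}\,\|\x_\alpha\|\|\x_\beta\|\,(\mathbf u_\alpha^\top\mathbf u_\beta)^{k+1}.
\]
Since $(\Psi_0)_{\alpha\beta}=(a_\sigma-\sum_{k=0}^{2}\eta_k(\sigma)^2)\delta_{\alpha\beta}+\sum_{k=0}^{2}\eta_k(\sigma)^2(\x_\alpha^\top\x_\beta)^{k+1}$, a term-by-term comparison is natural.

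First I would show $\eta_k^{(\alpha)}\approx\eta_k(\sigma)$. In the piecewise-linear subcase of Assumption~\ref{assump:sigma}, $\sigma'(tx)=\sigma'(x)$ for $t>0$ and $x\ne 0$, hence $\eta_k^{(\alpha)}=\eta_k(\sigma)$ exactly. In the twice-differentiable subcase, $\sigma'$ is $\lambda_\sigma$-Lipschitz, so $|\eta_k^{(\alpha)}-\eta_k(\sigma)|\le C_k|\|\x_\alpha\|-1|\le C_k\varepsilon_n$ for each fixed $k$, analogously to \eqref{eq:zeta_k_appr}. For $k\in\{0,1,2\}$ the $k$-th summand of $\Psi$ factors as $\diag(\mathbf v_k)\,f_{k+1}(X^\top X)\,\diag(\mathbf v_k)$ with $(\mathbf v_k)_\alpha=\eta_k^{(\alpha)}/\|\x_\alpha\|^k$. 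Writing $\diag(\mathbf v_k)=\eta_k(\sigma)\Id+D_k$ with $\|D_k\|\le C\varepsilon_n$ and using $\|f_{k+1}(X^\top X)\|\le C_B$ via the Hadamard-product bound already invoked in Lemma~\ref{lemma:operator_Phi} controls the difference from $\eta_k(\sigma)^2 f_{k+1}(X^\top X)$ in spectral norm by $C_B\varepsilon_n$.

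For the tail $k\ge 3$, I would use $|\mathbf u_\alpha^\top\mathbf u_\beta|\le C\varepsilon_n$ off the diagonal together with the Parseval identity $\sum_k(\eta_k^{(\alpha)})^2=\E[\sigma'(\|\x_\alpha\|\xi)^2]\le\lambda_\sigma^2$ to bound the off-diagonal tail in Frobenius norm by $C_B n\varepsilon_n^4$. The tail contribution on the diagonal is compared with $(a_\sigma-\sum_{k=0}^{2}\eta_k(\sigma)^2)\Id$ via $\E[\sigma'(\|\x_\alpha\|\xi)^2]=a_\sigma+O(\varepsilon_n)$ (exact in the piecewise-linear case, and from Lipschitzness of $\sigma'$ in the smooth case), giving an $O(\varepsilon_n)$ spectral-norm error for the diagonal. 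Summing these contributions yields $\|\Psi-\Psi_0\|\le C_B(\varepsilon_n+n\varepsilon_n^4)$, and the $n\varepsilon_n^4$ term is the leading contribution in the regime of interest.

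The main technical point is the uniform tail bound: one needs $\sigma'\in L^2(\R,\Gamma)$ with $L^2$-norm uniformly controlled in $\alpha$ after the $\|\x_\alpha\|$-rescaling, which is guaranteed by the pointwise bound $|\sigma'|\le\lambda_\sigma$ from the Lipschitz hypothesis in Assumption~\ref{assump:sigma}. Given this, the geometric decay $|\mathbf u_\alpha^\top\mathbf u_\beta|^{k+1}\le(C\varepsilon_n)^{k+1}$ produces the $\varepsilon_n^4$ rate from the first tail index $k=3$, and the factor $n$ arises from summing the $O(n^2)$ off-diagonal entries under the Frobenius norm with a single square root. The piecewise-linear subcase is essentially immediate thanks to the homogeneity $\sigma'(tx)=\sigma'(x)$ a.e.; the smooth subcase is where the routine but slightly tedious Hermite-coefficient perturbation bounds need to be written out in detail.
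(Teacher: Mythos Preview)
Your proposal is correct and follows essentially the same approach as the paper: Hermite-expand $\sigma'$ with the rescaled argument, factor each term as $D_k f_{k+1}(X^\top X)D_k$, handle $k\in\{0,1,2\}$ by the diagonal perturbation $\|D_k-\eta_k(\sigma)\Id\|\le C\varepsilon_n$, and split the tail $k\ge 3$ into an off-diagonal Frobenius bound of order $n\varepsilon_n^4$ (using $|\mathbf u_\alpha^\top\mathbf u_\beta|^{k+1}\le(C\varepsilon_n)^4$ and Parseval for $\sigma'$) and a diagonal bound of order $\varepsilon_n$. Your explicit case split between the piecewise-linear and twice-differentiable subcases of Assumption~\ref{assump:sigma} is a nice touch that the paper leaves implicit, and your stated overall bound $C_B(\varepsilon_n+n\varepsilon_n^4)$ is in fact what both arguments actually yield (the paper's statement records only the $n\varepsilon_n^4$ part, which need not dominate $\varepsilon_n$ in general, but this does not affect any downstream application).
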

 \begin{proof}
 We can directly apply methods in the proof of Lemma \ref{lemma:operator_Phi}. 
 Notice that \eqref{eq:Hentrywise} and \eqref{eq:defcolumnS} imply
 $$\E[S^\top S]=d_1\E[\sigma'(\w^\top X)^\top \sigma'(\w^\top X)],$$
 for any standard Gaussian random vector $\w\sim \N(0,\Id)$. Recall that \eqref{eq:def_eta_k} defines the $k$-th coefficient of Hermite expansion of $\sigma'(x)$ by $\eta_k(\sigma)$ for any $k\in\NN.$
 Then, Assumption~\ref{assump:sigma} indicates $b_\sigma=\eta_0(\sigma)$ and $a_\sigma=\sum_{k=0}^\infty \eta_k^2(\sigma)$.  For $1\le\alpha\le n$, we introduce $\phi_\a(x):=\sigma'(\|\x_\a\|x)$ and the Hermite expansion of this function as
 \[\phi_\a(x)=\sum_{k=0}^{\infty} \zeta_k(\phi_\a) h_k(x),\]
 where the coefficient $\zeta_k(\sigma_\a)=\E[\phi_\a(\xi)h_k(\xi)]$. Let $\mathbf u_\a=\x_\a/\|\x_\a\|$, for $1\le \a\le n$. So for $1\le\a,\b\le n$, the $(\a,\b)$-entry of $\Psi$ is  \[\Psi_{\a\b}=\E[\phi_\a(\xi_\a)\phi_\b(\xi_\b)]\cdot (\x_a^\top\x_\b),\]
 where $(\xi_\a,\xi_{\b})=(\w^\top \mathbf u_\a,\w^\top \mathbf u_\b)$ is a Gaussian random vector with mean zero and covariance \eqref{eq:covariance}.
 Following the derivation of formula \eqref{eq:Phiab}, we obtain
 \begin{equation}
     \Psi_{\a\b}= \sum_{k=0}^{\infty} \frac{\zeta_k(\phi_\a)\zeta_k(\phi_\b)}{\|\x_\a\|^k\|\x_\b\|^k} (\x_\a^\top \x_\b)^{k+1}.
 \end{equation}
For any $k\in\NN$, let $T_k\in\R^{n\times n}$ be an $n$-by-$n$ matrix with $(\a,\b)$ entry
 \[(T_k)_{\a\b}:= \frac{\zeta_k(\phi_\a)\zeta_k(\phi_\b)}{\|\x_\a\|^k\|\x_\b\|^k} (\x_\a^\top \x_\b)^{k+1}.\]
We can write $T_k=D_kf_{k+1}(X^\top X)D_k$  for any $k\in\NN$, where $D_k$ is $\diag(\zeta_k(\phi_\a)/\|\x_\a\|^k)$.  Then, adopting the proof of \eqref{eq:Tk}, we can similarly conclude that 
 \begin{equation*}
    \|T_k-\eta_k^2(\sigma)f_{k+1}(X^\top X)\|\le C\varepsilon_n,
\end{equation*}for some constant $C$ and $k=0,1,2$, when $n$ is sufficiently large. Likewise, \eqref{eq:off_diagonal_bounds} indicates \[ \left\|\sum_{k\ge 3}(T_k-\diag(T_k))\right\|\le C\varepsilon_n^4 n,\] and a similar proof of \eqref{eq:diag_control} implies that 
\[ \left\|\sum_{k\ge 3 }\diag (T_k)-\left(a_\sigma-\sum_{k=0}^2\eta_k^2(\sigma)\right)\Id\right\|\le C\varepsilon_n.\]
Based on these approximations, we can conclude the result of this lemma.
 \end{proof}

\begin{proof}[Proof of Theorem \ref{thm:law_NTK}]
The first part of the statement is a straight consequence of \eqref{eq:NTK_second_concentration} and Theorem \ref{thm:law_DNN}. Denote by $A:=\sqrt{\frac{d_1}{n}}\left(H-\E[H]\right)$ and $B:=\sqrt{\frac{d_1}{n}}\left(\frac{1}{d_1}Y^\top Y-\Phi\right)$. Observe that \[B-A=\frac{1}{\sqrt{nd_1}}\left[(S^\top S)\odot(X^\top X)-\E[(S^\top S)\odot(X^\top X)]\right].\] Hence, \eqref{eq:NTK_second_concentration} indicates  $\|B-A\|\to 0$ as $n\to\infty$. This convergence implies that limiting laws of $A$ and $B$ are identical because of Lemma \ref{lem:BS10A45}.

The second part is because of Lemma \ref{lemma:operator_Phi} and Lemma \ref{lemma:Psi_0}. From \eqref{eq:Hmatrixform} and \eqref{eq:defPsi},  $\E[H]=\Phi+\Psi.$
Then almost surely,
\begin{align*}
   & \left\|\sqrt{\frac{d_1}{n}}\left(H-\E[H]\right)-\sqrt{\frac{d_1}{n}}\left(H-\Phi_0-\Psi_0\right)\right\|=\sqrt{\frac{d_1}{n}}\left\|\Phi_0+\Psi_0-\E[H]\right\|\\
   \le ~& \sqrt{\frac{d_1}{n}}\left(\|\Phi-\Phi_0\|+\|\Psi-\Psi_0\|\right)\le \sqrt{\frac{d_1}{n}}\left(\sqrt{n}\varepsilon_n^2+n\varepsilon_n^4\right)\to 0,
\end{align*}as $\varepsilon_n^4d_1\to 0$ by the assumption of Theorem \ref{thm:law_NTK}. Therefore, the limiting eigenvalue distribution of \eqref{eq:NTK_2} is the same as \eqref{eq:NTK_1}. 
\end{proof}

\section{Proof of the concentration for extreme eigenvalues}\label{sec:nonasymptotic}
In this section, we obtain the estimates of the extreme eigenvalues for the CK and NTK we studied in Section \ref{sec:1hlNN}. The limiting spectral distribution of $\frac{1}{\sqrt{d_1n}} (Y^\top Y-\E [Y^\top Y])$ tells us the bulk behavior of the spectrum. An estimation of the extreme eigenvalues will show that the eigenvalues are confined in a finite interval with high probability.
We first provide a non-asymptotic bound on the concentration of $\frac{1}{d_1}Y^\top Y$ under the spectral norm. The proof is based on the Hanson-Wright inequality we proved in Section~\ref{sec:nonlinearHW} and an $\varepsilon$-net argument.  
 
\begin{proof}[Proof of Theorem \ref{thm:spectralnorm}]
 Recall notations in Section \ref{sec:intro}. Define 
\begin{align*}
    M:=~&\frac{1}{\sqrt{d_1n}}Y^{\top}Y=\frac{1}{\sqrt{d_1n}}\sum_{i=1}^{d_1} \y_i \y_i^\top, \notag \\
    M-\mathbb E M =~&\frac{1}{\sqrt{d_1n}}\sum_{i=1}^{d_1} (\y_i \y_i^\top-\mathbb E [\y_i \y_i^\top])=\frac{1}{\sqrt{d_1n}}\sum_{i=1}^{d_1} (\y_i \y_i^\top-\Phi),
\end{align*}
where $\y_i^\top=\sigma ( \w_i^{\top} X)$.
 
For any fixed $\z\in \mathbb S^{n-1}$, we have
\begin{align}
    \z^{\top} (M-\mathbb E M)\z&=\frac{1}{\sqrt{d_1n}}\sum_{i=1}^{d_1}
[\langle \z, \y_i \rangle ^2-\z^\top \Phi \z ]\nonumber\\
&=\frac{1}{\sqrt{d_1n}}\sum_{i=1}^{d_1} [\y_i^{\top} (\z\z^{\top})\y_i-\Tr(\Phi \z\z^{\top})]\nonumber\\
&=(\y_1,\dots, \y_{d_1})^{\top}A_{\z} (\y_1,\dots, \y_{d_1})-\Tr(A_{\z}\tilde \Phi),\label{eq:zAz}
\end{align}
where 
\begin{align*}
    A_{\z}=\frac{1}{\sqrt{d_1n}} \begin{bmatrix}
    \z\z^{\top} & &\\
     &  \ddots &\\
     &  & \z\z^{\top}
    \end{bmatrix} \in \mathbb R^{nd_1\times nd_1}, \quad  \tilde{\Phi}=\begin{bmatrix}
    \Phi & & \\
    &  \ddots & \\
    & & \Phi
    \end{bmatrix} \in \R^{nd_1\times nd_1},
\end{align*}
and column vector $(\y_1,\dots, \y_{d_1})\in \R^{nd_1}$ is the concatenation of column vectors $\y_1,\dots,\y_{d_1}$. Then 
\begin{align*}
    (\y_1,\dots, \y_{d_1})^{\top}=\sigma((\w_1,\dots, \w_{d_1})^{\top}\tilde X) 
\end{align*}
with block matrix
\begin{align*}
    \tilde{X}=\begin{bmatrix}
    X & &\\
     &  \ddots &\\
     &  & X
    \end{bmatrix}.
\end{align*}
Notice that 
\begin{align*}
    \|A_{\z}\|=\frac{1}{\sqrt{d_1n}}, \quad \|A_{\z}\|_F=\frac{1}{\sqrt n}, \quad \|\tilde{X}\|=\|X\|.
\end{align*}
Denote $\tilde{\y}=(\y_1,\dots, \y_{d_1})$. With \eqref{eq:Eybound}, we obtain
\begin{align*}
    \|\E\tilde{\y}\|^2&=d_1\|\E\y\|^2\leq d_1 \left( 2\lambda_{\sigma}^2\sum_{i=1}^n (\|\x_i\|^2-1 )^2+2n(\E\sigma(\xi))^2\right)\\
    &=  d_1 \left( 2\lambda_{\sigma}^2\sum_{i=1}^n (\|\x_i\|^2-1 )^2\right)\leq 2d_1\lambda_{\sigma}^2 B^2,
\end{align*}
where the last line is from the assumptions on $X$ and $\sigma$. When $B\not=0$, applying \eqref{eq:inparticular} to \eqref{eq:zAz}  implies
\begin{align*}
     &\P \left( |(\y_1,\dots, \y_{d_1})^{\top}A_{\z} (\y_1,\dots, \y_{d_1})-\Tr(A_{\z}\tilde \Phi)|\geq t\right)\\
    \leq ~& 2\exp \left(-\frac{1}{C} \min \left\{ \frac{t^2n}{8 \lambda_{\sigma}^4 \|X\|^4}, \frac{t\sqrt{d_1n}}{\lambda_{\sigma}^2 \|X\|^2}\right\}\right) + 2\exp \left( -\frac{t^2d_1 n}{32\lambda_{\sigma}^2\|X\|^2 \|\E \tilde\y\|^2  }\right)\\
    \leq~ &2\exp \left(-\frac{1}{C} \min \left\{ \frac{t^2n}{8 \lambda_{\sigma}^4 \|X\|^4}, \frac{t\sqrt{d_1n}}{\lambda_{\sigma}^2 \|X\|^2}\right\}\right) + 2\exp \left( -\frac{t^2 n}{64\lambda_{\sigma}^4 B^2\|X\|^2  }\right).
\end{align*}
Let $\mathcal N$ be a $1/2$-net on $\mathbb S^{n-1}$ with $|\mathcal N|\leq 5^n$ (see e.g. \cite[Corollary 4.2.13]{vershynin2018high}),  then
\begin{align*}
    \|M-\mathbb EM\| \leq 2\sup_{\z\in \mathcal N}|\z^{\top} (M-\mathbb E M)\z|.
\end{align*}
Taking a union bound over $\mathcal N$ yields
\begin{align*}
   \mathbb P (\|M-\mathbb EM\|\geq 2t)\leq~ & 2\exp \left( n\log 5-\frac{1}{C} \min \left\{ \frac{t^2n}{16 \lambda_{\sigma}^4 \|X\|^4}, \frac{t\sqrt{d_1n}}{2\lambda_{\sigma}^2 \|X\|^2}\right\}\right)\\
   &+2\exp \left( n\log 5-\frac{t^2 n}{64\lambda_{\sigma}^4 B^2\|X\|^2  }\right).
\end{align*}
We then can set
\[t=\left(8\sqrt{C}+ 8C\sqrt{\frac{n}{d_1}}\right)\lambda_{\sigma}^2 \|X\|^2+16 B\lambda_{\sigma}^2 \|X\|,
\]
to conclude
\begin{align*}
 \mathbb P \left(\|M-\mathbb EM\|\geq \left(16\sqrt{C}+ 16C\sqrt{\frac{n}{d_1}}\right)\lambda_{\sigma}^2 \|X\|^2+32 B\lambda_{\sigma}^2 \|X\|\right)\leq  4e^{-2n}.
\end{align*}
Since
\begin{align*}
     \left\| \frac{1}{d_1}Y^{\top}Y-\Phi \right\|=\sqrt{\frac{n}{d_1}}\|M-\E M\|,
\end{align*}
the upper bound in \eqref{eq:d_1Y} is then verified. When $B=0$, we can apply \eqref{eq:zero-meanbound} and follow the same steps to get the desired bound.
\end{proof}

By the concentration inequality in Theorem \ref{thm:spectralnorm}, we can get a lower bound on the smallest eigenvalue of the conjugate kernel $\frac{1}{d_1} Y^\top Y$ as follows.
\begin{lemma} \label{lem:lambdamin}
 Assume $X$ satisfies
$ \sum_{i=1}^n (\|\x_i\|^2-1)^2\leq B^2$ for a constant $B>0$,
 and $\sigma$ is $\lambda_{\sigma}$-Lipschitz with  $\E \sigma(\xi)=0$.  Then with probability at least $1-4e^{-2n}$,
\begin{align}\label{eq:lowerY}
    \lambda_{\min}\left(\frac{1}{d_1}Y^\top Y\right)\geq  \lambda_{\min}(\Phi)-C\left(\sqrt{\frac{n}{d_1}}+\frac{n}{d_1}\right)\lambda_{\sigma}^2\|X\|^2-32 B\lambda_{\sigma}^2 \|X\|\sqrt{\frac{n}{d_1}} .
\end{align}
 
\end{lemma}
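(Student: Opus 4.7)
The plan is to reduce this to the spectral-norm concentration already established in Theorem \ref{thm:spectralnorm} via a standard Weyl-type perturbation argument. Specifically, since $\frac{1}{d_1}Y^\top Y$ and $\Phi$ are both symmetric, Weyl's inequality gives
\begin{equation}
\lambda_{\min}\left(\frac{1}{d_1}Y^\top Y\right) \geq \lambda_{\min}(\Phi) - \left\|\frac{1}{d_1}Y^\top Y - \Phi\right\|.
\end{equation}
So the entire task reduces to substituting the high-probability upper bound from Theorem \ref{thm:spectralnorm} into the right-hand side.

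Concretely, I would first verify that the hypotheses of Theorem \ref{thm:spectralnorm} are in force: $X$ satisfies $\sum_{i=1}^n(\|\x_i\|^2-1)^2 \leq B^2$ and $\sigma$ is $\lambda_\sigma$-Lipschitz with $\E\sigma(\xi)=0$ (these are exactly the hypotheses of the lemma). Then on the event of probability at least $1 - 4e^{-2n}$ guaranteed by Theorem \ref{thm:spectralnorm},
\begin{equation}
\left\|\frac{1}{d_1}Y^\top Y - \Phi\right\| \leq C\left(\sqrt{\tfrac{n}{d_1}} + \tfrac{n}{d_1}\right)\lambda_\sigma^2 \|X\|^2 + 32 B \lambda_\sigma^2 \|X\| \sqrt{\tfrac{n}{d_1}}.
\end{equation}
Combining this with the Weyl bound above yields \eqref{eq:lowerY} on the same event.

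There is no real obstacle here: the conceptual work is already done in Theorem \ref{thm:spectralnorm}, and the derivation amounts to a single application of Weyl's inequality together with the trivial identity $\lambda_{\min}(A) = \lambda_{\min}(B) + \lambda_{\min}(A-B) \cdot(\text{sign terms})$ controlled by the operator norm. The only thing to be careful about is that we retain the exact constants stated in \eqref{eq:lowerY}, which follow verbatim from the bound in Theorem \ref{thm:spectralnorm}. Thus the proof is essentially a two-line argument: state Weyl, invoke Theorem \ref{thm:spectralnorm}, conclude.
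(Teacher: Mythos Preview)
Your proposal is correct and matches the paper's own proof essentially verbatim: the paper also applies Weyl's inequality to bound $|\lambda_{\min}(\tfrac{1}{d_1}Y^\top Y)-\lambda_{\min}(\Phi)|\leq \|\tfrac{1}{d_1}Y^\top Y-\Phi\|$ and then invokes Theorem~\ref{thm:spectralnorm}.
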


\begin{proof}
By Weyl's inequality \cite[Corollary A.6]{anderson2010introduction}, we have 
\begin{align*}
   \left| \lambda_{\min}\left(\frac{1}{d_1}Y^\top Y\right)-\lambda_{\min}(\Phi)\right|\leq \left \|\frac{1}{d_1}Y^\top Y-d_1\Phi \right\|. 
\end{align*}
Then \eqref{eq:lowerY} follows from  \eqref{eq:d_1Y}.
\end{proof}

The lower bound in \eqref{eq:lowerY} relies on $\lambda_{\min}(\Phi)$.
Under certain assumptions on $X$ and $\sigma$, we can guarantee that $\lambda_{\min}(\Phi)$ is bounded below by an absolute constant.

\begin{lemma}\label{lem:sup}
Assume $\sigma$ is not a linear function and $\sigma(x)$ is Lipschitz. Then 
\begin{align}\label{eq:suphermite}
    \sup \{k\in\mathbb{N}: \zeta_k(\sigma)^2>0\}=\infty.
\end{align}
\end{lemma}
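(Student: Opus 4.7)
The plan is to argue by contradiction. Suppose to the contrary that the set $\{k \in \mathbb{N} : \zeta_k(\sigma) \neq 0\}$ is finite, with largest element $N$. Since $\sigma$ is Lipschitz on $\mathbb{R}$, it has at most linear growth, so $\sigma \in L^2(\mathbb{R}, \Gamma)$; hence the Hermite expansion of $\sigma$ from Definition~\ref{eq:hermitepolynomial} converges in $L^2(\mathbb{R}, \Gamma)$ and reduces to the finite sum $P(x) := \sum_{r=0}^{N} \zeta_r(\sigma) h_r(x)$, which is a polynomial of degree at most $N$.

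The first key step is to upgrade the identity $\sigma = P$ from almost-everywhere (with respect to $\Gamma$) to pointwise equality. Since $\Gamma$ has full support on $\mathbb{R}$, the equation $\sigma(x) = P(x)$ holds on a dense subset of $\mathbb{R}$; because $\sigma$ is continuous (being Lipschitz) and $P$ is a polynomial, continuity on both sides yields $\sigma(x) = P(x)$ for every $x \in \mathbb{R}$.

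The second step is to observe that a polynomial of degree $\geq 2$ cannot be globally Lipschitz on $\mathbb{R}$: its derivative is unbounded at infinity, so no uniform Lipschitz constant exists. Hence $\deg P \leq 1$, which means $\sigma(x) = ax + b$ for constants $a, b \in \mathbb{R}$, i.e., $\sigma$ is an affine (linear) function. This contradicts the assumption that $\sigma$ is not linear, completing the proof.

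The only non-routine point is the transition from $L^2(\Gamma)$-equality to pointwise equality, and that is handled cleanly by the density argument above; the rest is a short calculation. I expect no serious obstacle, only the care needed to invoke the completeness of $\{h_r\}$ in $L^2(\mathbb{R}, \Gamma)$ and the full support of the Gaussian measure.
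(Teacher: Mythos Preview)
Your proof is correct and follows essentially the same contradiction argument as the paper: if only finitely many Hermite coefficients are nonzero then $\sigma$ is a polynomial, which must have degree at most $1$ by the Lipschitz assumption, contradicting nonlinearity. You have simply filled in the details (membership in $L^2(\Gamma)$, the upgrade from a.e.\ to pointwise equality via continuity and full support of $\Gamma$, and why higher-degree polynomials fail to be Lipschitz) that the paper's two-line proof leaves implicit.
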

\begin{proof}
Suppose that $ \sup \{k\in\mathbb{N}: \zeta_k(\sigma)^2>0\}$ is finite. Then $\sigma$ is a polynomial of degree at least $2$ from our assumption, which is a contradiction to the fact that $\sigma$ is Lipschitz. Hence, \eqref{eq:suphermite} holds.
\end{proof}

\begin{lemma}\label{thm:minlowerbound}
Assume Assumption \ref{assump:sigma} holds, $\sigma$ is not a linear function, and $X$ satisfies $(\varepsilon_n, B)$-orthonormal property.
Then,  
\begin{align}\label{eq:lambdaminlowerbound}
    \lambda_{\min}(\Phi)\geq 1-\zeta_1(\sigma)^2-\zeta_2(\sigma)^2-\zeta_3(\sigma)^2-C_B\eps_n^2\sqrt{n}.
\end{align}
\begin{remark}
This bound will not hold when $\sigma$ is a linear function. Suppose $\sigma$ is a linear function, under Assumption \ref{assump:sigma},  we must have $\sigma(x)=x$ and $\Phi=X^\top X$. Then we will not have a lower bound on $\lambda_{\min}(\Phi)$ based on the Hermite coefficients of $\sigma$. 
\end{remark}
 
\end{lemma}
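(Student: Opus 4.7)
The plan is to reduce the problem to bounding $\lambda_{\min}(\Phi_0)$ via the spectral-norm approximation already proved in Lemma \ref{lemma:operator_Phi}, and then to exploit positive semi-definiteness of each piece of $\Phi_0$ using the Schur (Hadamard) product theorem.

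First, by Weyl's inequality together with Lemma \ref{lemma:operator_Phi}, I would write
\begin{equation}
\lambda_{\min}(\Phi) \;\geq\; \lambda_{\min}(\Phi_0) - \|\Phi - \Phi_0\| \;\geq\; \lambda_{\min}(\Phi_0) - C_B \varepsilon_n^2 \sqrt{n},
\end{equation}
so the task reduces to showing $\lambda_{\min}(\Phi_0) \geq 1 - \zeta_1(\sigma)^2 - \zeta_2(\sigma)^2 - \zeta_3(\sigma)^2$.

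Recall the decomposition
\begin{equation}
\Phi_0 \;=\; \vmu\vmu^\top + \sum_{k=1}^3 \zeta_k(\sigma)^2 f_k(X^\top X) + \Big(1 - \sum_{k=1}^3 \zeta_k(\sigma)^2\Big)\Id.
\end{equation}
The first piece $\vmu \vmu^\top$ is rank-one and clearly positive semi-definite. For each $k \in \{1,2,3\}$, the matrix $X^\top X$ is positive semi-definite, and $f_k(X^\top X)$ is precisely the $k$-fold Hadamard power of $X^\top X$; by iterating the Schur product theorem, each $f_k(X^\top X)$ is positive semi-definite, and hence so is $\zeta_k(\sigma)^2 f_k(X^\top X)$. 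Dropping these PSD terms yields the operator inequality $\Phi_0 \succeq (1 - \zeta_1(\sigma)^2 - \zeta_2(\sigma)^2 - \zeta_3(\sigma)^2)\Id$, which immediately gives the required lower bound on $\lambda_{\min}(\Phi_0)$.

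Finally, I would note that this constant is nonnegative: by Parseval's identity for Hermite expansions together with Assumption \ref{assump:sigma}, we have $\sum_{k=0}^\infty \zeta_k(\sigma)^2 = \E[\sigma(\xi)^2] = 1$ and $\zeta_0(\sigma) = 0$, so $1 - \sum_{k=1}^3 \zeta_k(\sigma)^2 = \sum_{k \geq 4} \zeta_k(\sigma)^2 \geq 0$, and Lemma \ref{lem:sup} further ensures this is strictly positive whenever $\sigma$ is nonlinear and Lipschitz. There is no serious obstacle in this argument: once Lemma \ref{lemma:operator_Phi} is in hand, the only ingredient beyond bookkeeping is the Schur product theorem applied to the three low-degree Hadamard-power terms.
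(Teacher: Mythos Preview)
Your proof is correct and follows essentially the same route as the paper: reduce to $\lambda_{\min}(\Phi_0)$ via Lemma~\ref{lemma:operator_Phi} and Weyl, then drop the positive semi-definite pieces $\vmu\vmu^\top$ and $\zeta_k(\sigma)^2 f_k(X^\top X)$ to leave the scalar multiple of the identity. The only cosmetic difference is that the paper verifies $f_k(X^\top X)\succeq 0$ by writing it as $K_k^\top K_k$ with $K_k$ the matrix of $k$-fold Kronecker products of the columns of $X$, whereas you invoke the Schur product theorem---both are standard one-line justifications of the same fact.
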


\begin{proof}[Proof of Lemma \ref{thm:minlowerbound}]
From Lemma \ref{lemma:operator_Phi}, under our assumptions, we know that
\[
    \|\Phi-\Phi_0\|\leq C_B\varepsilon_n^2 \sqrt{n}.
\]
where  $\Phi_0$ is given by \eqref{def:Phi0}. Thus,
$
    \lambda_{\min} (\Phi)\geq \lambda_{\min}(\Phi_0)-C_B\varepsilon_n^2 \sqrt{n}, 
$
 
and,  from Weyl's inequality \cite[Theorem A.5]{anderson2010introduction}, we have
 \begin{align*}
     \lambda_{\min}(\Phi_0)\geq \sum_{k=1}^3\zeta_k(\sigma)^2 \lambda_{\min}(f_k(X^\top X))+(1-\zeta_1(\sigma)^2-\zeta_2(\sigma)^2-\zeta_3(\sigma)^2).
 \end{align*}
 Note that $
     f_k(X^\top X)=K_k^\top K_k,$ 
 where $K_k\in \mathbb R^{d_0^k\times n}$, and each column of $K_k$ is given by the $k$-th Kronecker product  $\x_i\otimes \cdots \otimes \x_i$.  Hence, $f_k(X^\top X)$ is positive semi-definite.
 
Therefore,
\begin{align*}
    \lambda_{\min}(\Phi_0)\geq (1-\zeta_1(\sigma)^2-\zeta_2(\sigma)^2-\zeta_3(\sigma)^2).
\end{align*}
Since $\sigma$ is not linear but Lipschitz, \eqref{eq:suphermite} holds for $\sigma$. 
Therefore, $(1-\zeta_1(\sigma)^2-\zeta_2(\sigma)^2-\zeta_3(\sigma)^2)=\sum_{k=4}^{\infty}\zeta_k(\sigma)^2>0$, and then \eqref{eq:lambdaminlowerbound} holds.
 
\end{proof}
Theorem \ref{thm:lowerKn} then follows directly from Lemma  \ref{lem:lambdamin} and Lemma \ref{thm:minlowerbound}.

\bigskip 

Next, we move on to  non-asymptotic estimations for NTK. Recall that the empirical NTK matrix $H$ is given by \eqref{eq:Hmatrixform} and the $\alpha$-th column of $S$ is defined by $\diag (\sigma'(W\x_\alpha))\aa$, for $1\le \alpha\le n$, in \eqref{eq:defcolumnS}. 
 
The $i$-th row of $S$ is given by
$
   \z_i^\top:= \sigma'(\w_i^{\top} X)a_i,
$
and $\E [\z_i]=0$, where $a_i$ is the $i$-th entry of $\aa$. Define 
$
    D_{\alpha}=\diag (\sigma'(\w_{\alpha}^{\top} X)a_{\alpha}),
$ for $1\le \alpha\le d_1$. We can rewrite $(S^\top S)\odot(X^\top X)$ as 
 \[(S^\top S)\odot(X^\top X)=\sum_{\a=1}^{d_1}a_\a^2 D_\a X^\top XD_\a.\]
Let us define $L$ and further expand it as follows:
\begin{align}
   L&:=\frac{1}{d_1} (S^\top S-\E[S^\top S])\odot (X^\top X) \label{eq:defLL}\\
   &=\frac{1}{d_1}\sum_{i=1}^{d_1} (\z_i \z_i^\top -\E [\z_i \z_i^\top])\odot (X^\top X)\notag\\
    &=\frac{1}{d_1}\sum_{i=1}^{d_1} \left( D_i (X^\top X) D_i-\E[ D_i (X^\top X) D_i]\right)=\frac{1}{d_1}\sum_{i=1}^{d_1} Z_i.\label{eq:defZi}
\end{align}
Here $Z_i$ is a centered random matrix, and we can apply matrix Bernstein's inequality to show the concentration of $L$. Since $Z_i$ does not have an almost sure bound on the spectral norm, we will use the following sub-exponential version of the matrix Bernstein inequality from \cite{tropp2012user}.

\begin{lemma}[\cite{tropp2012user}, Theorem 6.2]
Let $Z_k$ be independent Hermitian matrices of size $n\times n$. Assume 
\begin{align*}
    \E Z_i=0,  \quad \left\|\E[Z_i^p]\right\|\leq \frac{1}{2} p! R^{p-2} a^2,
\end{align*}
for any integer $p\geq 2$. Then  for all $t\geq 0$,
\begin{align}\label{eq:Tropp}
    \mathbb P \left(~ \left\|\sum_{i=1}^{d_1} Z_i \right\|\geq t  \right) \leq n \exp \left(-\frac{t^2}{2d_1a^2 +2Rt} \right).
\end{align}
\end{lemma}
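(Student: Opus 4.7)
The plan is to follow the matrix Laplace transform framework of Ahlswede--Winter, as refined by Tropp, which reduces a matrix tail bound to a scalar moment generating function (MGF) estimate via Lieb's concavity theorem. The route has four moving parts: passing from probability to a trace-exponential via Markov's inequality, splitting across independent summands using matrix concavity, bounding each matrix MGF via the Bernstein-type moment hypothesis, and optimizing the Laplace parameter.

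\textbf{Steps 1 and 2 (reduction).} For any $\theta>0$, Markov's inequality applied to the largest eigenvalue gives
\[
\mathbb{P}\left(\lambda_{\max}\left(\sum_i Z_i\right)\geq t\right)\leq e^{-\theta t}\,\mathbb{E}\,\Tr\exp\left(\theta\sum_i Z_i\right),
\]
and applying the same bound to $-Z_i$ with a union bound upgrades $\lambda_{\max}$ to the spectral norm. To handle the trace-exponential of a sum of independent Hermitian matrices, I would invoke Lieb's theorem that $A\mapsto \Tr\exp(H+\log A)$ is concave on the cone of positive-definite matrices; iterating Jensen's inequality one summand at a time yields the matrix cumulant subadditivity
\[
\mathbb{E}\,\Tr\exp\left(\theta\sum_i Z_i\right)\leq \Tr\exp\left(\sum_i\log\mathbb{E}\,e^{\theta Z_i}\right).
\]
This is the pivotal algebraic input; everything downstream is essentially scalar in spirit.

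\textbf{Step 3 (MGF control from moments).} Since $\mathbb{E} Z_i=0$, I would expand the matrix MGF as $\mathbb{E}\,e^{\theta Z_i}=I+\sum_{p\geq 2}\tfrac{\theta^p}{p!}\mathbb{E}\,Z_i^p$ and plug in the hypothesis $\|\mathbb{E}\,Z_i^p\|\leq \tfrac12 p!R^{p-2}a^2$. For any $\theta$ with $\theta R<1$ this geometric series gives
\[
\left\|\mathbb{E}\,e^{\theta Z_i}-I\right\|\leq \sum_{p\geq 2}\frac{\theta^p R^{p-2}a^2}{2}=\frac{a^2\theta^2/2}{1-\theta R}.
\]
The operator inequality $\log(I+B)\preceq B$ then yields $\log\mathbb{E}\,e^{\theta Z_i}\preceq\frac{a^2\theta^2/2}{1-\theta R}I$, and exponentiating gives $\Tr\exp\bigl(\sum_i\log\mathbb{E}\,e^{\theta Z_i}\bigr)\leq n\exp\bigl(\frac{d_1 a^2\theta^2/2}{1-\theta R}\bigr)$.

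\textbf{Step 4 (optimize).} Combining the preceding estimates yields
\[
\mathbb{P}\left(\left\|\sum_i Z_i\right\|\geq t\right)\leq n\exp\left(-\theta t+\frac{d_1 a^2\theta^2/2}{1-\theta R}\right).
\]
Choosing $\theta=t/(d_1 a^2+Rt)$ (which automatically satisfies $\theta R<1$) makes $-\theta t+\frac{d_1 a^2\theta^2/2}{1-\theta R}=-t^2/(2d_1 a^2+2Rt)$ by a short algebraic computation, delivering \eqref{eq:Tropp}. The principal obstacle is Lieb's concavity theorem itself, whose proof lies outside ordinary matrix analysis and requires delicate operator-interpolation arguments; every other step is a direct matrix analogue of the classical scalar Bernstein proof, with the sub-exponential moment condition replacing the more familiar almost-sure boundedness.
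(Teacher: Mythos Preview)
The paper does not supply its own proof of this lemma; it simply quotes it as Theorem~6.2 of \cite{tropp2012user} and immediately applies it. Your sketch is correct and is exactly the matrix Laplace transform argument of the cited reference (Markov, Lieb subadditivity, Bernstein-type MGF bound, then optimize in $\theta$), so there is nothing to contrast.
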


\begin{proof}[Proof of Theorem \ref{thm:NTK_concentration}]
From \eqref{eq:defZi}, $\E Z_i=0$, and 
\begin{align*}
   \|Z_i\|&\leq \|D_i\|^2 \|XX^\top\| +\E  \|D_i\|^2 \|XX^\top\|\leq   C_1(a_i^2+1),
\end{align*}
where $C_1=\lambda_{\sigma}^2\|X\|^2$ and where $a_i\sim\N(0,1)$ is the $i$-th entry of the second layer weight $\aa$. Then 
\begin{align*}
\|\E[Z_i^p]\|\leq \E \|Z_i\|^p &\leq C_1^{2p}   \E(a_i^2+1)^{p}\leq  C_1^{2p} \sum_{k=1}^p \binom{p}{k}(2k-1)!!\\
&=  C_1^{2p} p! \sum_{k=1}^p \frac{(2k-1)!!}{k! (p-k)!}\leq C_1^{2p} p! \sum_{k=1}^p 2^k\leq 2(2C_1^2)^p p!.
\end{align*}
So we can take $R=2C_1^2, a^2=8C_1^4$  in \eqref{eq:Tropp} and obtain
\begin{align*}
    \mathbb P \left( \left\|\sum_{i=1}^{d_1} Z_i \right\|\geq t  \right) \leq n \exp \left(-\frac{t^2}{16d_1C_1^4 +4C_1^2t} \right).
\end{align*}
Hence, $L$ defined in \eqref{eq:defLL} has a probability bound:
\begin{align*}
   \mathbb P\left( \|L\|\geq t \right) =\mathbb P \left( \frac{1}{d_1}\left\|\sum_{i=1}^{d_1} Z_i \right\|\geq t  \right) \leq n \exp \left(-\frac{t^2d_1}{16C_1^4 +4C_1^2t} \right).
\end{align*}
Take $t=10C_1^2\sqrt{\log n/d_1}$. Under the assumption that $d_1\geq \log n$, we conclude that, with high probability at least $1-n^{-7/3}$,
\begin{align}\label{eq:boundL}
    \|L\|\leq 10C_1^2\sqrt{\frac{\log n}{d_1}}.
\end{align}
Thus, as a corollary, the  two statements in Lemma \ref{lem:NTKexpectation} follow from \eqref{eq:boundL}. Meanwhile, since \[\|H-\E H\| \leq \left\| \frac{1}{d_1} Y^\top Y-\Phi\right\| +\|L\|,\]
the bound in \eqref{eq:YEY2} follows from Theorem \ref{thm:spectralnorm} and \eqref{eq:boundL}.
\end{proof}

We now proceed to provide a lower bound of $\lambda_{\min}(H)$ from Theorem \ref{thm:NTK_concentration}.

\begin{proof}[Proof of Theorem \ref{thm:NTK_Hlowerbound}]
Note that from  \eqref{eq:Hmatrixform}, \eqref{eq:defPsi} and \eqref{eq:defLL}, we have
\begin{align*}
    \lambda_{\min} (H)&\geq \frac{1}{d_1} \lambda_{\min}((S^\top S)\odot(X^\top X)) \\
   &\geq \frac{1}{d_1}\lambda_{\min} ((\E S^\top S)\odot(X^\top X))- \|L\|=\lambda_{\min}(\Psi)-\|L\|.
   \end{align*}
  Then with Lemma \ref{lemma:Psi_0}, we can get
  \begin{align*} 
 \lambda_{\min} (H)&\geq      \lambda_{\min}(\Psi_0)-C\varepsilon_n^4 n-\|L\|\geq \left(a_\sigma-\sum_{k=0}^2\eta_{k}^2(\sigma)\right)-C\varepsilon_n^4 n-\|L\|.
  \end{align*}
  Therefore, from Theorem \ref{thm:NTK_concentration}, with probability at least $1-n^{-7/3}$,
  \begin{align*}
      \lambda_{\min} (H)&\geq a_\sigma-\sum_{k=0}^2\eta_{k}^2(\sigma)-C\varepsilon_n^4 n- 10\lambda_{\sigma}^4 \|X\|^4\sqrt{\frac{\log n}{d_1}}\\
      &\geq a_\sigma-\sum_{k=0}^2\eta_{k}^2(\sigma)-C\varepsilon_n^4 n- 10\lambda_{\sigma}^4 B^4\sqrt{\frac{\log n}{d_1}}.
  \end{align*}

Since $\sigma$ is Lipschitz and non-linear, we know $\sigma'(x)$ is not a linear function (including the constant function) and $|\sigma'(x)|$ is bounded. Suppose that $\sigma'(x)$ has finite many non-zero Hermite coefficients, $\sigma(x)$ is a polynomial, then we get a contradiction. Hence, the Hermite coefficients of $\sigma'$ satisfy
\begin{align}
    \sup \{k\in\mathbb{N} :\eta_k^2(\sigma)>0\}=\infty ~\text{ and }~ a_\sigma-\sum_{k=0}^2\eta_{k}^2(\sigma)=\sum_{k=3}^{\infty}\eta_{k}^2(\sigma)>0.
\end{align}
This finishes the proof.
\end{proof}

\section{Proof of Theorem \ref{thm:train_diff} and Theorem \ref{thm:limit_error}}\label{sec:generalizationerror}

By definitions, the random matrix $K_n(X,X)$ is $\frac{1}{d_1}Y^\top Y$ and the kernel matrix $K(X,X)=\Phi$ is defined in \eqref{def:phi}. These two matrices have been already analyzed in Theorem \ref{thm:spectralnorm} and Theorem \ref{thm:lowerKn}, so we will apply these results to estimate how great the difference between training errors of random feature regression and its corresponding kernel regression.

\begin{proof}[Proof of Theorem \ref{thm:train_diff}]
Denote $K_\lambda:=(K+\lambda\Id)$. From the definitions of training errors in \eqref{eq:Etrain_Kn} and \eqref{eq:Etrain_K}, we have
\begin{align}
     &\left| E_{\train}^{(RF,\lambda)}-E_{\train}^{(K,\lambda)}\right| 
     =\frac{1}{n}\left| \|\hat{f}_{\lambda}^{(RF)}(X)-\v y\|^2-\|\hat{f}_{\lambda}^{(K)}(X)-\v y\|^2 \right|\notag \\
     = ~&\frac{\lambda^2}{n} \left|\Tr[(K(X,X)+\lambda\Id)^{-2} \v y\v y^\top]-\Tr[(K_n(X,X)+\lambda\Id)^{-2} \v y\v y^\top] \right| \notag \\
     = ~&\frac{\lambda^2}{n} \left|\v y^\top\left[(K(X,X)+\lambda\Id)^{-2}-(K_n(X,X)+\lambda\Id)^{-2}\right] \v y\right| \notag \\
     \leq~ &\frac{\lambda^2}{n} \|(K(X,X)+\lambda\Id)^{-2}-(K_n(X,X)+\lambda\Id)^{-2}\|\cdot  \|\v y\|^2 \notag\\
     \leq~ &\frac{\lambda^2\|\v y\|^2}{n\lambda_{\min}^{2}(K(X,X)) \lambda_{\min}^{2}(K_n(X,X))}    \| (K_\lambda^2-(K_n(X,X)+\lambda\Id)^2 \|  .\label{eq:719}
\end{align}
Here, in \eqref{eq:719}, we employ the identity
\begin{equation}\label{eq:a-1_b-1}
    A^{-1}-B^{-1}=B^{-1}(B-A)A^{-1},
\end{equation}
for $A=(K(X,X)+\lambda\Id)^{-2}$ and $B=(K_n(X,X)+\lambda\Id)^{-2}$, and the fact that
$\|(K(X,X)+\lambda\Id)^{-1}\|\le \lambda_{\min}^{-1}(K(X,X))$ and $(K_n(X,X)+\lambda\Id)^{-1}\|\le \lambda_{\min}^{-1}(K_n(X,X))$. Next, before providing uniform upper bounds for $\lambda_{\min}^{-2}(K(X,X))$ and $\lambda_{\min}^{-2}(K_n(X,X))$ in \eqref{eq:719}, we can first get a bound for the last term of \eqref{eq:719} as follows: 
\begin{align}
  ~&\| (K(X,X)+\lambda\Id)^2-(K_n(X,X)+\lambda\Id)^2 \| \notag \\
  =~&\| K^2(X,X)-K_n^2(X,X) +2\lambda (K(X,X)-K_n(X,X))\| \notag \\
 \leq~ & \| K^2(X,X)-K_n^2(X,X)\| +2\lambda \| (K(X,X)-K_n(X,X))\| \notag\\
 \leq~ &\Big(\|K_n(X,X)-K(X,X)\| +2\|K(X,X)\|+ 2\lambda\Big)\cdot\|K(X,X)-K_n(X,X)\|\notag\\
 \leq ~& C\left(\sqrt{\frac{n}{d_1}}+C\right)\sqrt{\frac{n}{d_1}}\label{eq:KKKK}.
\end{align}
for some constant $C>0$, with  probability at least $1-4 e^{-2n}$, where the last bound in \eqref{eq:KKKK} is due to Theorem~\ref{thm:spectralnorm} and Lemma \ref{lem:Kupperbound} in Appendix~\ref{appendix:lemmas}. Additionally, combining Theorem~\ref{thm:spectralnorm} and Theorem \ref{thm:lowerKn}, we can easily get 
\begin{equation}\label{eq:lambda_min}
    \|(K_n(X,X)+\lambda\Id)^{-1}\|\le\lambda_{\min}^{-1}(K_n(X,X))\le C
\end{equation} 
for all large $n$ and some universal constant $C$, under the same event that \eqref{eq:KKKK} holds. Theorem \ref{thm:minlowerbound} also shows $\lambda_{\min}^{-1}(K(X,X))\le C$ for all large $n$. Hence, with the upper bounds for $\lambda_{\min}^{-2}(K(X,X))$ and $\lambda_{\min}^{-2}(K_n(X,X))$, \eqref{eq:ETrain} follows from the bounds of \eqref{eq:719} and \eqref{eq:KKKK}.
\end{proof}

For ease of notation, we denote $K:=K(X,X)$ and $K_n:=K_n(X,X)$. Hence, from \eqref{eq:def_testerror}, we can further decompose the test errors for $K$ and $K_n$ into
\begin{align}
    \mathcal L(\hat f^{(K)}_\lambda)
    = ~&\E_\x [|f^*(\x)|^2]
    +\Tr\left[(K+\lambda\Id)^{-1}\v y\v y^\top (K+\lambda\Id)^{-1}\E_\x[K(\x,X)^\top K(\x,X)]\right]\label{eq:test_K}\\
    ~&~-2\Tr\left[(K+\lambda\Id)^{-1}\v y\E_\x[f^*(\x)K(\x,X)]\right],\notag\\
   \mathcal L(\hat f^{(RF)}_\lambda)
    =~  &\E_\x [|f^*(\x)|^2]
    +\Tr\left[(K_n+\lambda\Id)^{-1}\v y\v y^\top (K_n+\lambda\Id)^{-1}\E_\x[K_n(\x,X)^\top K_n(\x,X)]\right]\label{eq:test_RF}\\
    ~&~-2\Tr\left[(K_n+\lambda\Id)^{-1}\v y\E_\x[f^*(\x)K_n(\x,X)]\right].\notag
\end{align}
Let us denote
\begin{align*}
        E_1:=&\Tr\left[(K_n+\lambda\Id)^{-1}\v y\v y^\top (K_n+\lambda\Id)^{-1}\E_\x[K_n(\x,X)^\top K_n(\x,X)]\right],\\
        \bar E_1:=&\Tr\left[(K+\lambda\Id)^{-1}\v y\v y^\top (K+\lambda\Id)^{-1}\E_\x[K(\x,X)^\top K(\x,X)]\right],\\
       E_2:=&\Tr\left[(K_n+\lambda\Id)^{-1}\v y\bbeta^{*\top}\E_\x[\x K_n(\x,X)]\right],\\
        \bar E_2:=&\Tr\left[(K+\lambda\Id)^{-1}\v y\bbeta^{*\top}\E_\x[\x K(\x,X)]\right].
\end{align*}
As we can see, to compare the test errors between random feature and kernel regression models, we need to control $|E_1-\bar E_1|$ and $|E_2-\bar E_2|$. Firstly, it is necessary to study the concentrations of
\[\E_\x[K(\x,X)^\top K(\x,X)-K_n(\x,X)^\top K_n(\x,X)]\]
and 
\[\E_\x\left[f^*(\x)\left(K(\x,X)-K_n(\x,X)\right)\right].\]
\begin{lemma}\label{lemma:K_n}
Under Assumption \ref{assump:sigma} for $\sigma$ and Assumption \ref{assump:testdata} for $\x$ and $X$, with probability at least $1-4e^{-2n}$, we have
 \begin{align}\label{eq:K_n}
     \left\| K_n(\x,X)-K(\x,X) \right\|\leq C\sqrt{\frac{n}{d_1}},
 \end{align}
 where $C>0$ is a universal constant. Here, we only consider the randomness of the weight matrix in $K_n(\x,X)$ defined by \eqref{eq:K_n_def} and \eqref{eq:K_n_x_def}. 
\end{lemma}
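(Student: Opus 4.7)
The plan is to reduce the claim to Theorem~\ref{thm:spectralnorm} by embedding the test point $\x$ as an extra column of the training matrix. Consider the augmented data matrix $\tilde X = [\x_1, \ldots, \x_n, \x] \in \R^{d_0 \times (n+1)}$ from Assumption~\ref{assump:testdata}, and let $\tilde Y = \sigma(W \tilde X) \in \R^{d_1 \times (n+1)}$. Define the augmented empirical and population kernel matrices
\[
\tilde K_n := \frac{1}{d_1}\tilde Y^\top \tilde Y \in \R^{(n+1) \times (n+1)}, \qquad \tilde \Phi := \E_W[\tilde K_n] \in \R^{(n+1) \times (n+1)}.
\]
The $(n{+}1)$-st row of $\tilde K_n - \tilde \Phi$, with its diagonal entry removed, is precisely the vector $K_n(\x, X) - K(\x, X) \in \R^n$ (since $K(\x, \x_j) = \E_W[K_n(\x, \x_j)]$ for all $j$).

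First I would verify that Theorem~\ref{thm:spectralnorm} applies to $\tilde X$. By Assumption~\ref{assump:testdata}, $\tilde X$ is $(\eps_n, B)$-orthonormal, which in particular yields $\sum_{i=1}^{n+1} (\|\tilde \x_i\|^2 - 1)^2 \le B^2$ and $\|\tilde X\| \le B$. Assumption~\ref{assump:sigma} gives the Lipschitz constant $\lambda_\sigma$ and the centering $\E[\sigma(\xi)] = 0$ required by Theorem~\ref{thm:spectralnorm}. Applying that theorem (with $n$ replaced by $n+1$) yields, with probability at least $1 - 4e^{-2(n+1)} \ge 1 - 4e^{-2n}$,
\[
\left\| \tilde K_n - \tilde \Phi \right\| \le C\left(\sqrt{\tfrac{n+1}{d_1}} + \tfrac{n+1}{d_1}\right) \lambda_\sigma^2 B^2 + 32 B^2 \lambda_\sigma^2 \sqrt{\tfrac{n+1}{d_1}}.
\]

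Next I would pass from the operator norm on the augmented matrix to the $\ell_2$ norm on the single row of interest. For any matrix $A$ and standard basis vector $\mathbf e_i$, one has $\|\mathbf e_i^\top A\|_2 \le \|A\|$, and discarding coordinates of a vector can only decrease its $\ell_2$ norm. Hence
\[
\|K_n(\x, X) - K(\x, X)\|_2 \le \|\mathbf e_{n+1}^\top (\tilde K_n - \tilde \Phi)\|_2 \le \|\tilde K_n - \tilde \Phi\|.
\]
Combining with the previous display and absorbing the $O(n/d_1) \le O(\sqrt{n/d_1})$ term (valid on the regime $n/d_1 \to 0$) into a single constant $C$ depending on $B$, $\lambda_\sigma$ gives the stated bound $C\sqrt{n/d_1}$ with probability at least $1 - 4e^{-2n}$.

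There is no genuine obstacle here: the only things to check are that the augmented matrix inherits $(\eps_n, B)$-orthonormality (which is exactly what Assumption~\ref{assump:testdata} guarantees) and that the relevant row of a matrix is controlled by its operator norm. The main step is simply invoking Theorem~\ref{thm:spectralnorm} on $\tilde X$ rather than on $X$.
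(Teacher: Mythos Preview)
Your proposal is correct and follows essentially the same argument as the paper: augment $X$ by the test point to form $\tilde X$, apply Theorem~\ref{thm:spectralnorm} to the $(n{+}1)\times(n{+}1)$ kernel matrices, and then bound the desired row vector by the spectral norm of the full difference. The only cosmetic difference is that you explicitly verify the hypotheses and explicitly drop the diagonal entry, whereas the paper simply writes the block decomposition and invokes the row-norm-$\leq$-spectral-norm fact.
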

\begin{proof}
We consider $\tilde X=[\x_1,\ldots,\x_n,\x] $, its corresponding kernels $K_{n}(\tilde X, \tilde X)$ and $K(\tilde X, \tilde X)\in\R^{(n+1)\times (n+1)}$. Under Assumption \ref{assump:testdata}, we can directly apply Theorem \ref{thm:spectralnorm} to get the concentration of $K_{n}(\tilde X, \tilde X)$ around $K(\tilde X, \tilde X)$, namely,
\begin{equation}\label{eq:K_n_tilde_X}
    \left\| K_n(\tilde X, \tilde X)-K(\tilde X, \tilde X) \right\|\leq C\sqrt{\frac{n}{d_1}},
\end{equation}
 with probability at least $1-4e^{-2n}$. Meanwhile, we can write $K_{n}(\tilde X, \tilde X)$ and $K(\tilde X, \tilde X)$ as block matrices:
 \[K_{n}(\tilde X, \tilde X)=\begin{pmatrix}
  K_{n}( X, X) & K_{n}(X, \x)\\
  K_{n}(\x, X) & K_n(\x,\x)
 \end{pmatrix} ~\text{ and }~
 K(\tilde X, \tilde X)=\begin{pmatrix}
  K( X, X) & K(X, \x)\\
  K(\x, X) & K(\x,\x)
 \end{pmatrix}.\]
 Since the $\ell_2$-norm of any row is bounded above by the spectral norm of its entire matrix, we complete the proof of  \eqref{eq:K_n}. 
\end{proof}

\begin{lemma}\label{lemma:quadratic_ybeta}
Assume that training labels satisfy Assumption~\ref{assump:target} and  $\|X\|\le B$, then for any deterministic $A\in\R^{n\times n}$, we have
\begin{align*}
    \Var\left(\v y^\top A\v y\right),\Var\left(\bbeta^{*\top} A\v y\right)\le c\|A\|_F^2,
\end{align*}where constant $c$ only depends on $\sigma_{\bbeta},$ $\sigma_{\veps}$ and $B$. Moreover, \[\E[ \v y^\top A\v y]=\sigma^2_{\bbeta}\Tr AX^\top X+\sigma^2_{\veps}\Tr A, \quad  \E[\bbeta^{*\top} A\v y]=\sigma^2_{\bbeta}\Tr AX^\top.\]
\end{lemma}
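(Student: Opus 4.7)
The plan is to exploit the joint Gaussianity of $(\bbeta^*, \veps)$, which makes both $\v y^\top A \v y$ and $\bbeta^{*\top} A \v y$ Gaussian quadratic forms (after absorbing the cross terms). Then the expectation and variance identities are standard.

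\textbf{Expectations.} First, since $\v y = X^\top \bbeta^* + \veps$ with $\bbeta^*\perp\veps$, we have $\v y \sim \N(0,\Sigma)$ with $\Sigma := \sigma_\bbeta^2 X^\top X + \sigma_\veps^2 \Id$. So $\E[\v y^\top A \v y] = \Tr(A\Sigma) = \sigma_\bbeta^2\Tr AX^\top X + \sigma_\veps^2 \Tr A$. For the cross term, expand $\bbeta^{*\top}A\v y = \bbeta^{*\top}AX^\top\bbeta^* + \bbeta^{*\top}A\veps$; the second summand has mean $0$ by independence, and the first has mean $\sigma_\bbeta^2 \Tr(AX^\top)$.

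\textbf{Variance of $\v y^\top A \v y$.} Since $\v y$ is centered Gaussian with covariance $\Sigma$, the Gaussian quadratic form identity gives $\Var(\v y^\top A\v y) = 2\Tr\bigl((\tilde A \Sigma)^2\bigr)$, where $\tilde A := (A+A^\top)/2$. I would bound this by Cauchy--Schwarz as $2\Tr((\tilde A\Sigma)^2) \le 2\|\tilde A\Sigma\|_F^2 \le 2\|\Sigma\|^2\|A\|_F^2$. Since $\|X\|\le B$, $\|\Sigma\|\le \sigma_\bbeta^2 B^2 + \sigma_\veps^2$, yielding the claimed bound with $c$ depending only on $\sigma_\bbeta,\sigma_\veps,B$.

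\textbf{Variance of $\bbeta^{*\top}A\v y$.} I would condition and split using independence of $\bbeta^*$ and $\veps$:
\[
\Var(\bbeta^{*\top}A\v y) = \Var(\bbeta^{*\top}AX^\top\bbeta^*) + \Var(\bbeta^{*\top}A\veps),
\]
since the covariance between the two summands vanishes (take the expectation in $\veps$ first). The first term is a Gaussian quadratic form in $\bbeta^*\sim\N(0,\sigma_\bbeta^2\Id)$, so it equals $2\sigma_\bbeta^4\Tr(M^2)$ with $M:=(AX^\top+XA^\top)/2$; bound by $2\sigma_\bbeta^4\|M\|_F^2 \le 2\sigma_\bbeta^4\|X\|^2\|A\|_F^2 \le 2\sigma_\bbeta^4 B^2\|A\|_F^2$. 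The second term, using independence and $\E[\veps\veps^\top] = \sigma_\veps^2 \Id$, equals $\sigma_\bbeta^2\sigma_\veps^2\Tr(AA^\top) = \sigma_\bbeta^2\sigma_\veps^2\|A\|_F^2$. Summing gives the stated bound.

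There is no real obstacle here: the whole argument is just linearity of expectation plus the standard identity $\Var(\v z^\top A\v z) = 2\Tr((\tilde A\Sigma)^2)$ for centered Gaussians, coupled with the elementary inequality $\|A\Sigma\|_F\le\|\Sigma\|\cdot\|A\|_F$. The only point to be mindful of is symmetrizing $A$ (replacing $A$ by $(A+A^\top)/2$) so that the standard Gaussian quadratic-form variance formula applies, and observing that this symmetrization only improves the Frobenius norm.
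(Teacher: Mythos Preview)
Your proof is correct and rests on the same idea as the paper's: both use the Gaussian quadratic-form variance identity. The only difference is packaging. The paper stacks $(\bbeta^*/\sigma_{\bbeta},\,\veps/\sigma_{\veps})$ into a single standard Gaussian vector $\v g\in\R^{d_0+n}$ and writes each quadratic form as $\v g^\top\tilde A_i\v g$ for a block matrix $\tilde A_i$, then bounds $\Var\le 2\|\tilde A_i\|_F^2$ and expands the block Frobenius norm. You instead work directly with $\v y\sim\N(0,\Sigma)$ for the first form and split the second by independence of $\bbeta^*$ and $\veps$. Your route is slightly more streamlined (no block matrices), while the paper's is more uniform (both forms handled the same way); the underlying computation is identical.
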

\begin{proof}We follow the idea in Lemma C.8 of \cite{mei2019generalization} to investigate the variance of the quadratic form for the Gaussian random vector by
\begin{equation}\label{eq:var}
    \Var ( \v g^{\top} A  \v g)=\|A\|_{F}^2+\Tr(A^2)\le 2\|A\|_F^2,
\end{equation}
for any deterministic square matrix $A$ and standard normal random vector $\v g$. Notice that the quadratic form 
\begin{equation}
    \v y^\top A\v y=\v g^\top \begin{pmatrix}
     \sigma_{\bbeta}^{2} XAX^\top & \sigma_{\veps}\sigma_{\bbeta} XA\\
     \sigma_{\veps}\sigma_{\bbeta} AX^\top &  \sigma_{\veps}^{2}A
    \end{pmatrix}\v g,
\end{equation}
where $\v g$ is a standard Gaussian random vector in $\R^{d_0+n}$. Similarly, the second quadratic form can be written as
\begin{equation*}
    \bbeta^{*\top} A\v y=\v g^\top \begin{pmatrix}
      \sigma_{\bbeta}^2 AX^\top & \sigma_{\veps}\sigma_{\bbeta} A\\
     \mathbf{0} &\mathbf{0}
    \end{pmatrix}\v g.
\end{equation*} 
Let \[\tilde A_1:=\begin{pmatrix}
     \sigma_{\bbeta}^{2} XAX^\top & \sigma_{\veps}\sigma_{\bbeta} XA\\
     \sigma_{\veps}\sigma_{\bbeta} AX^\top &  \sigma_{\veps}^{2}A
    \end{pmatrix},\quad\tilde A_2:=\begin{pmatrix}
      \sigma_{\bbeta}^2 AX^\top & \sigma_{\veps}\sigma_{\bbeta} A\\
     \mathbf{0} &\mathbf{0}
    \end{pmatrix}.\]
By \eqref{eq:var}, we know $\Var\left(\v y^\top A\v y\right)\le 2\|\tilde A_1\|_F^2$ and $\Var\left(\bbeta^{*\top} A\v y\right)\le 2\|\tilde A_2\|_F^2$. Since $$\|\tilde A_1\|^2_F=\sigma_{\bbeta}^4\|XAX^\top\|^2_F+\sigma^2_{\veps}\sigma^2_{\bbeta} \|XA\|^2_F+\sigma^2_{\veps}\sigma^2_{\bbeta} \|AX^\top\|_F^2 + \sigma_{\veps}^{4}\|A\|_F^2\le c\|A\|_F^2$$ and similarly $\|\tilde A_2\|_F\le c\|A\|_F^2$ for a constant $c$, we can complete the proof. 
\end{proof}
As a remark, in Lemma \ref{lemma:quadratic_ybeta}, for simplicity, we only provide a variance control for the quadratic forms to obtain convergence in probability in the following proofs of Theorems \ref{thm:test_diff} and \ref{thm:limit_error}. However, we can actually apply Hanson-Wright inequalities in Section \ref{sec:nonlinearHW} to get more precise probability bounds and consider non-Gaussian distributions for $\bbeta^*$ and $\veps $.

\begin{proof}[Proof of Theorem \ref{thm:test_diff}]
Based on the preceding expansions of $\mathcal L(\hat{f}_{\lambda}^{(RF)}(\x))$ and $\mathcal L (\hat{f}_{\lambda}^{(K)}(\x))$ in \eqref{eq:test_K} and \eqref{eq:test_RF}, we need to control the right-hand side of
\begin{align*}
   & \left| \mathcal L(\hat{f}_{\lambda}^{(RF)}(\x))-\mathcal L (\hat{f}_{\lambda}^{(K)}(\x))\right|
\le  \left| E_1-\bar E_1\right|+2\left|\bar E_2 - E_2\right|.
\end{align*}
In the subsequent procedure, we first take the concentrations of $E_1$ and $E_2$ with respect to normal random vectors $\bbeta^*$ and $\veps$, respectively. Then, we apply Theorem~\ref{thm:spectralnorm} and Lemma~\ref{lemma:K_n} to complete the proof of \eqref{eq:test_diff}. For simplicity, we start with the second term 
\begin{align}
\left|\bar E_2 - E_2\right|\le~ &  \left|\bbeta^{*\top}\E_\x[\x (K_n(\x,X)-K(\x,X))](K_n+\lambda\Id)^{-1}\v y\right|\nonumber\\
    ~&+\left|\bbeta^{*\top}\E_\x[\x K(\x,X)]\left((K_n+\lambda\Id)^{-1}-(K+\lambda\Id)^{-1}\right)\v y\right|\nonumber\\
    \le ~& |I_1- \bar I_1|+|I_2- \bar I_2|+| \bar I_1|+| \bar I_2|,\label{eq:E_2-E_2}
\end{align}
where $I_1$ and $I_2$ are quadratic forms defined below
\begin{align*}
    I_1:= ~& \bbeta^{*\top}\E_\x[\x (K_n(\x,X)-K(\x,X))](K_n+\lambda\Id)^{-1}\v y,\\
    I_2:= ~& \bbeta^{*\top}\E_\x[\x K(\x,X)]\left((K_n+\lambda\Id)^{-1}-(K+\lambda\Id)^{-1}\right)\v y,
\end{align*}
and their expectations with respect to random vectors $\bbeta^*$ and $\veps$ are denoted by
\begin{align*} 
    \bar I_1:=\E_{\veps,\, \bbeta^*}[I_1]=~&\sigma_{\bbeta}^{2}\Tr\left(\E_\x[\x (K_n(\x,X)-K(\x,X))](K_n+\lambda\Id)^{-1} X^\top\right),\\
    \bar I_2:=\E_{\veps,\,\bbeta^*}[I_2]=~&\sigma_{\bbeta}^{2}\Tr\left(\left((K_n+\lambda\Id)^{-1}-(K+\lambda\Id)^{-1}\right)X^\top \E_\x[\x K(\x,X)] \right).
\end{align*}

We first consider the randomness of the weight matrix in $K_n$ and define the event $\cE$ where both \eqref{eq:lambda_min} and \eqref{eq:K_n_tilde_X} hold. Then, Theorem \ref{thm:lowerKn} and the proof of Lemma \ref{lemma:K_n}  indicate that event $\cE$ occurs with probability at least $1-4e^{-2n}$ for all large $n$. Notice that $\cE$ does not rely on the randomness of test data $\x$.

We now consider $A=\E_\x[\x (K_n(\x,X)-K(\x,X))](K_n+\lambda\Id)^{-1}$ in Lemma \ref{lemma:quadratic_ybeta}. Conditioning on event $\cE$, we have
\begin{align}
   \|A\|_F^2\le ~& \E_\x\left[ \left\|\x (K_n(\x,X)-K(\x,X))^\top\right\|_F^2\right]\cdot\left\|(K_n+\lambda\Id)^{-1} X^\top\right\|^2\\
    \le~ & \|X\|^2\left\|(K_n+\lambda\Id)^{-1}\right\|^2\cdot\E_\x\left[\|\x\|^2\|K_n(\x,X)-K(\x,X)\|^2\right]\le C\frac{n}{d_1},\label{eq:bound_A}
\end{align}
for some constant $C$, where we utilize the assumption $\E[\|\x\|^2]=1$. Hence, based on Lemma \ref{lemma:quadratic_ybeta}, we know $\Var_{\veps,\,\bbeta^*}(I_1)\le cn/d_1$, for some constant $c$. By Chebyshev’s inequality and event $\cE$, 
\begin{equation}\label{eq:I_1-I_1}
    \P\left(|I_1-\bar I_1|\ge (n/d_1)^{\frac{1-\varepsilon}{2}}\right)\le  c\left(\frac{n}{d_1}\right)^{\varepsilon}+4e^{-2n},
\end{equation}
 for any $\varepsilon\in (0,1/2)$. Hence, $\left(d_1/n\right)^{\frac{1}{2}-\varepsilon}\cdot|I_1-\bar I_1|=o(1)$
with probability $1-o(1)$, when $n/d_1\to 0 $ and $n\to \infty$.  

Likewise, when $A=\E_\x[\x K(\x,X)]\left((K_n+\lambda\Id)^{-1}-(K+\lambda\Id)^{-1}\right)$, we can apply \eqref{eq:a-1_b-1} and 
\begin{equation}\label{eq:KxX_bound}
    \|K(\x,X)\|\le \|K(\tilde X, \tilde X)\|\le C\lambda_\sigma^2 B^2,
\end{equation} due to Lemma \ref{lem:Kupperbound} in Appendix~\ref{appendix:lemmas}, to obtain $\|A\|_F^2\le Cn/d_1$ conditionally on event $\cE$. Then, similarly, Lemma \ref{lemma:quadratic_ybeta} shows $\Var_{\veps,\,\bbeta^*}(I_2)\le cn/d_1$. Therefore, \eqref{eq:I_1-I_1} also holds for $|I_2-\Bar{I}_2|$.

Moreover, conditioning on the event $\cE$,
\begin{align}
    |\bar I_1|=~& \sigma_{\bbeta}^{2}\left|\E_\x\left[ (K_n(\x,X)-K(\x,X))(K_n+\lambda\Id)^{-1} X^\top \x\right]\right|\\ 
    \le~ &\sigma_{\bbeta}^{2}\E_\x\left[ \|\x\|\cdot\left\|K_n(\x,X)-K(\x,X)\right\|\cdot\|X\|\cdot\left\|(K_n+\lambda\Id)^{-1}\right\| \right],\\
    \le~ &\sigma_{\bbeta}^{2}\E_\x\left[ \|\x\|^2\right]^{\frac{1}{2}} \E_\x\left[\left\|K_n(\x,X)-K(\x,X)\right\|^2\right]^{\frac{1}{2}} \|X\| \left\|(K_n+\lambda\Id)^{-1}\right\| \le C\sqrt{\frac{n}{d_1}},\label{eq:I1_bar}
\end{align}
for some constant $C$. In the same way, with \eqref{eq:KxX_bound}, $|\bar I_2|\le C\sqrt{\frac{n}{d_1}}$ on the event $\cE$. Therefore, from \eqref{eq:E_2-E_2}, we can conclude $\left|\bar E_2 - E_2\right|=o\left(\left(n/d_1\right)^{1/2-\varepsilon}\right)$ for any $\varepsilon\in (0,1/2)$, with probability $1-o(1)$, when $n/d_1\to 0 $ and $n\to \infty$.

Analogously, the first term $\left|\bar E_1 - E_1\right|$ is controlled by the following four quadratic forms
\begin{equation*}
    \left|\bar E_1 - E_1\right|\le \sum_{i=1}^4\left|\v y^\top A_i\v y\right|,
\end{equation*}
where we define by $J_i:=\v y^\top A_i\v y$ for $1\le i\le 4$ and
\begin{align*}
    A_1 :=~&  (K_n+\lambda\Id)^{-1}\E_\x[K_n(\x,X)^\top \left(K_n(\x,X)-K(\x,X)\right)](K_n+\lambda\Id)^{-1},\\
    A_2 :=~& (K_n+\lambda\Id)^{-1}\E_\x[ \left(K_n(\x,X)-K(\x,X)\right)^\top K(\x,X)](K_n+\lambda\Id)^{-1},\\
    A_3 :=~& \left((K_n+\lambda\Id)^{-1}-(K+\lambda\Id)^{-1}\right)\E_\x[  K(\x,X)^\top K(\x,X)](K_n+\lambda\Id)^{-1},\\
    A_4 := ~& (K+\lambda\Id)^{-1} \E_\x[  K(\x,X)^\top K(\x,X)]\left((K_n+\lambda\Id)^{-1}-(K+\lambda\Id)^{-1}\right).
\end{align*}
Similarly with \eqref{eq:bound_A} and \eqref{eq:I1_bar}, it is not hard to verify $\|A_i\|_F\le C\sqrt{n/d_1}$ and $|\E_{\veps,\,\bbeta^*}[J_i]|\le C\sqrt{n/d_1}$ conditioning on the event $\cE$. Then, like \eqref{eq:I_1-I_1}, we can invoke Lemma \ref{lemma:quadratic_ybeta} for each $A_i$ to apply Chebyshev’s inequality and conclude $\left|\bar E_1- E_1\right|=o\left(\left(n/d_1\right)^{1/2-\varepsilon}\right)$ with probability $1-o(1)$ when $d_1/n\to \infty$, for any $\varepsilon\in (0,1/2)$.
\end{proof}

\begin{lemma}\label{lemma:KxXKxX}
With Assumptions \ref{assump:sigma} and  \ref{assump:testdata}, for $(\varepsilon_n, B)$-orthonormal $X$, we have that
\begin{align}
    \left\|\E_\x[K(\x,X)^\top K(\x,X)]-\frac{b_\sigma^4}{d_0}X^\top X\right\|\le &\left\|\E_\x[K(\x,X)^\top K(\x,X)]-\frac{b_\sigma^4}{d_0}X^\top X\right\|_F
    \le  C\sqrt{n}\eps_n^2,\label{eq:KxXKxX}\\
    \left\|\E_\x[\x K(\x,X)]-\frac{b_\sigma^2}{d_0} X\right\|\le &\left\|\E_\x[\x K(\x,X)]-\frac{b_\sigma^2}{d_0} X\right\|_F\le C\sqrt{n}\eps_n^2,\label{eq:KxX}
\end{align}
for some constant $C>0$.
\end{lemma}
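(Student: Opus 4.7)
The plan is to reduce both bounds to a uniform pointwise approximation of the form $K(\x,\x_\beta)\approx b_\sigma^2\,\x^\top\x_\beta$ with an $O(\eps_n^2)$ error. Under Assumption~\ref{assump:testdata}, $\tilde X=[X,\x]$ is $(\eps_n,B)$-orthonormal almost surely, so $|\|\x\|-1|\le\eps_n$ and $|\x^\top\x_\beta|\le\eps_n$ for every $\beta\in[n]$. Writing $\sigma_t(x):=\sigma(tx)$, the Hermite-expansion identity already used in Lemma~\ref{lemma:operator_Phi} gives
\[K(\x,\x_\beta)=\sum_{k=0}^{\infty}\zeta_k(\sigma_{\|\x\|})\,\zeta_k(\sigma_{\|\x_\beta\|})\,\rho^k,\qquad \rho:=\frac{\x^\top\x_\beta}{\|\x\|\,\|\x_\beta\|}.\]
Combining this with the coefficient estimates $|\zeta_0(\sigma_\a)|\le C\eps_n$ (from $\zeta_0(\sigma)=0$ and Lipschitz continuity) and $|\zeta_1(\sigma_\a)/\|\x_\a\|-b_\sigma|\le C\eps_n$ from the proof of Lemma~\ref{lemma:operator_Phi}, together with the tail bound $\sum_{k\ge 2}|\zeta_k(\sigma_{\|\x\|})\zeta_k(\sigma_{\|\x_\beta\|})\rho^k|\le|\rho|^2\|\sigma_{\|\x\|}\|_{L^2}\|\sigma_{\|\x_\beta\|}\|_{L^2}\le C\eps_n^2$ via Cauchy--Schwarz, I would show that the residual
\[g_\beta(\x):=K(\x,\x_\beta)-b_\sigma^2\,\x^\top\x_\beta\]
satisfies $|g_\beta(\x)|\le C\eps_n^2$ uniformly in $\beta\in[n]$ and almost surely in $\x$. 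As a byproduct, $|K(\x,\x_\beta)|\le C\eps_n$.

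For \eqref{eq:KxX}, the identity $\E_\x[\x\x^\top]=d_0^{-1}\Id$ gives $b_\sigma^2\,\E_\x[\x\,(\x^\top\x_\beta)]=(b_\sigma^2/d_0)\x_\beta$, so
\[\E_\x[\x\,K(\x,\x_\beta)]-\frac{b_\sigma^2}{d_0}\x_\beta=\E_\x[\x\,g_\beta(\x)].\]
Then $\|\E_\x[\x\,g_\beta]\|\le\E_\x[\|\x\|]\cdot\sup_\x|g_\beta(\x)|\le C\eps_n^2$, using $\E_\x\|\x\|^2=\tr(\E_\x[\x\x^\top])=1$. Squaring and summing over $\beta$ yields $\bigl\|\E_\x[\x K(\x,X)]-(b_\sigma^2/d_0)X\bigr\|_F\le C\sqrt n\,\eps_n^2$, which is \eqref{eq:KxX}.

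For \eqref{eq:KxXKxX}, expand the product
\[K(\x,\x_\alpha)K(\x,\x_\beta)-b_\sigma^4(\x^\top\x_\alpha)(\x^\top\x_\beta)=b_\sigma^2(\x^\top\x_\alpha)g_\beta(\x)+b_\sigma^2(\x^\top\x_\beta)g_\alpha(\x)+g_\alpha(\x)g_\beta(\x)\]
and take $\E_\x$. Using $\E_\x[\x\x^\top]=d_0^{-1}\Id$, this identity rewrites in matrix form as
\[\E_\x[K(\x,X)^\top K(\x,X)]-\frac{b_\sigma^4}{d_0}X^\top X=b_\sigma^2\bigl(X^\top V+V^\top X\bigr)+N,\]
with $V:=\E_\x[\x\,\v g(\x)^\top]\in\R^{d_0\times n}$, $N:=\E_\x[\v g(\x)\v g(\x)^\top]\in\R^{n\times n}$, and $\v g(\x):=(g_1(\x),\ldots,g_n(\x))^\top$. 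The column-wise estimate of the previous paragraph gives $\|V\|_F\le C\sqrt n\,\eps_n^2$, hence $\|X^\top V\|_F\le\|X\|\,\|V\|_F\le C\sqrt n\,\eps_n^2$. Since $N$ is positive semidefinite, $\|N\|_F\le\tr N=\sum_\alpha\E_\x[g_\alpha(\x)^2]\le Cn\eps_n^4$, which is bounded by $C\sqrt n\,\eps_n^2$ once $n\eps_n^4\le 1$ (which holds eventually under Assumption~\ref{assump:testdata}). Combining the three contributions proves \eqref{eq:KxXKxX}.

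The main technical step is the uniform residual estimate $|g_\beta(\x)|\le C\eps_n^2$; it is a direct adaptation of the term-by-term Hermite bookkeeping already carried out in Lemma~\ref{lemma:operator_Phi} (the $k=0$ contribution is $O(\eps_n^2)$ because $\zeta_0(\sigma)=0$, the $k=1$ contribution agrees with $b_\sigma^2\x^\top\x_\beta$ up to an $O(\eps_n)\cdot|\x^\top\x_\beta|=O(\eps_n^2)$ error, and the $k\ge 2$ tail is $O(\eps_n^2)$ by Cauchy--Schwarz), applied here to the pair $(\x,\x_\beta)$ instead of $(\x_\a,\x_\beta)$. Once this pointwise bound is in hand, the remainder of the argument is a short calculation based on the test-data covariance identity $\E_\x[\x\x^\top]=d_0^{-1}\Id$ and positive semidefiniteness of $N$.
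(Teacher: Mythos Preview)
Your proposal is correct and follows essentially the same approach as the paper: both rely on the pointwise residual bound $|K(\x,\x_\beta)-b_\sigma^2\x^\top\x_\beta|\le C\eps_n^2$ (which the paper obtains by citing Lemma~\ref{lem:D3fan} rather than redoing the Hermite bookkeeping) and then aggregate using $\E_\x[\x\x^\top]=d_0^{-1}\Id$. The only minor difference is organizational: for \eqref{eq:KxXKxX} the paper pulls the Frobenius norm inside $\E_\x$ via Jensen and bounds the two rank-one pieces $K(\x,X)^\top(K(\x,X)-b_\sigma^2\x^\top X)$ and $(K(\x,X)-b_\sigma^2\x^\top X)^\top\x^\top X$ directly using $\|K(\x,X)\|\le\|K(\tilde X,\tilde X)\|\le CB^2$, which avoids your separate PSD/trace argument for the $N$ term and the auxiliary hypothesis $n\eps_n^4\le 1$.
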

\begin{proof} 
By Lemma \ref{lem:D3fan}, we have an entrywise approximation
\begin{equation*}
    |K(\x,\x_i)-b_\sigma^2\x^\top\x_i|\le C\lambda_\sigma \eps_n^2,
\end{equation*}for any $1\le i\le n$. Hence,
$
    \|K(\x, X)-b_\sigma^2\x^\top X\|\le C\lambda_\sigma \sqrt{n}\eps_n^2.
$
Assumption \ref{assump:testdata} of $\x$ implies that $\frac{b_\sigma^4}{d_0}X^\top X=b_\sigma^4\E_\x[X^\top \x \x^\top X]$. Then, we can verify \eqref{eq:KxXKxX} based on the following approximation
\begin{align*}
     &\left\|\E_\x[K(\x,X)^\top K(\x,X)]-\frac{b_\sigma^4}{d_0}X^\top X\right\|_F\le \E_\x\left[\left\|K(\x,X)^\top K(\x,X)-b_\sigma^4X^\top \x \x^\top X\right\|_F\right]\\
     \le ~&\E_\x\left[\left\|K(\x,X)^\top \left(K(\x,X)-b_\sigma^2 \x^\top X\right)\right\|_F+b_\sigma^2\left\|\left(K(\x,X)^\top-b_\sigma^2X^\top \x\right)\x^\top X\right\|_F\right]\\
    \le ~& \E_\x\left[\|K(\x, X)-b_\sigma^2\x^\top X\|\left(\|K(\x,X)\| +\|b_\sigma^2\x^\top X\|\right)\right]\le  C\sqrt{n}\eps_n^2,
\end{align*}for some universal constant $C$. The same argument can also be employed to prove \eqref{eq:KxX}, so details will be omitted here.
\end{proof}

\begin{proof}[Proof of Theorem \ref{thm:limit_error}]
From \eqref{eq:ETrain} and \eqref{eq:test_diff}, we can easily conclude that 
\begin{align}
    E_{\train}^{(RF,\lambda)}-E_{\train}^{(K,\lambda)}&\overset{\P}{\to} 0,\label{eq:RF-K_train}\\
    \mathcal L(\hat{f}_{\lambda}^{(RF)}(\x))-\mathcal L (\hat{f}_{\lambda}^{(K)}(\x))&\overset{\P}{\to} 0,\label{eq:RF-K_test}
\end{align}
as $n\to\infty$ and $n/d_1\to0$. 
Therefore, to study the training error $E_{\train}^{(RF,\lambda)}$ and the test error $\mathcal L(\hat{f}_{\lambda}^{(RF)}(\x))$ of random feature regression,
it suffices to analyze the asymptotic behaviors of $E_{\train}^{(K,\lambda)}$ and $\mathcal L (\hat{f}_{\lambda}^{(K)}(\x))$ for the kernel regression, respectively. In the rest of the proof, we will first analyze the test error $\mathcal L (\hat{f}_{\lambda}^{(K)}(\x))$ and then compute the training error $E_{\train}^{(K,\lambda)}$ under the ultra-wide regime.

Recall that $K_\lambda=(K+\lambda\Id)$ and the test error is given by
\begin{align}
    \mathcal L(\hat f^{(K)}_\lambda)=~ &\frac{1}{d_0}\|\bbeta^*\|^2+L_1-2L_2,\label{eq:expansion_test}
\end{align}
where 
$L_1:=\v y^\top K_\lambda^{-1}\E_\x[K(\x,X)^\top K(\x,X)]K_\lambda^{-1}\v y$, $L_2:=\bbeta^{*\top}\E_\x[\x K(\x,X)]K_\lambda^{-1}\v y$.
The spectral norm of $K_\lambda $ is bounded from above and the smallest eigenvalue is bounded from below by some positive constants. 

We first focus on the last two terms $L_1$ and $L_2$ in the test error. Let us define
\[\widetilde L_1:=\frac{b_\sigma^4}{d_0}\v y^\top K_\lambda^{-1}X^\top X K_\lambda^{-1}\v y\quad\text{and}\quad \widetilde L_2:=\frac{b_\sigma^2}{d_0}\bbeta^{*\top}X K_\lambda^{-1}\v y.\]
Then, we obtain two quadratic forms
\begin{align*}
    L_1-\widetilde L_1= & \v y^\top K_\lambda^{-1}\left(\E_\x[K(\x,X)^\top K(\x,X)]-\frac{b_\sigma^4}{d_0}X^\top X\right)K_\lambda^{-1}\v y=:\v y^\top A_1 \v y,\\
    L_2-\widetilde L_2= & \bbeta^{*\top}\left(\E_\x[\x K(\x,X)]-\frac{b_\sigma^2}{d_0} X\right)K_\lambda^{-1}\v y=:\bbeta^{*\top} A_2\v y,
\end{align*}
where $\|A_1\|_F$ and $\|A_2\|_F$ are at most $ C\sqrt{n}\eps_n^2$ for some constant $C>0$, due to Lemma \ref{lemma:KxXKxX}. Hence, applying Lemma \ref{lemma:quadratic_ybeta} for these two quadratic forms, we have $\Var(L_i-\widetilde L_i)\le cn\eps_n^4\to 0$ as $n\to\infty$. Additionally, Lemma \ref{lemma:quadratic_ybeta} and the proof of Lemma~\ref{lemma:KxXKxX} verify that $\E[\v y^\top A_1 \v y]$ and $\E[\bbeta^{*\top} A_2\v y]$ are vanishing as $n\to\infty$. Therefore, $L_i-\widetilde L_i$ converges to zero in probability  for $i=1,2$. So we can move to analyze $\widetilde L_1$ and $\widetilde L_2$ instead. Copying the above procedure, we can separately compute the variances of $\widetilde L_1$ and $\widetilde L_2$ with respect to $\bbeta^*$ and $\veps$, and then apply Lemma \ref{lemma:quadratic_ybeta}. Then, $|\widetilde L_1-\bar L_1|$ and $|\widetilde L_2-\bar L_2|$ will converge to zero in probability as $n,d_0\to \infty$, where 
\begin{align*}
    \bar L_1:= ~&\E_{\veps,\,\bbeta^*}[\widetilde L_1]=\frac{b_\sigma^4\sigma_{\bbeta}^2 n}{d_0}\tr K_\lambda^{-1}X^\top X K_\lambda^{-1}X^\top X+\frac{b_\sigma^4\sigma_{\veps}^2 n}{d_0}\tr K_\lambda^{-1}X^\top X K_\lambda^{-1},\\
    \bar L_2:= ~&\E_{\veps,\,\bbeta^*}[\widetilde L_2]=\frac{b_\sigma^2\sigma_{\bbeta}^2n}{d_0}\tr  K_\lambda^{-1}X^\top X.
\end{align*}
To obtain the last approximation, we define $\bar K(X,X):=b_\sigma^2X^\top X+(1-b_\sigma^2)\Id$ and
\begin{equation}\label{eq:def_Klambda}
    \bar K_\lambda:=b_\sigma^2X^\top X+(1+\lambda-b_\sigma^2)\Id.
\end{equation}
We aim to replace $K_\lambda$ by $\bar K_\lambda$ in $\bar L_1$ and $\bar L_2$. Recalling the identity \eqref{eq:a-1_b-1}, we have
\begin{align*}
    K_\lambda^{-1}-\bar K_\lambda^{-1}=\bar K_\lambda^{-1}\left(K(X,X)-\bar K(X,X) \right)K_\lambda^{-1}.
\end{align*}
Since $\sigma$ is not a linear function, $1-b_\sigma^2>0$. Then, with \eqref{eq:lambda_min}, the proof of Lemma \ref{lemma:spectrum_Phi} indicates
\begin{align}\label{eq:frob_K}
     \left\|K_\lambda^{-1}-\bar K_\lambda^{-1}\right\|_F\le C\sqrt{n^2\eps_n^4+n\eps_n^2},
\end{align}
where we apply the fact that $\lambda_{\min}(\bar K(X,X))\ge 1-b_\sigma^2>0$. Let us denote
\begin{align}
    L_1^0:=~&\frac{b_\sigma^4\sigma_{\bbeta}^2 n}{d_0}\tr \bar K_\lambda^{-1}X^\top X \bar K_\lambda^{-1}X^\top X+\frac{b_\sigma^4\sigma_{\veps}^2 n}{d_0}\tr \bar  K_\lambda^{-1}X^\top X \bar K_\lambda^{-1},\label{eq:defL1}\\
    L_2^0:=~& \frac{b_\sigma^2\sigma_{\bbeta}^2n}{d_0}\tr  \bar K_\lambda^{-1}X^\top X.\label{eq:defL2}
\end{align}
Notice that for any matrices $A,B\in\R^{n\times n}$,
$\|AB\|_F\le \|A\|\|B\|_F,  |\Tr(AB)|\le \|A\|_F\|B\|_F.$ Then, with the help of \eqref{eq:frob_K} and  uniform bounds of the spectral norms of $X^\top X$, $K_\lambda^{-1}$ and $\bar K_\lambda^{-1}$, we obtain that
\begin{align*}
    |\bar L_1-L^0_1|
    \le~ & \frac{b_\sigma^4\sigma_{\bbeta}^2 }{d_0}\left|\Tr K_\lambda^{-1}X^\top X  (K_\lambda^{-1}-\bar K_\lambda^{-1})X^\top X\right|+\frac{b_\sigma^4\sigma_{\bbeta}^2 }{d_0}\left|\Tr (K_\lambda^{-1}-\bar K_\lambda^{-1})X^\top X \bar K_\lambda^{-1}X^\top X\right|\\
    ~&+\frac{b_\sigma^4\sigma_{\veps}^2 }{d_0}\left|\Tr (K_\lambda^{-1}-\bar  K_\lambda^{-1})X^\top X \bar K_\lambda^{-1}\right|+\frac{b_\sigma^4\sigma_{\veps}^2 }{d_0}\left|\Tr K_\lambda^{-1} X^\top X(K_\lambda^{-1}-\bar  K_\lambda^{-1})\right| \\
    \le~ & \frac{C\sqrt{n}}{d_0}\left\|K_\lambda^{-1}-\bar K_\lambda^{-1}\right\|_F\le C\frac{n}{d_0}\sqrt{n\eps_n^4+\eps_n^2}\to 0,
\end{align*}
as $n\to \infty $, $n/d_0\to \gamma$ and $n\eps_n^4\to 0$. Combining all the approximations, we conclude that $L_i$ and $L_i^0$ have identical limits in probability for $i=1,2$. On the other hand, based on the assumption of $X$ and definitions in \eqref{eq:def_Klambda}, \eqref{eq:defL1} and \eqref{eq:defL2}, it is not hard to check that
\begin{align*}
    \lim_{n\to\infty} L_1^0=~& b_\sigma^4\sigma_{\bbeta}^2\gamma \int_{\mathbb R} \frac{x^2}{(b_\sigma^2x+1+\lambda-b_\sigma^2)^2} d\mu_0(x)+b_\sigma^4\sigma_{\veps}^2\gamma \int_{\R}\frac{x}{(b_\sigma^2x+1+\lambda-b_\sigma^2)^2} d\mu_0(x), \\
    \lim_{n\to\infty} L_2^0=~&b_\sigma^2\sigma_{\bbeta}^2\gamma \int_{\R}\frac{x}{b_\sigma^2x+1+\lambda-b_\sigma^2} d\mu_0(x).
\end{align*}
Therefore, $L_1$ and $L_2$ converge in probability to the above limits, respectively, as $n\to \infty$. In the end, we apply the concentration of the quadratic form $\bbeta^{*\top}\bbeta^*$ in \eqref{eq:expansion_test} to get $\frac{1}{d_0}\|\bbeta^*\|^2\xrightarrow[]{\P}\sigma^2_{\bbeta}$. Then, by \eqref{eq:RF-K_test}, we can get the limit in \eqref{eq:test_limit} for the test error $\mathcal L(\hat f^{(RF)}_\lambda)$. As a byproduct, we can even use $L_1^0$ and $L_2^0$ to form an $n$-dependent deterministic equivalent of $\mathcal L(\hat f^{(RF)}_\lambda)$ as well.

Thanks to Lemma \ref{lemma:quadratic_ybeta}, the training error, 
$E_{\train}^{(K,\lambda)}=\frac{\lambda^2}{n}\v y^\top K_\lambda^{-2} \v y$, analogously, concentrates around its expectation with respect to $\bbeta^*$ and $\veps$, which is $\sigma_{\bbeta}^2\lambda^2\tr K_\lambda^{-2}X^\top X +\sigma_{\veps}^2\lambda^2\tr K_\lambda^{-2}$. Moreover, because of \eqref{eq:frob_K}, we can further substitute $ K_\lambda^{-2}$ by $\bar K_\lambda^{-2}$ defined in \eqref{eq:def_Klambda}. Hence, we know that, asymptotically,
\[\left|E_{\train}^{(K,\lambda)}-\sigma_{\bbeta}^2\lambda^2\tr \bar K_\lambda^{-2}X^\top X-\sigma_{\veps}^2\lambda^2\tr \bar K_\lambda^{-2}\right|\xrightarrow[]{\P} 0,\]
where as $n,d_0\to \infty $,
\begin{align}
    \lim_{n\to \infty} \sigma_{\bbeta}^2\lambda^2\tr \bar K_\lambda^{-2}X^\top X=~&\sigma_{\bbeta}^2\lambda^2\int_{\R} \frac{x}{(b_\sigma^2x+1+\lambda-b_\sigma^2)^2}d\mu_0(x),\\
    \lim_{n\to \infty} \sigma_{\veps}^2\lambda^2\tr \bar K_\lambda^{-2}=~&\sigma_{\veps}^2\lambda^2\int_{\R} \frac{1}{(b_\sigma^2x+1+\lambda-b_\sigma^2)^2}d\mu_0(x).
\end{align}
The last two limits are due to $\mu_0=\limspec X^\top X$ as $n,d_0\to \infty $. Therefore, by \eqref{eq:RF-K_train}, we obtain our final result \eqref{eq:train_limit} in Theorem \ref{thm:limit_error}.
\end{proof}

\subsection*{Acknowledgements} 
Z.W. is partially supported by NSF DMS-2055340 and NSF DMS-2154099. Y.Z. is partially supported by  NSF-Simons Research Collaborations on the Mathematical and Scientific
Foundations of Deep Learning.
This material is based upon work supported by the National Science Foundation under Grant No. DMS-1928930 while Y.Z. was in residence at the Mathematical Sciences Research Institute in Berkeley, California, during the Fall 2021 semester for the program “Universality and Integrability in Random Matrix Theory and Interacting Particle Systems”. Z.W. would like to thank Denny Wu for his valuable suggestions and comments. Both authors would like to thank Lucas Benigni, Ioana Dumitriu, and  Kameron Decker Harris for their helpful discussion.

\appendix
\addcontentsline{toc}{section}{Appendices}

\section{Auxiliary lemmas}\label{appendix:lemmas}

\begin{lemma}[Equation (3.7.9) in \cite{johnson1990matrix}] \label{lem:Hadamardinequality}
Let $A, B$ be two $n\times n$ matrices, $A$ be positive semidefinite, and $A\odot B$ be the Hadamard product between $A$ and $B$. Then,
\begin{equation}
    \|A\odot B\|\le \max_{i,j}|A_{ij}|\cdot\|B\|.
\end{equation}
\end{lemma}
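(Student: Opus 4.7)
The plan is to prove this classical bound by first reducing the right-hand side to the diagonal of $A$ and then using a Gram representation of $A$.

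First, I would note that for a PSD matrix $A$, the $2\times 2$ principal submatrix at indices $(i,j)$ is also PSD, which forces $|A_{ij}|^2 \leq A_{ii}A_{jj}$. Consequently $\max_{i,j}|A_{ij}| = \max_i A_{ii}$, so it suffices to show the (slightly sharper) bound
\[
\|A \odot B\| \;\leq\; \Bigl(\max_i A_{ii}\Bigr)\,\|B\|.
\]

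Next, since $A\succeq 0$, write $A = V^{*}V$ with $V\in\mathbb{C}^{n\times n}$ (take $V = A^{1/2}$, say), and denote by $V_{k\cdot}$ the $k$-th row of $V$, so that $A_{ij} = \sum_{k} \overline{V_{ki}}\, V_{kj}$ and $A_{ii} = \sum_k |V_{ki}|^2$. For any unit vectors $x, y \in \mathbb{C}^n$, I would expand
\[
x^*(A\odot B)y \;=\; \sum_{i,j,k} \overline{V_{ki}}\,V_{kj}\,B_{ij}\,\overline{x_i}\,y_j \;=\; \sum_{k} (p^{(k)})^{*} B\, q^{(k)},
\]
where $p^{(k)}_i := V_{ki} x_i$ and $q^{(k)}_j := V_{kj} y_j$. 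Applying $|(p^{(k)})^*B q^{(k)}| \leq \|B\|\,\|p^{(k)}\|\,\|q^{(k)}\|$ and then Cauchy--Schwarz in $k$ yields
\[
|x^*(A\odot B)y| \;\leq\; \|B\|\,\Bigl(\sum_k \|p^{(k)}\|^2\Bigr)^{1/2}\Bigl(\sum_k \|q^{(k)}\|^2\Bigr)^{1/2}.
\]
A direct calculation gives $\sum_k \|p^{(k)}\|^2 = \sum_i |x_i|^2 A_{ii} \leq \max_i A_{ii}$ (and similarly for $q^{(k)}$), so the right-hand side is at most $(\max_i A_{ii})\|B\|$. Taking the supremum over unit $x,y$ yields the claim.

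There is no real obstacle here: the only mildly delicate point is recognizing that the Gram factorization of $A$ combined with Cauchy--Schwarz in the factorization index $k$ produces exactly the diagonal weights $A_{ii}$. Everything else is bookkeeping, and the passage from $\max_i A_{ii}$ to $\max_{i,j}|A_{ij}|$ is immediate from positivity of $2\times 2$ minors.
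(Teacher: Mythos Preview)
Your proof is correct. The paper does not supply its own proof of this lemma; it simply cites it as Equation~(3.7.9) in the reference and uses it as a black box. Your argument via the Gram factorization $A=V^*V$ and Cauchy--Schwarz in the factorization index is the standard route to this inequality (and in fact recovers the slightly sharper bound $\|A\odot B\|\le (\max_i A_{ii})\|B\|$, from which the stated form follows since $\max_{i,j}|A_{ij}|=\max_i A_{ii}$ for positive semidefinite $A$).
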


\begin{lemma}[Sherman–Morrison formula, \cite{bartlett1951inverse}] \label{lem:SMformula}
Suppose $A\in \R^{n\times n}$ is an invertible square matrix and $\mathbf u,\mathbf v\in \R^n$ are column vectors. Then
\begin{align}
    (A+\mathbf u \mathbf v^\top)^{-1}=A^{-1}-\frac{A^{-1}\mathbf u\mathbf v^\top A^{-1}}{1+\mathbf v^\top A^{-1}\mathbf u}.
\end{align}
\end{lemma}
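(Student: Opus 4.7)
The plan is to verify the identity by direct algebraic computation: right-multiplying the proposed inverse candidate by $A + \mathbf u \mathbf v^\top$ (or equivalently left-multiplying) and checking that the product equals $\Id$. This is valid provided the scalar $1 + \mathbf v^\top A^{-1}\mathbf u \neq 0$, which is implicitly required for the stated formula to make sense.

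Concretely, I would compute
\begin{align}
(A + \mathbf u \mathbf v^\top)\left( A^{-1} - \frac{A^{-1}\mathbf u \mathbf v^\top A^{-1}}{1 + \mathbf v^\top A^{-1}\mathbf u} \right) &= \Id + \mathbf u \mathbf v^\top A^{-1} - \frac{\mathbf u \mathbf v^\top A^{-1} + \mathbf u \mathbf v^\top A^{-1} \mathbf u \mathbf v^\top A^{-1}}{1 + \mathbf v^\top A^{-1}\mathbf u}.
\end{align}
The key observation is that the quantity $\mathbf v^\top A^{-1}\mathbf u$ appearing in the middle of the second numerator term is a scalar, so it commutes with the surrounding rank-one expression and can be factored out. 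This gives
\begin{align}
\Id + \mathbf u \mathbf v^\top A^{-1} - \frac{(1 + \mathbf v^\top A^{-1}\mathbf u)\,\mathbf u \mathbf v^\top A^{-1}}{1 + \mathbf v^\top A^{-1}\mathbf u} = \Id,
\end{align}
which is exactly what is required.

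There is no real obstacle here: the only subtlety is the harmless scalar-versus-matrix manipulation that lets one factor the denominator out of the rank-one correction term. Since the statement is an identity between two $n \times n$ matrices, a one-sided verification suffices (because a one-sided inverse of a square matrix is automatically two-sided). Alternatively, one could derive the formula conceptually by writing $A + \mathbf u \mathbf v^\top = A(\Id + A^{-1}\mathbf u \mathbf v^\top)$ and applying the rank-one inverse identity $(\Id + \mathbf x \mathbf y^\top)^{-1} = \Id - \mathbf x \mathbf y^\top/(1+\mathbf y^\top \mathbf x)$, but the direct verification above is the shortest route and requires no auxiliary lemmas.
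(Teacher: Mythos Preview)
Your proof is correct. The paper does not actually prove this lemma; it simply states it as a classical auxiliary result with a citation to \cite{bartlett1951inverse}, so there is nothing to compare against beyond noting that your direct verification is the standard argument.
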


\begin{lemma}[Theorem A.45 in \cite{bai2010spectral}]\label{lem:BS10A45}
Let $A, B$ be two $n\times n$ Hermitian matrices. Then $A$ and $B$ have the same limiting spectral distribution if $\|A-B\|\to 0$ as $n\to\infty$.
\end{lemma}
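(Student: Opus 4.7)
The plan is to reduce weak convergence of the two empirical spectral distributions to a quantitative comparison of their eigenvalues via Weyl's perturbation inequality. Let $\lambda_1(A)\ge \lambda_2(A)\ge\cdots\ge\lambda_n(A)$ and similarly for $B$ denote the ordered eigenvalues, and let $F_A$, $F_B$ be the corresponding empirical CDFs (each placing mass $1/n$ at an eigenvalue). The key classical input is Weyl's inequality for Hermitian matrices, which yields
\begin{equation}
\max_{1\le i\le n}\bigl|\lambda_i(A)-\lambda_i(B)\bigr|\;\le\;\|A-B\|.
\end{equation}
Setting $\varepsilon_n:=\|A_n-B_n\|$ for the two matrix sequences, we have $\varepsilon_n\to 0$ by hypothesis.

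Next I would convert this eigenvalue bound into a bound on the Lévy distance between $F_{A_n}$ and $F_{B_n}$. Because the eigenvalues are matched in order, for every $x\in\R$ the event $\{\lambda_i(A_n)\le x\}$ implies $\{\lambda_i(B_n)\le x+\varepsilon_n\}$ and vice versa, so
\begin{equation}
F_{A_n}(x-\varepsilon_n)\;\le\;F_{B_n}(x)\;\le\;F_{A_n}(x+\varepsilon_n)\qquad\text{for all }x\in\R.
\end{equation}
By the definition of the Lévy distance $L$, this gives $L(F_{A_n},F_{B_n})\le \varepsilon_n\to 0$.

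Finally, since the Lévy distance metrizes weak convergence of probability measures on $\R$, if $F_{A_n}$ converges weakly to some distribution $\mu$ then so does $F_{B_n}$, and conversely; in particular $A_n$ and $B_n$ share a limiting spectral distribution (meaning either both have one and they coincide, or neither does). There is no real obstacle here — the entire argument is a three-line consequence of Weyl's inequality plus the standard fact that Lévy distance metrizes weak convergence; the only minor care needed is to remark that ordering the eigenvalues is what lets the spectral-norm bound control the horizontal shift between the two staircase CDFs, which is exactly where Hermiticity is used (so that eigenvalues are real and can be ordered).
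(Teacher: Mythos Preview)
Your proof is correct; this is the standard argument via Weyl's perturbation inequality and the L\'evy metric. The paper does not prove this lemma at all---it simply cites it as Theorem~A.45 in Bai--Silverstein's monograph---so there is nothing to compare against beyond noting that your route is exactly the classical one.
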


\begin{lemma}[Theorem B.11 in \cite{bai2010spectral}]\label{lem:BS10B11}
Let $z=x+iv\in \C, v>0$ and $s(z)$ be the Stieltjes transform of a probability measure. Then
$
    |\Re s(z)|\leq v^{-1/2}\sqrt{\Im s(z)}
$.
\end{lemma}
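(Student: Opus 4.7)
The plan is to reduce the inequality to a direct application of the Cauchy--Schwarz inequality after writing $\Re s(z)$ and $\Im s(z)$ as explicit integrals against the underlying probability measure $\mu$. Writing $z = x + iv$ with $v > 0$ and rationalizing, one has
\[s(z) = \int_{\R} \frac{1}{t - z}\,d\mu(t) = \int_{\R} \frac{(t-x) + iv}{(t-x)^2 + v^2}\,d\mu(t),\]
so that
\[\Re s(z) = \int_{\R} \frac{t-x}{(t-x)^2 + v^2}\,d\mu(t), \qquad \Im s(z) = \int_{\R} \frac{v}{(t-x)^2 + v^2}\,d\mu(t).\]

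Next, I would split the integrand of $\Re s(z)$ into the product of two $L^2(\mu)$ factors
\[\frac{t-x}{(t-x)^2 + v^2} \;=\; \frac{t-x}{\sqrt{(t-x)^2 + v^2}} \cdot \frac{1}{\sqrt{(t-x)^2 + v^2}},\]
and apply the Cauchy--Schwarz inequality to obtain
\[|\Re s(z)|^2 \;\le\; \int_{\R} \frac{(t-x)^2}{(t-x)^2 + v^2}\,d\mu(t) \cdot \int_{\R} \frac{1}{(t-x)^2 + v^2}\,d\mu(t).\]

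To finish, I would bound each factor. The first factor is at most $\int_{\R} 1 \, d\mu(t) = 1$, since the integrand lies in $[0,1]$ and $\mu$ is a probability measure. The second factor equals $v^{-1} \Im s(z)$ directly from the formula for $\Im s(z)$ above. Combining these two bounds gives $|\Re s(z)|^2 \le v^{-1} \Im s(z)$, and taking square roots yields the desired inequality $|\Re s(z)| \le v^{-1/2}\sqrt{\Im s(z)}$. There is no serious obstacle here: the only nontrivial choice is the specific split of the integrand that makes Cauchy--Schwarz produce exactly $v^{-1}\Im s(z)$ on the right-hand side, and the probability normalization of $\mu$ is what makes the first factor drop out cleanly.
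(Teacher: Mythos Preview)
Your proof is correct and is the standard argument for this inequality. The paper does not give its own proof of this lemma; it is stated in the appendix as an auxiliary result with a citation to \cite{bai2010spectral}, so there is nothing to compare against beyond noting that your argument is exactly the classical one.
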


\begin{lemma}[Lemma D.2 in \cite{nguyen2020global}] \label{lem:NM20D2} Let $\x, \y\in \R^d$ such that $\|\x\|=\| \y\|=1$ and $\w\sim \N(0, I_{d})$. Let $h_j$ be the $j$-th normalized Hermite polynomial given in  \eqref{eq:hermitepolynomial}. Then 
\begin{align}
    \E_{\w}[h_j(\langle \w,\x\rangle)h_k(\langle \w,\y\rangle)  ]= \delta_{jk} \langle \x, \y \rangle ^k.
\end{align}
\end{lemma}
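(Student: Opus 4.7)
The plan is to reduce the statement to a classical bivariate computation and then extract the identity from the Hermite generating function. First I would observe that $U:=\langle \w,\x\rangle$ and $V:=\langle \w,\y\rangle$ are jointly Gaussian with $U,V\sim\N(0,1)$ (since $\|\x\|=\|\y\|=1$) and with covariance $\E[UV]=\x^\top\y=:\rho$. Hence the claim becomes: for $(U,V)$ such a bivariate Gaussian pair,
\[
\E[h_j(U)h_k(V)]=\delta_{jk}\rho^{k}.
\]

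Next I would recall that from the definition in \eqref{eq:hermitepolynomial}, the normalized Hermite polynomials satisfy the generating function identity
\[
\sum_{r\ge 0}\frac{t^{r}}{\sqrt{r!}}\,h_r(x)=e^{tx-t^{2}/2},
\]
which follows from the standard generating function for the (unnormalized) Hermite polynomials $H_r$ together with $h_r=H_r/\sqrt{r!}$. Multiplying the generating identities in $U$ and $V$ with formal parameters $s,t$ and taking expectation, I would use the joint MGF of $(U,V)$ to compute
\[
\E\!\left[e^{sU+tV-(s^{2}+t^{2})/2}\right]
=e^{(s^{2}+t^{2})/2+st\rho-(s^{2}+t^{2})/2}=e^{st\rho}=\sum_{n\ge 0}\frac{(st\rho)^{n}}{n!}.
\]
On the other hand, expanding the left-hand side via the generating function yields
\[
\sum_{j,k\ge 0}\E[h_j(U)h_k(V)]\,\frac{s^{j}t^{k}}{\sqrt{j!\,k!}}.
\]

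Finally, I would match coefficients of $s^{j}t^{k}$ on both sides. The right-hand side contains only diagonal terms $j=k$ with coefficient $\rho^{k}/k!$, forcing $\E[h_j(U)h_k(V)]=0$ whenever $j\ne k$ and $\E[h_k(U)h_k(V)]=\rho^{k}$. Substituting $\rho=\langle\x,\y\rangle$ gives the stated identity. There is no real obstacle here; the only care needed is to track the $\sqrt{r!}$ normalization so that the matching of formal power series coefficients comes out cleanly. Fubini/term-by-term integration is justified because the generating series converges absolutely for all $(s,t)$ and the Gaussian MGF is finite everywhere.
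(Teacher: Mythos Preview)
Your proof is correct and complete: reducing to the bivariate Gaussian pair $(U,V)$ with correlation $\rho=\langle\x,\y\rangle$ and matching coefficients in the Hermite generating function is the standard and cleanest route to this identity. The paper itself does not give a proof of this lemma at all---it simply quotes the result as Lemma~D.2 of \cite{nguyen2020global}---so there is nothing to compare against; your argument would serve as a self-contained proof were one needed.
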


\begin{lemma}[Proposition C.2 in \cite{fan2020spectra}]\label{lem:propC2}
Suppose $M=U+iV\in \C^{n\times n}$, $U,V$ are real symmetric, and $V$ is invertible with $\sigma_{\min}(V)\geq c_0>0$. Then $M$ is invertible with $\sigma_{\min}(M)\geq c_0$.
\end{lemma}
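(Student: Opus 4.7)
The plan is to directly lower bound $\|Mx\|$ for every unit vector $x \in \mathbb{C}^n$ using the sesquilinear form $\langle x, Mx\rangle$. By Cauchy--Schwarz, $\|Mx\| \geq |\langle x, Mx\rangle| = |x^*Mx|$. Decomposing $x^*Mx = x^*Ux + i\, x^*Vx$ and using that both $U$ and $V$ are real symmetric (hence Hermitian over $\mathbb{C}$), the quadratic forms $x^*Ux$ and $x^*Vx$ are real numbers. Consequently,
\[
|x^*Mx|^2 = (x^*Ux)^2 + (x^*Vx)^2 \geq (x^*Vx)^2,
\]
so $\|Mx\| \geq |x^*Vx|$, and the problem reduces to a purely Hermitian estimate on $V$.

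Next I would use the hypothesis on $V$ to control $|x^*Vx|$ uniformly over $\|x\|=1$. In the settings where this lemma is invoked in the paper, $V$ is positive semidefinite, so $\sigma_{\min}(V) = \lambda_{\min}(V) \geq c_0$ and hence $x^*Vx \geq c_0$ for every unit vector. Combined with the previous display this gives $\|Mx\| \geq c_0$, so $\sigma_{\min}(M) \geq c_0 > 0$, which in particular implies that $M$ is invertible.

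The proof is essentially a two-line argument and I do not anticipate a serious obstacle. The only subtlety is the interpretation of the hypothesis on $V$: one really needs the imaginary part to be (semi)definite, since otherwise $x^*Vx$ can vanish even when $V$ is invertible, so the uniform lower bound $|x^*Vx| \geq c_0$ fails. In all of the paper's uses of this lemma, $V$ is automatically positive definite---for instance of the form $\Im \bar{\beta}_n(z)\,\Phi_n + v\,\Id$ with $\Phi_n \succeq 0$ and $v>0$---so the Hermitian lower bound applies and the argument above goes through without modification.
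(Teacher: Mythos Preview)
Your argument is correct and is the standard proof of this fact. The present paper does not supply its own proof of the lemma---it is listed as an auxiliary result borrowed from \cite{fan2020spectra}---so there is no paper proof to compare against directly.

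Your caveat about definiteness is not only apt but necessary: the lemma as literally stated (requiring only $\sigma_{\min}(V)\geq c_0$, with no sign condition on $V$) is false. For instance, taking
\[
U=\begin{pmatrix}0&1\\1&0\end{pmatrix},\qquad V=\begin{pmatrix}1&0\\0&-1\end{pmatrix}
\]
gives $M=\bigl(\begin{smallmatrix}i&1\\1&-i\end{smallmatrix}\bigr)$ with $\det M=0$, while $\sigma_{\min}(V)=1$. As you correctly observe, every invocation of this lemma in the paper has a positive-definite imaginary part (either $v\,\Id$ with $v=\Im z>0$ in Proposition~\ref{prop:trRD}, or $\Im\bar\beta_n(z)\,\Phi_n+v\,\Id\succeq v\,\Id$ in the proof of Theorem~\ref{thm:iff_conditions}), so your proof covers all the applications.
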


\begin{lemma}[Proposition C.3 in \cite{fan2020spectra}]\label{lem:ESDforbenius}  Let $M,\widetilde{M}$  be two sequences of $n\times n$ Hermitian matrices satisfying 
\[\frac{1}{n} \|M-\widetilde{M}\|_F^2\to 0
\] as $n\to\infty$. Suppose that, as $n\to\infty$, $\limspec M=\nu$ for a probability distribution $\nu$ on $\R$, then $\limspec \widetilde{M}=\nu$.
\end{lemma}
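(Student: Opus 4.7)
The plan is to prove the lemma by controlling the Lévy distance between the two empirical spectral distributions using the Hoffman–Wielandt inequality for Hermitian matrices. Let $\lambda_1 \ge \cdots \ge \lambda_n$ and $\tilde\lambda_1 \ge \cdots \ge \tilde\lambda_n$ denote the ordered eigenvalues of $M$ and $\widetilde M$, respectively, and write $F$ and $\widetilde F$ for their empirical cumulative distribution functions. The Hoffman–Wielandt inequality gives
\[
\sum_{i=1}^n (\lambda_i - \tilde\lambda_i)^2 \;\le\; \|M-\widetilde M\|_F^2,
\]
so the hypothesis yields
\[
\frac{1}{n}\sum_{i=1}^n (\lambda_i - \tilde\lambda_i)^2 \;\to\; 0.
\]

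Next, I would invoke (or quickly derive) the standard inequality relating the Lévy distance $L(F,\widetilde F)$ to the ordered eigenvalue differences, namely
\[
L(F,\widetilde F)^3 \;\le\; \frac{1}{n}\sum_{i=1}^n (\lambda_i - \tilde\lambda_i)^2.
\]
This is the classical bound used throughout random matrix theory (e.g. Bai–Silverstein, Corollary A.41). Its short proof goes as follows: if $L(F,\widetilde F) > \varepsilon$, then there exists $x$ with, say, $F(x) > \widetilde F(x+\varepsilon)+\varepsilon$, which forces at least $\lceil \varepsilon n\rceil$ indices $i$ for which $\lambda_i \le x$ but $\tilde\lambda_i > x+\varepsilon$ (or the reverse with signs flipped), and hence $\sum_i(\lambda_i-\tilde\lambda_i)^2 \ge \varepsilon n \cdot \varepsilon^2 = \varepsilon^3 n$. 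Rearranging gives the desired inequality.

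Combining the two displays, $L(F,\widetilde F) \to 0$, so $F$ and $\widetilde F$ have the same weak limit whenever one exists. Since by hypothesis $F$ converges weakly to $\nu$, we conclude $\widetilde F \Rightarrow \nu$, i.e. $\limspec \widetilde M = \nu$.

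There is no serious obstacle: the only two ingredients are the Hoffman–Wielandt inequality (which is immediate because $M,\widetilde M$ are Hermitian) and the Lévy-distance bound, both of which are classical. The one thing to keep straight is that Hoffman–Wielandt compares \emph{sorted} eigenvalues, which is precisely what is needed for the Lévy-distance estimate; if one wrote the proof using the Kolmogorov distance instead, one would need to impose extra regularity on $\nu$, which is why I plan to use the Lévy metric.
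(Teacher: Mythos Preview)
Your proof is correct and is precisely the standard argument (Hoffman--Wielandt plus the Lévy-distance bound, as in Bai--Silverstein, Corollary~A.41). The paper does not actually supply its own proof of this lemma; it is simply quoted from \cite{fan2020spectra} in the appendix of auxiliary results, so there is nothing to compare against.
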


\begin{lemma}\label{lem:D3fan}
Recall the definition of $\Phi$ in \eqref{def:phi}. Under Assumption~\ref{assump:sigma}, if $X$ is $(\varepsilon,B)$-orthonormal with sufficiently small $\varepsilon$, then for a universal constant $C>0$ and any $\alpha\neq\beta\in [n]$, we have
\begin{align}
    | \Phi_{\alpha\beta}-b_{\sigma}^2 \x_{\alpha}^\top \x_{\beta}|\leq ~&C \varepsilon^2,\\
    |\E_{\w}[\sigma(\w^\top\x_{\alpha})]|\leq ~&C \varepsilon.
\end{align}
\end{lemma}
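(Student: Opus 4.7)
The plan is to reuse the Hermite expansion of $\Phi_{\alpha\beta}$ derived inside the proof of Lemma~\ref{lemma:operator_Phi}. Setting $\mathbf u_\alpha := \x_\alpha/\|\x_\alpha\|$ and $\sigma_\alpha(x) := \sigma(\|\x_\alpha\| x)$, equation \eqref{eq:Phiab} gives
\[
\Phi_{\alpha\beta} \;=\; \sum_{k=0}^{\infty} \zeta_k(\sigma_\alpha)\,\zeta_k(\sigma_\beta)\,(\mathbf u_\alpha^\top \mathbf u_\beta)^k .
\]
Since $b_\sigma^2 \x_\alpha^\top \x_\beta = b_\sigma^2 \|\x_\alpha\|\|\x_\beta\|(\mathbf u_\alpha^\top \mathbf u_\beta)$ has the shape of the $k=1$ summand, I would split the series into three pieces: (i) compare the $k=1$ summand to $b_\sigma^2 \x_\alpha^\top \x_\beta$, (ii) bound $|\zeta_0(\sigma_\alpha)\zeta_0(\sigma_\beta)|$, and (iii) bound the tail $\sum_{k\ge 2}$. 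The $(\varepsilon,B)$-orthonormal hypothesis gives $|\x_\alpha^\top\x_\beta|\le\varepsilon$ and $|\|\x_\alpha\|-1|\le\varepsilon$, hence $|\mathbf u_\alpha^\top \mathbf u_\beta| \le \varepsilon/(1-\varepsilon)^2 \le C\varepsilon$ for $\varepsilon$ small, which is the single geometric input driving all three estimates.

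For step (i), Gaussian integration by parts yields $\zeta_1(\sigma_\alpha) = \E[\xi\,\sigma(\|\x_\alpha\|\xi)] = \|\x_\alpha\|\,\E[\sigma'(\|\x_\alpha\|\xi)]$. Under Assumption~\ref{assump:sigma}, the approximation $\E[\sigma'(\|\x_\alpha\|\xi)] = b_\sigma + O(\varepsilon)$ follows either from a first-order Taylor expansion of $\sigma'$ (when $\sigma\in C^2$ with $|\sigma''|\le\lambda_\sigma$) or, in the piecewise linear case, from the fact that $\E[\sigma'(t\xi)]$ is independent of $t>0$. Thus $\zeta_1(\sigma_\alpha) = b_\sigma\|\x_\alpha\| + O(\varepsilon)$, and multiplying the $\alpha$ and $\beta$ versions together with $|\mathbf u_\alpha^\top \mathbf u_\beta|=O(\varepsilon)$ gives the $k=1$ summand equal to $b_\sigma^2 \x_\alpha^\top \x_\beta + O(\varepsilon^2)$. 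For step (ii), the bound $|\zeta_0(\sigma_\alpha)|\le C\varepsilon$ is precisely \eqref{eq:zeta_0_appr_0}/\eqref{eq:zeta_0_appr_00}, which was already established inside the proof of Lemma~\ref{lemma:operator_Phi}; this also instantly gives the second inequality of the lemma since $\E_{\w}[\sigma(\w^\top \x_\alpha)] = \E[\sigma(\|\x_\alpha\|\xi)] = \zeta_0(\sigma_\alpha)$. For step (iii), factor $(\mathbf u_\alpha^\top \mathbf u_\beta)^2$ out of the tail and apply Cauchy--Schwarz with Parseval together with the bound $\|\sigma_\alpha\|_{L^2}\le 1+C\varepsilon$ from \eqref{eq:L_2bound_1}:
\[
\sum_{k\ge 2}|\zeta_k(\sigma_\alpha)\zeta_k(\sigma_\beta)|\,|\mathbf u_\alpha^\top \mathbf u_\beta|^k \;\le\; (C\varepsilon)^2\,\|\sigma_\alpha\|_{L^2}\|\sigma_\beta\|_{L^2} \;\le\; C'\varepsilon^2 .
\]

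The main obstacle is step (i): one must establish $\E[\sigma'(\|\x_\alpha\|\xi)] = b_\sigma + O(\varepsilon)$ without imposing regularity beyond Assumption~\ref{assump:sigma}, and the two cases of that assumption must be treated separately. The piecewise linear case actually gives zero error by a scaling symmetry, and the $C^2$ case is a one-line Taylor estimate, so the obstacle is more bookkeeping than substance. Combining (i)--(iii) yields the first claimed bound, and (ii) supplies the second.
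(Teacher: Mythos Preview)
Your proposal is correct and follows essentially the same Hermite-expansion strategy as the paper: split $\Phi_{\alpha\beta}=\sum_{k\ge0}\zeta_k(\sigma_\alpha)\zeta_k(\sigma_\beta)(\mathbf u_\alpha^\top\mathbf u_\beta)^k$ into the $k=0$, $k=1$, and $k\ge2$ pieces and bound each by $O(\varepsilon^2)$ using $|\mathbf u_\alpha^\top\mathbf u_\beta|=O(\varepsilon)$ and $|\zeta_0(\sigma_\alpha)|=O(\varepsilon)$. The only cosmetic difference is that the paper delegates the $C^2$ case entirely to \cite{fan2020spectra} and, for the piecewise-linear case, exploits the exact scaling identity $\zeta_k(\sigma_\alpha)=\|\x_\alpha\|\,\zeta_k(\sigma)$ for $k\ge1$ (so the $k=1$ summand equals $b_\sigma^2\x_\alpha^\top\x_\beta$ with zero error), whereas you give a unified argument covering both cases via the approximation $\E[\sigma'(\|\x_\alpha\|\xi)]=b_\sigma+O(\varepsilon)$.
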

\begin{proof}
 When $\sigma$ is twice differentiable in Assumption~\ref{assump:sigma}, this result follows from Lemma D.3 in \cite{fan2020spectra}. When $\sigma$ is a piece-wise linear function defined in case \ref{assump:sigma_relu} of Assumption~\ref{assump:sigma}, the second inequality follows from \eqref{eq:zeta_0_appr_00} with $t=\|\x_{\a}\|$. For the first inequality, the Hermite expansion of $\Phi_{\alpha\beta}$ is given by \eqref{eq:Phiab} with coefficients $\zeta_k(\sigma_\a)=\E[\sigma(\|\x_{\a}\|\xi)h_k(\xi)]$ for $k\in\mathbb{N}$. Observe that the piece-wise linear function in case \ref{assump:sigma_relu} of Assumption~\ref{assump:sigma} satisfies
 \begin{align}
     \zeta_k(\sigma_\a)=~&\|\x_{\a}\|\zeta_k(\sigma), ~\text{ for }~k\ge 1,\\
     \zeta_0(\sigma_\a)=~& b(1-\|\x_{\a}\|),
 \end{align}
 because of condition \eqref{eq:conditionsigma} for $\sigma$. Recall $\mathbf u_\a=\x_\a/\|\x_\a\|$ and $\zeta_1(\sigma)=b_\sigma$. Then, analogously to the derivation of \eqref{eq:off_diagonal_bounds}, there exists some constant $C>0$ such that
 \begin{align}
     |\Phi_{\alpha\beta}-b_{\sigma}^2 \x_{\alpha}^\top \x_{\beta}|=~ & \left|\sum_{k\neq 1} \zeta_k(\sigma_\a)\zeta_k(\sigma_\b) (\mathbf u_\a^\top \mathbf u_\b)^k\right|\\
     \le ~& b^2(1-\|\x_{\a}\|)(1-\|\x_{\b}\|) + \frac{|\x_{\a}^\top\x_{\b}|^2}{\|\x_{\a}\|\|\x_{\b}\|}\|\sigma\|_{L^2}^2 \le C \varepsilon^2,
 \end{align}
for $\varepsilon\in(0,1)$ and $(\varepsilon,B)$-orthonormal $X$. This completes the proof of this lemma.
\end{proof}
With the above lemma, the proof of Lemma D.4 in \cite{fan2020spectra} directly yields the following lemma. 
\begin{lemma}\label{lem:Kupperbound} 
Under the same assumptions as Lemma~\ref{lem:D3fan}, there exists a constant $C>0$ such that 
$\|K(X,X)\|\leq C B^2$. Additionally, with Assumption~\ref{assump:testdata}, we have $\|K(\tilde X,\tilde X)\|\leq C  B^2$.
\end{lemma}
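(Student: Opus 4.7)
The plan is to control $\|\Phi\| = \|K(X,X)\|$ by leveraging Lemma~\ref{lemma:operator_Phi}, writing $\|\Phi\| \leq \|\Phi_0\| + \|\Phi - \Phi_0\|$ with $\Phi_0$ as defined in \eqref{def:Phi0}. Lemma~\ref{lemma:operator_Phi} already furnishes $\|\Phi - \Phi_0\| \leq C_B \eps^2 \sqrt{n}$, which is bounded uniformly in $n$ under the hypothesis that $\eps$ is sufficiently small, so it suffices to bound $\|\Phi_0\|$ by $CB^2$ for a constant $C$ that depends only on $\sigma$.

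I would bound each of the three pieces of $\Phi_0 = \vmu\vmu^\top + \sum_{k=1}^{3} \zeta_k(\sigma)^2 f_k(X^\top X) + (1 - \zeta_1^2 - \zeta_2^2 - \zeta_3^2)\Id$ separately. The rank-one piece obeys
\[
\|\vmu\vmu^\top\| = \|\vmu\|^2 = 2\zeta_2(\sigma)^2 \sum_{\a=1}^n (\|\x_\a\| - 1)^2 \leq 2\zeta_2(\sigma)^2\sum_{\a=1}^n(\|\x_\a\|^2-1)^2 \leq 2\zeta_2(\sigma)^2 B^2
\]
by the $(\eps,B)$-orthonormality. For $k = 1, 2, 3$ the kernel matrix $f_k(X^\top X)$ has entries $(\x_\a^\top \x_\b)^k$, and iterating the Hadamard-product bound in Lemma~\ref{lem:Hadamardinequality} yields $\|f_k(X^\top X)\| \leq \bigl(\max_{\a,\b}|\x_\a^\top\x_\b|\bigr)^{k-1}\|X^\top X\| \leq (1+\eps)^{2(k-1)} B^2 = O(B^2)$ for $\eps$ bounded. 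The identity term contributes at most $1$ in norm. Combining these with the bound on $\|\Phi - \Phi_0\|$, and noting that the residual $C_B\eps^2\sqrt{n}$ can be absorbed into a slightly larger constant (taking $B \geq 1$ without loss of generality, or using the trivial bound for small $B$), yields $\|\Phi\| \leq CB^2$.

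For the second statement, Assumption~\ref{assump:testdata} guarantees that $\tilde X = [\x_1, \ldots, \x_n, \x] \in \R^{d_0 \times (n+1)}$ is itself $(\eps_n, B)$-orthonormal (with the same $B$), so the same argument applies verbatim with $\tilde X$ and $n+1$ columns in place of $X$ and $n$, giving $\|K(\tilde X, \tilde X)\| \leq CB^2$.

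No step here is genuinely difficult; this is essentially a packaging of the approximation in Lemma~\ref{lemma:operator_Phi} together with the explicit spectral-norm bounds on the kernel matrices $f_k(X^\top X)$ already derived in its proof. The only mild subtlety is to ensure the additive residual $O(\eps^2\sqrt{n})$ and the $O(1)$ contributions from the identity and the small-norm pieces all collapse into a clean $CB^2$ bound rather than $C(1 + B^2)$, which is handled by choosing the constant $C$ large enough and (if needed) assuming $B$ is bounded away from zero.
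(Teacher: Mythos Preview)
Your approach via Lemma~\ref{lemma:operator_Phi} is essentially sound and, unlike the paper (which simply defers to Lemma~D.4 of \cite{fan2020spectra}), gives a self-contained argument. The bounds on $\|\vmu\vmu^\top\|$, $\|f_k(X^\top X)\|$, and the identity term are all correct.

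There is, however, one subtle gap. You claim the residual $\|\Phi-\Phi_0\|\le C_B\eps^2\sqrt{n}$ is ``bounded uniformly in $n$ under the hypothesis that $\eps$ is sufficiently small,'' but smallness of $\eps$ alone does \emph{not} control $\eps^2\sqrt{n}$: for instance $\eps=n^{-1/8}$ gives $\eps^2\sqrt{n}=n^{1/4}\to\infty$. The stated hypotheses of Lemma~\ref{lem:D3fan} only require $\eps$ small, not $n\eps^4=O(1)$. Your argument therefore proves the result under the slightly stronger hypothesis $\eps^2\sqrt{n}=O(1)$, which is exactly Assumption~\ref{assump:asymptotics}(b) and holds in every application of the lemma in the paper---so in practice nothing is lost, but the lemma as stated is not quite covered.

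A route that avoids this issue entirely, and yields the clean $CB^2$ bound without any relation between $\eps$ and $n$: write $\Phi=\Cov(\y)+\E[\y]\E[\y]^\top$. For any unit $\v v$, the map $\w\mapsto \v v^\top\sigma(X^\top\w)$ is $\lambda_\sigma\|X\|\le\lambda_\sigma B$-Lipschitz, so the Gaussian Poincar\'e inequality gives $\v v^\top\Cov(\y)\v v\le\lambda_\sigma^2B^2$, i.e.\ $\|\Cov(\y)\|\le\lambda_\sigma^2B^2$. And \eqref{eq:Eybound} together with $\E\sigma(\xi)=0$ and $\sum_\a(\|\x_\a\|-1)^2\le B^2$ gives $\|\E\y\|^2\le2\lambda_\sigma^2B^2$. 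Hence $\|\Phi\|\le3\lambda_\sigma^2B^2$ with no appeal to Lemma~\ref{lemma:operator_Phi}. The extension to $\tilde X$ is then immediate, as you note.
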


\section{Additional simulations}\label{appendix:simulation}
Figures \ref{fig:2} and \ref{fig:4} provide additional simulations for the eigenvalue distribution described in Theorem \ref{thm:law_DNN} with different activation functions and scaling. Here, we compute the empirical eigenvalue distributions of centered CK matrices in histograms and the limiting spectra in terms of self-consistent equations. All the input data $X$'s are standard random Gaussian matrices. Interestingly, in Figure~\ref{fig:4}, we observe an outlier that emerges outside the bulk distribution for the piece-wise linear activation function defined in case \ref{assump:sigma_relu} of Assumption~\ref{assump:sigma}. The analysis of the emergence of the outlier, in this case, would be interesting for future work.
\begin{figure}[!ht]
\includegraphics[width=0.32\textwidth]{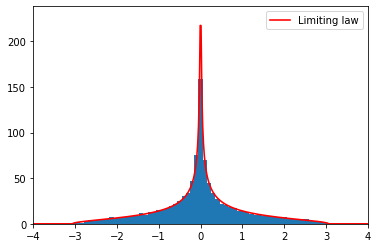}
\includegraphics[width=0.32\textwidth]{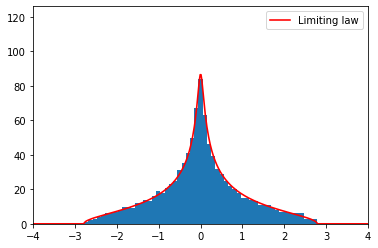}
\includegraphics[width=0.32\textwidth]{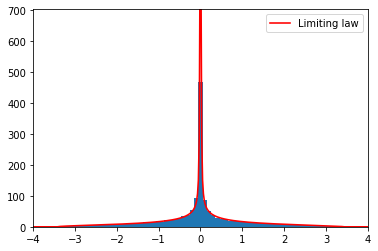}\\
\includegraphics[width=0.32\textwidth]{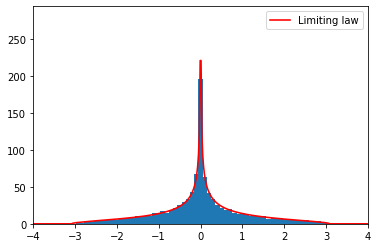}
\includegraphics[width=0.32\textwidth]{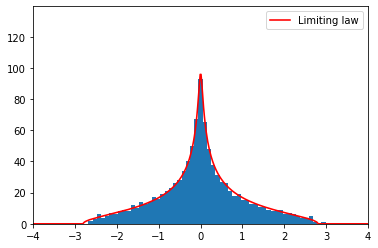}
\includegraphics[width=0.32\textwidth]{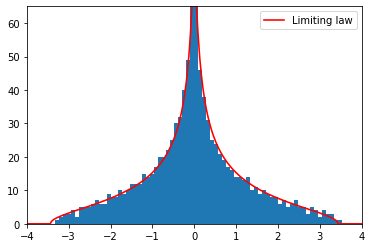}
\caption{\small Simulations for empirical eigenvalue distributions of \eqref{eq:center_RF2} and theoretical predication (red curves) of the limiting law $\mu$ with activation functions $\sigma(x)\propto$ Sigmoid function (first row) and $\sigma(x)=x$ linear function (second row) satisfying Assumption \ref{assump:sigma}: $n=10^3$, $d_0=10^3$ and $d_1=10^5$ (left); $n=10^3$, $d_0=1.5\times 10^3$ and $d_1=10^5$ (middle); $n=1.5\times 10^3$, $d_0=10^3$ and $d_1=10^5$ (right).}\label{fig:2}
\end{figure}

\newpage
\begin{figure}[!ht]
\includegraphics[width=0.32\textwidth]{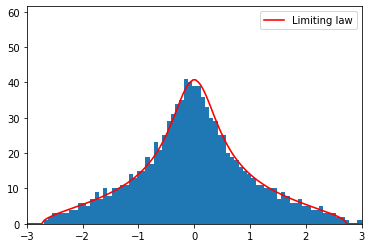}
\includegraphics[width=0.32\textwidth]{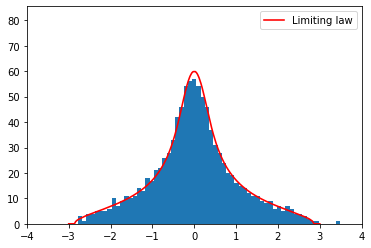}
\includegraphics[width=0.32\textwidth]{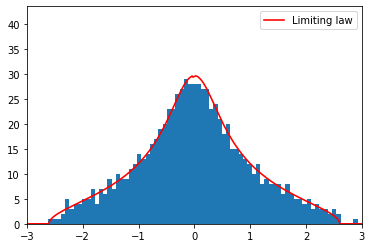}
\caption{\small Simulations for empirical eigenvalue distributions of \eqref{eq:center_RF2} and theoretical predication (red curves) of the limiting law $\mu$ where activation function $\sigma(x)\propto$ ReLU function satisfies case \ref{assump:sigma_relu} of Assumption~\ref{assump:sigma}: $n=10^3$, $d_0=10^3$ and $d_1=10^5$ (left); $n=10^3$, $d_0=800$ and $d_1=10^5$ (middle); $n=800$, $d_0=10^3$ and $d_1=10^5$ (right).}\label{fig:4}
\end{figure}

\bibliographystyle{alpha}
\bibliography{ref.bib}

\end{document}